\renewcommand{\epsilon}{{\varepsilon}}
\numberwithin{equation}{section}
\newtheorem{theorem}{Theorem}[section]
\newtheorem{lemma}[theorem]{Lemma}
\newtheorem{remark}[theorem]{Remark}
\newtheorem{definition}[theorem]{Definition}
\newtheorem{proposition}[theorem]{Proposition}
\newtheorem{corollary}[theorem]{Corollary}
\newtheorem{claim}[theorem]{Claim}
\newcommand{\C}{\mathbb C}
\newcommand{\R}{\mathbb R}
\newcommand{\N}{\mathbb N}
\def\({\left(}
\def\){\right)}
\def\<{\left\langle}
\def\>{\right\rangle}
\def\T{\tau}
\def\Sch{{\mathcal S}}% Schwartz space
\def\F{\mathcal F}
\def\K{\mathcal K}
\def\TT{\mathcal T}
\def\P{\mathcal P}
\def\G{\mathcal G}
\def\L{\mathcal L}
\def\EE{\mathcal E}
\def\ZZ{\mathcal Z}
\def\eps{\varepsilon}
\DeclareMathOperator{\RE}{Re}
\DeclareMathOperator{\IM}{Im}
\newcommand{\qtq}[1]{\quad\text{#1}\quad}
\begin{document}

\title[Threshold solutions]{Threshold solutions for the $3d$ cubic-quintic NLS}

\author[Alex H. Ardila]{Alex H. Ardila}
\address{Universidade Federal de Minas Gerais\\ ICEx-UFMG\\ CEP
 30123-970\\ MG, Brazil} 
\email{ardila@impa.br}

\author{Jason Murphy}
\address{Missouri University of Science \& Technology \\ Rolla, MO, USA}
\email{jason.murphy@mst.edu}

\begin{abstract} We study the cubic-quintic NLS in three space dimensions.  It is known that scattering holds for solutions with mass-energy in a region corresponding to positive virial, the boundary of which is delineated both by ground state solitons and by certain rescalings thereof.  We classify the possible behaviors of solutions on the part of the boundary attained solely by solitons.  In particular, we show that non-soliton solutions either scatter in both time directions or coincide (modulo symmetries) with a special solution, which scatters in one time direction and converges exponentially to the soliton in the other. 
\end{abstract}

%\subjclass[2010]{35Q55}
%\keywords{Energy-critical NLS; cubic-quintic; soliton; scattering.}

\maketitle

\medskip

\section{Introduction}
\label{sec:intro}
We study the cubic-quintic nonlinear Schr\"odinger equation (NLS) 
\begin{equation}\tag{NLS}\label{NLS}
\begin{cases} 
(i\partial_{t}+\Delta)u=-|u|^{2}u+|u|^{4}u,\\
u(0)=u_{0}\in H^{1}(\R^{3}),
\end{cases} 
\end{equation}
where $u: \R\times\R^{3}\rightarrow \C$.  This is the Hamiltonian evolution corresponding to the energy 
\[
E(u)=\int_{\R^{3}}\tfrac{1}{2}|\nabla u|^{2}
-\tfrac{1}{4}|u|^{4}+\tfrac{1}{6}|u|^{6}\,dx.
\] 
Solutions to \eqref{NLS} additionally preserve the mass, defined by
\[
M(u)=\int_{\R^{3}}|u|^{2}\,dx.
\]
Using the conservation of mass and energy, one can show that \eqref{NLS} is globally well-posed in $H^1$, with solutions remaining uniformly bounded in $H^1$ over time (see \cite{Zhang2006}).  Our interest is in the long-time behavior of solutions to \eqref{NLS}, with a focus on the question of scattering, i.e. whether an initial condition $u_0\in H^1(\R^3)$ leads to a solution $u$ obeying
\begin{equation}\label{scattering}
\lim_{t\to\pm\infty}\|u(t)-e^{it\Delta}u_\pm\|_{H^1} = 0 \qtq{for some}u_\pm\in H^1.
\end{equation}

This problem was studied in \cite{KillipOhPoVi2017}, which identified a scattering region in the mass-energy plane corresponding to the region of positive virial.  The boundary of this region is realized in part by (non-scattering) soliton solutions, but also in part by certain rescalings of solitons (which are not themselves soliton solutions to \eqref{NLS}). The behavior of solutions in the part of the boundary \emph{not} realized by any soliton was further studied in \cite{KillipMurphyVisan2020}, which established scattering in an open neighborhood of this part of the boundary (and, in particular, across the `virial threshold').  Our interest in this paper is to study the behavior of solutions with mass-energy belonging to the part of the boundary realized exclusively by solitons.  We classify all possible behaviors: other than the soliton (corresponding to zero virial), solutions either scatter as $t\to\pm\infty$ or coincide (modulo symmetries) with a special solution, which scatters in one direction and converges exponentially to the soliton in the other.  All three possible behaviors do occur.  Our result is closely related to the threshold classifications appearing in works such as \cite{DuyMerle2009, DuyckaertsRou2010, LiZhang1, CFR, CM, YZZ}; essentially, we obtain the usual classification, but with all blowup behavior removed.

To state our main result precisely, we first need to introduce the variational problem from \cite{KillipOhPoVi2017} that is used to define the scattering region. Writing $V$ for the virial functional,
\begin{equation}\label{Virial-Functional}
V(f)=\| f\|^{2}_{\dot{H}_{x}^{1}}+\|f\|^{6}_{L_{x}^{6}}
-\tfrac{3}{4}\|f\|^{4}_{L_{x}^{4}},
\end{equation}
we set
\begin{equation*}
\EE(m):=\inf\left\{E(f): f\in H^{1}(\R^{3}),\, M(f)=m \,\, \mbox{and}\,\, V(f)=0 \right\}.
\end{equation*}
The open set $\K\subset \R^{2}$ is then defined by
\begin{equation}
\label{Virial}
\K:= \left\{(m,e): 0<m<m_2 \,\, \mbox{and} \,\, 0<e<\EE(m)\right\},
\end{equation}
where is $m_2=M(Q_{1})$, with $Q_1$ an optimizer of a certain Gagliardo--Nirenberg--H\"older inequality (cf. Theorem~\ref{TheInverGN} below with $\alpha=1$).  The scattering result of \cite{KillipOhPoVi2017} can then be stated as follows:

\begin{theorem}[Scattering with positive virial, \cite{KillipOhPoVi2017}]\label{MainTheorem} If $(M(u_{0}),E(u_{0}))\in \K$, then the solution to \eqref{NLS} is global and scatters as $t\to\pm\infty$. 
\end{theorem}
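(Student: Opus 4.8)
I would establish Theorem~\ref{MainTheorem} by the concentration--compactness/rigidity scheme of Kenig--Merle, in the inhomogeneous form appropriate to the cubic-quintic nonlinearity. The first ingredient is a variational analysis anchored on the sharp Gagliardo--Nirenberg--H\"older inequality of Theorem~\ref{TheInverGN} (with $\alpha=1$), which is precisely what makes the threshold $m_2$ and the profile $\EE(m)$ well behaved on $0<m<m_2$ (in particular $\EE(m)>0$). From it one reads off the geometry of the sublevel set $\{f\in H^1(\R^3):M(f)=m,\ E(f)<\EE(m)\}$: it contains no $f$ with $V(f)=0$, hence splits into two relatively open pieces according to the sign of $V$, and $(m,e)\in\K$ puts $u_0$ in the piece where $V>0$. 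Conservation of mass and energy together with continuity of $t\mapsto V(u(t))$ then keep $V(u(t))>0$ for all $t$, and a compactness argument upgrades this to an \emph{energy-trapping} bound $V(u(t))\ge\delta\,\|u(t)\|_{\dot{H}^1_x}^2$ for some $\delta=\delta(m,e)>0$, alongside a two-sided control $c(m,e)\le\|u(t)\|_{\dot{H}^1_x}$ and the uniform $H^1$ bound of \cite{Zhang2006}. Thus the solution is global and confined, for all time, to this coercive region.

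The second ingredient is the perturbative framework: local well-posedness and small-data scattering in $H^1(\R^3)$, via Strichartz estimates that treat both the $\dot{H}^1_x$-critical quintic term and the $\dot{H}^{1/2}_x$-critical (energy-subcritical) cubic term, together with the accompanying stability/long-time perturbation lemma. Assuming the theorem false, one uses the conserved quantities and the variational structure to locate a minimal mass-energy point of $\K$ at which scattering just fails. A linear profile decomposition in $H^1(\R^3)$, combined with the passage to nonlinear profiles and the stability lemma, then produces a nonzero minimal non-scattering solution $u_c$ of \eqref{NLS} at this threshold whose orbit $\{u_c(t):t\in\R\}$ is precompact in $H^1(\R^3)$ modulo spatial translations $x(t)$. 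Two features of the inhomogeneous setting require attention here. First, profiles whose length scale degenerates (to $0$ or $\infty$) are governed in the limit by a scale-invariant equation --- principally the \emph{defocusing} energy-critical NLS $(i\partial_t+\Delta)w=|w|^4w$, whose global well-posedness and scattering (Colliander--Keel--Staffilani--Takaoka--Tao) forces those profiles to be global and scattering, so they cannot produce the failure; the a priori bounds from the first step likewise confine the compactness to translations alone (no scaling parameter). Second, minimality forces the momentum of $u_c$ to vanish: a nonzero momentum would let a Galilean boost strictly lower the energy at fixed mass, yielding a non-scattering solution below the threshold.

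The third ingredient is rigidity, via a truncated virial identity. Precompactness of the orbit modulo translations gives, for every $\eta>0$, a radius $R(\eta)<\infty$ with $\int_{|x-x(t)|>R}\bigl(|\nabla u_c|^2+|u_c|^2+|u_c|^4+|u_c|^6\bigr)\,dx<\eta$ uniformly in $t$, while the motion of $x(t)$ is sublinear. Pairing \eqref{NLS} with a smooth truncation of $|x-x(t)|^2$ at scale $R$ produces a quantity $z_R(t)$ with $|z_R(t)|\lesssim R^2 M(u_c)$ and, modulo errors controlled by $\eta$ and the sublinearity of $x(t)$, $z_R''(t)=8\,V(u_c(t))$; by the energy-trapping bound --- and since a nonzero $H^1$-precompact orbit cannot have $\|u_c(t)\|_{\dot{H}^1_x}\to0$ --- this exceeds a fixed positive constant once $\eta$ is small. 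Integrating twice forces $z_R(t)\gtrsim t^2$ for large $t$, contradicting $|z_R(t)|\lesssim R^2$. Hence no critical element exists, and scattering holds throughout $\K$.

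I expect the main obstacle to be the construction of the critical element --- specifically, setting up the linear and nonlinear profile decompositions compatible with the non-scale-invariant $H^1$ structure of \eqref{NLS}, and correctly isolating and controlling the profiles that run off toward the energy-critical regime, where one must import the full defocusing energy-critical scattering theory and match nonlinear flows across widely separated scales inside the stability argument. By comparison, the variational input and the virial rigidity are routine once Theorem~\ref{TheInverGN} and the compactness of the critical orbit are in hand.
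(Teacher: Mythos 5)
Your outline is essentially the argument of the cited source: the paper itself does not prove Theorem~\ref{MainTheorem} but imports it from \cite{KillipOhPoVi2017}, whose proof follows exactly the route you describe (variational trapping from the Gagliardo--Nirenberg--H\"older inequality, an $H^1$ profile decomposition without scaling to $\infty$ but with $\lambda_n^j\to0$ profiles controlled via the defocusing energy-critical NLS, a minimal non-scattering element compact modulo translations with zero momentum, and a truncated virial rigidity argument using sublinearity of $x(t)$). So your proposal is correct in approach and matches the original proof on which the paper relies.
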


The region $\mathcal{K}$ is represented in the following figure, imported from \cite{KillipMurphyVisan2020}.

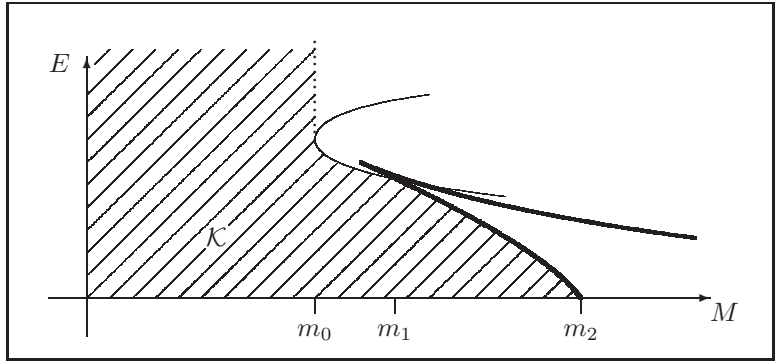
\begin{figure}[h]
\noindent
\begin{center}
\fbox{
\setlength{\unitlength}{1mm}
% 1mm  = 2.84pt
\begin{picture}(95,45)(-7,-7)
\put(0,-5){\vector(0,1){37}}\put(-5,30){$E$} % Vertical axis & label
\put(-5,0){\vector(1,0){87}}\put(82,-3){$M$} % Horizontal axis & label
\put(30,0){\line(0,-1){2}}\put(30,-5){\hbox to 0mm{\hss$m_0$\hss}}
\put(40.5,0){\line(0,-1){2}}\put(40.5,-5){\hbox to 0mm{\hss$m_1$\hss}}
\put(65,0){\line(0,-1){2}}\put(65,-5){\hbox to 0mm{\hss$m_2$\hss}}
%
% Soliton part
%
\linethickness{0.4mm}
\qbezier(36,18)(60,7)(65,0)
\qbezier(36,18)(50,12)(80,8)
%
% Rescaled soliton part
%
\linethickness{0.05mm}
\qbezier(30,21)(30,25)(45,27)
\qbezier(30,21)(30,16.5)(55,13.5)
%
% Dotted line
%
\multiput(30,21)(0,1){14}{\circle*{0.1}}
%
%  Cross hatching
%
% Across the top, left to right, stopping at diagonal.
\qbezier(1,31)(02,32)(03,33)
\qbezier(0,27)(01,28)(06,33)\qbezier(0,24)(01,25)(09,33)
\qbezier(0,21)(03,24)(12,33)\qbezier(0,18)(04,22)(15,33)
\qbezier(0,15)(06,21)(18,33)\qbezier(0,12)(07,19)(21,33)
\qbezier(0,09)(09,18)(24,33)\qbezier(0,06)(10,16)(27,33)
\qbezier(0,03)(12,15)(30,33)
% Now moving along x axis (and down on right end)
\qbezier(0,00)(15,15)(30,30)\qbezier(03,0)(16,13)(30,27)\qbezier(06,0)(18,12)(30,24)
\qbezier(09,0)(13,4)(15.6,6.6)\qbezier(18.7,9.7)(25,16)(30,21)\put(15.7,7){$\mathcal K$}% gap for R
%
% Now the curves hit the rescaled soliton part (Maple code at end of file)
%
\qbezier(12,0)(15, 3)(31.1,19.1)
\qbezier(15,0)(18, 3)(33.0,18.0)
\qbezier(18,0)(21, 3)(35.2,17.2)
\qbezier(21,0)(24, 3)(37.5,16.5)
\qbezier(24,0)(27, 3)(39.9,15.9)
%
% Now the curves hit the soliton part (Maple code at end of file)
%
\qbezier(27,0)(30, 3)(42.1,15.1)
\qbezier(30,0)(33, 3)(44.1,14.1)
\qbezier(33,0)(36, 3)(46.1,13.1)
\qbezier(36,0)(39, 3)(48.1,12.1)
\qbezier(39,0)(42, 3)(50.0,11.0)
\qbezier(42,0)(45, 3)(52.0,10.0)
\qbezier(45,0)(48, 3)(53.9, 8.9)
\qbezier(48,0)(51, 3)(55.7, 7.7)
\qbezier(51,0)(54, 3)(57.5, 6.5)
\qbezier(54,0)(57, 3)(59.3, 5.3)
\qbezier(57,0)(60, 3)(61.0, 4.0)
\qbezier(60,0)(63, 3)(62.6, 2.6)
\qbezier(63,0)(64, 1)(64.1, 1.1)
\end{picture}
}
\end{center}
\caption{Figure 1 from \cite{KillipMurphyVisan2020}. Soliton solutions correspond to the heavy curve; non-soliton virial obstructions correspond to the light curve.}
\label{F:1}
\end{figure}

It was shown in \cite{KillipOhPoVi2017} that the boundary of $\mathcal{K}$ (at mass $m\in(m_0,m_2)$) is attained either at a soliton or a rescaled soliton.  Here the (ground state) solitons refer to the unique, non-negative, radially symmetric solutions $P_\omega\in H^1$ to
\begin{equation}\label{GroundS}
-\omega P_\omega +\Delta P_\omega =-P_\omega^3+ P_\omega^5,\quad \omega\in(0,\tfrac3{16}),
\end{equation}
while the rescaled solitons $R_\omega$ are given by 
\[
R_\omega(x):=\sqrt{\tfrac{1+\beta(\omega)}{4\beta(\omega)}}\,P_\omega\!\Bigl(\tfrac{3(1+\beta(\omega))}{4\sqrt{3\beta(\omega)}}x\Bigr),
\qtq{with} \beta(\omega):=\tfrac{\|P_\omega\|_{L^6}^6}{\|\nabla P_\omega\|_{L^2}^2}.
\]
It was further shown in \cite{KillipOhPoVi2017} that the left-most part of $\partial\K$ is attained only by rescaled solitons, while the the right-most part is attained only by solitons.  In fact, it is conjectured (and supported by numerics) that there exists a mass $m_1$ dividing the boundary into the rescaled soliton part and soliton part.  It is also natural to conjecture that for a given mass, the corresponding soliton or rescaled soliton delineating the boundary is unique; we will work under this assumption below. 
 
The behavior of solutions around the left-most part of $\partial\mathcal{K}$ was studied in \cite{KillipMurphyVisan2020}, where the following result was obtained:

\begin{theorem}[Crossing the virial threshold, \cite{KillipMurphyVisan2020}]\label{FreeNLS}  There exists an open set $\mathcal{B}\subset\R^2$ containing $\mathcal{K}$ such that all solutions with mass-energy in $\mathcal{B}$ scatter as $t\to\pm\infty$.  The set $\mathcal{B}$ satisfies (i) there exists $m^*>m_0$ so that $(0,m^*)\times(0,\infty)\subset\mathcal{B}$, and (ii) any $(m,e)\in\partial\K$ not achieved by a soliton belongs to $\mathcal{B}$. 
\end{theorem}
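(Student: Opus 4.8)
\emph{Strategy.} The plan is to prove Theorem~\ref{FreeNLS} by the concentration-compactness and rigidity method of Kenig--Merle, exploiting two structural facts about \eqref{NLS}. First, by \cite{Zhang2006} the flow is globally well posed with solutions uniformly bounded in $H^{1}$; in particular there is neither finite-time blowup nor growth of the $H^{1}$-norm, so the only conceivable obstruction to scattering is a solitary-wave-like trajectory. Second, the variational analysis of \cite{KillipOhPoVi2017} ties such obstructions to the ground states of \eqref{GroundS}, whose mass-energy pairs sweep out the curve
\[
\Gamma:=\bigl\{(M(P_{\omega}),E(P_{\omega})):\omega\in(0,\tfrac{3}{16})\bigr\}.
\]
The whole argument is then organized around proving that any non-scattering solution of \eqref{NLS} has mass-energy on $\Gamma$, and around choosing $\B$ to be an open set which contains $\K$, the non-soliton part of $\partial\K$, and a full low-mass strip, yet avoids $\Gamma$.

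\emph{Variational preliminaries and the choice of $\B$.} Using \cite{KillipOhPoVi2017}: the map $\omega\mapsto(M(P_{\omega}),E(P_{\omega}))$ is continuous, the soliton masses are bounded below by some $m_{\flat}>m_{0}$, and (since a soliton $P_{\omega}$ satisfies $V(P_{\omega})=0$ by the Pohozaev identities) $\Gamma$ meets $\overline{\K}$ exactly along the right-most, soliton part of $\partial\K$. The decisive point is that the left-most, rescaled-soliton part of $\partial\K$ is disjoint from $\Gamma$: at a mass $m$ there the minimizer defining $\EE(m)$ is $R_{\omega}$, which is unique (our standing assumption) and does not solve \eqref{GroundS}, so if some $P_{\omega'}$ had the same mass and energy $\EE(m)$ it would be a second minimizer --- impossible. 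Combining these facts with the openness of $\K$ yields an open set
\[
\B\;\supset\;\K\,\cup\,\{(m,e)\in\partial\K:\ (m,e)\text{ not realized by a soliton}\}\,\cup\,\bigl((0,m^{*})\times(0,\infty)\bigr),\qquad m^{*}:=m_{\flat},
\]
with $\B\cap\Gamma=\emptyset$; near the single point where the two parts of $\partial\K$ join --- which is realized by a soliton and hence not required by~(ii) --- the neighborhood comprising $\B$ must be taken to shrink. I would also record here the small-data scattering theory and the stability/perturbation theory for \eqref{NLS}, both consequences of the $H^{1}$-Strichartz estimates together with the energy-subcritical cubic and defocusing energy-critical quintic structure of the nonlinearity.

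\emph{Reduction to a critical element.} Assume toward a contradiction that scattering fails somewhere in $\B$. Running the standard induction on the mass-energy --- a linear profile decomposition in $H^{1}$ for $e^{it\Delta}$, the nonlinear profile analysis, and the stability theory, exactly as in \cite{KillipOhPoVi2017,KillipMurphyVisan2020} --- produces a critical element $u_{c}$: a global, non-scattering solution with $(M(u_{c}),E(u_{c}))\in\B$ whose orbit is precompact in $H^{1}$ modulo phase rotations and spatial translations (the a priori $H^{1}$ bound, and the auxiliary energy-critical and mass-supercritical scattering results used for high- and low-frequency profiles, preclude any surviving scaling or frequency parameter). After a Galilean boost normalizing the momentum to zero, precompactness --- together with the attendant extra decay and regularity of critical elements --- forces the spatial center $x(t)$ to be sublinear, $|x(t)|=o(|t|)$ as $|t|\to\infty$.

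\emph{Rigidity, and the main obstacle.} A truncated virial/interaction-Morawetz argument, with cutoff chosen from the precompactness of the orbit and the sublinearity of $x(t)$, gives $\int_{0}^{T}V(u_{c}(t))\,dt=o(T)$; since $V(u_{c}(\cdot))$ is continuous, it follows that $V(u_{c}(t_{n}))\to0$ along some $t_{n}\to\infty$, and by precompactness $u_{c}(t_{n})\to\phi$ in $H^{1}$ (modulo symmetries) with $M(\phi)=M(u_{c})$, $E(\phi)=E(u_{c})$ and $V(\phi)=0$; hence $(M(u_{c}),E(u_{c}))$ lies in $\{(m,e):e\ge\EE(m)\}$. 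The crux --- and the step I expect to be the genuine obstacle --- is to upgrade this to $(M(u_{c}),E(u_{c}))\in\Gamma$, the contradiction we seek. This is delicate precisely when $(M(u_{c}),E(u_{c}))$ lies on the rescaled-soliton part of $\partial\K$ or strictly above $\partial\K$: there the usual sign-trapping of the virial (``if $V(u(t_{0}))=0$ then $E(u)\ge\EE(M(u))$'') is unavailable, and $\phi$ need be neither a steady state nor a minimizer of the problem defining $\EE$. To close the argument I would sharpen the rigidity by modulation theory near the ground-state family, combined with the precompactness and a coercive Lyapunov functional for \eqref{NLS}: the decisive feature is that a rescaled soliton $R_{\omega}$ is \emph{not} a solution of \eqref{NLS}, so the modulation equations exhibit a definite-sign drift away from the rescaled-soliton family, and the trajectory can asymptotically settle only onto a genuine ground state $P_{\omega}$ (indeed converging to it exponentially, as in \cite{DuyMerle2009,DuyckaertsRou2010}). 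Conservation of mass and energy then forces $(M(u_{c}),E(u_{c}))=(M(P_{\omega}),E(P_{\omega}))\in\Gamma$, contradicting $\B\cap\Gamma=\emptyset$; hence scattering holds throughout $\B$. Properties~(i) and~(ii) are then immediate from the construction of $\B$, with (i) using that $\Gamma$ occupies only masses $\ge m_{\flat}=m^{*}>m_{0}$.
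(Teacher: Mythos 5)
First, a point of context: the paper you are working from does not prove Theorem~\ref{FreeNLS} at all -- it is quoted from \cite{KillipMurphyVisan2020} -- so the only meaningful comparison is with the argument of that paper, which does not proceed by classifying non-scattering solutions. The decisive gap in your proposal is in the rigidity step, and it sits exactly where the new content of the theorem lies. Your chain is: truncated virial $\Rightarrow \frac1T\int_0^T V(u_c)\,dt\to 0 \Rightarrow V(u_c(t_n))\to 0 \Rightarrow u_c(t_n)$ is close (modulo symmetries) to the soliton/rescaled-soliton families $\Rightarrow$ modulation and a definite-sign drift. The third implication is available only at the threshold $e=\EE(m)$: the variational characterization (Lemma~\ref{PriModula}, \cite[Theorem~5.6]{KillipOhPoVi2017}) identifies zero-virial states with minimizers only when the energy equals $\EE$ of the mass. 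For $(m,e)\in\B$ with $e>\EE(m)$ -- that is, for the strip above the curve in part (i) with $m\in(m_0,m^*)$, and for the portion of any neighborhood of the non-soliton boundary lying strictly above $\partial\K$ -- the set $\{M=m,\,E=e,\,V=0\}$ carries no variational characterization and its elements need not be anywhere near either family, so there is nothing to modulate around and your drift argument never begins. (Even the step $V(u_c(t_n))\to 0$ needs care: above the threshold $V$ has no sign, and the averaged virial only gives $\liminf_t V(u_c(t))\le 0$.) What the sketch actually delivers for such points is $E(u_c)\ge\EE(M(u_c))$, which reproves Theorem~\ref{MainTheorem} but does not cross the threshold. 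Relatedly, the premise on which your construction of $\B$ rests -- that \emph{every} non-scattering solution has mass-energy on $\Gamma$ -- is far stronger than the theorem, is not known, and is not what \cite{KillipMurphyVisan2020} establish: they produce an open scattering set by a contradiction/compactness argument along sequences of hypothetical non-scattering solutions whose mass-energies approach the relevant boundary point or low-mass regime, exploiting quantitatively that the rescaled solitons are not solutions of \eqref{NLS}; no classification, and no exponential convergence to $P_\omega$, is involved.

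Two further points. For part (i) you set $m^*=m_\flat:=\inf_\omega M(P_\omega)$ and assert $m_\flat>m_0$ ``using \cite{KillipOhPoVi2017}''; this strict inequality is precisely something that must be proved rather than quoted -- note, for instance, that the explicit rescaling gives $M(R_\omega)\le M(P_\omega)$ with equality when $\beta(\omega)=\tfrac13$, so no soft comparison of the two families produces a strict gap between $\inf_\omega M(P_\omega)$ and the minimal mass of a virial obstruction; and even granting $m_\flat>m_0$, part (i) still requires the above-threshold rigidity you have not supplied. Finally, the two-parameter ``induction on mass-energy'' inside $\B$ needs a well-ordering or a limiting scheme of the kind used in \cite{KillipOhPoVi2017,KillipMurphyVisan2020}; as written, a ``minimal non-scattering mass-energy in $\B$'' is not defined. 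By contrast, your uniqueness discussion for the rescaled-soliton part of $\partial\K$ is unnecessary: points of $\partial\K$ not attained by a soliton are off $\Gamma$ by the very hypothesis of (ii).
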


Our interest in this paper is the behavior of solutions on the right part of $\partial\mathcal{K}$.

\begin{definition} We let $\partial\mathcal{K}_s$ denote the set of $(m,e)\in\partial\K$ with $e>0$ such that:
\begin{itemize}
\item[(i)] $(m,e)=(M(P_\omega),E(P_\omega))$ for some unique $\omega\in(0,\tfrac3{16})$; 
\item[(ii)] $(m,e)\neq(M(R_\omega),E(R_\omega))$ for any $\omega\in(0,\tfrac{3}{16})$.
\end{itemize} 
\end{definition}

As described above, the numerics from \cite{KillipMurphyVisan2020} suggest that $\partial\mathcal{K}_s$ coincides with the portion of $\partial\K$ with $m\in(m_1,m_2)$. The existence in (i) and nonexistence in (ii) are guaranteed for all $m$ sufficiently close to $m_2$ by the results of \cite{KillipOhPoVi2017}; however, the uniqueness in (i) must be taken as an assumption throughout the paper.

Our first result is the existence of a solution scattering in one time direction and converging exponentially to $P_{\omega}$ in the other. 

\begin{theorem}\label{TH1}
If $(m,e)=(M(P_\omega),E(P_\omega))\in \partial\K_{s}$, then there exists a global solution  $\G_{\omega}$ of \eqref{NLS}  with $(M(\G_{\omega}), E(\G_{\omega}))=(m,e)$ satisfying the following:
\begin{itemize}
\item $V(\G_{\omega}(t))>0$ for all $t\in \R$;
\item $\G_{\omega}$ scatters as $t\to-\infty$;
\item there exist constants $C>0$ and $c>0$ such that
\[
\|\G_{\omega}(t)-e^{i \omega t}P_{\omega}\|_{H^{1}_{x}}\leq C e^{-c t}\quad \text{for all $t\geq 0$.}
\]
\end{itemize}
\end{theorem}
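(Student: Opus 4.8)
The plan is to construct $\G_\omega$ as the unique solution that, as $t\to+\infty$, converges to the soliton $e^{i\omega t}P_\omega$ along the stable manifold of the linearized flow. First I would linearize \eqref{NLS} around the soliton: writing a solution of the form $e^{i\omega t}(P_\omega + h)$, the perturbation $h$ satisfies $\partial_t h = \L_\omega h + \NN(h)$, where $\L_\omega$ is the linearized operator associated with the action functional $S_\omega = E + \tfrac{\omega}{2}M$ and $\NN$ collects the quadratic and higher-order terms. One then studies the spectrum of $\L_\omega$ acting on (a suitable real form of) $H^1$. The key spectral fact I would need to establish — this is the heart of the matter — is that since $(m,e)\in\partial\K_s$ lies on the \emph{virial} boundary (where $V=0$ is the binding constraint but no rescaling saturates), the soliton is a nondegenerate, nonstationary critical point in the relevant sense, and consequently $\L_\omega$ has exactly one simple negative eigenvalue $-c<0$ (with eigenfunction $\phi_+$), a matching simple positive eigenvalue $+c$ (with eigenfunction $\phi_-$), the usual kernel generated by the symmetries (phase, translation, and — if applicable — scaling/Galilean), and the rest of the spectrum on the imaginary axis bounded away from $0$ on the symplectic complement of the generalized kernel. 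The existence of the unstable/stable pair $\pm c$ is exactly what produces the exponential convergence; establishing it requires checking that the Hessian of $S_\omega$ restricted to the right subspace has a single negative direction, which in turn is tied to the second variation of the energy at fixed mass being indefinite with Morse index one at this boundary point. I would extract this from the variational characterization of the boundary curve $\EE(m)$ in \cite{KillipOhPoVi2017}: on $\partial\K_s$ the soliton is the (unique, by hypothesis) minimizer of $E$ at fixed mass and zero virial, and the presence of the virial constraint is what cuts the Morse index down to one.

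With the spectral picture in hand, the construction proceeds by a standard fixed-point/modulation argument. I would seek $\G_\omega(t) = e^{i\omega t}\bigl(P_\omega + h(t)\bigr)$ with $h(t)\to 0$, and look for $h$ lying on the one-dimensional stable manifold, i.e. $h(t) = a(t)\phi_+ + w(t)$ with $w$ in the complement, subject to the ansatz $\|h(t)\|_{H^1}\lesssim e^{-ct}$. Writing the Duhamel formula for $h$ \emph{backward from $t=+\infty$} on the contracting modes and \emph{forward from some large $t_0$} on the remaining modes — the usual two-sided integral formulation for constructing orbits asymptotic to a hyperbolic equilibrium — one sets up a contraction in the space $X = \{h : \sup_{t\ge t_0} e^{ct}\|h(t)\|_{H^1} < \infty\}$ (possibly with a Strichartz-type norm added to control the nonlinearity). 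The nonlinearity $\NN(h)$ is at least quadratic, so $\|\NN(h)\|_X \lesssim \|h\|_X^2$, and for $\|h\|_X$ small the map is a contraction; this yields, for each small $a_0$, a solution $h^{(a_0)}$ with $a(t_0)=a_0$, hence a one-parameter family. Fixing any $a_0\neq 0$ gives a genuinely non-soliton solution. The exponential bound $\|\G_\omega(t) - e^{i\omega t}P_\omega\|_{H^1}\le Ce^{-ct}$ for $t\ge 0$ then follows (after relabeling time so $t_0=0$) directly from $h\in X$, using that the modulation parameters can be absorbed into the symmetries or shown to converge exponentially as well.

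It remains to check the global existence and the behavior as $t\to-\infty$. Global existence on all of $\R$ is immediate from the global well-posedness of \eqref{NLS} in $H^1$ quoted in the introduction (via \cite{Zhang2006}), and conservation of mass and energy gives $(M(\G_\omega),E(\G_\omega)) = (M(P_\omega),E(P_\omega)) = (m,e)$. For the positivity of the virial and scattering as $t\to-\infty$, the argument is: since $(m,e)\in\partial\K$ rather than in $\K$, I cannot directly invoke Theorem~\ref{MainTheorem}; instead I would argue that the constructed solution is \emph{not} the soliton (because $a_0\neq 0$) and, by the classification/variational structure, cannot stay near the soliton family as $t\to-\infty$ — it must exit a small neighborhood. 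Once it exits, one shows $V(\G_\omega(t))>0$ is forced (a solution at the virial boundary that leaves the soliton neighborhood falls into the positive-virial regime, by a continuity/sign argument using that $V$ cannot vanish except on the soliton orbit at this mass-energy level and that negative virial would force blowup, which is excluded by global well-posedness). Positive virial plus the concentration-compactness/rigidity machinery of \cite{KillipOhPoVi2017} (applied on the boundary, using that the only obstruction to scattering at this mass-energy is the soliton, which $\G_\omega$ is not) then yields scattering as $t\to-\infty$. The main obstacle I anticipate is the spectral analysis of $\L_\omega$: proving that the Morse index is exactly one and that there is a spectral gap on the symplectic complement, uniformly enough to run the contraction, is the delicate step, and it is where the hypothesis $(m,e)\in\partial\K_s$ (soliton, not rescaled soliton, and uniqueness) is essential.
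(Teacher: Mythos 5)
Your construction of the special solution follows essentially the paper's own route (Sections~\ref{S:Spectral} and~\ref{S: stableS}): linearize around $e^{i\omega t}P_\omega$, identify the real eigenvalue pair $\pm\lambda_1$ of $\L$ with the kernel generated by the symmetries, and run a contraction in exponentially weighted Strichartz-type norms to produce $W^{a}$ with $\|W^{a}(t)-e^{i\omega t}P_\omega-ae^{i\omega t}e^{-\lambda_1 t}e_+\|_{H^1}\lesssim e^{-\frac32\lambda_1 t}$ (Propositions~\ref{ApproxSo} and~\ref{ContracA}); the only caveat is that your two-sided Duhamel formulation on stable/center/unstable subspaces would require bounds for the linearized propagator on the center part, which the paper sidesteps by building high-order approximate solutions and treating the soliton potential terms perturbatively on unit time intervals. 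The first genuine gap is your justification of $V(\G_\omega(t))>0$: the step ``negative virial would force blowup, which is excluded by global well-posedness'' is not available for the cubic--quintic equation. There is no blowup for any $H^1$ data here, so $V<0$ produces no contradiction whatsoever (globally bounded solutions with negative virial exist in abundance; indeed the point of this threshold classification is precisely that all blowup behavior is absent). The paper's argument is different: at this exact mass--energy, $V(u(t_0))=0$ forces $u(t_0)=e^{i\theta}P_\omega(\cdot-x_0)$ by the variational characterization from \cite{KillipOhPoVi2017}, hence by uniqueness the solution is the soliton orbit, which contradicts \eqref{UniqVec} with $a\neq 0$ (Remark~\ref{Wa:Positive}, cf.\ Lemma~\ref{PositiveV}); thus $V$ never vanishes and keeps a fixed sign, and the sign is pinned down through the structure of the constructed solution and the gradient comparison (Remark~\ref{ReEMG}, Lemma~\ref{GMP}, Proposition~\ref{coroA}), not through a blowup dichotomy. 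Your picture of the solution ``exiting a neighborhood and falling into the positive-virial regime'' has no mechanism behind it: the sign of $V$ is constant for all time at this mass--energy level.

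The second gap is the scattering as $t\to-\infty$. You propose to invoke the concentration-compactness/rigidity machinery of \cite{KillipOhPoVi2017} ``using that the only obstruction to scattering at this mass-energy is the soliton, which $\G_\omega$ is not.'' That criterion is false at the threshold: $\G_\omega$ itself is not the soliton and nevertheless fails to scatter forward in time, so ``not being the soliton'' cannot imply scattering; moreover Theorem~\ref{MainTheorem} applies only strictly inside $\K$, so it cannot be applied directly to $\G_\omega$. What is actually needed is the boundary rigidity statement that a solution with $(M,E)=(M(P_\omega),E(P_\omega))$ and positive virial cannot have infinite scattering size in both time directions (Corollary~\ref{ClassC}), whose proof is the content of Section~\ref{S: Concentrated}: compactness modulo translations of any non-scattering direction via the profile decomposition and stability (with the profiles pushed strictly below the threshold), vanishing momentum and $|x(t)|=o(t)$, the modulation analysis near the soliton, and the modulated localized virial estimate of Lemma~\ref{Moduvirial}, which applied two-sidedly forces $V\equiv 0$ and hence a contradiction. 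This is a substantial new argument specific to the boundary, not a citation of the sub-threshold scattering theorem, and your sketch does not supply it.
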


Using the solution $\G_\omega$ in Theorem~\ref{TH1}, we can classify all possible behaviors for solutions with mass-energy in $\partial\K_s$. 
\begin{theorem}[Threshold behaviors on $\in\partial\K_s$]\label{ScatSoliton}  Suppose $u$ is a solution to \eqref{NLS} with $(M(u),E(u))=(M(P_\omega),E(P_\omega))\in \partial\K_s$. 
\begin{enumerate}[label=\rm{(\roman*)}]
\item If  $V(u|_{t=0})=0$, then $u(t)=e^{i \omega t}P_{\omega}$ up to the symmetries of the equation.
\item If $V(u|_{t=0})>0$, then either $u$  scatters in both time directions or  $u=\G_{\omega}$ up to the symmetries of the equation.
\end{enumerate}
\end{theorem}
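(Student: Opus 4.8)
\textbf{Proof proposal for Theorem~\ref{ScatSoliton}.}

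The plan is to run the usual rigidity/modulation argument from the Duyckaerts--Merle program, but streamlined by the fact that blowup is excluded a priori (global well-posedness with uniform $H^1$ bounds holds for all data, by \cite{Zhang2006}). Part (i) is the variational rigidity statement: if $V(u_0)=0$ and $(M(u_0),E(u_0))=(M(P_\omega),E(P_\omega))\in\partial\K_s$, then $u_0$ is a minimizer of the variational problem defining $\EE(M(P_\omega))$, and by the characterization from \cite{KillipOhPoVi2017} (together with the uniqueness assumption in the definition of $\partial\K_s$, which rules out the rescaled soliton) the only minimizers are $P_\omega$ up to the symmetries (phase, translation, and the scaling/Galilei symmetries present at the threshold). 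One subtlety: one must check that $V(u_0)=0$ is propagated, i.e. $V(u(t))=0$ for all $t$ — this follows because if $V(u(t_0))\ne 0$ for some $t_0$ then by continuity and the sign structure of the virial identity the solution would leave the constraint set, contradicting that $(M,E)$ lies exactly on $\partial\K_s$ where $E=\EE(M)$; then $u(t)$ is a minimizer for every $t$, hence a modulated soliton for every $t$, and plugging the modulated-soliton ansatz into \eqref{NLS} forces the modulation parameters to evolve as in the exact soliton $e^{i\omega t}P_\omega$.

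For part (ii), assume $V(u_0)>0$. By the variational analysis of \cite{KillipOhPoVi2017}, on $\partial\K_s$ the condition $V(u(t))>0$ is preserved for all $t$ (the sign of $V$ cannot change without passing through $V=0$, which by part (i) would make $u$ a soliton, contradicting $V(u_0)>0$). Hence $u$ is global with $V(u(t))>0$ for all $t\in\R$ and $\sup_t\|u(t)\|_{H^1}<\infty$. Now I would invoke a concentration-compactness / rigidity dichotomy adapted to this threshold setting: either $u$ scatters as $t\to+\infty$ and as $t\to-\infty$ (in which case we are done), or there is a time direction, say $t\to+\infty$, in which $u$ does not scatter. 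In that case, the standard profile-decomposition argument applied along a sequence $t_n\to+\infty$, combined with the fact that $(M,E)$ sits on the boundary $E=\EE(M)$ (so no mass/energy can be `split off' into a scattering piece without violating the minimality), produces a nonzero weak limit and shows that up to symmetries $u(t_n)$ converges along a subsequence; a further argument using the coercivity of $E-\EE(M)$ near the soliton manifold (away from the other minimizers, which are excluded on $\partial\K_s$) upgrades this to $u(t_n)\to P_\omega$ modulo symmetries. Thus $u$ is a non-scattering threshold solution converging to the soliton along a subsequence in one direction.

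It then remains to show uniqueness of such a solution: any solution on $\partial\K_s$ that does not scatter in forward time must equal $\G_\omega$ up to symmetries. This is where the real work lies, and it is the part I expect to be the main obstacle. The argument should go via the spectral analysis of the linearized operator $\mathcal L_\omega$ around $P_\omega$: the non-scattering direction forces exponential convergence $\|u(t)-e^{i\omega t}P_\omega\|_{H^1}\lesssim e^{-ct}$ (proved by a modulation-plus-energy/virial bootstrap, using that the positive virial keeps the solution on the `good' side and that there is at most one unstable eigenvalue $e_0>0$ with eigenfunction $\mathcal Y_+$ — this is exactly the structure used to build $\G_\omega$ in Theorem~\ref{TH1}). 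Once exponential convergence is known, writing $u(t)=e^{i\omega t}(P_\omega + h(t))$ and projecting onto the unstable mode, the coefficient $a_+(t):=\langle h(t),\mathcal Y_+\rangle$ satisfies $\dot a_+ = e_0 a_+ + O(\|h\|^2)$ with $a_+(t)\to0$; a fixed-point/contraction argument in a space of exponentially decaying functions shows that $h$ — hence $u$ — is determined by the sign of the leading coefficient of $a_+$, and the scaling symmetry of \eqref{NLS} allows us to normalize that coefficient, leaving exactly one solution $\G_\omega$ (the other sign would give $V<0$ or blowup, both excluded here, which is precisely why only one of the two `threshold' solutions survives). Modulo the symmetries this pins down $u=\G_\omega$, completing the proof.
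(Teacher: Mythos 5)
Your overall architecture is the same as the paper's: part (i) via the variational characterization of minimizers with $V=0$ on $\partial\K_s$, and part (ii) via the dichotomy scatter/non-scatter, concentration compactness plus modulation and a (modulated) virial estimate to force exponential convergence to the soliton in the non-scattering direction (Proposition~\ref{CompacDeca}), followed by a spectral/fixed-point uniqueness statement identifying the solution with a one-parameter family $W^a$ and a normalization reducing to a single solution $\G_\omega$ (Propositions~\ref{UniqueU}, \ref{coroA}, Corollary~\ref{coroCla}). Two points in your sketch, however, are wrong as stated and need repair.

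First, you invoke ``the scaling symmetry of \eqref{NLS}'' to normalize the leading coefficient of the unstable-mode projection. The cubic-quintic equation has no scaling symmetry; the normalization is achieved instead by a time translation (together with a phase): since the unstable coefficient behaves like $a e^{-\lambda_1 t}$, choosing $T_a$ with $a e^{-\lambda_1 T_a}=1$ gives $W^a = e^{i\theta_a}W^{+1}(\cdot - T_a)$, which is exactly Corollary~\ref{coroCla}. Second, your mechanism for discarding the opposite sign of the unstable coefficient (``the other sign would give $V<0$ or blowup, both excluded'') is not the one that works here: blowup is impossible for every datum, and negativity of $V$ is not what is contradicted. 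The paper excludes $a<0$ by a gradient comparison: positive virial on the threshold forces backward scattering (Corollary~\ref{ClassC}), which through energy conservation and Remark~\ref{ReEMG} yields $\|\nabla u(t)\|_{L^2}^2 < \|\nabla P_\omega\|_{L^2}^2$ for all $t$ (Lemma~\ref{GMP}), while $a<0$ would push $\|\nabla W^a(t)\|_{L^2}^2$ above $\|\nabla P_\omega\|_{L^2}^2$ for large $t$ (using $\int \Delta P_\omega\, e_1\,dx\neq 0$, Remark~\ref{PE1}). You should also note that in part (i) the digression on propagating $V=0$ is unnecessary (and as phrased, unclear): once the variational characterization identifies $u_0=e^{i\theta}P_\omega(\cdot-x_0)$ at $t=0$, uniqueness of the Cauchy problem immediately gives $u(t)=e^{i\theta}e^{i\omega t}P_\omega(\cdot-x_0)$; likewise there is no scaling or (nontrivial) Galilean symmetry available at the threshold, only phase, translations, and time translation. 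Finally, a minor misattribution: the exponential convergence in the non-scattering direction is obtained purely from compactness, modulation, and the localized/modulated virial plus Gronwall; the spectral information about $\pm\lambda_1$ and $e_\pm$ enters only in the uniqueness step, not in proving the convergence itself.
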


Theorems~\ref{TH1}~and~\ref{ScatSoliton} are similar to the threshold classification results appearing in works on pure power NLS (see e.g. \cite{DuyckaertsRou2010, CFR}); essentially, we obtain the standard classification with all blowup behavior removed. As mentioned above, all of the behaviors described in Theorem~\ref{ScatSoliton} actually do occur; in particular, we demonstrate the existence of scattering solutions in Proposition~\ref{P:exist-scatter}.  Our results do not apply to the case of zero energy solutions; indeed, one can show that scattering is not possible in this case (see Remark~\ref{RB2}).

In the rest of the introduction, let us briefly describe the organization of the paper and the strategy of proof for Theorems~\ref{TH1}~and~Theorem~\ref{ScatSoliton}.  

In Section~\ref{S:Spectral}, we analyze the spectrum of the operator arising from the linearization of \eqref{NLS} around the ground state.  In Section~\ref{S:Modula}, we then carry out the modulation analysis around the ground state.  In Section~\ref{S: Concentrated}, we use prove that solutions with $(M(u),E(u))\in\partial\mathcal{K}_s$ that do not scatter forward in time must converge exponentially to a ground state as $t\to\infty$ modulo symmetries.  This involves some standard concentration-compactness techniques to establish compactness of the orbit, combined with the modulation analysis and the (modulated) virial estimate. 

Section~\ref{S: stableS} contains the existence and uniqueness of solutions converging exponentially to the ground state (which ultimately yields the solution $\mathcal{G}_\omega$ appearing in Theorem~\ref{TH1}).  Using the spectral properties of the linearized operator, we first construct suitable approximate solutions, which we then upgrade to true solutions via a fixed point argument.  Section~\ref{S:Rigid} is then devoted to the proof of a rigidity-type result, namely, that solutions converging exponentially the ground state must coincide with one of the solutions constructed in Section~\ref{S: stableS} (or the ground state itself).  We further show that (modulo time translation and rotation), all of the  solutions constructed in Section~\ref{S: stableS} (other than the ground state) are in fact that same.

With all of the preceding results in hand, we can quickly complete the proofs of the main results in Section~\ref{S:proof}.  The appendix then contains the proof of a technical lemma from Section~\ref{S:Spectral}, as well as a demonstration of the existence of scattering solutions on $\partial\mathcal{K}_s$.

\subsection*{Notation}
We write $A\lesssim B$ or $B\gtrsim A$  to denote $A\leq CB$ for some positive constant $C$. We also write $A\sim B$ when $A \lesssim B \lesssim A$.
For a function $u:I\times \R^{3}\rightarrow \C$, $I\subset \R$, we write
\[ \|  u \|_{L_{t}^{q}L^{r}_{x}(I\times \R^{3})}=\|  \|u(t) \|_{L^{r}_{x}(\R^{3})}  \|_{L^{q}_{t}(I)}
 \]
with $1\leq q\leq r\leq\infty$. 
We say that the pair $(q,r)$ is admissible when $2\leq q,r\leq\infty$ and $\frac{2}{q}+\frac{3}{r}=\frac{3}{2}$. 
We write $\<\nabla\>=(1-\Delta)^{1/2}$ and we define the Sobolev norms 
\[
\|u\|_{H^{s,r}(\R^{3})}:=\|\<\nabla\>^{s} u\|_{L^{r}_{x}(\R^{3})}.
\]
If $\alpha(t)$ and $\beta(t)$ are two positive functions of $t$, we write $\alpha=O(\beta)$ when $\alpha\leq C\beta$ for some constant $C>0$,
and $\alpha \sim \beta$ when $\alpha=O(\beta)$ and $\beta=O(\alpha)$.

\section{Preliminaries}\label{S1:preli} 
In this section, we record some preliminary results that will be used throughout the paper. 

\subsection{Well-posedness and stability}
We first recall the Strichartz estimates for the linear Schr\"odinger equation.  We call a pair $(q,r)$ admissible if $2\leq q,r\leq\infty$ and $\tfrac{2}{q}+\tfrac{3}{r}=\tfrac{3}{2}$. 
\begin{lemma}[Strichartz estimates, \cite{GV, KT, Strichartz}]  Let $(q,r)$ and $(\tilde q,\tilde r)$ be admissible. Then the solution $u$ to the equation $(i\partial_{t}+\Delta) u=F$ with $u|_{t_0}=u_0$ satisfies
\begin{equation}\label{Strichartz}
\| u  \|_{L_{t}^{q}L^{r}_{x}(I\times \R^{3})} \lesssim \| u_{0}\|_{L^{2}_{x}(\R^{3})}+ \| F\|_{L_{t}^{\tilde{q}'}L^{\tilde{r}'}_{x}(I\times \R^{3})},
\end{equation}
for any interval $t_0\in I\subset \R$. 
\end{lemma}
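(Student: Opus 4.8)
The plan is to obtain \eqref{Strichartz} from the fundamental dispersive estimate for the free Schr\"odinger propagator via the classical $TT^{*}$ argument, using the Christ--Kiselev lemma to deal with the retarded truncation in Duhamel's formula, and deferring the double-endpoint case to Keel--Tao \cite{KT}. I would also reduce the statement for an arbitrary interval $t_{0}\in I\subset\R$ to the case $I=\R$ by extending $F$ by zero outside $I$ and restricting the resulting global-in-time bound back to $I$, so it suffices to work on $\R\times\R^{3}$.

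First I would record the two inputs: mass conservation, $\|e^{it\Delta}f\|_{L^{2}_{x}}=\|f\|_{L^{2}_{x}}$, and the pointwise decay $\|e^{it\Delta}f\|_{L^{\infty}_{x}}\lesssim|t|^{-3/2}\|f\|_{L^{1}_{x}}$, the latter coming from the explicit convolution kernel $(4\pi it)^{-3/2}e^{i|x|^{2}/4t}$ of $e^{it\Delta}$ on $\R^{3}$. Complex interpolation between these gives $\|e^{it\Delta}f\|_{L^{r}_{x}}\lesssim|t|^{-3(\frac12-\frac1r)}\|f\|_{L^{r'}_{x}}$ for all $2\le r\le\infty$. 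Next, the homogeneous estimate $\|e^{it\Delta}u_{0}\|_{L^{q}_{t}L^{r}_{x}(\R\times\R^{3})}\lesssim\|u_{0}\|_{L^{2}_{x}}$ for admissible $(q,r)$ with $q>2$: by the $TT^{*}$ principle this is equivalent to boundedness of $F\mapsto\int_{\R}e^{i(t-s)\Delta}F(s)\,ds$ from $L^{q'}_{t}L^{r'}_{x}$ to $L^{q}_{t}L^{r}_{x}$, and inserting the dispersive bound together with Minkowski's inequality reduces this to the one-dimensional Hardy--Littlewood--Sobolev inequality with kernel $|t-s|^{-3(\frac12-\frac1r)}$; the admissibility relation $\frac2q+\frac3r=\frac32$ is exactly what makes the kernel homogeneous of the right degree, while $q>2$ keeps us off the forbidden endpoint of HLS. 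Running the same computation for distinct pairs, combined with $L^{2}$-duality in the time variable, yields the mixed inhomogeneous bound $\bigl\|\int_{\R}e^{i(t-s)\Delta}F(s)\,ds\bigr\|_{L^{q}_{t}L^{r}_{x}}\lesssim\|F\|_{L^{\tilde q'}_{t}L^{\tilde r'}_{x}}$.

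It then remains to pass from the full integral over $\R$ to the retarded operator $\int_{t_{0}<s<t}e^{i(t-s)\Delta}F(s)\,ds$ appearing in Duhamel's formula $u(t)=e^{i(t-t_{0})\Delta}u_{0}+i\int_{t_{0}}^{t}e^{i(t-s)\Delta}F(s)\,ds$; I would do this with the Christ--Kiselev lemma, which applies whenever $\tilde q'<q$, i.e.\ away from the double endpoint $q=\tilde q=2$. Applying the homogeneous bound to the first Duhamel term and the truncated inhomogeneous bound to the second gives \eqref{Strichartz}. The one genuinely delicate case is the double endpoint $(q,r)=(\tilde q,\tilde r)=(2,6)$, where both the HLS step and Christ--Kiselev fail; this is the main obstacle, and it is precisely the content of the Keel--Tao theorem \cite{KT}, whose proof dyadically decomposes the time kernel, interpolates on each piece between the $L^{2}\times L^{2}$ and $L^{1}\times L^{\infty}$ bilinear bounds, and sums via a Schur-type argument exploiting the admissibility scaling. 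Since the lemma is quoted from the literature, in the write-up I would simply cite \cite{GV,KT,Strichartz} for this endpoint rather than reproduce the argument.
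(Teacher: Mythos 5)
Your outline is correct and is exactly the standard argument behind the cited references: dispersive decay plus $TT^{*}$ and Hardy--Littlewood--Sobolev for the non-endpoint pairs, Christ--Kiselev for the retarded Duhamel term, and Keel--Tao for the double endpoint $(2,6)$. The paper itself offers no proof here --- it simply quotes the lemma from \cite{GV, KT, Strichartz} --- so citing these works, as you do, is all that is expected.
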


We also have the following global well-posedness result from \cite{Zhang2006}. 
\begin{theorem}[Global well-posedness, \cite{Zhang2006}]\label{Th1} For any $u_0\in H^1(\R^3)$, the solution to \eqref{NLS} is global and obeys $u\in C(\R;H^1(\R^3))$.  Moreover, the mass, energy, and momentum are conserved, where the momentum is defined by
\[
\ZZ(u(t)):=\int_{\R^{3}}2\IM(\overline{u(t,x)}\nabla u(t,x))\,dx.
\]
\end{theorem}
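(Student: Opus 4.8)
The plan is to combine local well-posedness in $H^{1}(\R^{3})$ from the Strichartz estimates \eqref{Strichartz}, a coercivity argument converting the conserved mass and energy into a uniform-in-time $H^{1}$ bound, and the global space-time bound for the defocusing energy-critical NLS in $\R^{3}$ (imported through a long-time perturbation argument) to preclude finite-time blow-up. First I would set up the local theory by a contraction-mapping argument in a Strichartz space at the $H^{1}$-regularity level. The cubic term $-|u|^{2}u$ is energy-subcritical, so in a dual Strichartz norm its contribution over a time interval $I$ gains a positive power of $|I|$: for instance $\| |u|^{2}u \|_{L^{1}_{t}L^{2}_{x}(I)}\lesssim|I|\,\|u\|_{L^{\infty}_{t}L^{6}_{x}(I)}^{3}$ and a gradient version $\| |u|^{2}\nabla u \|_{L^{2}_{t}L^{6/5}_{x}(I)}\lesssim|I|^{1/2}\|u\|_{L^{\infty}_{t}L^{6}_{x}(I)}^{2}\|\nabla u\|_{L^{\infty}_{t}L^{2}_{x}(I)}$, using the admissible pairs $(\infty,2)$ and $(2,6)$. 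The quintic term $|u|^{4}u$ is exactly $H^{1}$-critical in three dimensions, so it is handled by the Cazenave--Weissler argument using the critical space-time norm $\|u\|_{L^{10}_{t,x}}$ (which is controlled by an $H^{1}$-level Strichartz norm via the admissible pair $(10,\tfrac{30}{13})$ and Sobolev embedding), with the standard caveat that the length of the interval of local existence depends on the \emph{profile} of $u_{0}$, not only on $\|u_{0}\|_{H^{1}}$. This produces a unique maximal-lifespan solution $u\in C((-T_{-},T_{+});H^{1})$, persistence of higher regularity, continuous dependence on $u_{0}$, and the blow-up alternative: if $T_{+}<\infty$ then $\|u\|_{L^{10}_{t,x}([0,T_{+})\times\R^{3})}=\infty$ (and similarly at $-T_{-}$).

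Next I would record the conservation laws and extract the a priori bound. For Schwartz data, conservation of $M$, $E$, and $\ZZ$ follows from a direct computation with the equation and integration by parts (the identities associated with the phase, time-translation, and space-translation invariances), and the general $H^{1}$ case follows by approximation together with the persistence of regularity and continuous dependence from the local theory; note that $\ZZ(u)$ is finite on $H^{1}$ since $\bar u\,\nabla u\in L^{1}$ by Cauchy--Schwarz. For the a priori bound, conservation of energy gives
\[
\tfrac12\|\nabla u(t)\|_{L^{2}_{x}}^{2}=E(u_{0})+\tfrac14\|u(t)\|_{L^{4}_{x}}^{4}-\tfrac16\|u(t)\|_{L^{6}_{x}}^{6},
\]
and the Gagliardo--Nirenberg interpolation $\|f\|_{L^{4}_{x}}\le\|f\|_{L^{2}_{x}}^{1/4}\|f\|_{L^{6}_{x}}^{3/4}$ together with Young's inequality yields $\tfrac14\|u(t)\|_{L^{4}_{x}}^{4}\le\tfrac14 M(u_{0})^{1/2}\|u(t)\|_{L^{6}_{x}}^{3}\le\tfrac16\|u(t)\|_{L^{6}_{x}}^{6}+C(M(u_{0}))$. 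Hence $\tfrac12\|\nabla u(t)\|_{L^{2}_{x}}^{2}\le E(u_{0})+C(M(u_{0}))$ uniformly in $t$, and with mass conservation, $\sup_{t}\|u(t)\|_{H^{1}}\le A$ for a constant $A=A(M(u_{0}),E(u_{0}))$. Thus at high amplitude the defocusing sextic term dominates the focusing quartic term, which is precisely why the focusing cubic nonlinearity cannot drive blow-up.

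The main obstacle is the globalization step: because the quintic nonlinearity is energy-critical, the a priori $H^{1}$ bound does not by itself prevent $\|u\|_{L^{10}_{t,x}}$ from becoming infinite in finite time, so one must show this norm stays finite on every bounded interval. To this end I would invoke the global space-time bound for the three-dimensional defocusing energy-critical NLS (Colliander--Keel--Staffilani--Takaoka--Tao): any $H^{1}$ solution $\tilde u$ of $(i\partial_{t}+\Delta)\tilde u=|\tilde u|^{4}\tilde u$ is global and obeys $\|\tilde u\|_{L^{10}_{t,x}(\R\times\R^{3})}\le L(\|\tilde u(0)\|_{H^{1}})$. Viewing the solution of \eqref{NLS} as a solution of the pure quintic equation with forcing error $e=-|u|^{2}u$, which by the estimates above is small in the relevant dual Strichartz norm at the $H^{1}$ level on short intervals (with bound $\le C(A)|I|^{1/2}$ for $|I|\le1$), I would apply the long-time perturbation (stability) lemma for the energy-critical equation on an interval $[t_{0},t_{0}+\tau]$, comparing $u$ with the quintic solution $\tilde u$ having $\tilde u(t_{0})=u(t_{0})$. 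Since $\|u(t_{0})\|_{H^{1}}\le A$ by the a priori bound, one has $\|\tilde u\|_{L^{10}_{t,x}(\R\times\R^{3})}\le L(A)$, so the smallness threshold in the stability lemma is a fixed $\epsilon_{0}=\epsilon_{0}(A)>0$; choosing $\tau=\tau(A)>0$ with $C(A)\tau^{1/2}<\epsilon_{0}(A)$, a routine continuity/bootstrap argument then keeps $u$ defined on $[t_{0},t_{0}+\tau]$ with $\|u\|_{L^{10}_{t,x}([t_{0},t_{0}+\tau]\times\R^{3})}\le C(A)$. Because $\|u(t_{0})\|_{H^{1}}\le A$ holds at every time $t_{0}$, this step iterates with the same constants across $[0,\infty)$ and $(-\infty,0]$; the blow-up alternative then gives $T_{\pm}=\infty$, whence $u\in C(\R;H^{1})$ as provided by the local theory. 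The genuinely delicate point throughout is the interaction of the subcritical cubic term with the energy-critical quintic term inside the perturbation lemma; this is exactly the content of \cite{Zhang2006}, and the scheme above is its skeleton.
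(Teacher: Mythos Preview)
The paper does not prove this theorem itself; it is imported from \cite{Zhang2006} without argument, so there is no ``paper's own proof'' to compare against. Your sketch is correct and is precisely the strategy of \cite{Zhang2006}: local theory in $H^{1}$ via Strichartz, a uniform $H^{1}$ bound from mass--energy conservation (the paper records the same coercivity via the identity $\tfrac12\|\nabla u\|_{L^2}^2+\tfrac16\int|u|^2(|u|^2-\tfrac34)^2\,dx=E(u)+\tfrac{3}{32}M(u)$ in the proof of Lemma~\ref{PositiveV}), and globalization by treating the cubic term as a subcritical perturbation of the defocusing energy-critical quintic NLS, using the Colliander--Keel--Staffilani--Takaoka--Tao global $L^{10}_{t,x}$ bound together with a long-time stability lemma.
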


For an interval $I\subset\R$, we define
\[
\|u\|_{S^{0}(I)}:=\sup\{\|u\|_{L^{q}_{t}L^{r}_{x}(I\times\R^{3})}: \quad \text{$(q,r)$ is admissible}\}.
\]
We then have the following stability result (cf. \cite{KillipOhPoVi2017}): 

\begin{lemma}[Stability]\label{S: result}
Let $I\subset \R$ be a time interval containing $t_{0}$. Suppose that  $\tilde{u}:I\times\R^{3} \to \C$ solves
\[ (i\partial_{t}+\Delta) \tilde{u}=|\tilde{u}|^{4}\tilde{u}-|\tilde{u}|^{2}\tilde{u}+e, \quad 
\tilde{u}(t_{0})=\tilde{u}_{0}\]
for some function $e:I\times\R^{3}\rightarrow \C$. Let $u_{0}\in H^{1}(\R^{3})$ and suppose
\begin{align*}
	\| \tilde{u}  \|_{L_{t}^{\infty}H^{1}_{x}(I\times\R^{3})}&\leq A\\
	\| \tilde{u}  \|_{L_{t,x}^{10}(I\times\R^{3})}&\leq L
\end{align*}
for some $A$, $L>0$.   
There exists $\epsilon_{0}=\epsilon_{0}(\mbox{A,L})>0$ such that if $0<\epsilon<\epsilon_{0}$ and
\begin{align*}
	\|u_{0}-\tilde{u}_{0} \|_{\dot{H}^{1}_{x}}&\leq \epsilon\\
	\|\nabla e  \|_{L_{t,x}^{\frac{10}{7}}(I\times\R^{3})}&\leq \epsilon,
\end{align*}
then there exists a unique solution $u$ to \eqref{NLS} with $u(t_{0})=u_{0}$ 
satisfying
\begin{equation*}
\|\nabla (u-\tilde{u})\|_{S^{0}(I)}\leq C(A,L)\epsilon.
\end{equation*}
\end{lemma}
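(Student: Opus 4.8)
The plan is to prove this by standard long-time perturbation theory, following \cite{KillipOhPoVi2017}. Note first that the existence and uniqueness of a global solution $u$ to \eqref{NLS} with $u(t_{0})=u_{0}$ is already provided by Theorem~\ref{Th1}, so the only content is the quantitative bound $\|\nabla(u-\tilde u)\|_{S^{0}(I)}\le C(A,L)\epsilon$. I would prove this by a continuity/bootstrap argument after partitioning $I$ into finitely many subintervals on which the energy-critical size $\|\tilde u\|_{L^{10}_{t,x}}$ of the approximate solution is small, the number of subintervals depending only on $A$ and $L$. As a preliminary reduction I would first upgrade the hypotheses to a bound $\|\nabla\tilde u\|_{S^{0}(I)}\le C(A,L)$: this follows by running the same partitioning-plus-Strichartz scheme on $\tilde u$ itself, whose Duhamel formula carries only the error term $\|\nabla e\|_{L^{10/7}_{t,x}}\le\epsilon$ in addition to the nonlinearity.

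Write $F(z)=|z|^{4}z-|z|^{2}z$ and $w=u-\tilde u$, so that
\[
(i\partial_{t}+\Delta)w=\bigl[F(\tilde u+w)-F(\tilde u)\bigr]+e,\qquad w(t_{0})=u_{0}-\tilde u_{0}.
\]
Duhamel's formula and the Strichartz estimate \eqref{Strichartz} applied to $\nabla w$, using the exponents dual to the admissible pair $(\tfrac{10}{3},\tfrac{10}{3})$ (i.e.\ the space-time norm $L^{10/7}_{t,x}$), give, on any subinterval $J$ adjacent to $t_{0}$,
\[
\|\nabla w\|_{S^{0}(J)}\lesssim \|\nabla(u_{0}-\tilde u_{0})\|_{L^{2}_{x}}+\bigl\|\nabla\bigl(F(\tilde u+w)-F(\tilde u)\bigr)\bigr\|_{L^{10/7}_{t,x}(J\times\R^{3})}+\|\nabla e\|_{L^{10/7}_{t,x}(J\times\R^{3})}.
\]
The crux is the nonlinear term. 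Using the pointwise bounds $|F(a+b)-F(a)|\lesssim(|a|^{4}+|b|^{4}+|a|^{2}+|b|^{2})|b|$ and their analogues for $\nabla(F(a+b)-F(a))$, valid since $F\in C^{1}$ with this growth, together with the fractional product rule, I would split into the quintic and cubic parts. The quintic (energy-critical) part is bounded, via H\"older and Sobolev embedding, by $\bigl(\|\tilde u\|_{L^{10}_{t,x}(J)}^{4}+\|\nabla w\|_{S^{0}(J)}^{4}\bigr)\|\nabla w\|_{S^{0}(J)}$ plus terms each carrying at least one factor of $\|\tilde u\|_{L^{10}_{t,x}(J)}$ or $\|\nabla w\|_{S^{0}(J)}$ (using $\|w\|_{L^{10}_{t,x}(J)}\lesssim\|\nabla w\|_{S^{0}(J)}$ and the preliminary bound $\|\nabla\tilde u\|_{S^{0}}\le C(A,L)$), hence is small once $\|\tilde u\|_{L^{10}_{t,x}(J)}$ and $\|\nabla w\|_{S^{0}(J)}$ are small. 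For the cubic (energy-subcritical) part I would invoke the \emph{a priori} bound $\|u\|_{L^{\infty}_{t}H^{1}_{x}}\lesssim A$ from Theorem~\ref{Th1} (quantitatively, $\le 2A$ once closeness has been established in the bootstrap), so that $\|u\|_{L^{\infty}_{t}L^{6}_{x}}+\|\tilde u\|_{L^{\infty}_{t}L^{6}_{x}}\lesssim A$, and estimate $\|\nabla(|u|^{2}u-|\tilde u|^{2}\tilde u)\|_{L^{10/7}_{t,x}(J)}$ by $A^{2}$ times a non-scale-invariant space-time norm of $\nabla w$ and $\nabla\tilde u$; the mismatch in scaling is absorbed either by H\"older in time (producing a harmless power of $|J|$ after refining the partition) or, when $I$ is unbounded, by carrying $\dot H^{1/2}$-Strichartz norms obtained by interpolating $L^{10}_{t,x}$ against $L^{\infty}_{t}H^{1}_{x}$. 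Collecting terms, the estimate reads schematically $\|\nabla w\|_{S^{0}(J)}\le C_{0}\epsilon_{J}+C_{0}\|\tilde u\|_{L^{10}_{t,x}(J)}^{4}\,\|\nabla w\|_{S^{0}(J)}+C_{0}\|\nabla w\|_{S^{0}(J)}^{2}$, with $\epsilon_{J}$ the size of the data error at the left endpoint of $J$; a standard continuity argument then gives $\|\nabla w\|_{S^{0}(J)}\lesssim\epsilon_{J}$ as soon as $\epsilon_{J}$ and $\|\tilde u\|_{L^{10}_{t,x}(J)}$ lie below thresholds depending only on $A$ and $L$.

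Finally I would iterate over the partition. Using $\|\tilde u\|_{L^{10}_{t,x}(I)}\le L$ and absolute continuity of the integral, partition $I$ into $N=N(A,L)$ consecutive subintervals $I_{1},\dots,I_{N}$ with $\|\tilde u\|_{L^{10}_{t,x}(I_{k})}\le\eta$ for a small $\eta=\eta(A,L)$. Apply the one-interval estimate on $I_{1}$ with data error $\epsilon$ to get $\|\nabla w\|_{S^{0}(I_{1})}\lesssim\epsilon$; in particular $\|\nabla w\|_{L^{\infty}_{t}L^{2}_{x}(I_{1})}\lesssim\epsilon$, which becomes the data error on $I_{2}$, and so on. Since each step multiplies the accumulated error by a fixed constant depending only on $A$ and $L$, after $N$ steps one obtains $\|\nabla w\|_{S^{0}(I)}\le C(A,L)\epsilon$; choosing $\epsilon<\epsilon_{0}(A,L)$ small enough keeps the running error below the thresholds throughout and retroactively justifies the \emph{a priori} bound $\|u\|_{L^{\infty}_{t}H^{1}_{x}}\le 2A$ used for the cubic term. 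The main obstacle is precisely this last point: unlike the quintic term, the subcritical cubic nonlinearity is not controlled by smallness of $\|\tilde u\|_{L^{10}_{t,x}}$ alone, and one must exploit the uniform $H^{1}$ bound (and, for unbounded $I$, interpolated $\dot H^{1/2}$-Strichartz norms) to keep the final constant of the form $C(A,L)$.
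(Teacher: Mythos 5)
The paper itself gives no proof of this lemma: it is quoted from \cite{KillipOhPoVi2017} (their long-time stability result), so the only question is whether your sketch would actually close. Your overall scheme --- partition $I$ into $N(A,L)$ subintervals on which $\|\tilde u\|_{L^{10}_{t,x}}$ is small, persistence of regularity for $\tilde u$, Duhamel plus Strichartz for $w=u-\tilde u$, and a bootstrap iterated over the subintervals --- is exactly the standard route taken in the cited source, and the quintic part of your estimate is fine. The cubic terms containing at least one factor of $\tilde u$ are also fine, \emph{provided} they are estimated with mass-level Strichartz norms of $\tilde u$, e.g. $\||\tilde u|^2\nabla w\|_{L^{10/7}_{t,x}}\lesssim\|\tilde u\|_{L^{10}_{t,x}}\|\tilde u\|_{L^{10/3}_{t,x}}\|\nabla w\|_{L^{10/3}_{t,x}}$; these are available because $\tilde u$ is bounded in $H^1$ (so $\|\tilde u\|_{S^0(I)}\lesssim_{A,L}1$ by the same persistence-of-regularity argument you run for $\nabla\tilde u$).

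The genuine gap is in the step you yourself flag as the crux: the cubic contributions that are purely in $w$, schematically $|w|^2\nabla w$. The hypotheses control only $\|u_0-\tilde u_0\|_{\dot H^1}$, hence give no bound on $M(u_0)$ or on $\|w\|_{L^\infty_tL^2_x}$; consequently (a) the a priori bound ``$\|u\|_{L^\infty_tH^1_x}\le 2A$'' is not available (the uniform bound of Theorem~\ref{Th1} depends on $M(u_0)$, which may be arbitrarily large), and (b) your rescue for unbounded intervals --- ``$\dot H^{1/2}$-Strichartz norms obtained by interpolating $L^{10}_{t,x}$ against $L^\infty_tH^1_x$'' --- cannot work: interpolating with an $L^\infty_t$ norm only pushes the time exponent above $10$, and there is no sub-$\dot H^1$ information on $w$ at all ($\|w\|_{\dot H^{1/2}}$ is not controlled by $\|\nabla w\|_{L^2}$). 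A scaling count makes the obstruction precise: any dual admissible pair satisfies $\tfrac{2}{\tilde q'}+\tfrac{3}{\tilde r'}=\tfrac72$, while every norm of $w$ at your disposal satisfies $\tfrac2q+\tfrac3r=\tfrac32$ (gradient Strichartz norms) or $\tfrac2q+\tfrac3\rho=\tfrac12$ (their Sobolev embeddings, e.g. $L^{10}_{t,x}$, $L^\infty_tL^6_x$), so three factors sum to at most $\tfrac52<\tfrac72$; hence $|w|^2\nabla w$ cannot be placed in any dual Strichartz space using $\dot H^1$-level norms of $w$ alone, and ``a harmless power of $|J|$'' is not harmless because at least one subinterval is infinite whenever $I$ is. The missing ingredient --- present in the original statement in \cite{KillipOhPoVi2017} and satisfied in every application made in this paper --- is an additional hypothesis bounding $\|u_0-\tilde u(t_0)\|_{H^1}$ (not merely $\dot H^1$); this yields mass-level control $\|w\|_{S^0(I)}\lesssim 1$, after which a factor such as $\|w\|_{L^{10/3}_{t,x}}$ supplies the missing unit in the scaling count and your bootstrap closes as described.
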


\subsection{Variational analysis}  We record here some of the variational analysis that played a central role in \cite{KillipOhPoVi2017}.  As in that work, we utilize the notation 
\[
\beta(\omega):=\frac{\|P_{\omega}\|^{6}_{L^{6}}}{\|\nabla P_{\omega}\|^{2}_{L^{2}}}. 
\]

We first have the following: 
\begin{theorem}[$\alpha$-Gagliardo-Nirenberg-H\"older inequality, \cite{KillipOhPoVi2017}]\label{TheInverGN}
Let $\alpha\in(0,\infty)$ and
\begin{equation}\label{GNI}
C^{-1}_{\alpha}:=\inf_{ f\in {H}^{1}\setminus\left\{0\right\}}
\frac{\|f\|_{L^{2}}\| f\|^{\frac{3}{1+\alpha}}_{\dot{H}^{1}}\|f\|^{\frac{3\alpha}{1+\alpha}}_{L^{6}}}
{\|f\|^{4}_{L^{4}}}.
\end{equation}
Then $C_{\alpha}\in (0,\infty)$ and the infimum \eqref{GNI} is attained by a function of the form
$f(x)=\lambda P_{\omega}(r(x-x_{0}))$, where $ P_{\omega}$  is a radial positive solution to the
stationary problem \eqref{GroundS} with $\beta(P_{\omega})=\alpha$, $\lambda\in \R$, $r>0$ and $x_{0}\in \R^{3}$.
\end{theorem}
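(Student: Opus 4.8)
The strategy is the standard concentration-compactness / profile-decomposition approach to sharp interpolation inequalities, adapted to the fact that the functional is invariant under a three-parameter symmetry group (scaling of amplitude, dilation of space, and translation) rather than the usual scaling alone. First I would show that $C_\alpha<\infty$, i.e.\ that the quotient in \eqref{GNI} is bounded below by a positive constant; this is a direct consequence of Sobolev embedding and Hölder interpolation: writing $\|f\|_{L^4}^4$ in terms of $\|f\|_{L^2}$, $\|f\|_{L^6}$ (and hence $\|f\|_{\dot H^1}$ via Sobolev) with the exponents chosen exactly so that the homogeneities match, one gets $\|f\|_{L^4}^4 \lesssim \|f\|_{L^2}\,\|f\|_{\dot H^1}^{3/(1+\alpha)}\,\|f\|_{L^6}^{3\alpha/(1+\alpha)}$. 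The positivity $C_\alpha>0$ (equivalently: the infimum is not zero) will follow once we produce an actual optimizer, or alternatively from the fact that the quotient is strictly positive on any nonzero $f$ and cannot degenerate — but the cleanest route is to extract a minimizer.

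Next I would take a minimizing sequence $f_n$ and normalize it using the three scaling symmetries to fix, say, $\|f_n\|_{L^2}=\|f_n\|_{L^6}=\|f_n\|_{\dot H^1}=1$ (the homogeneity of the quotient under $f\mapsto \lambda f(r\cdot)$ in two independent parameters is exactly what allows two independent normalizations beyond the $L^2$-normalization; one must check the exponents in \eqref{GNI} indeed permit this). Then $f_n$ is bounded in $H^1$, and along this sequence $\|f_n\|_{L^4}^4 \to C_\alpha$. I would then apply a profile decomposition for bounded sequences in $H^1(\R^3)$ (in the spirit of Gérard / Hmidi–Keraani), write $f_n = \sum_j g_j(\cdot - x_n^j) + r_n^J$ with asymptotic orthogonality of the $H^1$, $L^2$, $L^6$, and $L^4$ norms, and use the superadditivity of $\|\cdot\|_{L^2}^2$, $\|\cdot\|_{\dot H^1}^2$, $\|\cdot\|_{L^6}^6$ against the subadditivity of $\|\cdot\|_{L^4}^4$, together with the sharp inequality applied to each profile, to conclude (by a convexity/Hölder argument on the splitting of the masses) that only a single profile survives and the remainder vanishes in $L^4$, hence in $H^1$. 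This yields an optimizer $f\in H^1\setminus\{0\}$, which in particular gives $C_\alpha>0$.

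Finally, the Euler–Lagrange equation: a minimizer $f$ (which, after replacing $f$ by $|f|$ using that $\|\nabla |f|\|_{L^2}\le\|\nabla f\|_{L^2}$ and that all other norms are unchanged, may be taken nonnegative, and by Schwarz symmetrization radial and radially decreasing about some $x_0$) satisfies, after the usual Lagrange-multiplier computation, an equation of the form $-a\Delta f = b f^3 - c f^5$ with positive constants $a,b,c$ determined by the norms of $f$. Rescaling amplitude and space turns this into \eqref{GroundS} for some $\omega\in(0,\tfrac3{16})$, and the constraint linking the multipliers forces the balance $\beta(P_\omega)=\alpha$; uniqueness of the ground state (a known fact, used throughout the paper) then pins down $P_\omega$, so $f(x)=\lambda P_\omega(r(x-x_0))$ as claimed.

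The main obstacle is the profile-decomposition step: one must set up the decomposition so that the \emph{four} different norms appearing ($L^2$, $\dot H^1$, $L^6$, $L^4$) all decouple asymptotically, and then run the right convexity argument to rule out mass-splitting among profiles — the exponents $\tfrac{3}{1+\alpha}$ and $\tfrac{3\alpha}{1+\alpha}$ summing to $3$ is exactly the arithmetic that makes the strict superadditivity work, and getting that inequality sharp (so that a split is strictly worse than a single profile) is the crux. Translation invariance also means one genuinely needs the full profile decomposition rather than a simple Rellich compactness argument. Everything after that — symmetrization and the Euler–Lagrange identification — is routine.
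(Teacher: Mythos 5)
First, a point of comparison: the paper does not prove Theorem~\ref{TheInverGN} at all --- it is imported verbatim from \cite{KillipOhPoVi2017} --- so there is no in-paper argument to measure your sketch against, and I can only assess it on its own terms. Your overall strategy (minimizing sequence, compactness modulo translations, symmetrization, Euler--Lagrange equation, rescaling into \eqref{GroundS}, and identification of $\beta(P_\omega)=\alpha$ from the multiplier/Pohozaev relations) is a viable standard route; note also that the statement does not require uniqueness of $P_\omega$, so your appeal to soliton uniqueness at the end is unnecessary.

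Two concrete steps need repair. (1) The normalization you propose is not available: in $\R^3$ the symmetry $f\mapsto \mu f(\cdot/\lambda)$ scales \emph{both} $\|f\|_{\dot H^1}$ and $\|f\|_{L^6}$ by the same factor $\mu\lambda^{1/2}$, so the ratio $\|f\|_{L^6}/\|f\|_{\dot H^1}$ is invariant under the full two-parameter group and you cannot arrange $\|f_n\|_{L^2}=\|f_n\|_{\dot H^1}=\|f_n\|_{L^6}=1$ simultaneously. You may normalize two of the three (say $L^2$ and $\dot H^1$) and must then rule out degeneracy of the third; this is easy --- if $\|f_n\|_{L^6}\to 0$, then $\|f_n\|_{L^4}^4\le\|f_n\|_{L^2}\|f_n\|_{L^6}^3$ forces the quotient to blow up like $\|f_n\|_{L^6}^{-3/(1+\alpha)}$, contradicting minimization --- but it has to be said, since your convexity bookkeeping in the profile step relies on it. (2) Your Euler--Lagrange equation $-a\Delta f=bf^3-cf^5$ omits the linear term produced by differentiating the factor $\|f\|_{L^2}$ in the numerator; the correct equation is $-a\Delta f+df=bf^3-cf^5$ with $a,b,c,d>0$, and it is exactly this $df$ term which, after rescaling amplitude and space, yields the frequency $\omega$ in \eqref{GroundS} --- with the equation as you wrote it, no rescaling reaches \eqref{GroundS} with $\omega>0$. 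Two lesser remarks: you have the roles of $C_\alpha>0$ and $C_\alpha<\infty$ interchanged (the substantive Hölder--Sobolev inequality is the statement that the infimum is positive, i.e. $C_\alpha<\infty$; $C_\alpha>0$ is trivial); and if you perform the Schwarz symmetrization at the level of the minimizing sequence (legitimate, since rearrangement preserves the $L^2$, $L^4$, $L^6$ norms and does not increase $\|\cdot\|_{\dot H^1}$), the whole profile-decomposition apparatus becomes unnecessary, because bounded radial sequences in $H^1(\R^3)$ are precompact in $L^4$; this is the simpler and more classical path to an optimizer.
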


\begin{remark} The optimal constant $C_{1}$ in \eqref{GNI} is given by
\[
C_{1}=\tfrac{8}{3}\tfrac{1}{\|Q_{1}\|_{L^{2}}},
\]
where $Q_{1}$ denotes a ground state solution optimizing \eqref{GNI}. In particular, we have
\[
\|f\|^{4}_{L^{4}}\leq \tfrac{8}{3}\tfrac{\|f\|_{L^{2}}}{\|Q_{1}\|_{L^{2}}}\|f\|^{3/2}_{L^{6}}\|\nabla f\|^{3/2}_{L^{2}}.
\]
Moreover, using Young's inequality we get
\begin{equation}\label{BoundH}
\begin{aligned}
E(f)&\geq \tfrac{1}{2}\|f\|^{2}_{\dot{H}^{1}}+\tfrac{1}{6}\|f\|^{6}_{L^{6}}-\tfrac{2}{3}
\tfrac{\|f\|_{L^{2}}}{\|Q_{1}\|_{L^{2}}}\|f\|^{\frac{3}{2}}_{\dot{H}^{1}}\|f\|^{\frac{3}{2}}_{L^{6}}\\
&\geq \big(1-\tfrac{\|f\|_{L^{2}}}{\|Q_{1}\|_{L^{2}}} \big)\left[\tfrac{1}{2}\|f\|^{2}_{\dot{H}^{1}}+\tfrac{1}{6}\|f\|^{6}_{L^{6}}\right].
\end{aligned}
\end{equation}
\end{remark}

The ground states $P_{\omega}$ satisfy the following identities:
\begin{equation}\label{PhIden}
\begin{split}
&\|P_{\omega}\|^{2}_{L^{2}}=\tfrac{\beta({\omega})+1}{3\omega}\|\nabla P_{\omega}\|^{2}_{L^{2}},\quad
\|P_{\omega}\|^{4}_{L^{4}}=\tfrac{4[\beta({\omega})+1]}{3}\|\nabla P_{\omega}\|^{2}_{L^{2}},\\
&\|P_{\omega}\|^{6}_{L^{6}}=\beta(\omega)\|\nabla P_{\omega}\|^{2}_{L^{2}}
\quad \text{and}\quad
E(P_{\omega})=\tfrac{[1-\beta({\omega})]}{6}\|\nabla P_{\omega}\|^{2}_{L^{2}}.
\end{split}
\end{equation}

\begin{lemma}\label{PositiveV}
Let $u_{0}\in H^{1}(\R^{3})$ satisfy $(M(u_{0}), E(u_{0}))\in \partial\K_{s}$ and $V(u_{0})>0$.
Then the corresponding solution $u(t)$ of \eqref{NLS} is uniformly bounded in $H^{1}(\R^{3})$
and $V(u(t))>0$ for all $t\in \R$.
\end{lemma}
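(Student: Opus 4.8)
The plan is to derive the statement from the variational structure of $\partial\K_s$, specifically from the sharpness of the bound on $E$ in terms of $V$ at zero mass-energy on the boundary. First I would set up the key coercivity estimate: for $(m,e)=(M(P_\omega),E(P_\omega))\in\partial\K$ (so $e=\EE(m)$ by definition of $\partial\K$), I claim there is a continuous function $\delta(\cdot)$ with $\delta(0)=0$ such that any $f$ with $M(f)=m$ and $E(f)\le e$ satisfies $V(f)\ge -\delta(E(f)-e)$ — equivalently, the only way $V(f)$ can be nonpositive at this mass is to have energy at least $e$, and $V(f)=0$ forces $E(f)\ge\EE(m)=e$. The precise form I want is that on the mass shell $M(f)=m$, having $V(f)\le 0$ and $E(f)\le e$ implies $V(f)=0$ and then $E(f)=e$ exactly; more usefully, I want a quantitative statement that on a neighborhood of energy $e$, $V(f)<0$ is incompatible with $E(f)\le e$. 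This is the content hidden in the variational problem from \cite{KillipOhPoVi2017}, and I would extract it by a scaling argument: for $f$ with $M(f)=m$, consider $f_\lambda(x)=\lambda^{3/2}f(\lambda x)$, which preserves mass; then $V(f)=\tfrac12\tfrac{d}{d\lambda}\big|_{\lambda=1}E(f_\lambda)$ up to a positive constant, so the sign of $V$ controls whether energy can be decreased by rescaling, and $V(f)<0$ together with the fact that $\inf_{\lambda}E(f_\lambda)$ over the $V=0$ configuration along the curve cannot go below $\EE(m)$ gives the contradiction with $E(f)\le e=\EE(m)$.

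Next I would run the standard continuity/bootstrap argument in time. By Theorem~\ref{Th1} the solution $u$ is global with $u(t)\in H^1$ for all $t$, and by conservation $M(u(t))=m$, $E(u(t))=e$ for all $t$. Since $V$ is continuous on $H^1$ (this follows from Sobolev embedding: each of the three terms $\|u\|_{\dot H^1}^2$, $\|u\|_{L^6}^6$, $\|u\|_{L^4}^4$ is continuous on $H^1$ by the embeddings $H^1\hookrightarrow L^6$, $H^1\hookrightarrow L^4$ in three dimensions), and $t\mapsto u(t)$ is continuous into $H^1$, the map $t\mapsto V(u(t))$ is continuous. Suppose for contradiction there is a first time $t_*$ with $V(u(t_*))=0$ (it cannot cross to negative without hitting zero, by the intermediate value theorem, since $V(u(0))>0$). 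At $t=t_*$ we have $M(u(t_*))=m$, $V(u(t_*))=0$, hence $E(u(t_*))\ge\EE(m)=e$ by definition of $\EE$; but $E(u(t_*))=e$, so equality holds and $u(t_*)$ is an optimizer of the variational problem defining $\EE(m)$. Here is where I would need to rule this out, or rather show it is impossible to be the \emph{first} such time reaching $V=0$ from the positive side while the solution is genuinely evolving — the cleaner route is to note that the set $\{V>0\}\cap\{M=m\}\cap\{E\le e\}$ is relatively open and closed in the energy-mass-constrained configuration space near the soliton in a suitable sense, or more simply to observe that $V$ cannot become $0$ because that would force, via the strict inequality structure, that $u(t_*)$ equals $P_\omega$ up to symmetries (the only optimizer), contradicting $V(P_\omega)=0$ being consistent but then $u$ would have to be the soliton for all time, contradicting $V(u(0))>0$.

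Let me reorganize the logical core, since the above is slightly circular: the right statement is simply that $V(u(t))>0$ for all $t$ follows because $V(u(t))$ can never equal $0$. Indeed if $V(u(t_*))=0$ for some $t_*$, then since $M(u(t_*))=m$ we get $E(u(t_*))\ge\EE(m)$; combined with $E(u(t_*))=e=\EE(m)$, $u(t_*)$ is a minimizer for the constrained problem. By the variational characterization in \cite{KillipOhPoVi2017} (Theorem~\ref{TheInverGN} and the identities \eqref{PhIden}), every such minimizer coincides, modulo the symmetries of the equation (phase, translation), with the ground state $P_\omega$; but then by uniqueness of solutions to \eqref{NLS}, $u(t)\equiv e^{i\omega(t-t_*)}P_\omega(\cdot-x_0)$ (a solitary wave) for all $t$, whence $V(u(0))=V(P_\omega)=0$, contradicting the hypothesis $V(u_0)>0$. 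Therefore $V(u(t))\neq0$ for every $t$, and since $V(u(0))>0$ and $t\mapsto V(u(t))$ is continuous, the intermediate value theorem forces $V(u(t))>0$ for all $t\in\R$.

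Finally, uniform boundedness in $H^1$: this is essentially immediate from the energy bound \eqref{BoundH} once we localize to the mass shell, but in fact since $m<m_2=M(Q_1)\le\|Q_1\|_{L^2}$ we have $\|u(t)\|_{L^2}^2=m<m_2$, so the coercivity factor $1-\|u(t)\|_{L^2}/\|Q_1\|_{L^2}$ in \eqref{BoundH} is a fixed positive constant $\eta>0$; hence $\tfrac12\|u(t)\|_{\dot H^1}^2+\tfrac16\|u(t)\|_{L^6}^6\le \eta^{-1}E(u(t))=\eta^{-1}e$, giving $\|\nabla u(t)\|_{L^2}^2\lesssim e$ uniformly in $t$, and together with the conserved mass this bounds $\|u(t)\|_{H^1}$ uniformly. (One subtlety: \eqref{BoundH} was stated assuming the Gagliardo-Nirenberg-type bound, which requires $\|f\|_{L^2}\le\|Q_1\|_{L^2}$; on $\partial\K_s$ we should double-check $m\le\|Q_1\|_{L^2}^2$, but $m<m_2=M(Q_1)=\|Q_1\|_{L^2}^2$ handles this, provided the normalization of $Q_1$ is as in the cited remark.)

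\textbf{Main obstacle.} The crux is the rigidity step: showing that a minimizer of $\inf\{E(f):M(f)=m,\ V(f)=0\}$ at a boundary mass $m$ with $\EE(m)=E(P_\omega)$ must be $P_\omega$ up to symmetries. This is where the assumed uniqueness in the definition of $\partial\K_s$ and the variational analysis of \cite{KillipOhPoVi2017} (the Euler--Lagrange equation satisfied by minimizers, which after accounting for the two Lagrange multipliers and a rescaling reduces to \eqref{GroundS}) must be invoked carefully; everything else (continuity of $V$, the IVT argument, and the coercivity bound from \eqref{BoundH}) is routine.
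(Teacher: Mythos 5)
Your reorganized core argument is correct and is essentially the paper's proof: supposing $V(u(t_0))=0$ at some time, the variational characterization from \cite{KillipOhPoVi2017} (together with the defining properties of $\partial\K_{s}$, which fix $\omega$ and exclude the rescaled solitons $R_\omega$) forces $u(t_0)=e^{i\theta}P_\omega(\cdot-x_0)$, and uniqueness of the flow then gives $V(u_0)=0$, a contradiction, after which continuity of $t\mapsto V(u(t))$ yields $V(u(t))>0$ for all $t$. The only (harmless) difference is in the uniform $H^1$ bound: the paper uses the pointwise identity $E(u)+\tfrac{3}{32}M(u)=\int_{\R^3}\tfrac12|\nabla u|^2+\tfrac16|u|^2\big(|u|^2-\tfrac34\big)^2\,dx$, which needs no restriction on the mass, whereas you invoke \eqref{BoundH}, which indeed works here because $m<m_2=M(Q_1)$ (your normalization caveat is exactly the point to check).
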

\begin{proof}
First, notice that
\[
\int_{\R^{3}}\tfrac{1}{2}|\nabla u|^{2}+\tfrac{1}{6}|u|^{2}\big(|u|^{2}-\tfrac{3}{4}\big)^{2}dx=E(u)+\tfrac{3}{32}M(u).
\]
This implies that $\|u(t)\|^{2}_{L^{2}}+\|u(t)\|^{2}_{\dot{H}^{1}} \lesssim M(u_{0})+E(u_{0})$ for all $t\in \R$.  

On the other hand, suppose by contradiction that $V(u(t_{0}))=0$ for some $t_{0}\in \R$. Since 
$(M(u(t_{0})), E(u(t_{0})))\in \partial\K_{s}$, by the variational characterization given in \cite[Theorem 5.6]{KillipOhPoVi2017}
we infer that $u(t_{0},x)=e^{i\theta}P_{\omega}(x+x_{0})$ for some $\omega\in (0, \tfrac{3}{16})$, $\theta\in \R$ and $x_{0}\in \R^{3}$. 
By uniqueness of the solution, we have $V(u_{0})=0$, which is a contradiction. \end{proof}

\begin{remark}\label{ReEMG}
Let $f\in H^{1}(\R^{3})$. We observe that if
\begin{align}\label{EMG}
	E(f)=E(P_{\omega}), \quad 	M(f)=M(P_{\omega}) \quad \text{and}
	\quad \|\nabla f\|^{2}_{L^{2}}=\|\nabla P_{\omega}\|^{2}_{L^{2}},
\end{align}
then $f(x)=e^{i \theta}P_{\omega}(x-x_{0})$ for some $\theta\in \R$ and $x_{0}\in \R^{3}$. Indeed, 
by \eqref{EMG} we deduce
\[
\int_{\R^{3}}\tfrac{\omega}{2}|f|^{2}+
\tfrac{1}{6}|f|^{6}-\tfrac{1}{4}|f|^{4}dx=
\int_{\R^{3}}\tfrac{\omega}{2}|P_{\omega}|^{2}+
\tfrac{1}{6}|P_{\omega}|^{6}-\tfrac{1}{4}|P_{\omega}|^{4}dx.
\]
Since $\|\nabla f\|^{2}_{L^{2}}=\|\nabla P_{\omega}\|^{2}_{L^{2}}$, the variational characterization of $P_{\omega}$ given in \cite[(2.8)]{KillipOhPoVi2017} and
Theorem 2.2 in \cite{KillipOhPoVi2017} (uniqueness of solitons) guarantees that $f$ must agree with $P_{\omega}$
up to translation and a phase rotation. 

In particular, we see that if
\[
E(u_{0})=E(P_{\omega}), \quad 	M(u_{0})=M(P_{\omega}) \quad \text{and} \quad V(u_{0})>0,
\]
then either
\[
\|\nabla u(t)\|^{2}_{L^{2}}<\|\nabla P_{\omega}\|^{2}_{L^{2}}\quad \text{$\forall t\in \R$ or}
\quad
\|\nabla u(t)\|^{2}_{L^{2}}>\|\nabla P_{\omega}\|^{2}_{L^{2}}\quad \text{$\forall t\in \R$},
\]
where $u(t)$ is the corresponding solution to \eqref{NLS} with initial data $u_{0}$.
\end{remark}

%\begin{lemma}\label{ConditionGra}
%If $u_{0}\in H^{1}(\R^{3})$ satisfies 
%\[E(u_{0})=E(P_{\omega}), \quad M(u_{0})=M(P_{\omega}),\quad  \text{and} \quad V(u_{0})>0\]
%Then 
%\[
%\|\nabla u(t)\|^{2}_{L^{2}}<\|\nabla P_{\omega}\|^{2}_{L^{2}}\quad \text{$\forall t\in \R$,}
%\]
%where $u(t)$ is the corresponding solution to \eqref{NLS} with initial data $u_{0}$.
%\end{lemma}
%\begin{proof}{\color{blue}
%In view of the known results for the classical NLS, I think this lemma is true,
% however, it is still not clear to me, how to make a proof of the result. }
%\end{proof}

\begin{lemma}\label{PriModula}
Suppose $(m,e)\in \partial\K_{s}$.  If the sequence $\left\{f_{n}\right\}\subset H^{1}(\R^{3})$ obeys
$M(f_{n})=m$, $E(f_{n})=e$ and $V(f_{n})\to 0$ as $n\to \infty$, then there exists $\theta_{n}\in \R$
and $x_{n}\in \R^{3}$ such that
\[
\|f_{n}-e^{i \theta_{n}}P_{\omega}(\cdot-x_{n})\|_{H^{1}}\to 0
\]
as $n\to \infty$, where $\omega\in (0, \tfrac{3}{16})$ is unique.
\end{lemma}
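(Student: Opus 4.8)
The plan is to argue by contradiction via a concentration-compactness / profile decomposition argument, using the variational characterization of $\partial\K_s$ together with the coercivity estimate \eqref{BoundH}. Suppose the conclusion fails: then there is a subsequence (still denoted $f_n$) such that
\[
\inf_{\theta\in\R,\,x\in\R^3}\|f_n-e^{i\theta}P_\omega(\cdot-x)\|_{H^1}\geq\delta>0
\]
for all $n$, where $\omega$ is the (assumed unique) parameter with $(M(P_\omega),E(P_\omega))=(m,e)$. First I would record that $\{f_n\}$ is bounded in $H^1$: indeed $M(f_n)=m$ is fixed, and from the identity in the proof of Lemma~\ref{PositiveV} (or directly from \eqref{BoundH} using $m<m_2=\|Q_1\|_{L^2}^2$) one gets a uniform bound on $\|\nabla f_n\|_{L^2}$ in terms of $m$ and $e$. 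One also needs $V(f_n)\geq -o(1)$ to be compatible with the constraint, which is part of the hypothesis $V(f_n)\to0$.

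Next I would apply the linear profile decomposition (in $H^1(\R^3)$, modulo the translation symmetry; note the problem is not scaling-invariant, so only translations and phases are needed) to $\{f_n\}$. Write $f_n=\sum_{j=1}^{J}e^{i\theta_n^j}\phi^j(\cdot-x_n^j)+r_n^J$ with the usual orthogonality of the spatial cores $|x_n^j-x_n^k|\to\infty$, the decoupling of mass, kinetic energy, and the $L^4$, $L^6$ norms (the relevant decoupling here is the Brezis--Lieb / Rellich-type decoupling for $L^p$ norms along the profiles, which holds because there is no scaling parameter), and $\limsup_n\|r_n^J\|_{L^p}\to0$ as $J\to\infty$ for $4\le p\le 6$. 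The key consequence is the asymptotic decoupling
\[
m=\sum_j M(\phi^j)+\lim_n M(r_n^J)+o(1),\qquad e=\sum_j E(\phi^j)+\lim_n E(r_n^J)+o(1),
\]
together with $0=\lim_n V(f_n)=\sum_j V(\phi^j)+\lim_n V(r_n^J)+o(1)$ (here $o(1)$ is as $J\to\infty$ after $n\to\infty$).

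Now comes the rigidity step, which I expect to be the main obstacle. Using \eqref{BoundH} with the constraint $M\le m<m_2$, the energy $E$ is nonnegative (strictly positive unless the function is zero) on each piece, so the decoupling of $e$ forces all but one profile to carry zero mass and energy, hence to vanish; and the remainder must also vanish in $H^1$, not merely in $L^p$. To make this precise I would: (a) show $E(\phi^j)\ge c\big(\|\nabla\phi^j\|_{L^2}^2+\|\phi^j\|_{L^6}^6\big)\ge 0$ and likewise for $r_n^J$, so that from $\sum E(\phi^j)+\lim E(r_n^J)\le e$ and the fact that removing any profile strictly lowers the energy while keeping the mass admissible, one concludes there is exactly one nontrivial profile, say $\phi^1$, with $M(\phi^1)=m$, $E(\phi^1)=e$, and $\lim_n\|r_n^J\|_{H^1}=0$ as well as $V(\phi^1)=0$; (b) invoke the variational characterization of $\partial\K_s$ from \cite[Theorem 5.6]{KillipOhPoVi2017} — since $M(\phi^1)=m$, $E(\phi^1)=e$ with $(m,e)\in\partial\K_s$ and $V(\phi^1)=0$, we get $\phi^1=e^{i\theta}P_{\tilde\omega}(\cdot-x_0)$, and by the uniqueness assumption in the definition of $\partial\K_s$ necessarily $\tilde\omega=\omega$. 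Then $f_n-e^{i\theta_n^1}\phi^1(\cdot-x_n^1)=r_n^J\to0$ in $H^1$ (absorbing $\theta$ into $\theta_n^1$ and $x_0$ into $x_n^1$), contradicting $\delta>0$.

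The delicate point in (a) is upgrading the remainder's smallness from $L^p$ ($4\le p\le6$) to $H^1$: this is where one uses that $E(r_n^J)=\tfrac12\|\nabla r_n^J\|_{L^2}^2+\tfrac16\|r_n^J\|_{L^6}^6-\tfrac14\|r_n^J\|_{L^4}^4$ together with the energy decoupling and the fact that $\|r_n^J\|_{L^4}\to0$; since the total energy is exhausted by $\phi^1$, the kinetic energy of $r_n^J$ must vanish, and with $M(r_n^J)\to0$ one gets $\|r_n^J\|_{H^1}\to0$. I would also need to handle the phases $\theta_n^j$: one may assume after passing to a subsequence that $\theta_n^1\to\theta_\infty$ (or simply absorb it), which is harmless. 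A cleaner alternative to the full profile decomposition, which I would mention, is to note that $\{f_n\}$ is a minimizing-type sequence for the variational problem defining $\EE(m)$ under the constraint $V=0$ (since $E(f_n)=e=\EE(m)$ on $\partial\K_s$ by \cite{KillipOhPoVi2017}), so one may directly quote the concentration-compactness analysis already carried out in \cite{KillipOhPoVi2017} for such sequences, obtaining compactness up to translations and phase, and then identify the limit with $P_\omega$ via the uniqueness assumption; this shortens the argument considerably and is the route I would ultimately take in the writeup.
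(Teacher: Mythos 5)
Your closing ``cleaner alternative'' is in fact exactly what the paper does: its proof of Lemma~\ref{PriModula} consists of invoking the argument of \cite[Lemma 3.2]{KillipMurphyVisan2020} together with the variational characterization in \cite[Theorem 5.6]{KillipOhPoVi2017} and the uniqueness built into the definition of $\partial\K_s$. The self-contained sketch that forms the bulk of your proposal, however, has two genuine gaps. The first is the claim that, because the equation is not scale invariant, a translation-only profile decomposition suffices and the remainder vanishes in $L^p$ for $4\le p\le 6$. The quintic term is energy critical, so a bounded $H^1$ sequence can concentrate at small scales regardless of the symmetries of the equation; this is precisely why the paper's Proposition~\ref{ProfL} carries scaling parameters $\lambda_n^j\to 0$ and a modified energy decoupling, and why a translation-only decomposition does not give smallness of the remainder in $L^6$. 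Concentrating bubbles have to be ruled out using the constraint structure (their mass tends to $0$ while, by \eqref{GNI}, their $L^4$ norm vanishes, so their virial is positive and bounded below), not by appeal to the absence of scaling symmetry.

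The second, more serious gap is in step (a): nonnegativity of the energies of the pieces does not force a single profile. Two nontrivial profiles with positive energies summing to $e$ are perfectly compatible with the mass and energy decoupling, and you cannot argue $E(\phi^1)\ge\EE(M(\phi^1))$ because $\phi^1$ is not known to satisfy $V(\phi^1)=0$: the virial only decouples as a sum, with no sign information on the individual terms, so ``$V(\phi^1)=0$'' cannot be extracted before compactness is established. The missing ingredient is the interplay between the virial and the boundary structure: if the decomposition splits, then every nontrivial translation-type piece has mass at most $m<m_2$, strictly positive energy by \eqref{BoundH}, and energy strictly below $\EE$ of its mass by the strict monotonicity of $m\mapsto\EE(m)$, hence its mass-energy lies in $\K$ and its virial is strictly positive; concentrating pieces have positive virial as noted above; and the remainder's virial is bounded below by $-o(1)$ since its $L^4_x$ norm is asymptotically small. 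Since $V$ decouples along the decomposition, this contradicts $V(f_n)\to 0$. Only after this argument do you obtain a single profile and $H^1$ convergence of the remainder, whence $V(\phi^1)=0$ by continuity, and then \cite[Theorem 5.6]{KillipOhPoVi2017} together with the uniqueness assumption in the definition of $\partial\K_s$ identifies $\phi^1$ with $P_\omega$ up to phase and translation, as you indicate.
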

\begin{proof}
Following the same argument developed in the proof of \cite[Lemma 3.2]{KillipMurphyVisan2020}, by Theorem 5.6  and $(m,e)\in\partial\K_s$ we obtain that 
there exist a unique $\omega\in (0, \frac{3}{16})$,  $\theta_{n}$ and $x_{n}$ such that
\[
e^{-i \theta_{n}}f_{n}(x+x_{n})\to P_{\omega}(x) \quad \text{in} \quad H^{1}(\R^{3}).
\]
\end{proof}

\subsection{Linear profile decomposition}
The following appears as \cite[Theorem~7.5]{KillipOhPoVi2017}.

\begin{proposition}\label{ProfL} Let $\left\{f_{n}\right\}$ be a bounded sequence in $H^{1}(\R^{3})$. The following holds up to a subsequence:

There exist $J^{\ast}\in \left\{0,1,2,\ldots\right\}\cup\left\{\infty\right\}$, non-zero profiles 
$\{ \psi^{j}\}^{J^{\ast}}_{j=1}\subset \dot{H}^{1}(\R^{3})$ and parameters
\[
\left\{\lambda^{j}_{n}\right\}_{n\in \N}\subset (0,1],\quad 
\left\{t^{j}_{n}\right\}_{n\in \N}\subset \R
\quad \text{and}\quad 
\left\{x^{j}_{n}\right\}_{n\in \N}\subset \R^{3}
\]
so that for each finite $1\leq J\leq J^{\ast}$, we can write
\begin{equation}\label{Dcom}
f_{n}=\sum^{J}_{j=1} \psi_{n}^{j}+W^{J}_{n},
\end{equation}
with
\begin{equation}\label{fucti}
 \psi_{n}^{j}(x):=
\begin{cases} 
[e^{it^{j}_{n}\Delta} \psi^{j}](x-x^{j}_{n}), &\mbox{if $\lambda^{j}_{n}\equiv 1$},\\
(\lambda^{j}_{n})^{-\frac{1}{2}}[e^{it^{j}_{n}\Delta} P_{\geq (\lambda^{j}_{n})^{\theta}} \psi^{j}]\( \frac{x-x^{j}_{n}}{\lambda^{j}_{n}}\),
&\mbox{if $\lambda^{j}_{n}\rightarrow 0$},
\end{cases}
\end{equation}
for some $0<\theta<1$, satisfying the following statements:
\begin{itemize}
	\item $\lambda^{j}_{n}\equiv 1$ or $\lambda^{j}_{n}\rightarrow 0$ and $t^{j}_{n}\equiv 0$ or $t^{j}_{n}\rightarrow\pm\infty$,
	\item if $\lambda^{j}_{n}\equiv 1$ then $\left\{ \psi^{j}\right\}^{J^{\ast}}_{j=1}\subset L_{x}^{2}(\R^{3})$
\end{itemize}
for each $j$.  Moreover, we have:
\begin{itemize}[leftmargin=5mm]
	\item Smallness of the reminder: 
	\begin{equation}\label{Reminder}
\lim_{J\to J^{\ast}}\limsup_{n\rightarrow\infty}\|e^{it\Delta}W^{J}_{n}\|_{L^{10}_{t,x}(\R\times\R^{3})}=0.
   \end{equation}
		\item Weak convergence property:
		\begin{equation}\label{WeakConver}
e^{-it^{j}_{n}\Delta}[(\lambda^{j}_{n})^{\frac{1}{2}}W^{J}_{n}(\lambda^{j}_{n}x+x^{j}_{n})]\rightharpoonup 0\quad \mbox{in}\,\,
\dot{H}^{1}_{x}, \quad \mbox{for all $1\leq j\leq J$.}
     \end{equation}
			\item Asymptotic Pythagorean expansions:
		\begin{align}\label{MassEx}
		&\sup_{J}\lim_{n\to\infty}\big[M(f_{n})-\sum^{J}_{j=1}M( \psi^{j}_{n})-M(W^{J}_{n})\big]=0,\\\label{EnergyEx}
		&\sup_{J}\lim_{n\to\infty}\big[E(f_{n})-\sum^{J}_{j=1}E( \psi^{j}_{n})-E_{a}(W^{J}_{n})\big]=0.
		\end{align}
	\item Asymptotic orthogonality: for all $1\leq j\neq k\leq J^{\ast}$
	\begin{equation}\label{Ortho}
\lim_{n\rightarrow \infty}\left[ \frac{\lambda^{j}_{n}}{\lambda^{k}_{n}}+\frac{\lambda^{k}_{n}}{\lambda^{j}_{n}} 
+\frac{|x^{j}_{n}-x^{k}_{n}|^{2}}{\lambda^{j}_{n}\lambda^{k}_{n}}+
\frac{|t^{j}_{n}(\lambda^{j}_{n})^{2}-t^{k}_{n}(\lambda^{k}_{n})^{2}|}{\lambda^{j}_{n}\lambda^{k}_{n}}\right]=\infty.
\end{equation}		
	\end{itemize}
\end{proposition}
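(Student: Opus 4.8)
Since Proposition~\ref{ProfL} is recorded verbatim from \cite[Theorem~7.5]{KillipOhPoVi2017}, the ``proof'' I would give is to reproduce the argument there; here is the plan. The statement is an $H^1$ linear profile decomposition adapted to the two scalings that appear in \eqref{NLS}: the quintic term is energy-critical, so it interacts with concentration at every spatial scale, whereas the cubic term and the mass are $L^2$-subcritical and only feel profiles living at the unit scale. One therefore expects exactly two families of bubbles --- those with $\lambda_n^j\equiv1$, which carry $L^2$ mass, and those with $\lambda_n^j\to0$, which concentrate in $\dot H^1$ and have asymptotically vanishing mass. The Littlewood--Paley cutoff $P_{\geq(\lambda_n^j)^\theta}$ in \eqref{fucti} is inserted precisely so that the rescaled concentrating profiles stay inside $H^1$ while the cutoff remains invisible at the critical $\dot H^1$ level.

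First I would apply the $\dot H^1$-critical linear profile decomposition (of Bahouri--G\'erard / Keraani type) to $\{f_n\}$, which is bounded in $\dot H^1$ since it is bounded in $H^1$; this already supplies the profiles, the scales $\lambda_n^j$, the cores $x_n^j$, the times $t_n^j$, the parameter orthogonality \eqref{Ortho}, the weak-convergence property \eqref{WeakConver}, the $\dot H^1$ Pythagorean expansion, and smallness of the remainder in the critical Strichartz norm $L^{10}_{t,x}$, which is \eqref{Reminder}. Next I would normalize --- after passing to a subsequence and composing with scaling, translation, and free evolution --- to arrange $\lambda_n^j\in(0,1]$ with either $\lambda_n^j\equiv1$ or $\lambda_n^j\to0$, and $t_n^j\equiv0$ or $t_n^j\to\pm\infty$. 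For a unit-scale profile I would invoke the $L^2$-boundedness of $\{f_n\}$ and weak limits of the remainders to place $\psi^j\in L^2$. For a concentrating profile I would insert $P_{\geq(\lambda_n^j)^\theta}$ with $0<\theta<1$: by Bernstein this changes $\psi^j$ by an amount that vanishes in $\dot H^1$ as $\lambda_n^j\to0$, while the rescaled truncated profile obeys $\|\psi_n^j\|_{L^2}\lesssim(\lambda_n^j)^{1-\theta}\|\psi^j\|_{\dot H^1}\to0$, so that the concentrating profiles contribute no mass. Then I would prove the Pythagorean expansions \eqref{MassEx} and \eqref{EnergyEx}: parameter orthogonality decouples the $\dot H^1$ and $L^6$ terms (a Brezis--Lieb/refined-Fatou argument handles the $L^6$ term), the mass decouples by the vanishing $L^2$ norms of the concentrating pieces, and the lower-order cubic $L^4$ term is absorbed upon replacing $E$ by the modified energy $E_a$ of \cite{KillipOhPoVi2017} on the remainder. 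Finally, \eqref{Reminder} is upgraded through the standard inverse-Strichartz iteration: if $\|e^{it\Delta}W_n^J\|_{L^{10}_{t,x}}$ did not tend to $0$ as $J\to J^\ast$, one could extract a further nontrivial bubble, but the $\dot H^1$ Pythagorean expansion makes $\sum_j\|\psi^j\|_{\dot H^1}^2$ finite, forcing the extracted bubble sizes to zero and hence the remainder to zero along a diagonal sequence.

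The step I expect to be the main obstacle is reconciling the two scalings through the cutoff $P_{\geq(\lambda_n^j)^\theta}$: it must be shown simultaneously (i) asymptotically negligible at the $\dot H^1$ level, so that the critical profile decomposition and all of its orthogonality relations survive, and (ii) strong enough to kill the $L^2$ mass of the concentrating profiles. Coupled with this, the cross terms in the cubic nonlinearity must be tracked with enough care that \eqref{MassEx} and \eqref{EnergyEx} hold with the correct functionals --- in particular with $E_a$, rather than $E$, applied to $W_n^J$. This bookkeeping is the delicate part of the argument and is carried out in detail in \cite[Section~7]{KillipOhPoVi2017}, which I would follow.
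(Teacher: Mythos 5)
The paper offers no proof of Proposition~\ref{ProfL}: it is imported verbatim from \cite[Theorem~7.5]{KillipOhPoVi2017}, so the ``paper's proof'' is just that citation. Your plan --- follow the argument of \cite[Section~7]{KillipOhPoVi2017}, with the sketch of the two scale regimes, the frequency cutoff $P_{\geq(\lambda_n^j)^\theta}$, and the mass/energy decoupling --- is the same approach and is an accurate outline of the cited proof.
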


\subsection{Virial identity}
Given $R\geq 1$, we define
\[
w_{R}(x)=R^{2}\phi(\tfrac{x}{R})
\quad \text{and}\quad w_{\infty}(x)=|x|^{2},
\]
where $\phi$ is a real-valued and radial function satisfying
\[
\phi(x)=
\begin{cases}
|x|^{2},& \quad |x|\leq 1\\
0,& \quad |x|\geq 2,
\end{cases}
\quad \text{with}\quad 
|\partial_{x}^{\alpha}\phi(x)|\lesssim |x|^{2-|\alpha|}
\]
and $\partial_{r}\phi \geq 0$. Here, $\partial_{r}$ denotes the radial derivative.

We  also define the functional
\[
\P_{R}[u]=2\IM\int_{\R^{3}} \overline{u} \nabla u \cdot \nabla w_{R}  dx.
\]

\begin{lemma}\label{VirialIden}
Let $R\in [1, \infty]$. Let $u(t)$  be a solution of \eqref{NLS}. Then we have
\begin{equation}\label{LocalVirial}
\frac{d}{d t}\P_{R}[u]=F_{R}[u(t)],
\end{equation}
where
\begin{align*}
F_{R}[u]= \int_{\R^{3}} 4\RE \overline{u_{j}}u_{k}\partial_{jk}[w_{R}]-
|u|^{4}\Delta w_{R}+\tfrac{4}{3}|u|^{6}\Delta w_{R}-\Delta\Delta w_{R} |u|^{2}dx.
\end{align*}
Note that if $R=\infty$, then we have $F_{\infty}[u]=8V(u)$ (cf. \eqref{Virial-Functional}.
\end{lemma}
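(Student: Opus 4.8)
The statement to prove is Lemma~\ref{VirialIden}, the (localized) virial identity. The plan is to compute $\frac{d}{dt}\P_R[u]$ directly using the equation \eqref{NLS} and integration by parts, exactly as in the classical Morawetz/virial computation, keeping careful track of the nonlinear terms.

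First I would write $\P_R[u] = 2\IM\int \overline u\, \partial_j u\,\partial_j w_R\,dx$ (summing over $j$) and differentiate in time, using $\partial_t u = i\Delta u - i|u|^4 u + i|u|^2 u$ from \eqref{NLS}. This produces two groups of terms: one coming from the linear part $i\Delta u$ and one from the nonlinear part. For the linear part, the standard computation (integrate by parts, discard the imaginary parts that vanish, and use that $w_R$ is real-valued) yields the kinetic-type term $4\RE\int \overline{u_j}u_k\,\partial_{jk}w_R\,dx$ together with the bilaplacian term $-\int |u|^2\,\Delta\Delta w_R\,dx$; here I am using the shorthand $u_j=\partial_j u$. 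For the nonlinear part, I would substitute $-i|u|^4u+i|u|^2u$ and integrate by parts so that the derivative falls on $w_R$; writing $|u|^{2p}$ in terms of $G(|u|^2)$ with $G'(s)=\tfrac12 s^{p}$ type primitives, one finds the contributions $-\int |u|^4\,\Delta w_R\,dx$ from the cubic term (with the correct sign after accounting for the $-|u|^2u$ convention in \eqref{NLS}) and $+\tfrac43\int |u|^6\,\Delta w_R\,dx$ from the quintic term. Combining all of these gives precisely $F_R[u]$ as stated.

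For the special case $R=\infty$, i.e. $w_\infty(x)=|x|^2$, I would simply evaluate the derivatives: $\partial_{jk}w_\infty = 2\delta_{jk}$, so $4\RE\int\overline{u_j}u_k\,\partial_{jk}w_\infty = 8\|\nabla u\|_{\dot H^1}^2$; $\Delta w_\infty = 6$, so $-\int|u|^4\Delta w_\infty + \tfrac43\int|u|^6\Delta w_\infty = -6\|u\|_{L^4}^4 + 8\|u\|_{L^6}^6$; and $\Delta\Delta w_\infty = 0$. Dividing by $8$ recovers $V(u)$ from \eqref{Virial-Functional}, as claimed. A small remark is that for $R=\infty$ the formal computation requires $xu\in L^2$ (finite variance), or else one interprets the identity via the approximation $R\to\infty$; for finite $R$ there is no such issue since $w_R$ and all its derivatives are bounded.

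The main obstacle — or rather the only point demanding care — is bookkeeping of signs and constants in the nonlinear terms, because \eqref{NLS} is written with the focusing cubic $-|u|^2u$ and defocusing quintic $+|u|^4u$, so one must make sure the $-|u|^4\Delta w_R$ and $+\tfrac43|u|^6\Delta w_R$ terms come out with exactly these coefficients. A clean way to handle this uniformly is to note that for a nonlinearity $N(u)=f(|u|^2)u$ the contribution to $\frac{d}{dt}\P_R[u]$ is $\int \big(2|u|^2 f'(|u|^2)|u|^2 + \text{(lower order)}\big)\Delta w_R\,dx$ — more precisely, writing $F(s)=\int_0^s f$, the nonlinear contribution is $-\int \big(2 F(|u|^2) - \ldots\big)\Delta w_R$; I would carry out this one-line reduction carefully and then specialize $f(s)=-s+s^2$. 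Everything else is routine integration by parts justified by the fact that $u(t)\in H^1$ (Theorem~\ref{Th1}) and $w_R\in C^\infty_c$ for $R<\infty$.
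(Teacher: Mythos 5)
Your proposal is correct and follows exactly the standard virial/Morawetz computation that the paper treats as routine (it gives no proof of this lemma): differentiating $2\IM\int\bar u\,\nabla u\cdot\nabla w_R\,dx$ along the flow, the kinetic and bilaplacian terms come out as stated, the focusing cubic $-|u|^2u$ and defocusing quintic $+|u|^4u$ produce precisely $-\int|u|^4\Delta w_R\,dx$ and $+\tfrac{4}{3}\int|u|^6\Delta w_R\,dx$, and the $R=\infty$ evaluation with $\partial_{jk}|x|^2=2\delta_{jk}$, $\Delta|x|^2=6$, $\Delta^2|x|^2=0$ indeed yields $8V(u)$. The only blemish is the harmless notational slip $8\|\nabla u\|_{\dot H^1}^2$, which should read $8\|u\|_{\dot H^1}^2=8\|\nabla u\|_{L^2}^2$; your remark that the $R=\infty$ identity is formal unless one has finite variance (or passes to the limit $R\to\infty$) is also apt, since the paper only ever uses $F_\infty[u]$ as a quantity rather than differentiating the untruncated functional.
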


\begin{lemma}\label{Virialzero}
Let $R\in [1, \infty]$, $\theta\in \R$ and $y\in\R$. Then we have
\[
F_{R}[e^{i\theta}P_{\omega}(\cdot-y)]=0.
\]
\end{lemma}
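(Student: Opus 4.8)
The plan is to prove Lemma~\ref{Virialzero} by exploiting the two structural facts that make $e^{i\theta}P_\omega(\cdot-y)$ special: (i) the functional $F_R$ is invariant under the phase rotation $u\mapsto e^{i\theta}u$ and under spatial translation composed with a corresponding translation of the weight, and (ii) $F_R$ evaluated at a stationary solution of \eqref{GroundS} reduces to something that vanishes because $P_\omega$ is a critical point of the relevant action. First I would observe that $\partial_{jk}[w_R]$, $\Delta w_R$, and $\Delta\Delta w_R$ are all \emph{radial} in $x-y$ after translating, and that $P_\omega$ is real-valued and radial; the phase $e^{i\theta}$ drops out because every term in $F_R$ is quadratic (or higher) in $u$ with matching conjugates, in particular $\RE(\overline{u_j}u_k)$ is phase-invariant. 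So it suffices to show $F_R[P_\omega(\cdot-y)]=0$, and by translating the weight (note the statement is about the fixed weight $w_R$, so I must keep $y$) I reduce to understanding $F_R$ applied to a radial real function.

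The key computation is to recognize $F_R[u]$ as $\frac{d}{dt}\P_R[u]$ along the flow evaluated at $u=e^{i\omega t}P_\omega(\cdot-y)$, which is a genuine (standing-wave) solution of \eqref{NLS}. Along this solution $\P_R[u(t)]=2\IM\int \overline{e^{i\omega t}P_\omega}\,\nabla(e^{i\omega t}P_\omega)\cdot\nabla w_R\,dx = 2\IM\int P_\omega \nabla P_\omega\cdot\nabla w_R\,dx$, since the time-dependent phases cancel and $P_\omega$ is real; but $2\IM(P_\omega\nabla P_\omega)=0$ because $P_\omega\nabla P_\omega$ is real-valued. Hence $\P_R[e^{i\omega t}P_\omega(\cdot-y)]\equiv 0$ for all $t$, and therefore by Lemma~\ref{VirialIden}, $F_R[e^{i\omega t}P_\omega(\cdot-y)] = \frac{d}{dt}\P_R[u](t) = 0$. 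Setting $t=0$ gives $F_R[P_\omega(\cdot-y)]=0$, and combined with the phase-invariance of $F_R$ noted above, this yields $F_R[e^{i\theta}P_\omega(\cdot-y)]=0$ for all $\theta$.

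I would present it in this order: first note $F_R$ is invariant under $u\mapsto e^{i\theta}u$ (immediate from the explicit formula, since each term pairs $u$ with $\overline u$); then invoke Lemma~\ref{VirialIden} with the explicit standing-wave solution $u(t)=e^{i\omega t}P_\omega(\cdot-y)$ of \eqref{NLS}, which is legitimate because $P_\omega$ solves \eqref{GroundS}; then compute $\P_R[u(t)]$ and observe it vanishes identically in $t$ because $P_\omega$ is real-valued (so $\IM(\overline{P_\omega}\nabla P_\omega)=0$); conclude $F_R[e^{i\omega t}P_\omega(\cdot-y)]=\frac{d}{dt}\P_R[u(t)]=0$ and specialize $t=0$. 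I do not expect any serious obstacle here: the only point requiring a word of care is that $\P_R$ and $F_R$ refer to the fixed weight $w_R$ (not a translated one), so the reality/radiality argument must be applied to $P_\omega(\cdot-y)$ directly rather than after recentering — but the conclusion $\IM(\overline{P_\omega(\cdot-y)}\nabla P_\omega(\cdot-y))=0$ holds regardless of the translation, so this causes no difficulty. An alternative purely computational route is to integrate by parts in $F_R[P_\omega(\cdot-y)]$ and use the equation \eqref{GroundS} together with the Pohozaev-type identities \eqref{PhIden}, but the dynamical argument above is cleaner and avoids that bookkeeping.
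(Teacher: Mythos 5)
Your proof is correct. The paper states Lemma~\ref{Virialzero} without proof, treating it as an immediate consequence of the equation; your dynamical argument is a clean way to supply the missing justification: $u(t)=e^{i\omega t}P_\omega(\cdot-y)$ solves \eqref{NLS} because $P_\omega$ solves \eqref{GroundS}, the quantity $\P_R[u(t)]=2\IM\int P_\omega(\cdot-y)\nabla P_\omega(\cdot-y)\cdot\nabla w_R\,dx$ vanishes identically since the integrand is real, and Lemma~\ref{VirialIden} then gives $F_R[u(t)]=\tfrac{d}{dt}\P_R[u(t)]=0$; phase invariance of $F_R$ (or simply choosing $t$ with $\omega t=\theta$) finishes the claim, and the case $R=\infty$ is unproblematic for the exponentially decaying soliton. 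The alternative you mention, integrating by parts and using \eqref{GroundS} together with the Pohozaev-type identities \eqref{PhIden}, is the purely computational route the paper implicitly has in mind; both are fine.
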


\begin{lemma}\label{VirialModulate11}
Let $u$ be the solution to \eqref{NLS} on an interval $I_{0}$. Let $R\in [1, \infty]$, $\chi: I_{0}\to \R$, $\theta: I_{0}\to \R$,
$y: I_{0}\to \R$. Then for all $t\in\R$,
\begin{align}\nonumber
	\frac{d}{d t}\P_{R}[u]&=F_{\infty}[u(t)]\\ \label{Modu11}
	                     &+F_{R}[u(t)]-F_{\infty}[u(t)]\\\label{Modu22}
											 &-\chi(t)\big\{F_{R}[e^{i\theta(t)}P_{\omega}(\cdot-y(t))]-F_{\infty}[e^{i\theta(t)}P_{\omega}(\cdot-y(t))]\big\}.
\end{align}

\end{lemma}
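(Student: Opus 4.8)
The plan is to observe that this identity is nothing more than a bookkeeping rearrangement of the exact virial identity of Lemma~\ref{VirialIden}, with two manifestly vanishing terms inserted so as to expose the structure that will be exploited later. First I would invoke Lemma~\ref{VirialIden} directly to get $\frac{d}{dt}\P_R[u]=F_R[u(t)]$ for all $t\in I_0$. The right-hand side of the claimed identity consists of the term $F_\infty[u(t)]$ in the first line, the term $F_R[u(t)]-F_\infty[u(t)]$ in \eqref{Modu11}, and the term $-\chi(t)\{F_R[e^{i\theta(t)}P_\omega(\cdot-y(t))]-F_\infty[e^{i\theta(t)}P_\omega(\cdot-y(t))]\}$ in \eqref{Modu22}. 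The first two of these telescope to $F_R[u(t)]$, so it suffices to show that the modulated term in \eqref{Modu22} vanishes identically.

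For that, apply Lemma~\ref{Virialzero} with the particular parameter values $\theta=\theta(t)$ and $y=y(t)$, once with the given finite $R$ and once with $R=\infty$: this gives $F_R[e^{i\theta(t)}P_\omega(\cdot-y(t))]=0$ and $F_\infty[e^{i\theta(t)}P_\omega(\cdot-y(t))]=0$, hence the bracket in \eqref{Modu22} is zero for every $t$, and multiplying by $-\chi(t)$ leaves zero. Combining this with the telescoping of the first two terms, the entire right-hand side equals $F_R[u(t)]=\frac{d}{dt}\P_R[u]$, which is the assertion. Differentiability of $t\mapsto\P_R[u(t)]$ and the validity of the identity in the classical sense are already provided by Lemma~\ref{VirialIden}, so no additional regularity argument is needed.

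I do not expect any real obstacle here: the content of the lemma is purely organizational. It isolates the principal virial term $F_\infty[u(t)]=8V(u(t))$, the spatially localized error $F_R[u(t)]-F_\infty[u(t)]$ (supported in the region where $w_R$ differs from $|x|^2$), and a modulated counterterm that, although identically zero, is recorded in this form because in the later application, when $u(t)$ is compared with $\chi(t)e^{i\theta(t)}P_\omega(\cdot-y(t))$, the difference $F_R-F_\infty$ of the two profiles can then be estimated on their common tail region. The only points worth a sanity check are that $\chi,\theta,y$ are allowed to be arbitrary functions of $t$ (no hypotheses are imposed, which is consistent with the subtracted term vanishing regardless of their values), and that $R=\infty$ is permitted in Lemma~\ref{Virialzero}, so that $F_\infty[e^{i\theta(t)}P_\omega(\cdot-y(t))]=0$ is indeed available.
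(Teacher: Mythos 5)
Your proposal is correct and is exactly the intended argument: the paper states this lemma without proof precisely because it is the identity $\frac{d}{dt}\P_R[u]=F_R[u(t)]$ of Lemma~\ref{VirialIden} with the zero quantity $F_\infty[u]-F_\infty[u]$ inserted and the bracket in \eqref{Modu22} killed by Lemma~\ref{Virialzero} (valid for $R\in[1,\infty]$, so in particular $F_\infty[e^{i\theta(t)}P_\omega(\cdot-y(t))]=8V(P_\omega)=0$). Nothing further is needed, and your remark that $\chi,\theta,y$ may be arbitrary since the subtracted term vanishes identically is the right sanity check.
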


%%%%%%%%%%%%%%%%%%%%%%%%%
\section{Spectral properties of the linearized operator}\label{S:Spectral}
Let $u$ be a solution to \eqref{NLS}, and define $h=h_1+ih_2$ via
\[
h(t,x):=e^{-i \omega t}u(t,x)-P_{\omega}(x)
\]
Then, defining the operators $L_\pm$ (acting on $L^2(\R^3;\R)$) via
\begin{align*}
L_{+}h_{1}&=-\Delta h_{1}+\omega h_{1}-3P_{\omega}^{2}h_{1}+5P_{\omega}^{4}h_{1},\\
L_{-}h_{2}&=-\Delta h_{2}+\omega h_{2}-P_{\omega}^{2}h_{2}+P_{\omega}^{4}h_{2},	
\end{align*}
and setting
\begin{equation}\label{Resi}
\begin{split}
R(h)&=|h|^{2}h-h|h|^{4}+P_{\omega}\big( 2|h|^{2}+h^{2}-2|h|^{2}h^{2}-3|h|^{4}\big)\\
&-2P^{2}_{\omega}\big(\tfrac{1}{2}h^{3}+3|h|^{2}h +\tfrac{3}{2}|h|^{2}\overline{h}\big)
-2P^{3}_{\omega}\big( 2|h|^{2}+\tfrac{5}{2}h^{2}+\tfrac{1}{2}(\overline{h})^{2}\big),
\end{split}
\end{equation}
we find that $h$ satisfies the equation
\begin{equation}\label{Decomh}
\partial_{t}h+\L h=iR(h), \quad \text{where} \quad
\L:=\begin{pmatrix}
0 & -L_{-} \\
L_{+} & 0
\end{pmatrix}.
\end{equation}

The spectra of $L_{+}$ and $L_{-}$ consist of essential spectrum in $[\omega, \infty)$ and of a finite number of eigenvalues
in $(-\infty, \tilde{\omega}]$ for all $\tilde{\omega}<\omega$. Since $L_{-} P_{\omega}=0$ with $P_{\omega}>0$, it follows that
${\rm ker} \left\{L_{-}\right\}=\mbox{span}\left\{P_{\omega} \right\}$. In particular, by the Min-Max characterization of eigenvalues 
we have there exists $\eta>0$ such that
\begin{equation}\label{Lmenos}
\< L_{-} v, v \>\geq \eta \|v\|^{2}_{L^{2}}\quad \text{for $v\in H^{1}(\R^{3})$ with $(v, P_{\omega})_{L^{2}}=0$}.
\end{equation}

On the other hand, $L_{+}$ has only one negative eigenvalue $-\lambda_{1}$ with a corresponding  
eigenfunction $e_{1}\in H^{2}(\R^{3})$ (cf. \cite[Theorem 2.2]{KillipOhPoVi2017}). We assume that $\|e_{1}\|_{L^{2}}=1$. The second eigenvalue is $0$
and 
\[
{\rm ker} \left\{L_{+}\right\}=\mbox{span}\left\{\partial_{j}P_{\omega}: j=1,2, 3 \right\}.
\]
In particular, the space $H^{1}(\R^{3})$ can be decomposed into
\begin{equation}\label{Hdecom}
H^{1}(\R^{3})=\mbox{span}\left\{e_{1}\right\}\oplus\mbox{span}\left\{\partial_{j}P_{\omega}; j=1,2, 3 \right\}
\oplus E_{+},
\end{equation}
where  $L_{+}$ defines a positive definite quadratic form on $E_{+}$. Notice that in the direct sum \eqref{Hdecom} 
the spaces are mutually orthogonal with the inner product of $L^{2}(\R^{3})$.

We denote by $\F(g,h)$ the bilinear symmetric form 
\begin{equation}\label{Quadratic}
\F(g,h):=\tfrac{1}{2}\<L_{+}g_{1}, h_{1}  \>+\tfrac{1}{2}\<L_{-}g_{2}, h_{2}\>,
\end{equation}
where $g=g_1+ig_2$ and $h=h_1+ih_2$.  We denote $\F(h,h)$ by $\F(h)$, i.e.
\begin{equation}\label{LinearizedE}
\F(h)=\tfrac{1}{2}\int_{\R^{3}}|\nabla h|^{2}dx+\tfrac{1}{2}P^{4}_{\omega}(5h^{2}_{1}+h^{2}_{2})
-\tfrac{1}{2}P^{2}_{\omega}(3 h^{2}_{1}+h^{2}_{2})+\tfrac{\omega}{2}|h|^{2}\, dx.
\end{equation}

Note that
\begin{equation}\label{OrthoHB}
\F(i P_{\omega}, h)=\F(\partial_{j}P_{\omega}, h)=0\quad \text{for all $h\in H^{1}(\R^{3})$}.
\end{equation}

We have the following result.
\begin{lemma}\label{CoerQuadra}
There exists $C>0$ such that for every $h=h_{1}+ih_{2}\in H^{1}(\R^{3})$ satisfying
%\begin{align}\label{Qorto11}
%&\<h_{1}, \partial_{1}P_{\omega}\>=\<h_{1}, \partial_{2}P_{\omega}\>=\<h_{1}, \partial_{3}P_{\omega}\>=\<h_{2}, P_{\omega}\>=0,\\
%\label{Qorto22}
%&\<h_{1}, \Delta P_{\omega}\>=0,
%\end{align}
{  
\begin{align}\label{Qorto11}
&\<h_{1}, \partial_{1}P_{\omega}\>=\<h_{1}, \partial_{2}P_{\omega}\>=\<h_{1}, \partial_{3}P_{\omega}\>=\<h_{2}, P_{\omega}\>=0
\end{align}
and either 
\begin{align}
\label{Qorto22}
&\<h_{1}, L_{+}(x\cdot \nabla P_{\omega})\>=\<h_{1}, -2\Delta P_{\omega}\>=0;
\end{align}
or 
\begin{align}
\label{QortoX334}
&\<h_{1}, L_{+}(x\cdot \nabla P_{\omega})\>\geq 0;
\end{align}
or
\begin{align}
\label{QortoX33455}
&\<h_{1}, L_{+}(P_{\omega})\>\geq 0,
\end{align}
}
then we have
\[
\F(h)\geq C \|h\|^{2}_{H^{1}}.
\]
\end{lemma}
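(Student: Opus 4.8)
The plan is to establish the coercivity of $\F$ on the codimension-five subspace cut out by the orthogonality conditions \eqref{Qorto11}--\eqref{Qorto22}, and then to upgrade this to the one-sided (inequality) conditions \eqref{QortoX334} and \eqref{QortoX33455} by a separate argument. For the core estimate, first I would decompose $h_1$ using the spectral decomposition \eqref{Hdecom}: write $h_1=a e_1+\sum_j b_j\partial_j P_\omega+p$ with $p\in E_+$. The conditions $\<h_1,\partial_j P_\omega\>=0$ do not immediately kill the $b_j$ (the spaces in \eqref{Hdecom} are $L^2$-orthogonal, so in fact $\<\partial_j P_\omega,\partial_k P_\omega\>$ need not vanish — but the decomposition is $L^2$-orthogonal as stated, so $\<h_1,\partial_j P_\omega\>=0$ forces $b_j=0$). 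Thus $h_1=ae_1+p$, and $\<L_+h_1,h_1\>=-\lambda_1 a^2+\<L_+p,p\>$ with $\<L_+p,p\>\gtrsim\|p\|_{H^1}^2$ by positivity of $L_+$ on $E_+$. For the $h_2$ piece, \eqref{Lmenos} gives $\<L_-h_2,h_2\>\geq\eta\|h_2\|_{L^2}^2$, which together with the explicit form of $L_-$ upgrades to an $H^1$ bound: $\<L_-h_2,h_2\>\gtrsim\|h_2\|_{H^1}^2$. So everything reduces to controlling the single bad direction $a$.

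The standard device here is the sign of $L_+$ on $x\cdot\nabla P_\omega$ (or on $P_\omega$). Using the Pohozaev/scaling identities \eqref{PhIden} one computes $\<L_+(x\cdot\nabla P_\omega), x\cdot\nabla P_\omega\>$ and checks it is strictly negative (this is where $(m,e)\in\partial\K_s$, i.e. the soliton being a virial obstruction rather than a rescaled one, enters — it guarantees the relevant scaling derivative of the energy has the right sign, so that $x\cdot\nabla P_\omega$ is a direction of negativity for $L_+$ transverse to the kernel). Since $x\cdot\nabla P_\omega\notin\ker L_+$ and $\<L_+ w,w\><0$ for $w=x\cdot\nabla P_\omega$, the negative eigenvalue $-\lambda_1$ is "used up" by $w$: more precisely, decomposing $w=\mu e_1+(\text{kernel})+w_+$ with $w_+\in E_+$, negativity forces $\mu\neq 0$. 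Now the orthogonality $\<h_1,L_+(x\cdot\nabla P_\omega)\>=\<L_+ h_1, x\cdot\nabla P_\omega\>=0$ becomes a linear constraint relating $a$ to $p$: expanding, $0=-\lambda_1 a\mu + \<L_+ p, w_+\>$, which gives $|a|\lesssim\|p\|_{H^1}$, since $\mu\neq 0$. (The second condition $\<h_1,-2\Delta P_\omega\>=0$, i.e. $\<h_1,L_+ P_\omega - \text{l.o.t.}\>=0$ via \eqref{GroundS}, plays the analogous role and/or provides a second constraint handling a degenerate case; I would use whichever of $x\cdot\nabla P_\omega$, $P_\omega$ gives a nonzero pairing with $e_1$.) Substituting $|a|^2\lesssim\|p\|_{H^1}^2$ into $-\lambda_1 a^2 + \<L_+p,p\>\geq -\lambda_1 a^2 + c\|p\|_{H^1}^2$ and noting $\|h_1\|_{H^1}^2\sim a^2+\|p\|_{H^1}^2$ yields $\<L_+h_1,h_1\>\gtrsim\|h_1\|_{H^1}^2$, and combining with the $h_2$ estimate gives $\F(h)\gtrsim\|h\|_{H^1}^2$ under \eqref{Qorto11}--\eqref{Qorto22}.

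To handle the one-sided versions \eqref{QortoX334} (resp.\ \eqref{QortoX33455}), I would use a perturbation/interpolation trick: given $h$ satisfying \eqref{Qorto11} and only $\<h_1,L_+(x\cdot\nabla P_\omega)\>\geq 0$, consider $\tilde h_1 = h_1 - s\, r$ where $r$ is a suitable multiple of the component of $x\cdot\nabla P_\omega$ (or $e_1$) orthogonal to the $\partial_j P_\omega$, chosen so that $\tilde h$ satisfies the full pair \eqref{Qorto22}; the one-sided inequality controls the sign of $s$, and since $\<L_+ w,w\><0$ the correction only \emph{decreases} $\F$, so $\F(h)\geq \F(\tilde h)\gtrsim\|\tilde h\|_{H^1}^2\gtrsim\|h\|_{H^1}^2$ after checking the subtracted piece is controlled. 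Alternatively and more cleanly, a standard contradiction/compactness argument: if coercivity failed there would be a sequence $h^{(n)}$ with $\|h^{(n)}\|_{H^1}=1$, the orthogonality/inequality conditions, and $\F(h^{(n)})\to 0$; weak limits and the conditions would force the limit into $\ker$ of the form, contradicting either \eqref{Qorto11} or the strict negativity.

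The main obstacle I anticipate is the computation establishing that $\<L_+(x\cdot\nabla P_\omega),x\cdot\nabla P_\omega\><0$ (equivalently that the relevant second variation/scaling derivative of the energy along the rescaling $R_\omega$ has the correct sign) precisely on the soliton part $\partial\K_s$ of the boundary, and ensuring the pairing with $e_1$ that enters the constraint is nonzero; this is exactly the place where the hypothesis $(m,e)\in\partial\K_s$ (soliton, not rescaled soliton) must be invoked, presumably via the variational analysis of \cite{KillipOhPoVi2017} and identities \eqref{PhIden}. Everything else — the spectral decomposition, the $H^1$-upgrade of the $L^2$-coercivity of $L_\pm$, and the linear-algebra manipulation of the constraint — is routine.
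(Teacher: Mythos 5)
Your overall skeleton does match the paper's proof: decompose $h_1=ae_1+p$ with $p\in E_+$ (the kernel directions being removed by \eqref{Qorto11}), exploit the special direction $g_\omega=\tfrac12 x\cdot\nabla P_\omega$ with $L_+g_\omega=-\Delta P_\omega$ and $\langle L_+g_\omega,g_\omega\rangle<0$ (resp.\ $g_\omega=P_\omega$ for \eqref{QortoX33455}), treat $h_2$ via \eqref{Lmenos}, and get the constant by a compactness argument. But the decisive step is invalid as written. From the constraint $0=-\lambda_1 a\mu+\langle L_+p,w_+\rangle$ you extract only the soft bound $|a|\lesssim\|p\|_{H^1}$ and then ``substitute'' into $-\lambda_1a^2+\langle L_+p,p\rangle$. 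That does not give positivity, let alone coercivity: if the implicit constant in $|a|\lesssim\|p\|_{H^1}$ is larger than $\sqrt{c/\lambda_1}$ (with $c$ the coercivity constant of $L_+$ on $E_+$), the quantity $-\lambda_1a^2+c\|p\|_{H^1}^2$ can be negative. What is needed, and what the paper does, is the Cauchy--Schwarz inequality for the positive form $\langle L_+\cdot,\cdot\rangle$ on $E_+$, namely $\langle L_+p,w_+\rangle^2\le \langle L_+p,p\rangle\,\langle L_+w_+,w_+\rangle$, which together with the constraint gives $\langle L_+p,p\rangle\ge \lambda_1^2a^2\mu^2/\langle L_+w_+,w_+\rangle$ and hence $-\lambda_1a^2+\langle L_+p,p\rangle\ge -\lambda_1a^2\,\langle L_+g_\omega,g_\omega\rangle/\langle L_+w_+,w_+\rangle>0$: the strict negativity of $\langle L_+g_\omega,g_\omega\rangle$ is used \emph{quantitatively}, not merely to ensure $\mu\neq0$. (Strict positivity is then upgraded to $\geq C\|h\|_{H^1}^2$ by the compactness argument, which you do have.) Incidentally, the negativity $\langle L_+(x\cdot\nabla P_\omega),x\cdot\nabla P_\omega\rangle<0$ is an unconditional identity (Table 2.1 of \cite{KillipOhPoVi2017}); no input from $(m,e)\in\partial\K_s$ is needed, so the place you flagged as the main obstacle is not where the difficulty lies.

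The one-sided cases \eqref{QortoX334}, \eqref{QortoX33455} are also not handled by your sketch. The perturbation trick points the wrong way: if $s$ is chosen so that $\tilde h_1:=h_1-sw$ satisfies $\langle \tilde h_1,L_+w\rangle=0$, then $\langle L_+h_1,h_1\rangle=\langle L_+\tilde h_1,\tilde h_1\rangle+s^2\langle L_+w,w\rangle\le \langle L_+\tilde h_1,\tilde h_1\rangle$, so coercivity for $\tilde h$ yields an upper bound, not a lower bound, for $\F(h)$. The compactness alternative presupposes strict positivity on the one-sided constraint set, which is exactly the point at issue, so it cannot serve as a substitute. The paper instead reruns the Step-1 algebra with the equality $\langle L_+\psi,\zeta\rangle=ab\lambda_1$ replaced by the corresponding inequality (and, for \eqref{QortoX33455}, with $g_\omega=P_\omega$, using $\langle L_+P_\omega,P_\omega\rangle=\tfrac43(\beta(\omega)-2)\|\nabla P_\omega\|_{L^2}^2<0$); the sign bookkeeping there is delicate and is precisely what your argument would need to supply.
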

\begin{proof} We first prove that for all $f\in H^{1}(\R^{3})\backslash\left\{ 0\right\}$ satisfying
%\begin{equation}\label{LmasCon}
%\<f, \Delta P_{\omega}\>=\<f, \partial_{j}P_{\omega}\>=0 %\quad \text{for all $j=1,2,3$,}
%\end{equation}
{  
\begin{equation}\label{LmasCon}
\tfrac{1}{2}\<f, L_{+}(x\cdot \nabla P_{\omega})\>=\<f, -\Delta P_{\omega}\>=\<f, \partial_{j}P_{\omega}\>=0 \quad \text{for all $j=1,2,3$,}
\end{equation}
}
we have
\begin{equation}\label{lem-step1}
\<L_{+}f,f\>>0.
\end{equation}
Indeed, setting $g_{\omega}:= \frac{1}{2} x\cdot \nabla P_{\omega}$, it follows from straightforward calculations (see \cite[Table 2.1]{KillipOhPoVi2017}) that 
\[
L_{+}g_{\omega}=-\Delta P_{\omega}\quad \text{and} \quad \<L_{+}g_{\omega}, g_{\omega}  \>=-\tfrac{1}{2}\|g_{\omega}\|^{2}_{\dot{H}^{1}}<0.
\]
Notice also that $(g_{\omega}, \partial_{j}P_{\omega})_{L^{2}}=0$ for all $j=1,2,3$. By the decomposition \eqref{Hdecom}, 
and \eqref{LmasCon} we infer that 
there exist $a$, $b\in \R$ and $\psi$, $\zeta\in E_{+}$ so that
\[
f=ae_{1}+\psi\qtq{and}g_{\omega}=be_{1}+\zeta.
\]
As $\<L_{+}g_{\omega}, g_{\omega}  \><0$, it follows that $b\neq 0$.

If $a=0$, then it is clear that  $\<L_{+}f,f\>=\<L_{+}\psi, \psi\>>0$ (recall that $f\neq 0$).  Suppose instead that $a\neq 0$. As $L_{+}$ defines a positive definite quadratic form on $E_{+}$, we have the Cauchy--Schwartz inequality 
\[
\<L_{+}\psi,\zeta\>^{2}\leq \<L_{+}\psi,\psi\>\<L_{+}\zeta,\zeta\>,
\]
which implies
\begin{equation}\label{AuxLe}
\<L_{+}f,f\>=-a^{2}\lambda_{1}+\<L_{+}\psi,\psi\>\geq 
-a^{2}\lambda_{1}+\frac{\<L_{+}\psi,\zeta\>^{2}}{\<L_{+}\zeta,\zeta\>}.
\end{equation}
Moreover, by using the fact that $L_{+}g_{\omega}=-\Delta P_{\omega}$
 and the orthogonality condition $\<f, \Delta P_{\omega}\>=0$, we have 
\[
0=-\<f, \Delta P_{\omega}\>=\<f, L_{+}g_{\omega}\>=-ab\lambda_{1}+\<L_{+}\psi,\zeta\>.
\]
Thus, $\<L_{+}\psi,\zeta\>=ab\lambda_{1}$, which implies
\begin{align*}
-a^{2}\lambda_{1}+\frac{\<L_{+}\psi,\zeta\>^{2}}{\<L_{+}\zeta,\zeta\>}
&=\frac{-a^{2}\lambda_{1}\<L_{+}\zeta,\zeta\>+a^{2}b^{2}\lambda^{2}_{1}}{\<L_{+}\zeta,\zeta\>}\\
&=	\frac{-a^{2}\lambda_{1}\<L_{+}g_{\omega}, g_{\omega}  \> }{\<L_{+}\zeta,\zeta\>}>0.
\end{align*}
Combined with \eqref{AuxLe}, this yields $\<L_{+}f, f \>>0$.

Next, we show that under conditions \eqref{LmasCon} there exists $C_{+}>0$ such that
\[
\<L_{+}f, f \>\geq C_{+}\|f\|^{2}_{H^{1}}.
\]
Suppose instead that there exists a sequence $\left\{f_{n}\right\}_{n\in \N}\subset H^{1}(\R^{3})$
such that $\<f_{n}, \Delta P_{\omega}\>=\<f_{n}, \partial_{j}P_{\omega}\>=0$ for all $j=1,2,3$,
\[
\|\nabla f_{n}\|^{2}_{L^{2}}+\omega \| f_{n}\|^{2}_{L^{2}}=1
\quad \text{and} \quad
\<L_{+}f_{n}, f_{n} \>\ \to 0 
\]
as $n\to \infty$. It is clear that $\left\{f_{n}\right\}_{n\in \N}$ is bounded in $ H^{1}(\R^{3})$. Therefore, there 
exists $f$ such that $f_{n}\rightharpoonup f$ in $H^{1}(\R^{3})$.  Notice that by the exponential decay of $P_{\omega}$ we get
as $n\to \infty$
\begin{align}\label{weakCondi11}
&5\int_{\R^{3}}	P^{4}_{\omega}f^{2}_{n}dx-3\int_{\R^{3}}P^{2}_{\omega}f^{2}_{n}dx\to  5\int_{\R^{3}}	P^{4}_{\omega}f^{2}dx-3\int_{\R^{3}}P^{2}_{\omega}f^{2}dx,\\ \label{weakCondi}
&\<f, \Delta P_{\omega}\>=\<f, \partial_{j}P_{\omega}\>=0 \quad \text{for all $j=1,2,3$}.
\end{align}
In particular, by weak lower semi-continuity of the $H^{1}(\R^{3})$- norm we obtain
\begin{equation}\label{weakZero}
\<L_{+}f,f\>\leq \liminf _{n\to \infty}\<L_{+}f_{n},f_{n}\>=0.
\end{equation}
Combing \eqref{weakCondi}, \eqref{weakZero}, and \eqref{lem-step1},  we see that $f\equiv 0$. However,
\[
3\int_{\R^{3}}P^{2}_{\omega}f^{2}-5\int_{\R^{3}}	P^{4}_{\omega}f^{2}
=1- \liminf _{n\to \infty}\<L_{+}f_{n},f_{n}\>=1,
\]
which thus yields a contradiction. Therefore, $\<L_{+}f, f \>\geq C_{+}\|f\|^{2}_{H^{1}}$ for some constant $C_{+}>0$.

On the other hand, by \eqref{Lmenos} and using an argument similar to the above we infer that there exists $C_{-}>0$ 
such that $\<L_{-}f, f \>\geq C_{-}\|f\|^{2}_{H^{1}}$. Therefore, for $h=h_{1}+ih_{2}\in H^{1}(\R^{3})$ we have
\[
\F(h)\geq \tfrac{C_{+}+C_{-}}{2}\|h\|^{2}_{H^{1}}.
\]
{ %\color{magenta} 
Next, assume that $\<h_{1}, L_{+}(x\cdot \nabla P_{\omega})\>\geq 0$. The proof follows using the same argument
(with some obvious modifications) used above.

Finally, assume that $\<h_{1}, L_{+}(P_{\omega})\>\geq 0$.  In this case, setting $g_{\omega}:=P_{\omega}$ we have 
(see \cite[Table 2.1]{KillipOhPoVi2017})
\[
L_{+}g_{\omega}=4P^{5}_{\omega}-2P^{3}_{\omega},
\quad \quad \<L_{+}g_{\omega}, g_{\omega}  \>=\tfrac{4}{3}(\beta(\omega)-2)\|P_{\omega}\|^{2}_{\dot{H}^{1}}<0,
\]
and $(g_{\omega}, \partial_{j}P_{\omega})_{L^{2}}=0$ for all $j=1,2,3$. Now, by applying the same argument as above
to $g_{\omega}=P_{\omega}$, we find that $\F(h)\geq C \|h\|^{2}_{H^{1}}$ holds for all $f\in H^{1}(\R^{3})$.}
\end{proof}

{ %\color{magenta} 
\begin{corollary}\label{ConHPG}
There exists $C>0$ such that for every $h=h_{1}+ih_{2}\in H^{1}(\R^{3})$ satisfying
\begin{align}\label{CoroC11}
&\<h_{1}, \partial_{1}P_{\omega}\>=\<h_{1}, \partial_{2}P_{\omega}\>=\<h_{1}, \partial_{3}P_{\omega}\>=\<h_{2}, P_{\omega}\>=0
\end{align}
and
\begin{align}\label{CoroC22}
&\<h_{1}, L_{+}(x\cdot \nabla P_{\omega}+\tfrac{3}{2}P_{\omega})\>=0,
\end{align}
we have
\[
\F(h)\geq C \|h\|^{2}_{H^{1}}.
\]
\end{corollary}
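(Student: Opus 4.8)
# Proof Proposal for Corollary \ref{ConHPG}

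The plan is to reduce this statement directly to Lemma \ref{CoerQuadra}, since the orthogonality condition \eqref{CoroC22} is designed to match one of the hypotheses \eqref{Qorto22}, \eqref{QortoX334}, or \eqref{QortoX33455}. The key observation is that $L_+$ is self-adjoint, so $\langle h_1, L_+(x\cdot\nabla P_\omega + \tfrac32 P_\omega)\rangle = \langle L_+ h_1, x\cdot\nabla P_\omega\rangle + \tfrac32\langle L_+ h_1, P_\omega\rangle = \langle h_1, L_+(x\cdot\nabla P_\omega)\rangle + \tfrac32\langle h_1, L_+(P_\omega)\rangle$. So condition \eqref{CoroC22} says that a particular linear combination of the two quantities appearing in \eqref{QortoX334} and \eqref{QortoX33455} vanishes; hence at least one of them is $\geq 0$, and Lemma \ref{CoerQuadra} applies.

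More carefully, I would argue as follows. Suppose $h = h_1 + ih_2 \in H^1(\R^3)$ satisfies \eqref{CoroC11} and \eqref{CoroC22}. Set $a := \langle h_1, L_+(x\cdot\nabla P_\omega)\rangle$ and $b := \langle h_1, L_+(P_\omega)\rangle$. By self-adjointness of $L_+$ and \eqref{CoroC22}, we have $a + \tfrac32 b = 0$, so $a$ and $b$ have opposite signs (or both vanish). If $a \geq 0$, then \eqref{QortoX334} holds and Lemma \ref{CoerQuadra} gives $\F(h) \geq C\|h\|_{H^1}^2$. If $a < 0$, then $b = -\tfrac23 a > 0$, so \eqref{QortoX33455} holds, and again Lemma \ref{CoerQuadra} applies. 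In either case we obtain the claimed coercivity, with $C$ the minimum of the two constants produced by Lemma \ref{CoerQuadra} (which may be taken to be the single constant of that lemma, since it is stated uniformly over the three alternative hypotheses).

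I do not expect any genuine obstacle here; the corollary is essentially a bookkeeping consequence of the lemma, and the only point requiring care is the self-adjointness manipulation, which is justified because $P_\omega$ and $x\cdot\nabla P_\omega$ both lie in $H^2(\R^3)$ (indeed they decay exponentially, so $L_+$ maps them into $L^2$) and $h_1 \in H^1(\R^3)$, so all the pairings $\langle h_1, L_+(\cdot)\rangle$ and $\langle L_+ h_1, \cdot\rangle$ are well-defined and equal in the sense of the duality pairing. The reason this corollary is worth stating separately is presumably that the single orthogonality condition \eqref{CoroC22} is the one that arises naturally from the modulation parameters in later sections (a combined scaling/phase direction), so it is convenient to record the coercivity in exactly that form.
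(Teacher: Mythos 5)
Your proposal is correct and follows essentially the same argument as the paper: split $\<h_{1}, L_{+}(x\cdot\nabla P_{\omega}+\tfrac32 P_{\omega})\>=0$ into the two terms and apply Lemma~\ref{CoerQuadra} via hypothesis \eqref{QortoX334} when $\<h_{1},L_{+}(x\cdot\nabla P_{\omega})\>\geq 0$ and via \eqref{QortoX33455} otherwise. (The only cosmetic remark: the splitting needs only linearity of $L_{+}$ and of the pairing, not self-adjointness, though invoking the latter does no harm.)
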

\begin{proof}
Consider $h=h_{1}+ih_{2}$. By \eqref{CoroC22} we have
\begin{align}\label{UCCo}
	&\<h_{1}, L_{+}(x\cdot \nabla P_{\omega})\>+\<h_{1}, L_{+}(\tfrac{3}{2}P_{\omega})\>=0.
\end{align}
Now, if $\<h_{1}, L_{+}(x\cdot \nabla P_{\omega})\>\geq 0$, then \eqref{Qorto11} (recall \eqref{CoroC11}) and \eqref{QortoX334} hold. Lemma~\ref{CoerQuadra} implies that
$\F(h)\geq C \|h\|^{2}_{H^{1}}$ for some constant $C>0$.

On the other hand, if $\<h_{1}, L_{+}(x\cdot \nabla P_{\omega})\>< 0$, then by \eqref{UCCo} we infer that 
$\<h_{1}, L_{+}(\tfrac{3}{2}P_{\omega})\>\geq 0$, which implies that \eqref{Qorto11}  and \eqref{QortoX33455} hold.  
Again, by Lemma~\ref{CoerQuadra} we find $\F(h)\geq  \tilde{C} \|h\|^{2}_{H^{1}}$ for some constant $\tilde{C}>0$.
\end{proof}
}

The following lemma gives the general structure of the spectrum of the operator $\L$.  The proof of this result is given in Appendix~\ref{S:A}.
\begin{lemma}\label{SpecLL}
The operator $\L$ defined on $L^{2}(\R^{3})\times L^{2}(\R^{3})$ with domain $H^{2}(\R^{3})\times H^{2}(\R^{3})$
has two simple eigenfunctions $e_{+}$ and $e_{-}=\overline{e_{+}}$ in $\Sch(\R^{3})$ with real eigenvalues
$\pm \lambda_{1}$ with $\lambda_{1}>0$. Moreover
\[
\sigma(\L)\cap \R=\left\{-\lambda_{1}, 0, \lambda_{1}\right\}.
\]
The essential spectrum of $\L$ is 
$\left\{i\xi: \xi\in \R, |\xi|\geq \omega\right\}$, and the kernel is
\[
\mbox{ker}\left\{\L\right\}
=\mbox{span}\left\{\partial_{1}P_{\omega}, \partial_{2}P_{\omega}, \partial_{3}P_{\omega}, iP_{\omega}  \right\}.
\]
\end{lemma}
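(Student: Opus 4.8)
The plan is to prove Lemma~\ref{SpecLL} by exploiting the block structure of $\L=\left(\begin{smallmatrix}0&-L_-\\ L_+&0\end{smallmatrix}\right)$ and reducing statements about $\L$ to the spectral theory of the self-adjoint operators $L_\pm$, which has already been set up in this section. First I would record the basic symmetry: $\L$ commutes with conjugation in the sense that $\L(\bar h)=\overline{\L h}$ up to the obvious sign bookkeeping, so eigenfunctions come in conjugate pairs, which is why the two real eigenvalues give eigenfunctions $e_+$ and $e_-=\overline{e_+}$. Next, for the real eigenvalues: if $\L h=\mu h$ with $h=h_1+ih_2$ and $\mu\in\R$, then $-L_-h_2=\mu h_1$ and $L_+h_1=\mu h_2$, hence $L_+L_-h_2=-\mu^2 h_2$ and likewise $L_-L_+h_1=-\mu^2 h_1$. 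So $-\mu^2$ must be an eigenvalue of $L_+L_-$ (equivalently $L_-L_+$); conversely a negative eigenvalue $-\mu^2<0$ of $L_-L_+$ produces a real eigenvalue pair $\pm\mu$ of $\L$. Thus the existence of exactly one pair of real nonzero eigenvalues $\pm\lambda_1$ is equivalent to showing $L_-L_+$ has exactly one negative eigenvalue, and it is simple.

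To pin down that negative eigenvalue, I would use the spectral information already in hand: $L_-\ge 0$ with kernel $\operatorname{span}\{P_\omega\}$ (from \eqref{Lmenos}), and $L_+$ has exactly one negative eigenvalue $-\lambda_1^{L_+}$, kernel $\operatorname{span}\{\partial_j P_\omega\}$, and is positive on the complementary subspace $E_+$ (from \eqref{Hdecom}). The standard argument (as in Grillakis--Shatah--Strauss / Weinstein) counts negative eigenvalues of the linearized operator via the Morse index of $L_+$ restricted to the range of $L_-$ together with the sign of a finite-dimensional quadratic form built from $L_-^{-1}$ acting on $\ker L_+$; here $\ker L_+=\operatorname{span}\{\partial_j P_\omega\}$ consists of odd functions, which by parity considerations lie in the range of $L_-$, so no correction term appears and $L_-L_+$ inherits exactly one negative eigenvalue from the single negative direction of $L_+$, which is simple because the corresponding $L_+$-eigenvalue is simple. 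Simplicity and Schwartz-class decay of $e_\pm$ then follow from elliptic regularity and the exponential decay of $P_\omega$ and of $L_\pm$-eigenfunctions (this is where I would invoke standard decay estimates for eigenfunctions of Schr\"odinger operators with exponentially decaying potentials below the essential spectrum).

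For the remaining spectral components: the kernel of $\L$ is computed directly from the system $L_-h_2=0$, $L_+h_1=0$, giving $h_2\in\operatorname{span}\{P_\omega\}$ and $h_1\in\operatorname{span}\{\partial_jP_\omega\}$, i.e. $\ker\L=\operatorname{span}\{\partial_1P_\omega,\partial_2P_\omega,\partial_3P_\omega,iP_\omega\}$ — note these are genuine eigenfunctions since $\L(iP_\omega)=-iL_-P_\omega\cdot(\ldots)=0$ and $\L(\partial_jP_\omega)=0$ (the Jordan block structure of the full $\L$ is irrelevant for the statement as written, which only asserts the kernel). The essential spectrum is obtained by a Weyl-sequence argument: at spatial infinity $P_\omega\to 0$, so $\L$ is a relatively compact perturbation of $\L_0=\left(\begin{smallmatrix}0&-(-\Delta+\omega)\\ -\Delta+\omega&0\end{smallmatrix}\right)$, whose spectrum is readily computed by Fourier transform to be $\{i\xi:|\xi|\ge\omega\}$; Weyl's theorem on the stability of the essential spectrum under relatively compact perturbations then gives $\sigma_{ess}(\L)=\{i\xi:|\xi|\ge\omega\}$. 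Finally, that $\sigma(\L)\cap\R=\{-\lambda_1,0,\lambda_1\}$ follows by combining the three pieces: real spectrum outside the essential spectrum is discrete, each nonzero real eigenvalue corresponds to a negative eigenvalue of $L_-L_+$ (of which there is exactly one, simple), and $0$ is an eigenvalue with the kernel identified above — and one must also rule out that $0$ is embedded oddly or that there is additional real essential spectrum, which is immediate since $\sigma_{ess}(\L)\subset i\R$ meets $\R$ only at $0$, forcing any real nonzero spectrum to be eigenvalues. The main obstacle I expect is the careful negative-eigenvalue count for $L_-L_+$ — specifically verifying that the finite-dimensional correction form vanishes (equivalently, that $\ker L_+\subset \operatorname{ran} L_-$, which here is the parity statement that odd functions orthogonal to nothing problematic lie in the range of the even operator $L_-$ restricted appropriately) — since this is the step where the specific structure of the cubic-quintic ground state enters and where sign errors are easy; everything else is either a direct computation or a citation to standard perturbation theory.
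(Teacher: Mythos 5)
Your outline of the soft parts (conjugation symmetry giving $e_-=\overline{e_+}$, the kernel computation from $L_+h_1=0$, $L_-h_2=0$, and the essential spectrum via Weyl's theorem for the relatively compact perturbation of $\bigl(\begin{smallmatrix}0&-(-\Delta+\omega)\\ -\Delta+\omega&0\end{smallmatrix}\bigr)$) matches the paper. The genuine gap is in the step you yourself flag as the crux: the count of negative eigenvalues of $L_-L_+$. The Grillakis--Shatah--Strauss/Weinstein-type index formula does not involve a correction ``built from $L_-^{-1}$ acting on $\ker L_+$''; the relevant correction comes from restricting $L_+$ to $(\ker L_-)^\perp=\{P_\omega\}^\perp$ (equivalently, from the sign of $\langle L_+^{-1}P_\omega,P_\omega\rangle$, the Vakhitov--Kolokolov quantity). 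With $n(L_+)=1$ and $L_-\ge 0$, the existence of a real eigenvalue pair of $\L$ is \emph{equivalent} to $L_+$ having a negative direction orthogonal to $P_\omega$, and this can fail: in the orbitally stable regime one has $n(L_+)=1$ yet $\sigma(\L)\cap\R=\{0\}$. So your assertion that ``no correction term appears and $L_-L_+$ inherits exactly one negative eigenvalue from the single negative direction of $L_+$'' (justified by a parity remark about $\ker L_+\subset\operatorname{ran}L_-$) would prove linear instability of every such ground state, which is false; the parity of $\ker L_+$ is simply not the obstruction. This is exactly the point where the paper does real work: in Lemma~\ref{NegativeL} it builds the explicit test function $\Phi=Z+\mu\,x\cdot\nabla P_\omega$ with $(\Phi,P_\omega)_{L^2}=0$ and shows $\langle L_+\Phi,\Phi\rangle<0$, the key input being $\langle Z,x\cdot\nabla P_\omega\rangle\ge 0$, proved by contradiction from the coercivity Lemma~\ref{CoerQuadra} (condition \eqref{QortoX334}); it then transfers this negative direction to the symmetrized self-adjoint operator $\TT=(L_-)^{1/2}L_+(L_-)^{1/2}$ (rather than working with the non-self-adjoint product $L_-L_+$), whose essential spectrum lies in $[\omega,\infty)$, to produce the eigenvalue $-\lambda_1^2$ and hence $e_\pm$.

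A secondary consequence: your identification $\sigma(\L)\cap\R=\{0,\pm\lambda_1\}$ and the simplicity of $\pm\lambda_1$ rest entirely on the same ``exactly one, simple'' count, so they are not established either. The paper instead rules out additional real eigenvalues without any index theorem: if $\L f=-\lambda_0 f$ with $\lambda_0\in\R\setminus\{0,\pm\lambda_1\}$, the antisymmetry $\F(\L g,h)=-\F(g,\L h)$ forces $\F(f,e_\pm)=\F(f,f)=0$, and then the coercivity of $\F$ on $Y^\perp$ (Lemma~\ref{NewCoer}) pushes $f$ into $\ker\L$, a contradiction. If you want to salvage your route, you must replace the parity argument by an actual proof that $L_+$ is negative somewhere on $\{P_\omega\}^\perp$ (this is where the threshold structure of the cubic--quintic soliton enters), and you should either symmetrize as in the paper or justify the spectral theory for the non-self-adjoint product $L_-L_+$ separately.
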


We set 
\[
e_{1}:=\RE e_{+}\quad \text{and}\quad e_{2}:= \IM e_{+},
\]
so that that $L_{+}e_{1}=\lambda_{1}e_{2}$ and $L_{-}e_{2}=-\lambda_{1}e_{1}$.

The following lemma shows that the linearized energy $\F$ is coercive in the set $Y^{\bot}$ of $h\in H^{1}(\R^{3})$ satisfying the orthogonality relations \eqref{NewCon} and \eqref{Newcon22}.
\begin{lemma}\label{NewCoer}
There exists $C>0$ such that for every $h=h_{1}+ih_{2}\in H^{1}(\R^{3})$ satisfying
\begin{align}\label{NewCon}
&\<h_{1}, \partial_{1}P_{\omega}\>=\<h_{1}, \partial_{2}P_{\omega}\>
=\<h_{1}, \partial_{3}P_{\omega}\>=\<h_{2}, P_{\omega}\>=0,\\
\label{Newcon22}
&\<e_{1}, h_{2}\>=\<e_{2}, h_{1}\>=0,
\end{align}
we have
\[
\F(h)\geq C \|h\|^{2}_{H^{1}}.
\]
%Recall that $e_{1}=\RE e_{+}$ and $ e_{2}= \IM e_{+}$.
\end{lemma}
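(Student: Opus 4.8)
\textbf{Proof proposal for Lemma~\ref{NewCoer}.}

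The plan is to deduce this from Lemma~\ref{CoerQuadra} (or rather Corollary~\ref{ConHPG}) by a projection argument: given $h$ satisfying \eqref{NewCon}--\eqref{Newcon22}, we subtract off a suitable multiple of the negative/zero directions of $L_+$ so as to land in a subspace where the earlier coercivity applies, and then show the correction is controlled. Concretely, recall from Lemma~\ref{SpecLL} that $e_1 = \RE e_+$ lies in the negative eigenspace data for $\L$ and that $L_+ e_1 = \lambda_1 e_2$, $L_- e_2 = -\lambda_1 e_1$. Since $\F(h) = \tfrac12\<L_+ h_1, h_1\> + \tfrac12\<L_- h_2, h_2\>$, I would treat the two real components separately. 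For the imaginary part $h_2$: the condition $\<h_2, P_\omega\> = 0$ from \eqref{NewCon} already puts $h_2 \perp \ker L_-$, so \eqref{Lmenos} together with a standard weak-limit/spectral-gap argument (exactly as in the proof of Lemma~\ref{CoerQuadra} for $L_-$) gives $\<L_- h_2, h_2\> \geq C_- \|h_2\|_{H^1}^2$ directly — the extra condition $\<e_1, h_2\> = 0$ is not even needed here. The real work is for $h_1$.

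For the real part $h_1$, the orthogonality conditions available are $\<h_1, \partial_j P_\omega\> = 0$ for $j=1,2,3$ (so $h_1 \perp \ker L_+$) and $\<e_2, h_1\> = 0$. The idea is: decompose $h_1 = a e_1 + f$ where $f \in \{e_1\}^\perp$ inside the relevant space, i.e. choose $a = \<h_1, e_1\>/\|e_1\|_{L^2}^2$. One checks $\<L_+ h_1, h_1\> = a^2 \<L_+ e_1, e_1\> + 2a\<L_+ e_1, f\> + \<L_+ f, f\> = -a^2 \lambda_1 \|e_1\|^2 \cdot(\text{const}) + \dots$ — wait, more precisely using $L_+ e_1 = \lambda_1 e_2$ we get $\<L_+ e_1, e_1\> = \lambda_1 \<e_2, e_1\>$; I would instead use the cleaner relation coming from the spectral setup, namely that $e_1$ is (up to normalization) the negative eigenfunction, so $\<L_+ e_1, e_1\> < 0$ and the cross term $\<L_+ e_1, f\> = \lambda_1 \<e_2, f\>$. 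The point of the condition $\<e_2, h_1\> = 0$ is precisely to kill this cross term: since $h_1 = a e_1 + f$ and $\<e_2, e_1\>$, $\<e_2, f\>$ combine, the constraint $\<e_2, h_1\> = 0$ forces a linear relation that, when combined with the negativity of $\<L_+ e_1, e_1\>$, allows one to absorb the bad $-a^2\lambda_1$ term. Alternatively — and this is probably the slicker route — I would invoke Corollary~\ref{ConHPG} or Lemma~\ref{CoerQuadra} directly: show that the conditions \eqref{Newcon22} imply (or can be modified to imply) the sign condition $\<h_1, L_+(x\cdot\nabla P_\omega + \tfrac32 P_\omega)\>=0$ up to a controllable error, or simply run the same weak-compactness contradiction argument from scratch with the new orthogonality set $\{\partial_j P_\omega\}_j \cup \{e_2\}$ in place of $\{-\Delta P_\omega, \partial_j P_\omega\}$, using \eqref{lem-step1}-type positivity on the limiting function.

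I expect the main obstacle to be verifying that the new orthogonality set is \emph{adapted} to $L_+$ in the sense needed for the contradiction argument: one needs that any nonzero $f$ in the $L^2$-closure of $H^1$ satisfying $\<f, \partial_j P_\omega\> = 0$ and $\<f, e_2\> = 0$ has $\<L_+ f, f\> > 0$. This requires knowing that $e_2$ "sees" the negative direction $e_1$ of $L_+$, i.e. that $\<e_1, e_2\> \neq 0$ — this should follow from $L_+ e_1 = \lambda_1 e_2$ and $\<L_+ e_1, e_1\> = \lambda_1\<e_2,e_1\> \ne 0$ since $e_1$ is a genuine negative-eigenvalue vector (so $\<L_+e_1,e_1\><0$, forcing $\<e_1,e_2\>\ne0$). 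Once that is in hand, the Cauchy--Schwarz-on-$E_+$ computation mirrors exactly the one already carried out in the proof of Lemma~\ref{CoerQuadra} (with $g_\omega$ replaced by an appropriate representative of the negative cone), and the spectral-gap contradiction argument closes it. The remaining steps — combining the $h_1$ and $h_2$ estimates and converting $\|h_1\|_{L^2}^2 + \|h_2\|_{L^2}^2 + \|\nabla h\|_{L^2}^2$ control into the $\|h\|_{H^1}^2$ bound — are routine, using that $\F(h)$ controls $\|\nabla h\|_{L^2}^2$ plus lower-order terms bounded via the exponential decay of $P_\omega$.
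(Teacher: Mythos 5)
Your overall strategy is viable and genuinely different from the paper's, but as written it contains one real flaw at the pivotal point. You justify the negativity $\<L_{+}e_{1},e_{1}\><0$ (equivalently $\<e_{1},e_{2}\>\neq 0$) by asserting that $e_{1}=\RE e_{+}$ "is a genuine negative-eigenvalue vector" of $L_{+}$. It is not: here $e_{1}$ is the real part of the eigenfunction of the \emph{matrix} operator $\L$ from Lemma~\ref{SpecLL}, not the negative eigenfunction of $L_{+}$ appearing in \eqref{Hdecom} (the paper's notation invites this confusion, but the two objects are distinct). Indeed, if $e_{1}$ were an eigenvector of $L_{+}$, then $e_{2}=\lambda_{1}^{-1}L_{+}e_{1}$ would be parallel to $e_{1}$, and $L_{-}e_{2}=-\lambda_{1}e_{1}$ would make $e_{1}$ a negative eigenvector of $L_{-}$, contradicting $L_{-}\geq 0$. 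The fact you need is nevertheless true, and the repair is short: by Remark~\ref{PLf}, $\F(e_{+})=0$, i.e.\ $\<L_{+}e_{1},e_{1}\>=-\<L_{-}e_{2},e_{2}\>\leq 0$ since $L_{-}\geq0$; if equality held, then $e_{2}\in\ker L_{-}=\operatorname{span}\{P_{\omega}\}$, whence $e_{1}=-\lambda_{1}^{-1}L_{-}e_{2}=0$ and then $e_{2}=\lambda_{1}^{-1}L_{+}e_{1}=0$, contradicting $e_{+}\neq0$. With $\<L_{+}e_{1},e_{1}\><0$ in hand, your plan works: since $\<h_{1},e_{2}\>=0$ is exactly $\<h_{1},L_{+}e_{1}\>=0$, you can run the Cauchy--Schwarz-on-$E_{+}$ computation from the first step of Lemma~\ref{CoerQuadra} verbatim with $e_{1}$ in the role of $g_{\omega}$, getting $\<L_{+}h_{1},h_{1}\>>0$ for nonzero $h_{1}$ orthogonal to $\ker L_{+}$ and to $e_{2}$, and then the weak-compactness argument upgrades this to $\<L_{+}h_{1},h_{1}\>\gtrsim\|h_{1}\|_{H^{1}}^{2}$. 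Your treatment of $h_{2}$ (using only $\<h_{2},P_{\omega}\>=0$ and \eqref{Lmenos}) is correct, and so is your side remark that $\<e_{1},h_{2}\>=0$ is not needed for the coercivity itself; it is imposed because it is used elsewhere (e.g.\ to ensure $\F(e_{\pm},v^{\bot})=0$ in Section~\ref{S:Rigid}). Your first alternative, massaging \eqref{Newcon22} into the sign condition \eqref{QortoX334} or the condition of Corollary~\ref{ConHPG} "up to controllable error," does not work and should be dropped; the conditions on $e_{2}$ say nothing about $\<h_{1},L_{+}(x\cdot\nabla P_{\omega}+\tfrac32 P_{\omega})\>$.

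For comparison, the paper's proof (following Duyckaerts--Roudenko) does not decouple $h_{1}$ and $h_{2}$ and never uses the sign of $\<L_{+}e_{1},e_{1}\>$. It argues by contradiction: if some nonzero $g$ satisfying \eqref{NewCon}--\eqref{Newcon22} had $\F(g)\leq 0$, then using $\F(e_{\pm},g)=0$, $\F(e_{+})=0$, $\F(e_{+},e_{-})\neq0$ (Remarks~\ref{PLf} and \ref{PLf11}) and $\F(iP_{\omega})=\F(\partial_{j}P_{\omega})=0$, the span $E_{-}$ of $\{\partial_{j}P_{\omega},iP_{\omega},e_{+},g\}$ would be a $6$-dimensional subspace on which $\F\leq0$, contradicting the fact that Lemma~\ref{CoerQuadra} makes $\F$ positive definite on a subspace of codimension $5$; the coercivity constant then comes from the same compactness argument you invoke. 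Your route is more hands-on and avoids the dimension count and Remark~\ref{PLf11}, at the price of needing the strict negativity of $\<L_{+}e_{1},e_{1}\>$, which is exactly the point where your proposal as written is unjustified.
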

\begin{proof} The proof is similar to that of  \cite[Proposition 2.7]{DuyckaertsRou2010}.

We first show that if $h=h_{1}+ih_{2}\in H^{1}(\R^{3})\backslash\left\{0\right\}$ satisfies \eqref{NewCon} and  \eqref{Newcon22}, then $\F(h)>0$. Suppose instead that there exists nonzero $g \in H^{1}(\R^{3})$ such that
\begin{equation}\label{Ortog}
\F(g)\leq 0,\quad 
\<e_{1}, g_{2}\>=\<e_{2}, g_{1}\>=\< g_{2}, P_{\omega}\>=\<g_{1}, \partial_{j}P_{\omega}\>=0,
\quad j=1,2,3.
\end{equation}
We now note that (cf. Remarks~\ref{PLf}~and~\ref{PLf11})
\begin{equation}\label{FEE}
\F(e_{+}, g)=0, \quad \F(e_{-}, g)=0, \quad \F(e_{+}, e_{+})=0
\quad \text{and}\quad 
\F(e_{+}, e_{-})\neq 0.
\end{equation}
We also recall that
\begin{equation}\label{recallH}
\F(iP_{\omega})=\F(\partial_{j}P_{\omega})=0 \quad \text{for all $j=1,2,3$}.
\end{equation}
Now we set
\[
E_{-}:=\mbox{span}\left\{\partial_{1}P_{\omega},  \partial_{2}P_{\omega},
\partial_{3}P_{\omega}, iP_{\omega}, e_{+}, g\right\}
\]
Combining \eqref{Ortog}, \eqref{FEE} and \eqref{recallH}  we get $\F(h)\leq 0$ for all $h\in E_{-}$.
Similarly, by using \eqref{Ortog}, \eqref{FEE} and \eqref{recallH} one can show that $\mbox{dim}_{\R} E_{-}=6$ (see \cite[Proposition 2.7]{DuyckaertsRou2010} for more details). However, Lemma~\ref{CoerQuadra} shows that $\F$ is definite positive on a codimension $5$ subspace of $H^{1}(\R^{3})$, which is a contradiction. 

Finally, the same argument given in the second part of the proof of Lemma~\ref{CoerQuadra} shows that $\F(h)\geq C \|h\|^{2}_{H^{1}}$ for all $h\in H^{1}(\R^{3})$.
\end{proof}

\begin{remark}\label{PLf} A direct computation shows that for any $f$, $g\in H^{1}$,
\begin{align*}
&\F(e_{\pm})=0, \quad \F(P_{\omega})=\tfrac{4}{3}[\beta(\omega)-2]\|\nabla P_{\omega}\|^{2}_{L^{2}},\\
&\F(h,g)=\F(g,h), \quad \F(\L h, g)=-\F(h, \L g)
\end{align*}
\end{remark}

\begin{remark}\label{PLf11} We have that $\F(e_{+}, e_{-})\neq 0$. Indeed, suppose instead that $\F(e_{+}, e_{-})= 0$. Consider 
\[h\in \mbox{span}\left\{iP_{\omega}, e_{+}, e_{-}, \partial_{1}P_{\omega}, \partial_{2}P_{\omega}, \partial_{3}P_{\omega}\right\},\] 
which is of codimension $6$.  Then by Remark~\ref{PLf} we see that $\F(h)=0$, which is a contradiction because $\F$ is positive on a codimension 5 subspace (see Proposition~\ref{CoerQuadra}).
\end{remark}

\begin{remark}\label{PE1}
We have
\[
\int_{\R^{3}}\Delta P_{\omega} e_{1}dx\neq 0.
\]
Indeed, suppose instead that $\langle \Delta P_{\omega}, e_{1} \rangle=0$. Recall that
\[
L_{+}g_{\omega}=-\Delta P_{\omega}\quad \text{and} \quad \<L_{+}g_{\omega}, g_{\omega}  \>=-\tfrac{1}{2}\|g_{\omega}\|^{2}_{\dot{H}^{1}}<0,
\]
where $g_{\omega}:= \frac{1}{2} x\cdot \nabla P_{\omega}$.  Notice that 
$\langle L_{+}(g_{\omega}), e_{1} \rangle=\lambda_{1}\langle g_{\omega}, e_{2} \rangle$. On the other hand,  $\langle L_{+}(g_{\omega}), e_{1} \rangle=\langle -\Delta P_{\omega}, e_{1} \rangle=0$. Therefore, $\langle g_{\omega}, e_{2} \rangle=0$. Then, by Lemma~\ref{NewCoer} we infer that $\F(g_{\omega})>0$, contradicting that $\F(g_{\omega})=\<L_{+}g_{\omega}, g_{\omega}  \><0$. 
\end{remark}

Finally, we record the following identity, which follows from direct computation:
\begin{lemma}\label{Fuu}
Let $h\in H^{1}(\R^{3})$ and assume $E(P_{\omega}+h)=E(P_{\omega})$ and $M(P_{\omega}+h)=M(P_{\omega})$.
Then
\begin{align*}
\F(h)&=\int_{\R^{3}}P_{\omega}(|h|^{2}h_{1}-|h|^{4}h_{1})dx -\tfrac{1}{2}\int_{\R^{3}}P^{2}_{\omega}(|h|^{4}+4|h|^{2}h^{2}_{1})dx\\
&\quad -\tfrac{1}{3}\int_{\R^{3}}P^{3}_{\omega}(6|h|^{2}h_{1}+4h^{3}_{1})dx +\tfrac{1}{4}\int_{\R^{3}}|h|^{4}dx-\tfrac{1}{6}\int_{\R^{3}}|h|^{6}dx.
\end{align*}
\end{lemma}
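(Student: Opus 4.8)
The plan is to expand both conservation laws to second order in $h$ around $P_\omega$ and subtract. Write $u = P_\omega + h$ with $h = h_1 + ih_2$, so that $|u|^2 = P_\omega^2 + 2P_\omega h_1 + |h|^2$. First I would compute $M(P_\omega+h) - M(P_\omega) = \int 2P_\omega h_1 + |h|^2\,dx$, which is exact (no higher-order terms since the mass is quadratic). For the energy, I would expand each of the three terms $\tfrac12\|\nabla u\|_{L^2}^2$, $-\tfrac14\|u\|_{L^4}^4$, $\tfrac16\|u\|_{L^6}^6$ using the binomial-type identities for $|P_\omega+h|^4$ and $|P_\omega+h|^6$ in terms of $P_\omega$, $h_1$, and $|h|^2$. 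The gradient term gives $\tfrac12\|\nabla P_\omega\|_{L^2}^2 + \int \nabla P_\omega\cdot\nabla h_1\,dx + \tfrac12\|\nabla h\|_{L^2}^2$.

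Next I would organize the expansion of $E(P_\omega + h) - E(P_\omega)$ into three groups: (a) the linear-in-$h$ terms, (b) the quadratic-in-$h$ terms, and (c) the genuinely higher-order (cubic through sextic) terms. For group (a), integration by parts turns $\int \nabla P_\omega\cdot\nabla h_1$ into $-\int \Delta P_\omega\, h_1$, and combining with the linear parts of the quartic and sextic terms, the stationary equation \eqref{GroundS}, namely $-\Delta P_\omega = -\omega P_\omega + P_\omega^3 - P_\omega^5$, should show that the total linear contribution is a multiple of $\int (2P_\omega h_1)\,dx$ plus a multiple of $\int P_\omega^3 h_1$, $\int P_\omega^5 h_1$... more precisely it collapses to $-\tfrac\omega2\cdot(\text{linear part of }M)$ after using the equation; I would then absorb this using the hypothesis $M(P_\omega+h) = M(P_\omega)$, which forces $\int 2P_\omega h_1\,dx = -\int|h|^2\,dx$, converting the leftover linear term into a quadratic one. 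For group (b), the quadratic terms from $-\tfrac14\|u\|_{L^4}^4$ and $\tfrac16\|u\|_{L^6}^6$ produce exactly $-\tfrac12\int P_\omega^2(3h_1^2 + h_2^2) + \tfrac12\int P_\omega^4(5h_1^2 + h_2^2)$ (using $|h|^2 = h_1^2 + h_2^2$ and the coefficients coming from the second derivatives of $t\mapsto t^2$ and $t\mapsto t^3$ composed with $|u|^2$), which together with $\tfrac12\|\nabla h\|_{L^2}^2 + \tfrac\omega2\|h\|_{L^2}^2$ (the latter supplied by the conversion just described) assembles precisely into $\F(h)$ as defined in \eqref{LinearizedE}.

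Finally, group (c): everything that remains is the collection of cubic, quartic, quintic, and sextic terms in $h$ coming from $-\tfrac14|P_\omega+h|^4$ and $\tfrac16|P_\omega+h|^6$. Setting $E(P_\omega+h)-E(P_\omega) = 0$ and solving for $\F(h)$, these higher-order terms move to the right-hand side with a sign flip, and matching them against the stated formula is a bookkeeping exercise: the terms $P_\omega(|h|^2h_1 - |h|^4 h_1)$ are the cubic/quintic pieces, $-\tfrac12 P_\omega^2(|h|^4 + 4|h|^2h_1^2)$ the quartic pieces, $-\tfrac13 P_\omega^3(6|h|^2h_1 + 4h_1^3)$ the remaining cubic pieces, and $\tfrac14|h|^4 - \tfrac16|h|^6$ the top-order pieces. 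The main obstacle is purely computational: correctly expanding $|P_\omega+h|^4$ and $|P_\omega+h|^6$ — note $|P_\omega+h|^2 = P_\omega^2 + 2P_\omega h_1 + |h|^2$ so one is squaring and cubing a trinomial — and then carefully tracking which monomials in $P_\omega^k h_1^\ell |h|^m$ land in which of the three groups. There is no conceptual difficulty; the identity is an exact algebraic consequence of the two constraints together with \eqref{GroundS}, and the only real risk is an arithmetic slip in the multinomial coefficients, which I would guard against by checking the $h$-degree-$2$ terms against \eqref{LinearizedE} independently.
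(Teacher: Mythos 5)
Your plan is correct and is exactly the computation the paper has in mind: Lemma~\ref{Fuu} is recorded there as following from direct computation, namely expanding $E(P_\omega+h)-E(P_\omega)$, using the stationary equation \eqref{GroundS} to reduce the linear terms to $-\omega\int P_\omega h_1\,dx$, converting this via the mass constraint $\int 2P_\omega h_1\,dx=-\int|h|^2\,dx$ into $\tfrac{\omega}{2}\int|h|^2\,dx$, recognizing the quadratic terms as $\F(h)$ from \eqref{LinearizedE}, and moving the cubic-through-sextic terms to the right-hand side. I checked the multinomial bookkeeping (including the easily missed $\tfrac12 P_\omega^2|h|^4$ piece from $\tfrac16|u|^6$), and it reproduces the stated identity, so your proposal matches the paper's (omitted) argument.
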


%%%%%%%%%%%%%%%%%%%%%%%
%%%%%%%%%%%%%%%%%%%%%%%
\section{Modulation analysis}\label{S:Modula}

Throughout this section, we fix $(m, e)=(M(P_\omega),E(P_\omega))\in \partial\K_{s}$ and a solution $u$ to \eqref{NLS} satisfying
\[
(M(u),E(u))=(m,e) \qtq{and} V(u_0)>0.
\]
We denote 
\[
\delta(t):=V(u(t)) \quad \text{for $t\in \R$},
\]
where $V$ is the virial functional \eqref{Virial-Functional}. By Lemma~\ref{PositiveV} we have 
\begin{equation}\label{deltapositive}
\delta(t)=V(u(t))>0, \quad \text{for all $t\in \R$}.
\end{equation}

We let $\delta_{0}>0$ be a small parameter and define the open set
\[
I_{0}=\left\{t\in [0, \infty):\delta(t)<\delta_{0}\right\}.
\] 
We will prove the following. 

\begin{proposition}\label{ModilationFree}
For $\delta_{0}>0$ sufficiently small, there exist functions $\theta: I_{0}\to \R$, 
$\alpha: I_{0}\to \R$, $y: I_{0}\to \R^{3}$, and $h:I_0\to H^1$ such that we can write
\begin{equation}\label{DecomUFree}
e^{-i\omega t}u(t, \cdot+y(t))=e^{i \theta(t)}[(1+\alpha(t))P_{\omega}+h(t)]\qtq{for all}t\in I_0,
\end{equation}
with
\begin{equation}\label{EstimateFree}
|y^{\prime}(t)|+|\theta^{\prime}(t)|+|\alpha^{\prime}(t)|
\lesssim\delta(t)\sim |\alpha(t)|\sim \|h(t)\|_{H^{1}}.
\end{equation}
\end{proposition}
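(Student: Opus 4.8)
The plan is to carry out a standard modulation argument built on a quantitative version of Lemma~\ref{PriModula}. First I would establish, via Lemma~\ref{PriModula} and a routine contradiction/compactness argument, that there is a continuous map assigning to each $t\in I_0$ (i.e. each $t$ with $\delta(t)<\delta_0$) parameters $\theta(t)\in\R$ and $y(t)\in\R^3$ such that the function
\[
h(t):=e^{-i\theta(t)}\bigl[e^{-i\omega t}u(t,\cdot+y(t))\bigr]-P_\omega,
\qquad\text{decomposed as}\qquad
e^{-i\omega t}u(t,\cdot+y(t))=e^{i\theta(t)}[(1+\alpha(t))P_\omega+h(t)],
\]
is small in $H^1$ and satisfies the orthogonality conditions from \eqref{NewCon}, where $\alpha(t)\in\R$ is chosen so that $\langle h(t),P_\omega\rangle_{L^2}=0$ (so $\alpha(t)$ absorbs the $P_\omega$-component of the real part). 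The existence of such a choice for $\delta_0$ small follows from the implicit function theorem applied to the $C^1$ map $(\theta,y,\alpha)\mapsto$ (the four orthogonality inner products), whose differential at the soliton is invertible precisely because $\langle\partial_jP_\omega,\partial_kP_\omega\rangle$ and $\langle P_\omega,P_\omega\rangle$ are nondegenerate; Lemma~\ref{PriModula} supplies the a priori closeness that lets one stay in the domain where the IFT applies.

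Next I would derive the size estimate $\delta(t)\sim|\alpha(t)|\sim\|h(t)\|_{H^1}$. Expanding the conserved quantities $E(u(t))=E(P_\omega)$ and $M(u(t))=M(P_\omega)$ around the soliton, and using that $P_\omega$ is a critical point of the action $E+\omega M$, gives
\[
0=E(P_\omega+w)-E(P_\omega)+\omega[M(P_\omega+w)-M(P_\omega)]=\F(w)+O(\|w\|_{H^1}^3),
\qquad w:=(1+\alpha(t))P_\omega+h(t)-P_\omega=\alpha(t)P_\omega+h(t),
\]
together with $\langle L_-\cdot,\cdot\rangle$-type identities and the fact that $\F(P_\omega)=\tfrac43(\beta(\omega)-2)\|\nabla P_\omega\|_{L^2}^2<0$ on $\partial\K_s$ (Remark~\ref{PLf}). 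Since $h$ is orthogonal to $P_\omega$, one gets $\F(\alpha P_\omega+h)=\alpha^2\F(P_\omega)+\F(h)+2\alpha\F(P_\omega,h)$, and $\F(P_\omega,h)$ is controlled by the orthogonality structure; combined with the coercivity $\F(h)\gtrsim\|h\|_{H^1}^2$ from Lemma~\ref{NewCoer} (or Lemma~\ref{CoerQuadra}), the energy/mass constraint forces $\|h\|_{H^1}\lesssim|\alpha|$. Conversely $|\alpha|\lesssim\|h\|_{H^1}$ is obstructed unless one also uses $\delta(t)=V(u(t))$: expanding $V$ around $P_\omega$ and using $V(P_\omega)=0$ with $\frac{d}{d\epsilon}\big|_0 V((1+\epsilon)P_\omega)\neq0$ (this is exactly the transversality making $\partial\K_s$ a boundary curve, traceable to the $x\cdot\nabla P_\omega$ virial computation in \cite{KillipOhPoVi2017}) yields $\delta(t)=c\,\alpha(t)+O(\alpha^2+\|h\|_{H^1}^2)$ with $c\neq0$, hence $\delta(t)\sim|\alpha(t)|$ and then $\|h(t)\|_{H^1}\lesssim\delta(t)$; the matching lower bound $\delta(t)\gtrsim\|h(t)\|_{H^1}$ comes from feeding $|\alpha|\lesssim\|h\|_{H^1}$ — wait, more carefully: one shows $\|h\|_{H^1}\lesssim|\alpha|\sim\delta$ from the energy constraint and coercivity, and $\delta\lesssim|\alpha|+\|h\|_{H^1}\lesssim\|h\|_{H^1}+\|h\|_{H^1}$ is not circular because the energy bound gives $\|h\|_{H^1}^2\lesssim|\alpha|\|h\|_{H^1}+|\alpha|^2$ hence $\|h\|_{H^1}\lesssim|\alpha|$, and separately $|\alpha|\lesssim\delta+\|h\|_{H^1}^2\lesssim\delta+|\alpha|^2$ so $|\alpha|\lesssim\delta$; this closes the loop and gives all three quantities comparable.

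Finally, the estimate on the derivatives $|y'(t)|+|\theta'(t)|+|\alpha'(t)|\lesssim\delta(t)$ is obtained by differentiating the four orthogonality conditions in $t$, inserting the equation \eqref{Decomh} satisfied by $h$ (so $\partial_t h=-\L h+iR(h)$ plus the contributions from $\theta',y',\alpha'$), and solving the resulting linear system for $(\theta'-\text{something},y',\alpha')$. The system is solvable with uniformly bounded inverse because its matrix is a perturbation of the same nondegenerate Gram matrix from the IFT step, and every inhomogeneous term is either linear in $h$ (hence $O(\|h\|_{H^1})=O(\delta)$) or at least quadratic from $R(h)$ (hence $O(\delta^2)$); here one uses the exponential decay of $P_\omega$ and $\partial_jP_\omega$ to make sense of all the inner products and their bounds. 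I expect the main obstacle to be the bookkeeping in this last step — correctly identifying which modulation parameter each orthogonality condition controls and verifying the Gram matrix is invertible (in particular handling the $\theta$ vs. $\alpha$ coupling through $\langle h_1,P_\omega\rangle$ vs. $\langle h_2,P_\omega\rangle$) — together with the transversality input $c\neq0$ in the $\delta\sim|\alpha|$ comparison, which is what genuinely uses that $(m,e)\in\partial\K_s$ rather than a generic mass-energy pair.
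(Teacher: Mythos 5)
Your overall blueprint (modulation via the implicit function theorem with Lemma~\ref{PriModula} as input, comparison of $\delta$, $|\alpha|$, $\|h\|_{H^1}$ via the conserved quantities plus a coercivity statement for $\F$, and derivative bounds by differentiating the orthogonality relations) is the same as the paper's, but your normalization of $\alpha$ --- choosing $\<h_1,P_\omega\>_{L^2}=0$ --- creates genuine gaps at exactly the two points where this choice matters. First, the coercivity you invoke does not apply to your $h$: Lemma~\ref{NewCoer} requires orthogonality to the eigenfunctions $e_1,e_2$, which the modulation does not provide, and Lemma~\ref{CoerQuadra} requires either $\<h_1,L_+(x\cdot\nabla P_\omega)\>=\<h_1,\Delta P_\omega\>=0$ or one of the sign conditions \eqref{QortoX334}, \eqref{QortoX33455}; none of these follows from $\<h_1,P_\omega\>=0$. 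Coercivity of $L_+$ on $\{P_\omega\}^{\perp}$ (modulo the kernel) is equivalent to a separate spectral condition, essentially $\<L_+^{-1}P_\omega,P_\omega\>\le 0$, i.e.\ $\partial_\omega M(P_\omega)\ge 0$, which is neither proved in the paper nor available on all of $\partial\K_s$, so this step cannot simply be cited away. Second, with your normalization the expansion of the virial retains a linear term in $h_1$: since $V'(P_\omega)=L_+(x\cdot\nabla P_\omega+\tfrac32 P_\omega)$ is not proportional to $P_\omega$, you only obtain $\delta=c\,\alpha+\<V'(P_\omega),h_1\>+O(\alpha^2+\|h\|^2_{H^1})$, where the middle term is of size $\|h\|_{H^1}\sim|\alpha|$; hence your claimed inequality $|\alpha|\lesssim\delta+\|h\|_{H^1}^2$ does not follow, and the crucial lower bound $|\alpha|\lesssim\delta$ is not obtained --- the loop you attempt to close is exactly where the argument breaks.

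The paper resolves both difficulties simultaneously by defining $\alpha(t)$ as the projection of the modulated solution along $L_+(x\cdot\nabla P_\omega+\tfrac32 P_\omega)$, so that $h$ satisfies \eqref{h: Cond}. Then the linear term in the virial expansion vanishes and $\delta(t)=2[\beta(\omega)-1]\|\nabla P_\omega\|_{L^2}^2\,\alpha(t)+O(\alpha^2+\|h\|_{H^1}^2)$ with nonzero coefficient (your transversality constant, nonzero because $E(P_\omega)>0$ forces $\beta(\omega)<1$), while Corollary~\ref{ConHPG} --- built precisely from the sign-condition variants in Lemma~\ref{CoerQuadra} --- gives $\F(h)\gtrsim\|h\|_{H^1}^2$ under exactly these orthogonality conditions. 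Your IFT step for $(\theta,y)$ and your scheme for the derivative bounds do match the paper (Lemmas~\ref{ExistenceFree} and~\ref{BoundII}), but as written the middle step \eqref{DeltaBound} is not established.
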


Using the implicit theorem and the variational characterization of $P_{\omega}$ in Lemma~\ref{PriModula}, we can obtain the following orthogonal decomposition.  We will omit the proof, as it is essentially the same given in \cite[Lemma 4.1]{DuyckaertsRou2010}.

\begin{lemma}\label{ExistenceFree}
 There exist $\delta_{0}>0$, a positive function $\epsilon(\delta)$ defined for $0<\delta\leq \delta_{0}$ and functions $\sigma: I_{0}\to \R$ and $y: I_{0}\to \R^{3}$ such that if
$\delta(t)<\delta_{0}$, then
\begin{equation}\label{TaylorF}
\| u(t)-e^{i\sigma(t)}P_{\omega}(\cdot-y(t))\|_{H^{1}}\leq \epsilon(\delta).
\end{equation}
The mapping $u\mapsto (\sigma, y)$ is $C^{1}$ and $\epsilon(\delta)\to 0$ as $\delta\to 0$. The functions $\sigma(\cdot)$ and $y(\cdot)$ are chosen to impose the following orthogonality conditions: for $j\in\{1,2,3\}$, 
\begin{equation}\label{OrtFree}
\IM\<e^{i\sigma(t)}P_{\omega}(\cdot-y(t)), u(t)\>=\RE\<e^{i\sigma(t)}\partial_{j}P_{\omega}(\cdot-y(t)), u(t)\>=0.
\end{equation}
\end{lemma}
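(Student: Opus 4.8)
The plan is to combine the variational rigidity of Lemma~\ref{PriModula} with the implicit function theorem, exactly along the lines of \cite[Lemma~4.1]{DuyckaertsRou2010}.

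\emph{Step 1: quantitative closeness to the soliton manifold.} For $t\in\R$ set $d(t):=\inf\{\,\|u(t)-e^{i\theta}P_{\omega}(\cdot-z)\|_{H^{1}}:\theta\in\R,\ z\in\R^{3}\,\}$. I would first show that $d$ has a modulus of continuity in $\delta(\cdot)=V(u(\cdot))$, i.e.\ there is $\epsilon_{1}(\delta)\to0$ as $\delta\to0^{+}$ with $d(t)\le\epsilon_{1}(\delta(t))$ whenever $\delta(t)$ is small. This is a compactness argument: if it failed there would be times $t_{n}$ with $\delta(t_{n})\to0$ but $d(t_{n})\ge\epsilon_{0}>0$; the functions $f_{n}:=u(t_{n})$ then satisfy $M(f_{n})=m$, $E(f_{n})=e$ and $V(f_{n})\to0$, so Lemma~\ref{PriModula} forces $\|f_{n}-e^{i\theta_{n}}P_{\omega}(\cdot-x_{n})\|_{H^{1}}\to0$, contradicting $d(t_{n})\ge\epsilon_{0}$. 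Here it is essential that $(M(u),E(u))=(m,e)\in\partial\K_{s}$ is conserved by the flow.

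\emph{Step 2: the implicit function theorem near $P_{\omega}$.} Define $\Phi:H^{1}(\R^{3})\times\R\times\R^{3}\to\R^{4}$ by
\[
\Phi(v,\sigma,y):=\bigl(\IM\<e^{i\sigma}P_{\omega}(\cdot-y),v\>,\ \{\RE\<e^{i\sigma}\partial_{j}P_{\omega}(\cdot-y),v\>\}_{j=1,2,3}\bigr).
\]
Then $\Phi(P_{\omega},0,0)=0$, using $\IM\<P_{\omega},P_{\omega}\>=0$ and $\RE\<\partial_{j}P_{\omega},P_{\omega}\>=\tfrac12\int_{\R^{3}}\partial_{j}(P_{\omega}^{2})\,dx=0$. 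Differentiating in $(\sigma,y)$ at $(P_{\omega},0,0)$, using the radial symmetry of $P_{\omega}$ and integration by parts, one gets the block-diagonal Jacobian $\mathrm{diag}\bigl(\|P_{\omega}\|_{L^{2}}^{2},\ \tfrac13\|\nabla P_{\omega}\|_{L^{2}}^{2}\,\mathrm{Id}_{\R^{3}}\bigr)$, which is invertible. Since $\Phi$ is smooth in $(\sigma,y)$ and linear (in particular smooth) in $v$, the implicit function theorem produces $r>0$ and $C^{1}$ maps $v\mapsto(\sigma(v),y(v))$ on $\{\|v-P_{\omega}\|_{H^{1}}<r\}$ with $\sigma(P_{\omega})=0$, $y(P_{\omega})=0$, uniquely determined by $\Phi(v,\sigma(v),y(v))=0$ together with $|\sigma(v)|+|y(v)|$ small, and obeying $|\sigma(v)|+|y(v)|\lesssim\|v-P_{\omega}\|_{H^{1}}$.

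\emph{Step 3: transport around the manifold, and regularity.} The functionals in $\Phi$ are equivariant under the phase--translation action: if $\tilde v=e^{-i\theta_{0}}v(\cdot+x_{0})$, then $\<e^{i\sigma}P_{\omega}(\cdot-y),v\>=\<e^{i(\sigma-\theta_{0})}P_{\omega}(\cdot-(y-x_{0})),\tilde v\>$, and likewise with $\partial_{j}P_{\omega}$. Hence, given $t\in I_{0}$, I would choose $(\theta_{0},x_{0})$ attaining $d(t)$ up to error $\epsilon_{1}(\delta(t))$, apply Step~2 to $\tilde u(t):=e^{-i\theta_{0}}u(t)(\cdot+x_{0})$ (which lies in the $r$-ball around $P_{\omega}$ once $\delta_{0}$ is so small that $2\epsilon_{1}(\delta_{0})<r$, by Step~1), and set $\sigma(t):=\sigma(\tilde u(t))+\theta_{0}$, $y(t):=y(\tilde u(t))+x_{0}$; by equivariance these satisfy \eqref{OrtFree}. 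Independence of the choice of $(\theta_{0},x_{0})$ — hence that $u\mapsto(\sigma,y)$ is a well-defined $C^{1}$ map on a tube around $\{e^{i\theta}P_{\omega}(\cdot-z)\}$ — follows from the local uniqueness in Step~2 together with the fact that $(\theta,z)\mapsto e^{i\theta}P_{\omega}(\cdot-z)$ is injective modulo $2\pi$ and locally bi-Lipschitz into $H^{1}$. Estimate \eqref{TaylorF} then holds with $\epsilon(\delta):=C\epsilon_{1}(\delta)$, by the triangle inequality, $|\sigma(\tilde u(t))|+|y(\tilde u(t))|\lesssim\|\tilde u(t)-P_{\omega}\|_{H^{1}}=\|u(t)-e^{i\theta_{0}}P_{\omega}(\cdot-x_{0})\|_{H^{1}}\le2\epsilon_{1}(\delta(t))$, and continuity of $(\theta,z)\mapsto e^{i\theta}P_{\omega}(\cdot-z)$ in $H^{1}$. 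Finally, $\sigma,y\in C(I_{0})$ since $t\mapsto u(t)\in C(\R;H^{1})$ (Theorem~\ref{Th1}); and because the orthogonality conditions only pair $u$ against Schwartz functions, differentiating $\Phi(u(t),\sigma(t),y(t))=0$ in $t$ using $u\in C^{1}(\R;H^{-1})$ (a standard consequence of the equation and $u\in C(\R;H^{1})$) and inverting the Jacobian (still invertible for $\delta_{0}$ small, by continuity) upgrades this to $\sigma,y\in C^{1}(I_{0})$, as will be needed for the modulation equations of Proposition~\ref{ModilationFree}.

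\emph{Main obstacle.} The only genuinely delicate points are the non-degeneracy computation in Step~2 — elementary, but it is the crux, and it is exactly what singles out the orthogonality conditions \eqref{OrtFree} — and the consistency check in Step~3 that the locally defined implicit-function branch glues into a single $C^{1}$ map over the non-compact, rotation- and translation-invariant soliton manifold. Everything else is routine.
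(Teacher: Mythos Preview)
Your proposal is correct and follows essentially the same approach as the paper: the paper omits the proof entirely, stating only that it is obtained from the implicit function theorem together with the variational characterization in Lemma~\ref{PriModula}, exactly as in \cite[Lemma~4.1]{DuyckaertsRou2010}. Your Steps~1--3 are precisely a fleshed-out version of that argument, and the Jacobian computation in Step~2 is correct.
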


With $(\sigma(t),y(t))$ as in Lemma~\ref{ExistenceFree}, we write
\begin{equation}\label{DefH}
e^{-i\theta(t)}[e^{-i \omega t}u(t, x+y(t))]=(1+\alpha(t))P_{\omega}(x)+h(t,x),
\end{equation}
where $\theta(t):=\sigma(t)-\omega t$ and
%\[
%\alpha(t):=\RE\frac{\< e^{-i\theta(t)}e^{-i \omega t}\nabla %u(t, \cdot+y(t)), \nabla P_{\omega}   \>}{ %\|P_{\omega}\|^{2}_{\dot{H}^{1}} }-1.
%\]
% then have the following orthogonally conditions for $h$: for $j\in\{1,2,3\}$, 
%\begin{equation}\label{h: Cond}
%\IM\<h(t), P_{\omega}\>=\RE\< h(t),\Delta %P_{\omega}\>=\RE\<h(t), \partial_{j}P_{\omega}\>\equiv0.
%\end{equation}
{
\[
\alpha(t)=\frac{\<L_{+}(x\cdot\nabla P_{\omega}+\tfrac{3}{2} P_{\omega}),  e^{-i\theta(t)}e^{-i \omega t} u(t, \cdot+y(t))\>}{\<L_{+}(x\cdot\nabla P_{\omega}+\tfrac{3}{2} P_{\omega}), P_{\omega}\>}-1.
\]
We then have the following orthogonally conditions for $h$: for $j\in\{1,2,3\}$, 
\begin{equation}\label{h: Cond}
\IM\<h(t), P_{\omega}\>=\RE\< h(t),L_{+}(x\cdot\nabla P_{\omega}+\tfrac{3}{2} P_{\omega})\>=\RE\<h(t), \partial_{j}P_{\omega}\>\equiv0.
\end{equation}
We also note that $\<L_{+}(x\cdot\nabla P_{\omega}+\tfrac{3}{2} P_{\omega}), P_{\omega}\>=2[\beta(\omega)-1]\|\nabla P_{\omega}\|^{2}_{L^{2}}$.
}

The following lemma relates the parameters $\delta(t)$, $\alpha(t)$, and $h(t)$. 
\begin{lemma}\label{BoundI}
For all $t\in I_0$, 
\begin{equation}\label{DeltaBound}
\delta(t)\sim |\alpha(t)|\sim \|h(t)\|_{H^{1}}.
\end{equation}
\end{lemma}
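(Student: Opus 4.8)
The plan is to expand the conserved quantities $E(u(t))$, $M(u(t))$ and the virial $V(u(t))$ in terms of the modulation parameters $\alpha(t)$ and the error $h(t)$, using the decomposition \eqref{DecomUFree}, and then extract the three-way equivalence \eqref{DeltaBound}. Throughout I suppress the $t$-dependence and write $u_\sharp:=e^{-i\theta}e^{-i\omega t}u(t,\cdot+y)=(1+\alpha)P_\omega+h$. Since $E$, $M$ and $V$ are invariant under the spatial translation by $y$ and the phase rotations $e^{i\theta}$, $e^{i\omega t}$ (the latter because these functionals do not see the global phase), we have $E(u_\sharp)=E(P_\omega)$, $M(u_\sharp)=M(P_\omega)$ and $V(u_\sharp)=\delta(t)$.

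First I would expand the mass and energy constraints. Writing $w:=(1+\alpha)P_\omega$, one has $M((1+\alpha)P_\omega+h)-M(P_\omega)=\big[(1+\alpha)^2-1\big]M(P_\omega)+2(1+\alpha)\RE\langle P_\omega,h\rangle+\|h\|_{L^2}^2$, and similarly for $E$, using the nonlinear Taylor expansion around $P_\omega$ and the ground-state equation \eqref{GroundS}. Grouping the terms that are linear in $h$, the equation $-\omega P_\omega+\Delta P_\omega=-P_\omega^3+P_\omega^5$ together with the identity from Lemma~\ref{Fuu} (valid since $M(P_\omega+\tilde h)=M(P_\omega)$, $E(P_\omega+\tilde h)=E(P_\omega)$ with $\tilde h:=\alpha P_\omega+h$) lets me rewrite the combination $E(u_\sharp)+\omega M(u_\sharp)$ so that the quadratic part is exactly $\F(\tilde h)=\F(\alpha P_\omega+h)$ plus cubic-and-higher terms that are $O(\|\tilde h\|_{H^1}^3)=O((|\alpha|+\|h\|_{H^1})^3)$. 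Since $E(u_\sharp)+\omega M(u_\sharp)=E(P_\omega)+\omega M(P_\omega)$, this forces
\[
\F(\alpha P_\omega+h)=O\big((|\alpha|+\|h\|_{H^1})^3\big).
\]
Now I use bilinearity: $\F(\alpha P_\omega+h)=\alpha^2\F(P_\omega)+2\alpha\F(P_\omega,h)+\F(h)$. By Remark~\ref{PLf}, $\F(P_\omega)=\tfrac43[\beta(\omega)-2]\|\nabla P_\omega\|_{L^2}^2<0$ (since $\beta(\omega)<2$; this uses $\omega\in(0,\tfrac3{16})$), and the cross term $\F(P_\omega,h)=\tfrac12\langle L_+P_\omega,h_1\rangle$ is controlled — in fact by the orthogonality conditions \eqref{h: Cond}, namely $\RE\langle h,L_+(x\cdot\nabla P_\omega+\tfrac32P_\omega)\rangle=0$ and $L_+(x\cdot\nabla P_\omega)=-2\Delta P_\omega$, one has $\langle L_+P_\omega,h_1\rangle=-\tfrac23\langle x\cdot\nabla P_\omega,L_+h_1\rangle$-type relations that make $|\F(P_\omega,h)|\lesssim\|h\|_{H^1}$; more simply, $|\F(P_\omega,h)|\lesssim\|h\|_{H^1}$ trivially. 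The point is that the orthogonality \eqref{h: Cond} is precisely condition \eqref{CoroC22} of Corollary~\ref{ConHPG} (together with \eqref{CoroC11}), so $\F(h)\geq C\|h\|_{H^1}^2$. Combining, $C\|h\|_{H^1}^2\leq \F(h)\leq -2\alpha\F(P_\omega,h)-\alpha^2\F(P_\omega)+O((|\alpha|+\|h\|_{H^1})^3)\lesssim |\alpha|\,\|h\|_{H^1}+\alpha^2+\cdots$; by Young's inequality and absorbing (for $\delta_0$, hence $|\alpha|+\|h\|_{H^1}$, small) one gets $\|h\|_{H^1}\lesssim|\alpha|$. Conversely, $\F(h)\geq 0$ gives $|\alpha|^2|\F(P_\omega)|\lesssim |\alpha|\,\|h\|_{H^1}+O(\cdots)$, hence $|\alpha|\lesssim\|h\|_{H^1}$; together $|\alpha|\sim\|h\|_{H^1}$.

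It remains to bring in $\delta(t)=V(u_\sharp)$. By Lemma~\ref{Virialzero} (equivalently $F_\infty[P_\omega]=8V(P_\omega)=0$, so $V(P_\omega)=0$) and smoothness of $V$, expand $V((1+\alpha)P_\omega+h)$ around $P_\omega$: the zeroth order term vanishes, so $\delta(t)=DV(P_\omega)[\alpha P_\omega+h]+O((|\alpha|+\|h\|_{H^1})^2)$. The linear term $DV(P_\omega)[\cdot]$ applied to $\alpha P_\omega$ is $\alpha\,DV(P_\omega)[P_\omega]$; a direct computation from \eqref{Virial-Functional} and the Pohozaev/virial identities \eqref{PhIden} shows $DV(P_\omega)[P_\omega]=2\|\nabla P_\omega\|_{L^2}^2+6\|P_\omega\|_{L^6}^6-3\|P_\omega\|_{L^4}^4$, which using \eqref{PhIden} equals a nonzero multiple of $[\beta(\omega)-2]\|\nabla P_\omega\|_{L^2}^2\neq0$; and $|DV(P_\omega)[h]|\lesssim\|h\|_{H^1}$. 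Hence $|\delta(t)|\lesssim |\alpha|+\|h\|_{H^1}\lesssim|\alpha|$. For the reverse bound $|\alpha|\lesssim\delta(t)$, the cleanest route is to note $DV(P_\omega)[h]$ is not obviously small, so instead I would use the \emph{second} derivative: since $\F(\alpha P_\omega+h)=O((|\alpha|+\|h\|_{H^1})^3)$ is much smaller than quadratic, the genuinely quadratic behavior must show up in $\delta$, i.e. one shows $\delta(t)\gtrsim |\alpha|$ by testing the virial-type quadratic form against $\alpha P_\omega$ and using that on the subspace where $\F$ is coercive the error $h$ is quadratically small relative to... — actually the simplest argument is: $|\alpha|\leq\epsilon$ small and $|\delta|\geq c|\alpha| - C|\alpha|^2 \geq \tfrac{c}{2}|\alpha|$ once $\delta_0$ is small enough, from the linear-term computation above (the linear term dominates because its coefficient on $P_\omega$ is nonzero and $\|h\|_{H^1}\sim|\alpha|$ so $DV(P_\omega)[h]$ is of the same order but with a controllable constant — if necessary, refine the orthogonality so that $DV(P_\omega)[h]$ is itself $o(|\alpha|)$, which is automatic if $h$ is additionally orthogonal to the direction $P_\omega$, already guaranteed since $\alpha$ was chosen via the $L_+(x\cdot\nabla P_\omega+\tfrac32 P_\omega)$ projection). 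This closes $\delta(t)\sim|\alpha(t)|\sim\|h(t)\|_{H^1}$.

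The main obstacle is the lower bound $\delta(t)\gtrsim|\alpha(t)|$: one must rule out cancellation in the linear expansion of $V$, i.e. show that the linear functional $DV(P_\omega)$ does not vanish on the modulated error $\alpha P_\omega+h$ to leading order. This is where the specific choice of $\alpha$ in \eqref{DefH} — projecting onto $L_+(x\cdot\nabla P_\omega+\tfrac32P_\omega)$ — and the nondegeneracy $\beta(\omega)\neq 1,2$ (from $(m,e)\in\partial\K_s$, $\omega\in(0,\tfrac3{16})$) are essential, paralleling \cite[Lemma 4.5]{DuyckaertsRou2010}.
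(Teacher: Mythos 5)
Your strategy is essentially the paper's: apply Lemma~\ref{Fuu} to $\tilde h=\alpha P_\omega+h$, expand $\F$ bilinearly, and use the coercivity of Corollary~\ref{ConHPG} (which applies exactly because of the orthogonality conditions \eqref{h: Cond}) to obtain $|\alpha|\sim\|h\|_{H^1}$, then Taylor-expand $V$ around $P_\omega$ to compare $\delta$ with $\alpha$. The first half is correct up to two small slips: the bound $|\alpha|\lesssim\|h\|_{H^1}$ does not follow from ``$\F(h)\geq 0$'' (that inequality points the wrong way) but from the trivial upper bound $\F(h)\lesssim\|h\|_{H^1}^2$ inserted into the expansion \eqref{QuadraExp}, which is what the paper does; and your computation of $DV(P_\omega)[P_\omega]$ should give $2[\beta(\omega)-1]\|\nabla P_\omega\|_{L^2}^2$ (via \eqref{PhIden}), not a multiple of $\beta(\omega)-2$. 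Its nonvanishing comes from $E(P_\omega)=\tfrac{1-\beta(\omega)}{6}\|\nabla P_\omega\|_{L^2}^2>0$ on $\partial\K_s$, i.e.\ $\beta(\omega)<1$.

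The genuine gap is in the lower bound $\delta(t)\gtrsim|\alpha(t)|$. You correctly observe that $DV(P_\omega)[h]$ is a priori of size $\|h\|_{H^1}\sim|\alpha|$ and could cancel $\alpha\,DV(P_\omega)[P_\omega]$, but you never resolve this: you float a second-derivative argument, then an absorption ``$c|\alpha|-C|\alpha|^2$'' that simply ignores the $h$-contribution, and finally claim that $DV(P_\omega)[h]=o(|\alpha|)$ is ``automatic if $h$ is additionally orthogonal to $P_\omega$,'' which is false --- orthogonality of $h_1$ to $P_\omega$ does not kill $\langle -2\Delta P_\omega+6P_\omega^5-3P_\omega^3,\,h_1\rangle$. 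The missing ingredient, and the very reason $\alpha$ is defined by the projection in \eqref{DefH}, is the exact identity $-2\Delta P_\omega+6P_\omega^5-3P_\omega^3=L_+(x\cdot\nabla P_\omega+\tfrac{3}{2}P_\omega)$: with it, $DV(P_\omega)[h]=\langle L_+(x\cdot\nabla P_\omega+\tfrac{3}{2}P_\omega),h_1\rangle=0$ identically by \eqref{h: Cond}, so the expansion becomes $\delta(t)=2[\beta(\omega)-1]\|\nabla P_\omega\|_{L^2}^2\,\alpha(t)+O(\alpha^2+\|h\|_{H^1}^2)=2[\beta(\omega)-1]\|\nabla P_\omega\|_{L^2}^2\,\alpha(t)+O(\alpha^2)$ once $\|h\|_{H^1}\sim|\alpha|$ is in hand, and both $\delta\lesssim|\alpha|$ and $\delta\gtrsim|\alpha|$ follow for $\delta_0$ small. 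Without verifying this identity (or some equivalent exact cancellation of the linear-in-$h$ term), your argument for $\delta\gtrsim|\alpha|$ does not close; this is precisely the step the paper's proof makes exact.
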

\begin{proof}
{
We expand the virial functional around $P_{\omega}$ (recalling that $V(P_{\omega})=0$) to obtain
\begin{align*}
\delta(t)&=	V(P_{\omega}+\alpha P_{\omega}+h)-V(P_{\omega})\\
&=\<-2\Delta P_{\omega}+ 6P_{\omega}^{5}-3P_{\omega}^{3},\,\, \alpha(t) P_{\omega}+h_{1}(t)  \>+O({\alpha}^{2}+\|h\|^{2}_{H^{1}}) \\
&=\<L_{+}(x\cdot\nabla P_{\omega}+\tfrac{3}{2} P_{\omega}),\,\, \alpha(t) P_{\omega}+h_{1}(t)  \>+O({\alpha}^{2}+\|h\|^{2}_{H^{1}}),
\end{align*}
where we have used $-2\Delta P_{\omega}+ 6P_{\omega}^{5}-3P_{\omega}^{3}=L_{+}(x\cdot\nabla P_{\omega}+\tfrac{3}{2} P_{\omega})$.
By \eqref{h: Cond} we infer that
\begin{align}\label{DeltaAlfa}
&\delta(t)=\<L_{+}(x\cdot\nabla P_{\omega}+\tfrac{3}{2} P_{\omega}), P_{\omega}\>\alpha(t)+O({\alpha}^{2}+\|h\|^{2}_{H^{1}}).
\end{align}
On the other hand, by using Lemma~\ref{Fuu} we can write (recalling that $\F(P_{\omega})<0$, cf. Remark~\ref{PLf}):
\begin{equation}\label{QuadraExp}
\F(h)=\alpha^{2}[-\F(P_{\omega})]+2\alpha [-\F(P_{\omega}, h_{1})]+O(\alpha^{3}+\|h\|^{3}_{H^{1}}).
\end{equation}
In particular, as $\F(h) \lesssim \|h\|^{2}_{H^{1}}$ we see that $|\alpha|=O(\|h\|_{H^{1}})$. Moreover, by Corollary~\ref{ConHPG} (recalling the orthogonally conditions for $h$, \eqref{h: Cond})
and \eqref{QuadraExp} we infer that  $\|h\|_{H^{1}}=O(|\alpha|)$, i.e., $\|h\|_{H^{1}}\sim |\alpha|$. 
 
Finally, from \eqref{DeltaAlfa} we see that
\[
\delta(t)=\<L_{+}(x\cdot\nabla P_{\omega}+\tfrac{3}{2} P_{\omega}), P_{\omega}\>\alpha(t)+O({\alpha}^{2}),
\]
which implies that $\delta \sim |\alpha|$. Therefore,  $\|h\|_{H^{1}}\sim |\alpha|\sim \delta$.}\end{proof}

\begin{lemma}\label{BoundII}
 Let $(\sigma(t), y(t))$ be as in Lemma~\ref{ExistenceFree}  and $h(t)$, $\theta(t)$ and $\alpha(t)$ be as in \eqref{DefH}. Then, taking a smaller $\delta_{0}$ if necessary, we have
\[
|y^{\prime}(t)|+|\theta^{\prime}(t)|+|\alpha^{\prime}(t)|
\lesssim\delta(t)
\]
\end{lemma}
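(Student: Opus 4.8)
The plan is the standard modulation computation: derive the evolution equation for $h$, differentiate in $t$ the five orthogonality relations in \eqref{h: Cond}, and solve the resulting nearly-diagonal linear system for $(\theta',\alpha',y')$. For \emph{Step~1}, write $g(t):=\alpha(t)P_{\omega}+h(t)$, so by \eqref{DefH} we have $e^{-i\theta(t)}[e^{-i\omega t}u(t,\cdot+y(t))]=P_\omega+g(t)$. Inserting this ansatz into \eqref{NLS}, using the soliton equation \eqref{GroundS} and the definitions of $L_\pm$ to isolate the linear part, one arrives (after separating real and imaginary parts) at a system of the form
\begin{align*}
\partial_t h_1 &= L_- h_2-\alpha' P_\omega+\theta' h_2+y'\cdot\nabla\big((1+\alpha)P_\omega+h_1\big)+\mathcal N_1,\\
\partial_t h_2 &= -L_+ h_1-\alpha\,L_+ P_\omega-\theta'\big((1+\alpha)P_\omega+h_1\big)+y'\cdot\nabla h_2+\mathcal N_2,
\end{align*}
where $\mathcal N_1=-\IM R(g)$ and $\mathcal N_2=\RE R(g)$ with $R$ as in \eqref{Resi}; since every term of $R(g)$ is at least quadratic in $g$, pairing against any fixed $\phi\in\mathcal S(\R^3)$ and using the Sobolev embeddings $H^1\hookrightarrow L^p$ ($2\le p\le 6$) gives $|\langle\mathcal N_i,\phi\rangle|\lesssim\|g\|_{H^1}^2\lesssim\delta^2$ by Lemma~\ref{BoundI}. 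Here $\partial_t h$ is understood in $H^{-1}$, which is legitimate since $u\in C(\R;H^1)$ solves \eqref{NLS} (so $\partial_t u\in C(\R;H^{-1})$), $\theta$ and $\alpha$ are $C^1$ on $I_0$ by Lemma~\ref{ExistenceFree} and the explicit formula defining $\alpha$, and $P_\omega$ together with all of its derivatives and powers lies in $\mathcal S(\R^3)$ by exponential decay.

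For \emph{Step~2}, each of the five quantities in \eqref{h: Cond} vanishes identically on $I_0$; differentiating in $t$, substituting the system above, and pairing against the relevant fixed Schwartz functions yields three relations. From $\tfrac{d}{dt}\IM\langle h,P_\omega\rangle=0$ no $\alpha'$ appears (it enters only $\partial_t h_1$), the coefficient of $\theta'$ is $-(1+\alpha)\|P_\omega\|_{L^2}^2-\langle h_1,P_\omega\rangle=-\|P_\omega\|_{L^2}^2+O(\delta)\ne0$, and every remaining term is $O(\delta)+O(\delta|y'|)$, so $|\theta'|\lesssim\delta+\delta|y'|$. From $\tfrac{d}{dt}\RE\langle h,\partial_k P_\omega\rangle=0$ the $\alpha'$-term drops since $\langle P_\omega,\partial_k P_\omega\rangle=0$, the off-diagonal $y'_j$-terms vanish by the radial symmetry identity $\langle\partial_j P_\omega,\partial_k P_\omega\rangle=\tfrac13\delta_{jk}\|\nabla P_\omega\|_{L^2}^2$, and all other terms are $O(\delta)+O(\delta|\theta'|)+O(\delta|y'|)$, so $|y'|\lesssim\delta+\delta|\theta'|$. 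Finally, from $\tfrac{d}{dt}\RE\langle h,L_+(x\cdot\nabla P_\omega+\tfrac32 P_\omega)\rangle=0$ the coefficient of $\alpha'$ is $-\langle P_\omega,L_+(x\cdot\nabla P_\omega+\tfrac32 P_\omega)\rangle=-2[\beta(\omega)-1]\|\nabla P_\omega\|_{L^2}^2$, which is nonzero because $\beta(\omega)<1$ on $\partial\K_s$ (as $E(P_\omega)=\tfrac{1-\beta(\omega)}{6}\|\nabla P_\omega\|_{L^2}^2>0$ by \eqref{PhIden}), while the $y'$-contribution is only $O(\delta|y'|)$ since $L_+\partial_j P_\omega=0$ annihilates the leading piece $\langle y'\cdot\nabla P_\omega,L_+(\cdots)\rangle$; hence $|\alpha'|\lesssim\delta+\delta|\theta'|+\delta|y'|$.

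For \emph{Step~3}, substituting the bound for $|y'|$ into that for $|\theta'|$ gives $|\theta'|\lesssim\delta+\delta(\delta+\delta|\theta'|)$, so shrinking $\delta_0$ we may absorb the last term and conclude $|\theta'|\lesssim\delta$; then $|y'|\lesssim\delta$ and, from the third relation, $|\alpha'|\lesssim\delta$, i.e. $|y'(t)|+|\theta'(t)|+|\alpha'(t)|\lesssim\delta(t)$.

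The main obstacle is the bookkeeping in Step~1 — correctly isolating the linear operators $L_\pm$ from the two nonlinearities and tracking the terms produced by differentiating the symmetry parameters and the amplitude $\alpha$ — together with verifying in Step~2 that each ``diagonal'' coefficient is bounded away from zero while every cross term carries a factor of $\delta$. The only genuinely structural input is the nondegeneracy $\langle P_\omega,L_+(x\cdot\nabla P_\omega+\tfrac32 P_\omega)\rangle\ne0$ on $\partial\K_s$, which is exactly what makes the amplitude condition in \eqref{h: Cond} an admissible normalization.
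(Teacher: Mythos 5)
Your proposal is correct and follows essentially the same route as the paper, which omits the details and simply points to the argument of Duyckaerts--Roudenko (differentiating the orthogonality relations \eqref{h: Cond}, using Lemma~\ref{BoundI}, and solving the nearly diagonal system for $(\theta',\alpha',y')$). Your identification of the nondegenerate diagonal coefficients, in particular $\langle P_{\omega}, L_{+}(x\cdot\nabla P_{\omega}+\tfrac32 P_{\omega})\rangle=2[\beta(\omega)-1]\|\nabla P_{\omega}\|_{L^{2}}^{2}\neq 0$ via $E(P_{\omega})>0$, matches the normalization the paper sets up right after \eqref{h: Cond}.
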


\begin{proof}
With Lemma~\ref{BoundI} in hand, the proof is very similar to that of \cite[Lemma~4.3]{DuyckaertsRou2010} (in particular, the key idea is to differentiate the orthogonality relations).  We omit the details. 
\end{proof}

Proposition~\ref{ModilationFree} now follows immediately from Lemmas~\ref{ExistenceFree}, \ref{BoundI} and \ref{BoundII}.

To close this section, we record one final lemma, which shows how control over $\delta(t)$ guarantees exponential closeness to the orbit of $P_\omega$.  

\begin{lemma}\label{Decaiment}
Suppose that there exist $a$, $b>0$
so that
\begin{equation}\label{Decau}
\int^{\infty}_{t}\delta(s)\,ds\leq a e^{-b t} \quad \text{for all $t\geq 0$}.
\end{equation}
Then $\lim_{t\to \infty}\delta(t)=0$. Moreover, there exist $\theta_{0}\in \R$, $y_{0}\in \R^{3}$ and $c$, $C>0$ such that
\[
\|u(t)-e^{i\theta_{0}}e^{i \omega t}P_{\omega}(\cdot-y_{0})\|_{H^{1}}\leq C e^{-c t}.
\]
\end{lemma}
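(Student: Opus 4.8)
The plan is to combine the integral decay hypothesis \eqref{Decau} with the modulation estimates of Proposition~\ref{ModilationFree}. First I would observe that, since $\int_t^\infty\delta(s)\,ds\le a e^{-bt}$ and $\delta$ is continuous and strictly positive (see \eqref{deltapositive}), each unit interval $[n,n+1]$ contains a point $s_n$ with $\delta(s_n)\le a e^{-bn}$; hence there is a sequence $s_n\to\infty$ along which $\delta\to0$, so the solution eventually enters the modulation regime $I_0=\{t\ge0:\delta(t)<\delta_0\}$ where the decomposition \eqref{DecomUFree}--\eqref{EstimateFree} and the relation $\delta(t)\sim|\alpha(t)|\sim\|h(t)\|_{H^1}$ (Lemma~\ref{BoundI}) are available, with implicit constants uniform over $I_0$.

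The key step, and the one requiring the most care, is to show that the modulation decomposition is valid on an \emph{entire} half-line $[s_0,\infty)$, i.e. that once $\delta$ is small at some late time $s_0$ it remains below $\delta_0$ for all later times. I would do this by a continuity/bootstrap argument: letting $t_1$ be the first time after a suitably chosen $s_0$ at which $\delta$ reaches $\delta_0$, the function $\alpha$ is continuous on $[s_0,t_1)\subset I_0$ and nonvanishing there (because $|\alpha|\gtrsim\delta>0$ by Lemma~\ref{BoundI} and \eqref{deltapositive}), hence of constant sign, so $|\alpha|$ is $C^1$ with $\big|(|\alpha|)'\big|=|\alpha'|\lesssim\delta$. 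Integrating from $s_0$ to $t_1$ and using $|\alpha(s_0)|\lesssim\delta(s_0)$ together with $\int_{s_0}^{t_1}\delta\le a e^{-bs_0}$ gives $|\alpha(t_1^-)|\lesssim\delta(s_0)+a e^{-bs_0}$, which can be made $\ll\delta_0$ by choosing $s_0$ large (possible since $\delta(s_n)\to0$); this contradicts $|\alpha(t_1^-)|\sim\delta(t_1)=\delta_0$, forcing $t_1=\infty$.

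Once \eqref{DecomUFree}--\eqref{EstimateFree} hold for all $t\ge s_0$, I would integrate the modulation bounds against \eqref{Decau}. From $|\theta'(t)|+|y'(t)|\lesssim\delta(t)$ and $\int_t^\infty\delta\le a e^{-bt}$ one gets that $\theta(t)$ and $y(t)$ are Cauchy as $t\to\infty$, with limits $\theta_0,y_0$ satisfying $|\theta(t)-\theta_0|+|y(t)-y_0|\lesssim e^{-bt}$; likewise $\alpha(t)$ converges, and its limit is forced to be $0$ by integrability of $|\alpha|\sim\delta$, so $|\alpha(t)|\le\int_t^\infty|\alpha'(s)|\,ds\lesssim e^{-bt}$. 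Consequently $\delta(t)\sim|\alpha(t)|\to0$ — which is the first assertion of the lemma — and $\|h(t)\|_{H^1}\sim|\alpha(t)|\lesssim e^{-bt}$. Finally I would substitute into \eqref{DecomUFree} and use the triangle inequality: $\|u(t)-e^{i\theta_0}e^{i\omega t}P_\omega(\cdot-y_0)\|_{H^1}$ is controlled by $|\alpha(t)|\,\|P_\omega\|_{H^1}+\|h(t)\|_{H^1}+|e^{i\theta(t)}-e^{i\theta_0}|\,\|P_\omega\|_{H^1}+\|P_\omega(\cdot-y(t))-P_\omega(\cdot-y_0)\|_{H^1}$, the last term estimated by $|y(t)-y_0|\,\|\nabla P_\omega\|_{H^1}$ using $P_\omega\in H^2$ (standard elliptic regularity and exponential decay of the ground state). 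This yields the bound $\lesssim e^{-bt}$ for $t\ge s_0$, and enlarging the constant to absorb the compact range $[0,s_0]$ (on which the left-hand side is bounded since $u$ is bounded in $H^1$) gives the stated estimate with $c=b$.

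I do not expect a serious obstacle beyond the bootstrap in the second paragraph: all the heavy lifting — spectral coercivity, the variational rigidity of $P_\omega$, and the modulation estimates \eqref{EstimateFree} — is already in place, so the remainder is a careful chaining of these bounds, the main points of vigilance being the uniformity of the implicit constants on $I_0$ and the continuity/constant-sign property of $\alpha$ used to turn $|\alpha'|\lesssim\delta\sim|\alpha|$ into genuine control on $|\alpha|$.
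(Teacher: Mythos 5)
Your argument is correct and is essentially the proof the paper intends, which it omits by citing \cite[Lemma~4.4]{DuyckaertsRou2010} once Proposition~\ref{ModilationFree} is in hand: extract times $s_n$ with $\delta(s_n)\to0$ from \eqref{Decau}, bootstrap using $|\alpha'|\lesssim\delta\sim|\alpha|$ to keep the solution in the modulation regime on a half-line, integrate \eqref{EstimateFree} against \eqref{Decau} to get exponential convergence of $\alpha,\theta,y$, and conclude by the triangle inequality. No gaps; this matches the paper's (cited) approach.
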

\begin{proof}
With Proposition~\ref{ModilationFree} in hand, the proof  is essentially the same as that of \cite[Lemma~4.4]{DuyckaertsRou2010}.\end{proof}

%%%%%%%%%%%%%%%%%%%%%%%%%%%%%%%%%%
\section{Concentration compactness}\label{S: Concentrated}

In this section, we show that the solutions to \eqref{NLS} with $(M(u), E(u))=(M(P_\omega),E(P_\omega))\in\partial \K_s$ that do not scatter forward in time, must converge exponentially to $P_{\omega}$ as $t \to \infty$ up to
the symmetries of the equation.

\begin{proposition}\label{CompacDeca}
Let $u$ be a solution of \eqref{NLS} satisfying 
\begin{align}\label{PropCon11}
&(M(u), E(u))=(M(P_\omega),E(P_\omega))\in\partial\K_s \quad \text{with $V(u_{0})>0$},\\
\label{PropCon22}
&\|u\|_{L^{10}_{t, x}((0, \infty)\times \R^{3})}=\infty.
\end{align}
Then there exist $\theta\in \R$, $y_{0}\in \R^{3}$ and $c$, $C>0$ such that
\[
\|u(t)-e^{i\theta}e^{i \omega t}P_{\omega}(\cdot-y_{0})\|_{H^{1}}\leq Ce^{-c t}.
\]
\end{proposition}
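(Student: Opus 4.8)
The plan is to follow the classical concentration-compactness-rigidity scheme, specialized to our setting where the mass-energy lies on $\partial\K_s$. Since $u$ fails to scatter forward in time, i.e. \eqref{PropCon22} holds, the first step is to establish that the forward-in-time orbit of $u$ is precompact in $H^1$ modulo spatial translations: there is a continuous path $x(t)\in\R^3$ such that $\{e^{-i\omega t}u(t,\cdot+x(t)):t\geq 0\}$ is precompact in $H^1(\R^3)$. This is proved by the standard contradiction argument using the linear profile decomposition (Proposition~\ref{ProfL}), the stability result (Lemma~\ref{S: result}), and the asymptotic Pythagorean expansions \eqref{MassEx}--\eqref{EnergyEx}: if the orbit were not precompact, one decomposes a failing sequence $u(t_n)$ into profiles, uses the mass-energy constraint and the inequality \eqref{BoundH} (together with the fact that $(m,e)\in\partial\K_s$ sits on the virial boundary) to conclude that there must be exactly one profile carrying all the mass and energy and the remainder vanishes; then the minimality/criticality forces this profile to generate a solution that itself does not scatter, and a standard argument shows the original sequence must in fact converge in $H^1$ modulo symmetries — contradiction. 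One must be slightly careful here because of the non-scaling-invariant nature of the equation, but this is precisely why the profile decomposition in Proposition~\ref{ProfL} is stated with the two alternatives $\lambda^j_n\equiv 1$ or $\lambda^j_n\to 0$; the $\lambda^j_n\to 0$ (mass-critical-scaling) profiles are ruled out because they carry zero mass and hence cannot be consistent with $M(u)=m>0$ being attained in the limit.

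The second step is to rule out the possibility that the compact orbit escapes to spatial infinity, i.e. to show $\sup_{t\geq 0}|x(t)|<\infty$, after which one can take $x(t)\equiv 0$ (or absorb $x(t)$). Here one uses a truncated virial/momentum argument based on $\P_R[u]$ from Lemma~\ref{VirialIden}, together with conservation of momentum (Theorem~\ref{Th1}) and the precompactness to control error terms. This is also where we must use that $V(u(t))>0$ for all $t$ (Lemma~\ref{PositiveV}), which gives $F_\infty[u(t)]=8V(u(t))>0$ and hence monotonicity of the (truncated) virial quantity; combined with the uniform $H^1$ bound this prevents $\P_R[u]$ from growing without bound, which in turn bounds $|x(t)|$.

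With the compact orbit (modulo phase and the fixed $P_\omega$) localized near the origin, the third step is the rigidity argument that upgrades precompactness to exponential convergence. Since the orbit of $e^{-i\omega t}u(t)$ is precompact in $H^1$ and has mass-energy on $\partial\K_s$, Lemma~\ref{PriModula} tells us that along any sequence $t_n\to\infty$ (at least along a subsequence where $\delta(t_n)\to 0$), $u(t_n)$ converges to the $P_\omega$-orbit; in fact, precompactness together with $V(u(t))>0$ and the variational characterization forces $\delta(t)=V(u(t))\to 0$ as $t\to\infty$ (otherwise a limit point of the orbit would have $V=0$ yet lie at positive virial-distance, contradicting the classification). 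Once $\delta(t)\to 0$, we enter the modulation regime of Section~\ref{S:Modula}: for large $t$ we have the decomposition \eqref{DecomUFree} with $\|h(t)\|_{H^1}\sim|\alpha(t)|\sim\delta(t)$. We then run the modulated virial identity of Lemma~\ref{VirialModulate11}: choosing the cutoff $\chi$ appropriately and using Lemma~\ref{Virialzero}, the time derivative of $\P_R[u]$ is, to leading order, $8V(u(t))=8\delta(t)$ plus errors that are quadratic in $(\alpha,h)$ hence $O(\delta^2)$, plus tail terms from the spatial truncation that are controlled by the precompactness. Integrating this identity over $[t,\infty)$ and using that $\P_R[u]$ is bounded (uniform $H^1$ bound plus compact localization) yields a differential inequality of the form $\int_t^\infty\delta(s)\,ds\lesssim \delta(t)$, which by Gronwall gives $\int_t^\infty\delta(s)\,ds\leq a e^{-bt}$, i.e. \eqref{Decau}. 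Lemma~\ref{Decaiment} then immediately delivers the desired exponential convergence $\|u(t)-e^{i\theta}e^{i\omega t}P_\omega(\cdot-y_0)\|_{H^1}\leq Ce^{-ct}$.

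The main obstacle I expect is the rigidity step, specifically producing the closed differential inequality $\int_t^\infty \delta(s)\,ds \lesssim \delta(t)$. The subtlety is that the naive virial quantity $\P_\infty[u]$ need not be finite (no a priori spatial decay beyond $H^1$), so one is forced to work with the truncated $\P_R[u]$ and carefully estimate both the truncation-error term \eqref{Modu11} and the subtracted soliton term \eqref{Modu22}; controlling these requires the precompactness of the orbit (to make the tails uniformly small by choosing $R$ large) in tandem with the modulation estimates \eqref{EstimateFree}, and the two must be balanced against each other as $t\to\infty$ and $R\to\infty$ simultaneously. A secondary difficulty is the profile decomposition step: since \eqref{NLS} is not scaling invariant one cannot directly import a pure-power argument, and one must carefully track which profiles are compatible with the mass constraint, but this is by now fairly routine given the form of Proposition~\ref{ProfL}.
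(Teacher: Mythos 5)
Your overall skeleton (compactness modulo translation, modulated truncated virial, Gronwall, then Lemma~\ref{Decaiment}) is the paper's, but two steps in the middle do not work as you describe, and they are exactly where the real content lies. First, your claim that ``precompactness together with $V(u(t))>0$ and the variational characterization forces $\delta(t)\to 0$ as $t\to\infty$'' is not a valid argument: if $\delta(t)\not\to 0$ there is a sequence $t_n\to\infty$ with $\delta(t_n)\geq\delta_*>0$, and any limit point of $u(t_n,\cdot+x(t_n))$ simply has positive virial; no contradiction with Lemma~\ref{PriModula} arises. What actually produces smallness of $\delta$ is a quantitative virial mechanism: one must first show the conserved momentum vanishes --- in the paper this is the Galilean boost argument of Lemma~\ref{Mome}, where a nonzero momentum would strictly lower the energy, place the boosted solution inside $\K$, and force scattering, contradicting \eqref{PropCon22} --- which yields $|x(t)|=o(t)$; only then does the localized virial with $R\sim C(\epsilon)+\sup_{[T_0,T]}|x(t)|\leq \epsilon T$ give $\tfrac1T\int_0^T V(u)\,dt\to 0$ and hence a sequence $t_n\to\infty$ with $\delta(t_n)\to 0$ (Lemma~\ref{ZeroVirial}). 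Your proposal mentions momentum conservation only in passing and never uses the boost/threshold-minimality argument, so the mechanism that gets the orbit near the soliton at all is missing.

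Second, your step establishing $\sup_{t\geq 0}|x(t)|<\infty$ is circular as stated: $\P_R[u]$ is trivially bounded by $CR\|u\|_{H^1}^2$, so ``$\P_R[u]$ cannot grow'' gives no information on $x(t)$, and to make the truncation errors small you must take $R\gtrsim\sup|x(t)|$, which is the quantity you are trying to bound. In the paper, boundedness of $x(t)$ is obtained only at the end, by playing three estimates against each other: the sequence $\delta(t_n)\to 0$, the modulated virial bound $\int_{t_1}^{t_2}\delta\lesssim\bigl[1+\sup_{[t_1,t_2]}|x|\bigr]\bigl(\delta(t_1)+\delta(t_2)\bigr)$ of Lemma~\ref{Moduvirial}, and the control $|x(t_1)-x(t_2)|\lesssim\int_{t_1}^{t_2}\delta$ of Lemma~\ref{SpatialcenterCombi}. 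Note also that to get $\delta(t_1)+\delta(t_2)$ (rather than $O(1)$) on the right-hand side one needs the boundary estimate $|\P_R[u(t_j)]|\lesssim R\,\delta(t_j)$ when $\delta(t_j)$ is small, which comes from subtracting the (zero-momentum) soliton contribution inside $\P_R$ and using the modulation estimates; mere boundedness of $\P_R[u]$, as in your sketch, would only give $\int_t^\infty\delta\lesssim 1$ and no exponential decay. Once these ingredients are in place the conclusion does follow exactly as you say, via Gronwall and Lemma~\ref{Decaiment}.
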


As a corollary, we will obtain the following: 
\begin{corollary}\label{ClassC}
There is no solution to \eqref{NLS} satisfying \eqref{PropCon11} and 
\begin{equation}\label{Infinity10}
\|u\|_{L^{10}_{t, x}((0, \infty)\times \R^{3})}=
\|u\|_{L^{10}_{t, x}((-\infty,0)\times \R^{3})}=
\infty.
\end{equation}
\end{corollary}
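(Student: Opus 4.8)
The plan is to prove Corollary~\ref{ClassC} by contradiction, deriving it as a consequence of Proposition~\ref{CompacDeca} applied in both time directions. Suppose a solution $u$ exists satisfying \eqref{PropCon11} and \eqref{Infinity10}. Since $u$ does not scatter forward in time (i.e. $\|u\|_{L^{10}_{t,x}((0,\infty)\times\R^3)}=\infty$), Proposition~\ref{CompacDeca} yields $\theta_+\in\R$, $y_+\in\R^3$ and constants $c_+,C_+>0$ with
\[
\|u(t)-e^{i\theta_+}e^{i\omega t}P_\omega(\cdot-y_+)\|_{H^1}\leq C_+e^{-c_+t}\qtq{for}t\geq0.
\]
In particular $V(u(t))=\delta(t)\to0$ as $t\to+\infty$.

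Next I would apply the same reasoning in the reverse time direction. The time-reversal $v(t,x):=\overline{u(-t,x)}$ solves \eqref{NLS}, has the same mass and energy, and (since $V$ depends only on $|v|$ and $\nabla v$ through quadratic/power quantities that are invariant under conjugation) satisfies $V(v(0))=V(u(0))>0$; moreover $\|v\|_{L^{10}_{t,x}((0,\infty)\times\R^3)}=\|u\|_{L^{10}_{t,x}((-\infty,0)\times\R^3)}=\infty$. Hence Proposition~\ref{CompacDeca} applies to $v$, giving exponential convergence of $v(t)$ to an orbit of $P_\omega$ as $t\to+\infty$, equivalently exponential convergence of $u(t)$ to an orbit of $P_\omega$ as $t\to-\infty$. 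In particular $\delta(t)=V(u(t))\to0$ as $t\to-\infty$ as well.

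The contradiction now comes from the virial/monotonicity structure. By Lemma~\ref{PositiveV} we have $\delta(t)>0$ for all $t\in\R$, and by what we have just shown $\delta(t)\to0$ as $t\to\pm\infty$. Thus $\delta$ attains a positive interior maximum, or at least $\delta$ is strictly positive and small outside a compact set. Using the modulated virial identity (Lemma~\ref{VirialModulate11}) together with the modulation analysis of Section~\ref{S:Modula}: on the sets where $\delta(t)<\delta_0$, write $u$ via the decomposition \eqref{DecomUFree}, so that $\frac{d}{dt}\P_R[u]=F_\infty[u]+(\text{error terms})$, where $F_\infty[u]=8V(u)=8\delta(t)$ and the error terms \eqref{Modu11}--\eqref{Modu22} are $o(\delta(t))$ by \eqref{EstimateFree}, Lemma~\ref{Virialzero}, and by choosing $R$ large depending on the proximity to the soliton (exactly as in the argument leading to Proposition~\ref{CompacDeca}). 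Integrating $\frac{d}{dt}\P_R[u]\gtrsim\delta(t)>0$ over a long interval $[-T,T]$ forces $\P_R[u]$ to grow, while $\P_R[u]$ is bounded (since $u$ is uniformly bounded in $H^1$ by Lemma~\ref{PositiveV} and $|\nabla w_R|\lesssim R$ is finite for finite $R$, or one uses the exponential smallness of the tails near the soliton for $R=\infty$); this is a contradiction. More directly, one can quote Proposition~\ref{CompacDeca}'s proof: the exponential decay forces $\int_{\R}\delta(s)\,ds<\infty$, yet the virial inequality $\int \delta\,ds\lesssim \sup_R|\P_R[u]|<\infty$ combined with the sign $\delta>0$ and the coercive lower bound $\frac{d}{dt}\P_R[u]\gtrsim\delta$ is incompatible with a two-sided limit to zero unless $\delta\equiv0$.

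I expect the main obstacle to be the bookkeeping in the two-sided virial argument — specifically, verifying that the error terms in Lemma~\ref{VirialModulate11} remain controllable simultaneously as $t\to+\infty$ and $t\to-\infty$, since the radius $R$ and the modulation parameters $(\theta,y)$ differ in the two directions. However, this is handled exactly as in the forward-in-time proof of Proposition~\ref{CompacDeca} (the compactness forces $y(t)$ to stay bounded and $R$ can be taken uniformly large once $\delta(t)$ is small), so the argument is essentially a symmetric reuse of machinery already developed. An alternative, cleaner route avoiding any new virial computation: Proposition~\ref{CompacDeca} shows that both $u(t)\to e^{i\theta_+}e^{i\omega t}P_\omega(\cdot-y_+)$ as $t\to+\infty$ and $u(t)\to e^{i\theta_-}e^{i\omega t}P_\omega(\cdot-y_-)$ as $t\to-\infty$ exponentially; then $\int_\R\delta(s)\,ds<\infty$, and feeding this into the already-established virial estimate from the proof of Proposition~\ref{CompacDeca} (which shows $\frac{d}{dt}\P_\infty$-type quantities are comparable to $\delta$ near the soliton and that $\int\delta<\infty$ with $\delta>0$ forces $\delta\to0$ along a sequence but also that $\P$ has a definite sign) yields $\delta\equiv0$, hence $V(u_0)=0$, contradicting \eqref{PropCon11}. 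Either way the corollary follows.
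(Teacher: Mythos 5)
Your setup is sound and is essentially the paper's strategy run in both time directions: you apply Proposition~\ref{CompacDeca} to $u$ and to the time reversal $\overline{u(-t)}$ (the paper instead re-runs the compactness construction directly on all of $\R$ and shows $x(t)$ is bounded and $\delta(t)\to0$ as $t\to\pm\infty$, but that difference is immaterial, and your version even gives exponential decay of $\delta$ at both ends). The problem is the concluding virial step. From $\tfrac{d}{dt}\P_{R}[u]\gtrsim\delta(t)>0$ together with the crude bound $|\P_{R}[u]|\lesssim R\|u\|_{H^1}^2$ you declare a contradiction because $\P_{R}[u]$ ``grows'' while staying bounded; but a bounded increasing function is not contradictory --- that argument only yields $\int_{\R}\delta\,dt\lesssim R<\infty$, which you already know from the exponential convergence, so no contradiction is produced. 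Similarly, the parenthetical appeal to ``$\P$ has a definite sign'' is not established anywhere in the paper and is not the actual mechanism.

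What is needed --- and what the paper gets by modifying Lemma~\ref{Moduvirial} --- is control of the \emph{boundary terms} by $\delta$ at the endpoints. When $\delta(t_j)$ is small, $u(t_j)$ is $H^1$-close to a modulated soliton $e^{i\theta}e^{i\omega t_j}P_{\omega}(\cdot-y)$, on which $\P_{R}$ vanishes because $P_\omega$ is real, so $|\P_{R}[u(t_j)]|\lesssim R\,\delta(t_j)$ (Claim I in the proof of Lemma~\ref{Moduvirial}); and the error terms satisfy $|\sigma(t)|\lesssim\epsilon\,\delta(t)$ in both regimes once $x(t)$ is bounded (which in your setting follows from the two-sided exponential convergence), so they can be absorbed. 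The fundamental theorem of calculus on $[-n,n]$ then gives $\int_{-n}^{n}\delta\,dt\leq C\bigl(\delta(n)+\delta(-n)\bigr)\to0$, hence $\delta\equiv0$, contradicting $V(u_0)>0$ (Lemma~\ref{PositiveV}); this is exactly the estimate the paper quotes. So your route does work, but only after replacing ``boundedness of $\P_{R}$'' by the vanishing (indeed $O(\delta)$ smallness) of $\P_{R}[u(\pm n)]$; as written, your final step does not yield a contradiction. A further minor caveat: the remark that one could take $R=\infty$ using ``exponential smallness of the tails'' is unjustified, since $H^1$-closeness to the soliton does not make $\P_{\infty}[u]$ finite.
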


The first step in the proof of Proposition~\ref{CompacDeca} is to establish compactness for nonscattering solutions. 

\begin{lemma}\label{Compacness11}
Suppose that $u(t)$ satisfies \eqref{PropCon11} and \eqref{PropCon22}. Then there exists 
$x_{0}:[0, \infty) \to \R^{3}$ such that 
 \begin{equation}\label{CompactX}
\left\{u(t, \cdot+x_{0}(t)): t\in [0, \infty)\right\} \quad
\text{is pre-compact in $H^{1}(\R^{3})$}.
\end{equation}
\end{lemma}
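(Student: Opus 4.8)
The plan is to prove Lemma~\ref{Compacness11} via the standard concentration-compactness argument, using the linear profile decomposition (Proposition~\ref{ProfL}), the stability lemma (Lemma~\ref{S: result}), and the variational/scattering results already available. The key structural fact is that $\K$ is a \emph{scattering region} (Theorem~\ref{MainTheorem}) while $(M(u),E(u))\in\partial\K_s$ sits exactly on its boundary, with $V(u(t))>0$ for all $t$ (Lemma~\ref{PositiveV}); heuristically this says the solution lives ``at the minimal mass-energy level'' among non-scattering solutions with positive virial, which forces a compactness property on its orbit.

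First I would argue by contradiction: if \eqref{CompactX} fails, one can extract a sequence of times $t_n\to\infty$ (using \eqref{PropCon22} and global well-posedness, so $u(t_n)$ is bounded in $H^1$ by Lemma~\ref{PositiveV}) along which the orbit fails to be precompact modulo translations. Apply Proposition~\ref{ProfL} to the bounded sequence $\{u(t_n)\}$, writing $u(t_n)=\sum_{j=1}^J\psi_n^j+W_n^J$. The aim is to show that exactly one profile is present, that it has $\lambda_n^j\equiv1$ and $t_n^j\equiv0$ (so it is an honest $H^1$-profile converging strongly), and that the remainder vanishes — which contradicts the failure of precompactness. The mechanism: the asymptotic Pythagorean expansions \eqref{MassEx}, \eqref{EnergyEx} split the mass and energy among the profiles and the remainder; combined with the coercivity bound \eqref{BoundH} (each piece has nonnegative ``energy'' after accounting for the relevant functional, once one checks the relevant positivity inherited from the virial), one concludes each nontrivial profile $\psi^j$ carries a strictly positive share of mass-energy. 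If two or more profiles were nontrivial, each would have mass-energy \emph{strictly inside} $\K$ (strictly below the threshold), hence each associated nonlinear solution scatters by Theorem~\ref{MainTheorem}; feeding these scattering nonlinear profiles plus the small remainder into the stability lemma~\ref{S: result} (a standard nonlinear-profile superposition / perturbation argument) would then force $u$ itself to have finite $L^{10}_{t,x}$ norm on $(t_n,\infty)$ — and hence on $(0,\infty)$ — contradicting \eqref{PropCon22}. Therefore $J^\ast=1$. One must also rule out the degenerate scales ($\lambda_n^1\to0$) and the nonzero time shifts ($t_n^1\to\pm\infty$): in the former case the profile would have zero mass (it lives only in $\dot H^1$) and one checks the energy of such a profile is $\geq0$, again leaving room for the remainder to be nontrivial, which is excluded; in the latter case the nonlinear evolution of $\psi_n^1$ is asymptotically free, so again $u$ would scatter. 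Hence $\lambda_n^1\equiv1$, $t_n^1\equiv0$, and $W_n^1\to0$ in $\dot H^1$ — but then also in $L^2$ (using that $\lambda_n\equiv1$ forces $\psi^1\in L^2$ and the mass Pythagorean identity), so $u(t_n,\cdot+x_n^1)\to\psi^1$ strongly in $H^1$, contradicting our assumption. Setting $x_0(t):=x_n^1$ appropriately (or rather, running the argument along every sequence) yields precompactness of $\{u(t,\cdot+x_0(t))\}$.

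The main obstacle I expect is the bookkeeping in the nonlinear profile superposition step: one must decouple the nonlinear evolutions of the profiles (using the asymptotic orthogonality \eqref{Ortho} of the parameters and the fact that each scatters so is globally spacetime-bounded), build an approximate solution $\tilde u_n\approx\sum_j U_n^j$ (the nonlinear profiles) with small error measured in the $\dot H^1$-Strichartz norm $\|\nabla e\|_{L^{10/7}_{t,x}}$ required by Lemma~\ref{S: result}, and carefully handle the profiles with $\lambda_n^j\to0$, which require the Fourier-truncation structure in \eqref{fucti} and the associated construction of nonlinear profiles at small scales (this is exactly the mechanism used in \cite{KillipOhPoVi2017}). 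A secondary subtlety is verifying that a profile with a strictly positive but strictly sub-threshold share of mass-energy genuinely lands in $\K$ — i.e.\ that the energy of each profile is strictly positive and below $\EE$ at its mass — which follows from \eqref{BoundH} together with the fact that $(M(u),E(u))$ is on $\partial\K_s$ with $e>0$ and $m<m_2$, so that strict subadditivity in \eqref{MassEx}–\eqref{EnergyEx} pushes each summand into the open region. I would present these decoupling and profile-construction steps by citing the corresponding arguments in \cite{KillipOhPoVi2017} (and the analogous threshold arguments in \cite{DuyckaertsRou2010, CFR}) rather than reproducing them in full, since they are by now routine, and spend the detailed writing on the extraction of the single compact profile and the exclusion of the degenerate cases.
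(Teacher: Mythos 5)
Your plan follows essentially the same route as the paper's proof: apply the profile decomposition along a sequence of times, use the asymptotic Pythagorean expansions together with \eqref{BoundH} and the strict monotonicity of $m\mapsto\EE(m)$ to place any extra profile strictly inside the scattering region $\K$, run the nonlinear profile superposition through the stability result (Lemma~\ref{S: result}) to contradict \eqref{PropCon22}, conclude a single unit-scale profile with trivial time shift and vanishing remainder, and then pass to the function $x_0(t)$ by the standard argument (the paper cites \cite{DuyHolmerRoude2008}). The only slight imprecision is your phrasing for the $\lambda_n^1\to 0$ case: the exclusion there comes from the small-scale nonlinear profiles of \cite[Proposition 8.3]{KillipOhPoVi2017} (or the Bernstein mass bound) combined with stability, not from the remainder ``being nontrivial,'' though you identify the correct mechanism in your closing paragraph.
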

\begin{proof} First note that by Lemma~\ref{PositiveV}, the solution $u(t)$ is uniformly bounded in $H^{1}(\R^{3})$ and $V(u(t))>0$ for all $t\in \R$.

Given $\T_{n}\to \infty$, we will first show that there exists a subsequence in $n$ and $x_{n}\in\R^3$ such that
\begin{equation}\label{SequenceC}
\text{$u(\T_{n}, x+x_{n})$ converges in $H^{1}(\R^{3})$. }
\end{equation}
We apply the profile decomposition Proposition~\ref{ProfL} to the sequence $u(\T_{n})$ to obtain
\begin{equation}\label{ProfileB}
u_{n}:=u(\T_{n})=\sum^{J}_{j=1}\psi_{n}^{j}+W^{J}_{n}, 
\end{equation}
with all of the properties stated in Proposition~\ref{ProfL}.

If $J^{\ast}=0$, then
\[
\|e^{it\Delta}u(\T_{n})\|_{L^{10}_{t,x}([0, \infty)\times \R^{3})} \to 0 \quad \text{as $n\to \infty$}.
\]
In particular, by using the stability result (Lemma~\ref{S: result}) with $\tilde{u}_{n}=e^{it\Delta}u(\T_{n})$ and $u_{n}=u(\T_{n})$, we obtain
\[
\|u(t+\T_{n})\|_{L^{10}_{t,x}((0, \infty)\times \R^{3})}=
\|u\|_{L^{10}_{t,x}((\T_{n}, \infty)\times \R^{3})}\lesssim 1,
\]
for large $n$, which is a contradiction. Therefore $J^{\ast}\geq 1$. 

Now suppose that $J^{\ast}\geq 2$. By decoupling of mass and energy, we have
\begin{align}\label{Pyt11}
&\lim_{n\rightarrow\infty}\big[\sum^{J}_{j=1}M(\psi_{n}^{j})+M(W^{J}_{n})\big]
=\lim_{n\rightarrow\infty}M(u_{n})=M(P_{\omega}),\\\label{Pyt22}
&\lim_{n\rightarrow\infty}\big[\sum^{J}_{j=1}E(\psi_{n}^{j})+E(W^{J}_{n})\big]
=\lim_{n\rightarrow\infty}E(u_{n})=E(P_{\omega}).
\end{align}
for any $0\leq J\leq J^{\ast}$.

% Assume  $E(P_{\omega})=0$. Since $M(\psi_{n}^{j})< M(P_{\omega})= M(Q_{1})$ for $n$ large, we infer that (see \eqref{BoundH})
% \[
% E(\psi_{n}^{j})
% \geq \big[1-( \tfrac{M(\psi_{n}^{j})}{M(Q_{1})} )^{1/2} \big]
% \left[\tfrac{1}{2}\|\psi_{n}^{j}\|^{2}_{\dot{H}^{1}}+\tfrac{1}{6}\|\psi_{n}^{j}\|^{6}_{L^{6}}\right]\geq 0.
% \]
% Thus, by \eqref{Pyt22} we see that $E(\psi_{n}^{j})=0$ for all $n$ and $1\leq j \leq J$. A similar argument shows that $E(W_{n}^{J})=0$.  In particular, inequality above shows that (recall $M(\psi_{n}^{j})<M(Q_{1})$)
% $\psi_{n}^{j}\equiv 0$ for all $n$ and $1\leq j \leq J$, which is a contradiction. Therefore, in this case, $J^{\ast}=1$.

By assumption, we have $E(P_{\omega})>0$. We claim that there exists $\delta>0$ such that for all $n$ large and $1\leq j \leq J$,
\begin{equation}\label{MEdelta}
M(\psi_{n}^{j})\leq M(P_{\omega})- \delta
\quad \text{and}\quad 
E(\psi_{n}^{j})\leq E(P_{\omega})- \delta.
\end{equation}
In particular, by the strict monotonicity of $m\mapsto \EE(m)$,  \eqref{MEdelta} implies that $(M(\psi_{n}^{j}), E(\psi_{n}^{j}))\in \K$ (the scattering region) for all $n$ large and $1\leq j \leq J$.

\begin{proof}[Proof of \eqref{MEdelta}] First, suppose that $\lambda^{j}_{n}\equiv 1$. Then there exists $\delta>0$ such that $M(\psi_{n}^{j})\leq M(P_{\omega})- \delta$. In particular, as  $M(P_{\omega})\leq M(Q_{1})$, by \eqref{BoundH} we obtain
\[
E(\psi_{n}^{j})
\gtrsim_{\delta}\|\psi^{j}\|^{2}_{\dot{H}^{1}},
\]
which implies \eqref{MEdelta}. Now suppose instead $\lambda^{j}_{n} \to 0$ as $n\to \infty$.  By Bernstein's inequality and a changes of variables we see that $M(\psi_{n}^{j})\lesssim (\lambda^{j}_{n})^{1-\theta}\|\psi^{j}\|_{\dot{H}^{1}}$. As $\lambda^{j}_{n} \to 0$, we find that there exists $\delta>0$ such that $M(\psi_{n}^{j})\leq M(P_{\omega})- \delta$ for $n$ large.
Moreover, using 
\[
\|\psi_{n}^{j}\|_{\dot{H}^{1}}=\|P_{\geq (\lambda^{j}_{n} )^{\theta}}\psi^{j}\|_{\dot{H}^{1}}\to
 \|\psi^{j}\|_{\dot{H}^{1}}\quad \text{as $n\to \infty$}
\]
together with \eqref{BoundH}, we have that 
\[
\liminf_{n\to \infty}E(\psi_{n}^{j})
\gtrsim_{\delta}\liminf_{n\to \infty}\|\psi_{n}^{j}\|^{2}_{\dot{H}^{1}}
\gtrsim \|\psi^{j}\|^{2}_{\dot{H}^{1}}
\quad \text{for  $1\leq j \leq J$},
\]
which yields \eqref{MEdelta}. \end{proof}

Now we define solutions $v^{j}$ to \eqref{NLS} as follows:
\begin{enumerate}[label=\rm{(\roman*)}]
\item If $\lambda^{j}_{n}\equiv 1$ and $t^{n}_{j}\rightarrow\pm\infty$ as $n\rightarrow\infty$, we define $v^{j}$  to be the  global solution to \eqref{NLS} which scatters to $e^{it\Delta}\psi^{j}$ when $t\rightarrow \pm\infty$. Note that
\[
\lim_{n\to \infty}\|v^{j}(t^{n}_{j})-\psi_{n}^{j}\|_{H^{1}}=0.
\]
In particular, since \eqref{MEdelta} holds, by  Theorem~\ref{MainTheorem} we obtain  global space-time bounds for the solution $v^{j}$.

\item If $\lambda^{j}_{n}\equiv 1$ and $t^{n}_{j}\equiv 0$, we define $v^{j}$ to be the global solution to \eqref{NLS}
	with the initial data $v^{j}(0)=\psi^{j}$. Since 
$(M(\psi^{j}), E(\psi^{j}))\in \K$ (see \eqref{MEdelta}), Theorem~\ref{MainTheorem} implies that the solution $v^{j}$ is a global solution
with finite scattering size.
	
\item If $\lambda^{j}_{n}\rightarrow 0$ as $n\rightarrow\infty$, we define $v_{n}^{j}$ to be the solution to \eqref{NLS}
	with the initial data $v_{n}^{j}(0)=\psi_{n}^{j}$  established in \cite[Proposition 8.3]{KillipOhPoVi2017}.
\end{enumerate}

In the cases (i) and (ii) we define the global solutions to \eqref{NLS},
\[v^{j}_{n}(t,x):=v^{j}(t+t^{j}_{n}, x-x^{j}_{n}).\]

By \eqref{MEdelta} and  inequality \eqref{BoundH} we get $\|v^{j}_{n}(t)\|^{2}_{\dot{H}_{x}^{1}}\lesssim_{\delta} E(v^{j}_{n})$. Thus,
persistence of regularity (see Lemma 6.2 in \cite{KillipOhPoVi2017}) implies
\begin{equation}\label{Boundcriti}
\|v^{j}_{n}\|_{L^{10}_{t, x}(\R\times\R^{3})}
+\|\nabla v^{j}_{n}\|_{L^{\frac{10}{3}}_{t, x}(\R\times\R^{3})}\lesssim_{\delta} E(v^{j}_{n})^{\frac{1}{2}}.
\end{equation}
Similarly,  we get
\begin{equation}\label{Masscriti}
\|v^{j}_{n}\|_{L^{\frac{10}{3}}_{t, x}(\R\times\R^{3})}\lesssim_{\delta} M(v^{j}_{n})^{\frac{1}{2}}.
\end{equation}
Moreover, an argument similar to the one above shows that
 \begin{equation}\label{W: Bound}
\|W_{n}^{J}\|^{2}_{\dot{H}^{1}}\lesssim_{\delta} E(W^{J}_{n}).
\end{equation}

We now define approximate solutions to \eqref{NLS} by
\[u^{J}_{n}(t,x)= \sum^{J}_{j=1}v^{j}_{n}(t,x)+e^{it\Delta}W^{J}_{n},\]
and we use perturbation argument to obtain a contradiction to \eqref{PropCon22}.  We will verify that for $n$ and $J$ large,  $u_{n}^{J}$
is an approximate solution to \eqref{NLS} with global finite space-time bounds. Indeed, 
with estimates \eqref{Boundcriti}, \eqref{Masscriti} and \eqref{W: Bound} in hand, we can repeat the argument of 
\cite[Lemmas 9.3, 9.4 and 9.5]{KillipOhPoVi2017} to show that 
\begin{align}
&\lim_{n\to \infty}\|u^{J}_{n}(0)-u_{n}(0)\|_{H^{1}_{x}}=0,\\
&\sup_{J}\limsup_{n\rightarrow\infty}\left[ \| {u}^{J}_{n}  \|_{L^{10}_{t,x}(\R \times \R^{3})} 
+\| {u}^{J}_{n}  \|_{L_{t}^{\infty}H^{1}_{x}(\R\times \R^{3})} \right]\lesssim_{\tau_{c},\delta} 1,\\
&\lim_{J\to J^{\ast}}\limsup_{n\rightarrow\infty}
\|\nabla [i\partial_{t}u^{J}_{n}+\Delta u^{J}_{n}-F(u^{J}_{n})] \|_{L^{\frac{10}{7}}_{x}(\R\times \R^{3})}=0,
\end{align}
where $F(z)=|z|^{4}z-|z|^{2}z$. Thus, an application of the stability result (Lemma~\ref{S: result})
then yields that $u$ satisfies finite space-time bounds globally in time, i.e. $u\in L^{10}_{t,x}(\R \times \R^{3})$, 
contradicting \eqref{PropCon22}  and so $J^{\ast}\geq 2$ cannot occur.

From the analysis above we obtain that $J^{\ast}=1$. Therefore,
\[
u_{n}=u(\T_{n})=\psi_{n}+W_{n}
\]
with $W_{n} \rightharpoonup 0$ weakly in $\dot{H}^{1}(\R^{3})$ (cf. \eqref{WeakConver}).
To prove the proposition we need to show that $W_{n} \to 0$ strongly in $H^{1}(\R^{3})$,  
$\lambda_{n}\equiv 1$ and $t_{n} \equiv 0$. First we show that $W_{n} \to 0$ in $H^{1}(\R^{3})$.
Recall that $E(P_{\omega})>0$ by assumption. If $W_{n} \nrightarrow 0$  in $\dot{H}^{1}(\R^{3})$, as $M(W_{n})\leq M(P_{\omega})<M(Q_{1})$, then by inequality \eqref{BoundH} we infer that
$\liminf_{n \to \infty}E(W_{n})>0 $ and one can show by the previous argument that $u$ scatters in $H^{1}(\R^{3})$.
Therefore, $W_{n} \to 0$ in $\dot{H}^{1}(\R^{3})$. In particular, $\lim_{n\to \infty}E(W_{n})=0$ and 
$\lim_{n\to \infty}E(\psi_{n})=E(P_{\omega})$ (see \eqref{Pyt22}). Moreover, if $M(\psi_{n})\leq M(P_{\omega})-\delta$ for some $\delta>0$ and $n$ large, then since $m\mapsto \EE(m)$ is strictly decreasing we infer that (again by the above argument) $u$ scatters in $H^{1}(\R^{3})$, contradicting \eqref{PropCon22}.
Thus, $\lim_{n\to \infty}M(\psi_{n})=M(P_{\omega})$ and $\lim_{n\to \infty}M(W_{n})=0$. 

% Now we treat the case $E(P_{\omega})=0$. By \eqref{Pyt11} and inequality \eqref{BoundH} (notice that $M(P_{\omega})=M(Q_{1})$)
%  we get $E(\psi_{n})\geq 0$ and $E(W_{n})\geq 0$. Since $E(P_{\omega})=0$, it follows (cf. \eqref{Pyt22})
% \begin{equation}\label{ContraEW}
% \lim_{n\to \infty}E(\psi_{n})=\lim_{n\to \infty}E(W_{n})=0.
% \end{equation}
% Suppose $\|W_{n}\|_{L^{2}} \nrightarrow 0$. Then there exists $\delta>0$ such that $M(\psi_{n})\leq M(P_{\omega})-\delta$.
% This implies by \eqref{Pyt22}, $E(\psi_{n})\gtrsim_{\delta} \|\psi\|^{2}_{\dot{H}^{1}}$ for $n$ large, which is a contradiction to \eqref{ContraEW}.
% Therefore, $\|W_{n}\|_{L^{2}}\to 0$. Combining this with \eqref{ContraEW} we deduce that  $\|W_{n}\|_{{H}^{1}}\to 0$.

If $\lambda_{n}\to 0$ as $n\to \infty$, by using Proposition 8.3 in \cite{KillipOhPoVi2017} we see that the unique global solution 
with initial data $v_{n}(0)=\psi_{n}$ satisfies
$\| v_{n}\|_{L^{10}_{t, x}(\R\times \R^{3})}\leq C(\|\psi\|)$ for $n$ sufficiently large.
As
\[
\lim_{n\to \infty}\|u_{n}-v_{n}(0)\|_{H^{1}}=\lim_{n\to \infty}\|W_{n}\|_{H^{1}}=0,
\]
applying the stability result Lemma~\ref{S: result} we obtain a contradiction to \eqref{PropCon22}. 

Now we preclude the possibility that $t_{n}\to \pm\infty$. If $t_{n}\to \infty$ as $n\to \infty$ (say), then by Strichartz estimates, monotone convergence  theorem  we infer that
\[
\begin{split}
\| e^{i\Delta}u_{n}  \|_{L^{10}_{t,x}([0,\infty)\times \R^{3})}\leq \|e^{it \Delta} \psi_{n}  \|_{L^{10}_{t,x}([0,\infty)\times \R^{3})}+
\|e^{it \Delta} W_{n} \|_{L^{10}_{t,x}([0,\infty)\times \R^{3})},\\
\lesssim
\|e^{it \Delta}  \phi \|_{L^{10}_{t,x}([t_{n},\infty)\times \R^{3})}+\|W_{n}\|_{H^{1}}\rightarrow0.
\end{split}
\]
Applying the stability result once more,  we reach a contradiction.

Finally, by \eqref{SequenceC} and using the same argument developed in \cite{DuyHolmerRoude2008} we deduce that there exist a function
$x_{0}(t)$ such that \eqref{CompactX} holds, which finally completes the proof of the proposition.\end{proof}

Next, the same argument as in \cite[Lemma 4.2]{MiaMurphyZheng2021} yields the following lemma.

\begin{lemma}\label{Parametrization}
If $\delta_{0}>0$ is sufficiently small, then there exists $C>0$ such that
\[
|x_{0}(t)-y(t)|<C \quad \text{for  $t\in I_{0}$},
\]
where the parameter $y(t)$ is given in Proposition~\ref{ModilationFree}.
\end{lemma}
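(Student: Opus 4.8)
The plan is to compare two approximate centers of mass for the solution $u(t)$ on the set $I_0$ where $\delta(t)<\delta_0$: one coming from the concentration-compactness parameter $x_0(t)$ produced in Lemma~\ref{Compacness11}, the other from the modulation parameter $y(t)$ produced in Proposition~\ref{ModilationFree}. On $I_0$ both of these translation parameters make $u(t)$ close (in $H^1$, modulo a phase) to $P_\omega$: indeed, by \eqref{EstimateFree} and \eqref{DecomUFree} we have $\|e^{-i\omega t}u(t,\cdot+y(t))-e^{i\theta(t)}P_\omega\|_{H^1}\lesssim\delta(t)<\delta_0$, while precompactness of $\{u(t,\cdot+x_0(t))\}$ together with the fact (from Lemma~\ref{PriModula}, applied along any sequence with $\delta(t_n)\to 0$) that every $H^1$-limit of the orbit modulo translations is a phase rotation of $P_\omega$ shows that, after possibly enlarging $x_0(t)$ by a bounded amount, $\|e^{-i\omega t}u(t,\cdot+x_0(t))-e^{i\vartheta(t)}P_\omega\|_{H^1}$ is also small for $t\in I_0$ (taking $\delta_0$ small enough).

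The key step is then a rigidity/non-degeneracy argument: if two translates of a fixed profile $P_\omega$ are both $\varepsilon$-close in $H^1$ to phase rotations of $P_\omega$, with $\varepsilon$ small, then the two translation vectors differ by $O(1)$. Concretely, I would argue by contradiction: suppose there were $t_n\in I_0$ with $|x_0(t_n)-y(t_n)|\to\infty$. Writing $z_n:=x_0(t_n)-y(t_n)$, from the two closeness statements we would get $\|e^{i\vartheta_n}P_\omega(\cdot-z_n)-e^{i\theta_n}P_\omega\|_{H^1}\to 0$ after passing to a subsequence and relabeling phases; but since $P_\omega>0$ has $L^2$-norm not concentrated at infinity (it decays exponentially), translating by $z_n\to\infty$ forces $\|P_\omega(\cdot-z_n)-e^{i(\theta_n-\vartheta_n)}P_\omega\|_{L^2}\to\sqrt 2\,\|P_\omega\|_{L^2}>0$ (the two bump functions become asymptotically disjointly supported), a contradiction. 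This is exactly the argument used in \cite[Lemma 4.2]{MiaMurphyZheng2021}, and the only input specific to our setting is the variational characterization from Lemma~\ref{PriModula} guaranteeing that the compactness profile is $P_\omega$ itself (not some other solitary wave), which holds because $(M(u),E(u))\in\partial\K_s$ and the uniqueness assumption in $\partial\K_s$ is in force.

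The main obstacle — and really the only subtle point — is making precise the claim that the concentration-compactness center $x_0(t)$ can be adjusted by a bounded amount so that $e^{-i\omega t}u(t,\cdot+x_0(t))$ is $H^1$-close to $e^{i\vartheta(t)}P_\omega$ uniformly on $I_0$; this requires combining precompactness of the orbit $\{u(t,\cdot+x_0(t)):t\ge 0\}$ with the quantitative variational statement of Lemma~\ref{PriModula} (which upgrades $V\to 0$ into $H^1$-proximity to the $P_\omega$-orbit), using that $\delta(t)<\delta_0$ on $I_0$ and that $\delta_0$ is at our disposal. Once this uniform proximity is in hand, the remainder is the soft translation-rigidity argument above and is essentially identical to \cite[Lemma 4.2]{MiaMurphyZheng2021}, so I would simply cite that reference for the details.
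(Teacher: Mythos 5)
Your overall strategy (contradiction sequence $t_n\in I_0$ with $|x_0(t_n)-y(t_n)|\to\infty$, playing the precompactness center against the modulation center, and using that far-apart translates of $P_\omega$ are asymptotically orthogonal) is the right one and is in the spirit of \cite[Lemma 4.2]{MiaMurphyZheng2021}, which is all the paper itself invokes. However, the step you yourself flag as the key one is not justified by the mechanism you propose, and as stated it is close to circular. On $I_0$ you only know $\delta(t)<\delta_0$, not $\delta(t_n)\to 0$, so Lemma~\ref{PriModula} does \emph{not} tell you that every $H^1$-limit point of $\{u(t,\cdot+x_0(t)):t\in I_0\}$ is a phase rotation of $P_\omega$; along a sequence with, say, $\delta(t_n)\to\delta_0/2$ the limit is merely within $\epsilon(\delta_0)$ of the soliton orbit. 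Worse, the claim that one can ``enlarge $x_0(t)$ by a bounded amount'' so that $u(t,\cdot+x_0(t))$ is $H^1$-close to $e^{i\vartheta(t)}P_\omega$ is essentially equivalent to the conclusion $|x_0(t)-y(t)|\leq C$ that you are trying to prove, so it cannot serve as an intermediate step without an independent argument.

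The repair is standard and avoids any claim that limit points are solitons. For $t\in I_0$, Lemma~\ref{ExistenceFree} already gives the quantitative closeness $\|u(t)-e^{i\sigma(t)}P_\omega(\cdot-y(t))\|_{H^1}\leq\epsilon(\delta_0)$, so the mass of $u(t)$ in a ball $\{|x-y(t)|\leq R'\}$ (with $R'$ chosen so that $P_\omega$ carries almost all of its mass there) is at least $M(P_\omega)-C\epsilon(\delta_0)$. On the other hand, precompactness of $\{u(t,\cdot+x_0(t))\}$ gives uniform localization: for every $\eta>0$ there is $R(\eta)$ with $\sup_{t\geq0}\int_{|x-x_0(t)|>R(\eta)}|u(t,x)|^2\,dx<\eta$. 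If $|x_0(t)-y(t)|>R(\eta)+R'$, these two facts are incompatible once $\delta_0$ and $\eta$ are small relative to $M(P_\omega)$; this yields the bound $|x_0(t)-y(t)|\leq C$ directly. (Alternatively, your contradiction argument can be salvaged by extracting a strong limit $\varphi$ of $u(t_n,\cdot+x_0(t_n))$, noting $M(\varphi)=M(P_\omega)>0$ by conservation of mass, and pairing the closeness to $e^{i\sigma_n}P_\omega(\cdot-(y(t_n)-x_0(t_n)))$ with the weak vanishing of translates whose centers escape to infinity; again no identification of $\varphi$ with a soliton is needed.) As written, though, the appeal to Lemma~\ref{PriModula} to characterize all limit points on $I_0$ is a genuine gap.
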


Using Lemma~\ref{Parametrization}, we can define the modified spatial center $x(t)$ via
\begin{equation}\label{Def:x}
x(t)=
\begin{cases}
x_{0}(t)& \quad t\in [0, \infty)\setminus I_{0},\\
y(t)&\quad t\in I_{0}.
\end{cases}
\end{equation}
In this case, we still have that \begin{equation}\label{CompacNew}
\text{$\left\{u(t, \cdot+x(t))\right\}$ is pre-compact in $H^{1}(\R)$}.
\end{equation}

Next, we use the minimality property of $\partial\K$ to obtain control over the motion of the spatial center $x(t)$:

\begin{lemma}\label{Mome}
Let $u(t)$ be the solution in Lemma~\ref{Compacness11} with $x(t)$ defined in \eqref{Def:x}.
Then the conserved momentum $\ZZ(u(t))=\int_{\R^{3}}2\IM\overline{u}(t)\nabla u(t)\,dx$ is zero. Moreover,
\begin{equation}\label{L:xt}
\left|\frac{x(t)}{t}\right|\rightarrow 0 \quad \text{as $t\rightarrow \infty$}.
\end{equation}
\end{lemma}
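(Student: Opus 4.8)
The plan is to prove this in two steps, treating the momentum claim first and then deriving the sublinear-motion estimate from it together with compactness.

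\textbf{Step 1: The momentum vanishes.} Suppose toward a contradiction that $\ZZ(u) = \ZZ_0 \neq 0$. Since $u$ does not scatter forward in time but $(M(u),E(u)) = (M(P_\omega),E(P_\omega)) \in \partial\K_s$, the key is a Galilean boost argument. Recall that if $v_\xi(t,x) := e^{i x\cdot\xi}e^{-it|\xi|^2} u(t, x - 2\xi t)$, then $v_\xi$ solves \eqref{NLS} with $M(v_\xi) = M(u)$, $E(v_\xi) = E(u) + \xi\cdot\ZZ_0 + |\xi|^2 M(u)$, and the scattering/non-scattering behavior is preserved by the boost. Choosing $\xi$ parallel to $-\ZZ_0$ with small magnitude, one can strictly decrease the energy: $E(v_\xi) < E(u) = E(P_\omega)$ for small $|\xi|$, provided $\ZZ_0 \neq 0$ (the linear term $\xi \cdot \ZZ_0$ dominates the quadratic term for small $\xi$). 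Then $(M(v_\xi), E(v_\xi))$ lies strictly below the threshold curve, i.e. in $\K$ (using that $e < \EE(m)$ is an open condition and $\EE$ is continuous). By Theorem~\ref{MainTheorem}, $v_\xi$ scatters, hence so does $u$, contradicting \eqref{PropCon22}. Therefore $\ZZ(u) = 0$. One must be slightly careful that decreasing $e$ keeps $m$ fixed and that the point genuinely enters $\K$; since $\K$ is open and $(m,e)\in\partial\K$, any point $(m, e-\epsilon)$ with small $\epsilon>0$ lies in $\K$.

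\textbf{Step 2: Sublinear motion of $x(t)$.} With $\ZZ(u) \equiv 0$ in hand, this is the standard ``zero momentum implies sublinear center'' argument from concentration-compactness theory (as in the Kenig--Merle framework and its NLS adaptations). The compactness \eqref{CompacNew} gives, for any $\eta > 0$, a radius $\rho(\eta)$ such that $\int_{|x - x(t)| > \rho(\eta)} |\nabla u|^2 + |u|^2 + |u|^6 \,dx < \eta$ uniformly in $t \geq 0$. One uses a truncated virial/momentum-type quantity: let $\phi_R$ be a smooth cutoff equal to $x$ on $|x| \le R$ and supported in $|x| \le 2R$, and consider $z_R(t) := \int_{\R^3} |u(t,x)|^2 \phi_R(x)\,dx$ (a vector-valued quantity). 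Differentiating, $z_R'(t) = \int 2\IM(\bar u \nabla u)\cdot\nabla\phi_R \,dx$, which equals $\ZZ(u) = 0$ plus an error controlled by the mass outside $|x|\le R$. Choosing $R = R(t)$ growing appropriately and exploiting that $u(t,\cdot+x(t))$ stays compact so the mass concentrates near $x(t)$, one shows $|z_R(t) - M(u) x(t)|$ is small relative to $R$, while $|z_R(t) - z_R(0)| = o(t)$ because the time derivative is $o(1)$ as $R\to\infty$. Combining, $|x(t)| = o(t)$ as $t\to\infty$; concretely, for any $\eta>0$ one gets $|x(t)| \le \eta t + C_\eta$ for $t$ large, hence $|x(t)/t| \to 0$. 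If $x(t)$ were not sublinear, one could extract a sequence $t_n\to\infty$ with $|x(t_n)|/t_n \ge \delta > 0$, and the truncated-momentum identity (with $R \sim |x(t_n)|$) would force $\ZZ(u) \neq 0$, a contradiction.

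\textbf{Main obstacle.} The delicate point is Step 2: making the cutoff radius and the quantitative relationship between $z_R(t)$, $M(u)x(t)$, and the outside-mass tails precise enough to conclude sublinearity, since $x(t)$ itself is only known to be continuous (or measurable) a priori and could in principle oscillate. The standard fix is the argument of \cite{DuyHolmerRoude2008} (already cited in the proof of Lemma~\ref{Compacness11}): one shows $x(t)$ cannot escape a sublinear region by a contradiction/continuity argument using the uniform compactness. In Step 1, the only subtlety is checking the boost genuinely lands in the \emph{open} region $\K$ and that $0 < m < m_2$ is maintained — both are immediate since the mass is unchanged and $\K$ is open with $(m,e)$ on its boundary.
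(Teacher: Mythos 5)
Your proposal is correct and follows essentially the same route as the paper: the momentum is killed by a Galilean boost that strictly lowers the energy at fixed mass, landing in the open region $\K$ where Theorem~\ref{MainTheorem} forces scattering and contradicts \eqref{PropCon22} (the paper simply takes the explicit optimal boost $\xi_{0}=-\ZZ(u)/(2M(u))$, while your small-boost version works equally well despite the harmless factor-of-$\tfrac12$ slip in $E(v_\xi)=E(u)+\tfrac12\xi\cdot\ZZ(u)+\tfrac12|\xi|^{2}M(u)$). For \eqref{L:xt} the paper just invokes Proposition~10.2 of \cite{KillipOhPoVi2017}, which is exactly the truncated center-of-mass/compactness argument you sketch, so no further comparison is needed.
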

\begin{proof}
Assume that $\ZZ(u)\neq 0$.  We then define
\[
w(t,x)=e^{ix\cdot\xi_{0}}e^{-it|\xi_{0}|^{2}}u(t, x-2\xi_{0}t),
\]
where $\xi_{0}=-\ZZ(u)/(2M(u))$. As $E(P_{\omega})>0$ and 
\[
M(w)=M(u)=M(P_{\omega})<M(Q_{1}),
\]
the inequality \eqref{BoundH} implies that $E(w)>0$. Moreover.
\[
E(w)=E(u)-\frac{1}{2}\frac{|\ZZ(u)|^{8}}{M(u)}<E(u)=E(P_{\omega}).
\]
In particular, $(M(w), E(w))\in \K$.  Theorem~\ref{MainTheorem} implies that  $w$ scatters in $H^{1}(\R^{3})$, which is a contradiction because
\[
\| w\|_{L^{10}_{t,x}([0,\infty)\times\R^{3})}=\|  u \|_{L^{10}_{t,x}([0,\infty)\times\R^{3})}=\infty.
\]
%Next we treat the case $E(P_{\omega})=0$. By using the fact that $M(w)=M(u)=M(P_{\omega})=M(Q_{1})$,  from inequality \eqref{BoundH} we get $E(w)\geq 0$. But
%\[E(w)=E(u)-\frac{1}{2}\frac{|\ZZ(u)|^{8}}{M(u)}<E(P_{\omega})=0,\]
%which is impossible. 

Finally, the proof of \eqref{L:xt} is identical to the proof of Proposition 10.2 in \cite{KillipOhPoVi2017}.
\end{proof}

Next, we use the localized virial argument to show that $\delta(t_n)\to 0$ along some sequence $t_n\to\infty$.  This gives a preliminary indication of the convergence of $u$ to $P_\omega$.

\begin{lemma}\label{ZeroVirial}
Suppose $u$ is a solution of \eqref{NLS} satisfying the assumptions of Proposition~\ref{CompacDeca}. 
Then there exists a sequence $t_{n}\to\infty$ such that 
\[
\lim_{n\to \infty}\delta(t_{n})=0.
\]
%\[ 
%\lim_{T\to +\infty}\frac{1}{T}\int^{T}_{0}V(u(t))dt=0.
%\]
\end{lemma}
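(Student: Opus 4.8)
The plan is to use the localized virial identity from Lemma~\ref{VirialIden} together with the compactness of $\{u(t,\cdot+x(t))\}$ in $H^1$ and the sublinear growth of the spatial center $x(t)$ from Lemma~\ref{Mome}. First I would choose $R$ large (to be taken to infinity) and consider the truncated virial quantity $\P_R[u(t)]$, translated so that the bulk of the mass is concentrated near the origin; more precisely, I would work with $\P_R[u(t,\cdot+x(t))]$ or equivalently insert the spatial center into the weight $w_R$. Since $u$ is uniformly bounded in $H^1$ and $\nabla w_R$ is bounded by $O(R)$, the quantity $\P_R$ is bounded by $O(R)$ uniformly in $t$. Integrating \eqref{LocalVirial} over $[0,T]$ then gives
\[
\left|\int_0^T F_R[u(t)]\,dt\right| = \big|\P_R[u(T)]-\P_R[u(0)]\big| \lesssim R.
\]

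Next I would split $F_R = F_\infty + (F_R - F_\infty) = 8V(u) + (F_R-F_\infty)$, as noted in Lemma~\ref{VirialIden}. The key point is that the error term $F_R - F_\infty$ is supported in the region $|x - x(t)| \gtrsim R$ (after accounting for the translation), where the compactness \eqref{CompacNew} forces the $H^1$-mass of $u(t,\cdot+x(t))$ to be uniformly small for $R$ large: given $\epsilon>0$, there is $R(\epsilon)$ so that $\int_{|x-x(t)|>R}(|\nabla u|^2+|u|^2+|u|^4+|u|^6)\,dx < \epsilon$ for all $t$. Hence $|F_R[u(t)] - 8V(u(t))| < \epsilon$ uniformly in $t$ once $R>R(\epsilon)$. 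Combining this with the integral bound above and $\delta(t)=V(u(t))>0$ from \eqref{deltapositive}, we get
\[
8\int_0^T \delta(t)\,dt \le \left|\int_0^T F_R[u(t)]\,dt\right| + \epsilon T \lesssim R(\epsilon) + \epsilon T.
\]
Dividing by $T$ and sending $T\to\infty$ yields $\limsup_{T\to\infty}\tfrac1T\int_0^T\delta(t)\,dt \lesssim \epsilon$, and since $\epsilon$ was arbitrary, $\tfrac1T\int_0^T\delta(t)\,dt\to 0$. In particular $\liminf_{t\to\infty}\delta(t)=0$, which produces the desired sequence $t_n\to\infty$ with $\delta(t_n)\to 0$.

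The main obstacle I anticipate is handling the translation $x(t)$ correctly inside the virial identity: the weight $w_R$ must be recentered at $x(t)$, which is not differentiable in the classical sense, and differentiating $\P_R[u(t,\cdot+x(t))]$ generates an extra term involving $\dot x(t)$. This is precisely where Lemma~\ref{Mome} enters — the bound $|x(t)|/t\to 0$ (equivalently, that the center moves sublinearly, which with the vanishing momentum $\ZZ(u)=0$ gives the needed control) ensures that the contribution of this extra term, after integration in time and division by $T$, also vanishes as $T\to\infty$. One standard way to make this rigorous is to avoid recentering the weight and instead use that for each $\epsilon$, the choice of $R$ can be made $t$-dependent only through $|x(t)|$, and since $|x(t)| = o(t)$ one may take $R = R(t)$ growing slower than linearly; the boundary term $\P_{R(T)}[u(T)] = O(R(T)) = o(T)$ still divides out. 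A cleaner alternative, which I would actually carry out, is to fix $R$ and use the compactness directly: the error $F_R[u(t)] - 8\delta(t)$ is controlled by the $H^1$-mass of $u(t)$ outside the ball of radius $R$ centered at $x(t)$, which is small uniformly in $t$ by \eqref{CompacNew}, while the boundary term is a fixed $O(R)$; the sublinearity of $x(t)$ is not even needed for this version, only uniform-in-time smallness of the exterior mass. Either way, the argument closes, and I expect this to follow the pattern of the analogous rigidity lemmas in \cite{DuyckaertsRou2010} and \cite{KillipOhPoVi2017}.
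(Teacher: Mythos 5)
Your overall strategy (localized virial identity plus compactness plus control of the spatial center) is the same as the paper's, but the version you say you would \emph{actually carry out} has a genuine gap. The clean identity $\tfrac{d}{dt}\P_{R}[u]=8V(u)+\bigl(F_{R}[u]-F_{\infty}[u]\bigr)$ holds only for the weight $w_{R}$ centered at the \emph{origin}; recentering $w_R$ at $x(t)$ destroys it, since differentiating in time produces terms involving $\dot x(t)$ (and $x(t)$ need not even be differentiable), as you yourself observe. Consequently the error $F_{R}[u(t)]-F_{\infty}[u(t)]$ is an integral over $\{|x|\gtrsim R\}$, \emph{not} over $\{|x-x(t)|\gtrsim R\}$. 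The compactness statement \eqref{CompacNew} gives uniform smallness only on $\{|x-x(t)|>C(\epsilon)\}$, so to conclude $|F_{R}[u(t)]-8\delta(t)|\lesssim\epsilon$ you need the inclusion $\{|x|\ge R\}\subset\{|x-x(t)|\ge C(\epsilon)\}$, i.e. $R\ge C(\epsilon)+|x(t)|$. Your ``cleaner alternative'' --- fix $R=R(\epsilon)$ once and for all and claim that the sublinearity of $x(t)$ is not needed --- therefore only works if $|x(t)|$ is bounded, which is \emph{not} known at this stage: boundedness of $x(t)$ is established later in the proof of Proposition~\ref{CompacDeca}, and that argument uses the present lemma, so assuming it here would be circular. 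In particular your displayed bound $8\int_0^T\delta\,dt\lesssim R(\epsilon)+\epsilon T$ with $R(\epsilon)$ independent of $T$ is unjustified.

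The correct repair is the one you sketch in passing and then set aside, and it is exactly what the paper does: fix $\epsilon>0$, use Lemma~\ref{Mome} to get $T_0(\epsilon)$ with $|x(t)|\le\epsilon t$ for $t\ge T_0(\epsilon)$, and on each interval $[0,T]$ choose the \emph{constant} radius $R:=C(\epsilon)+\sup_{t\in[T_0(\epsilon),T]}|x(t)|\le C(\epsilon)+\epsilon T$ (keeping $R$ constant on the interval avoids any extra terms from a $t$-dependent truncation). Then the boundary term is $O(R)=O(C(\epsilon)+\epsilon T)$, the error term is $O(\epsilon)$ on $[T_0(\epsilon),T]$, and dividing by $T$, letting $T\to\infty$ and then $\epsilon\to0$ gives $\tfrac1T\int_0^T\delta(t)\,dt\to0$, hence the desired sequence. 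So the sublinear growth $|x(t)|=o(t)$ (equivalently the vanishing momentum from Lemma~\ref{Mome}) is genuinely needed; with that reinstated and the truncation kept centered at the origin, your argument closes and coincides with the paper's proof.
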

\begin{proof} We use a localized virial argument.  First, recall $\delta(t)=V(u(t))$. By Lemma~\ref{VirialIden} we can write
\begin{equation}\label{DecVi}
\tfrac{d}{dt}\P_{R}(u(t))=8V(u(t))+[F_{R}(u(t))-F_{\infty}(u(t))].
\end{equation}
Moreover,
\begin{equation}\label{PC}
|\P_{R}(u)|\lesssim R\|u\|^{2}_{L^{\infty}_{t}H^{1}_{x}}\lesssim_{P_{\omega}} R
\end{equation}
and
\begin{equation}\label{OrdeP}
|F_{R}(u(t))-F_{\infty}(u(t))|=\mathcal{O}\biggl\{
\int_{|x|\geq R}[ |\nabla u|^{2} +|u|^{6} + |u|^{4}+|u|^{2}]\,dx\biggr\}.
\end{equation}
By compactness in $H^{1}(\R^{3})$ (see \eqref{CompactX}), for $\epsilon>0$ there exists $C(\epsilon)>0$ 
such that
\[
\sup_{t\in \R}\int_{|x-x(t)|>C(\epsilon)}[ |\nabla u|^{2} +|u|^{6} + |u|^{4}+|u|^{2}](t,x)dx\leq \epsilon.
\]
As $|x(t)|=o(t)$ as $t\to \infty$ (cf. Lemma~\ref{Mome}), it follows that there exists $T_{0}(\epsilon)$ such that
\[
|x(t)|\leq \epsilon t \quad \text{for all $t\geq T_{0}(\epsilon)$}.
\]
Given $T>T_{0}(\epsilon)$,  we put
\[
R:=C(\epsilon)+\sup_{t\in [T_{0}(\epsilon), T]}|x(t)|.
\]
As $\left\{x: |x|\geq R\right\}\subset \left\{x: |x-x(t)|\geq C(\epsilon)\right\}$ for $t\in [T_{0}(\epsilon), T]$,
 by \eqref{OrdeP} we get
\begin{equation}\label{EstimaP}
|F_{R}(u(t))-F_{\infty}(u(t))|\leq C\epsilon \quad \text{ for $t\in [T_{0}(\epsilon), T]$.}
\end{equation}

Combining this with \eqref{DecVi}, \eqref{PC} and integrating on $[0, T]$  we see that
\begin{align*}
\int^{T}_{0}V(u(t))\,dt&= \int^{T_{0}(\epsilon)}_{0}V(u(t))\,dt+	\int^{T}_{T_{0}(\epsilon)}V(u(t))\,dt\\
&\leq \int^{T_{0}(\epsilon)}_{0}V(u(t))\,dt+ \tilde{C}[\epsilon T + C(\epsilon)+\sup_{t\in [T_{0}(\epsilon), T]}|x(t)|].
\end{align*}
Since $\sup_{t\in [T_{0}(\epsilon), T]}|x(t)|\leq \epsilon T$, we derive 
\[
\tfrac{1}{T}\int^{T}_{0}V(u(t))\,dt\leq \tfrac{1}{T}\int^{T_{0}(\epsilon)}_{0}V(u(t))\,dt
+\tilde{C}\epsilon +\tfrac{\tilde{C} C(\epsilon)}{T}.
\]
Taking to the limit $T\to \infty$ and then $\epsilon\to 0$ we find
\[
\lim_{T\to \infty}\tfrac{1}{T}\int^{T}_{0}V(u(t))\,dt=0.
\]
In particular, there exists a sequence $t_{n}\to \infty$  and  $V(u(t_{n}))\to 0$ as $n\to \infty$.\end{proof}

Next, we use a more refined version of the localized virial estimate (taking the modulation analysis into account) to obtain a better estimate on $\delta(t)$. 

\begin{lemma}\label{Moduvirial}
Suppose $u$ is a solution of \eqref{NLS} satisfying the assumptions of Proposition~\ref{CompacDeca} with $x(t)$ defined in 
\eqref{Def:x}. Then there exists a positive constant 
$C$ such that for any interval $[t_{1}, t_{2}]\subset [0, \infty)$ we have
\begin{equation}\label{BoundT11}
\int^{t_{2}}_{t_{1}}\delta(t)dt\,\leq C\big[1+\sup_{t\in[t_{1},t_{2}]}|x(t)|\big]\left\{\delta(t_{1})+\delta(t_{2})\right\}.
\end{equation}
\end{lemma}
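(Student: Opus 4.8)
The plan is to run a localized virial argument using the decomposition of Lemma~\ref{VirialModulate11}, exploiting the modulation analysis from Proposition~\ref{ModilationFree} to control the virial functional $\delta(t)=V(u(t))$ on the set $I_0$ where the solution is close to the orbit of $P_\omega$, while on the complement $[0,\infty)\setminus I_0$ we have $\delta(t)\geq\delta_0$ and $\delta$ is comparable to $\delta(t_1)+\delta(t_2)$ only if that interval is short — so the estimate must be set up to absorb the ``far from soliton'' portions into the endpoint terms as well. First I would choose the cutoff radius $R:=C_0\bigl[1+\sup_{t\in[t_1,t_2]}|x(t)|\bigr]$ and consider $\P_R[u(t)]$ recentered at $x(t)$; using Lemma~\ref{VirialModulate11} with $\chi(t)=\chi_{I_0}(t)$ (or a smooth version), $\theta(t),y(t)$ the modulation parameters, we get
\[
\tfrac{d}{dt}\P_R[u(t)] = 8V(u(t)) + \bigl\{F_R[u(t)]-F_\infty[u(t)]\bigr\} - \chi(t)\bigl\{F_R[e^{i\theta(t)}P_\omega(\cdot-y(t))]-F_\infty[e^{i\theta(t)}P_\omega(\cdot-y(t))]\bigr\}.
\]
The error terms $F_R-F_\infty$ are supported in $|x|\gtrsim R$; by the compactness of $\{u(t,\cdot+x(t))\}$ in $H^1$ (see \eqref{CompacNew}) together with Lemma~\ref{Parametrization}, the tails of $u$ and of the modulated soliton beyond radius $R$ are small, and in fact — this is the key quantitative point — on $I_0$ they are $O(\|h(t)\|_{H^1}+|\alpha(t)|)=O(\delta(t))$ by Proposition~\ref{ModilationFree}, while off $I_0$ they are $o(1)$ but there we simply bound $\delta(t)\lesssim 1\lesssim\delta(t)/\delta_0$. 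Thus, after recentering, $|F_R[u(t)]-F_\infty[u(t)] - \chi(t)\{\cdots\}| \lesssim \delta(t)$ uniformly in $t\in[t_1,t_2]$, once $\delta_0$ is small.

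Next I would integrate the identity over $[t_1,t_2]$. The left side contributes $8\int_{t_1}^{t_2}\delta(t)\,dt$ minus the integral of the error, which is $\lesssim\int_{t_1}^{t_2}\delta(t)\,dt$; choosing $\delta_0$ (hence the implicit constant) small, this error is absorbed into a fraction of the main term, leaving
\[
\int_{t_1}^{t_2}\delta(t)\,dt \lesssim \bigl|\P_R[u(t_2)]-\P_R[u(t_1)]\bigr| + (\text{absorbed error}).
\]
It then remains to bound the boundary terms $|\P_R[u(t_i)]|$ by $R\cdot\delta(t_i)$. Here the recentering is essential: after translating by $x(t_i)$, the momentum of $u$ is zero (Lemma~\ref{Mome}), so $\P_R[u(t_i)]$ is really measuring the discrepancy between $u(t_i)$ and the (zero-momentum) soliton; on $I_0$ one writes $u(t_i)=e^{i\sigma}[(1+\alpha)P_\omega(\cdot-y)+h]$, notes $\P_R$ vanishes on $e^{i\theta}P_\omega(\cdot-y)$ (it is real and the gradient term integrates against an odd weight appropriately — cf. Lemma~\ref{Virialzero}), and the remaining contribution is linear in $(\alpha,h)$ up to quadratic error, hence $O(R\|h(t_i)\|_{H^1})=O(R\,\delta(t_i))$; off $I_0$, $\delta(t_i)\geq\delta_0$ so $|\P_R[u(t_i)]|\lesssim R\lesssim R\,\delta(t_i)/\delta_0$. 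In either case $|\P_R[u(t_i)]|\lesssim R\,\delta(t_i) = C[1+\sup_{[t_1,t_2]}|x(t)|]\delta(t_i)$, which yields \eqref{BoundT11}.

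The main obstacle I anticipate is the bookkeeping at the interface between $I_0$ and its complement — ensuring that the modulation parameters $\theta,y$ (and the cutoff $\chi$) can be chosen so that the error $F_R[u]-F_\infty[u]-\chi\{F_R[\text{sol}]-F_\infty[\text{sol}]\}$ is genuinely $O(\delta(t))$ with a uniform constant across the switch, rather than merely $o(1)$. Handling this cleanly likely requires either smoothing $\chi$ and controlling $\chi'$, or observing that on $\partial I_0$ one has $\delta(t)=\delta_0$ so the two regimes match up to constants; combined with the $x(t)=o(t)$ growth bound from Lemma~\ref{Mome} (used only to know $R$ is finite, not for the inequality itself), this should close the argument. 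The other routine-but-delicate point is verifying $\P_R$ vanishes on the modulated soliton up to acceptable error, which follows from Lemma~\ref{Virialzero}-type computations together with the exponential decay of $P_\omega$.
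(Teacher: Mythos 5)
Your proposal follows essentially the same route as the paper's proof: the modulated localized virial identity of Lemma~\ref{VirialModulate11} with $\chi$ switching between the near- and far-from-soliton regimes, boundary terms bounded by $R\,\delta(t_i)$ (using that $\mathcal{P}_R$ vanishes on the modulated soliton and that the $H^1$ distance to it is $\sim\delta(t_i)$ on $I_0$, while $\delta(t_i)\gtrsim\delta_0$ off $I_0$), error terms made $\leq\epsilon\,\delta(t)$ by taking $R\geq\rho_\epsilon+\sup_{[t_1,t_2]}|x(t)|$ together with compactness of the orbit, and finally absorption of the error integral. Two minor corrections to your write-up: no recentering of the weight at $x(t)$ is needed (and literally moving the center would add uncontrolled $x'(t)$ terms off $I_0$) --- the large choice of $R$ already accounts for the drift, and the vanishing of $\mathcal{P}_R$ on the modulated soliton is simply because $P_\omega$ is real up to a constant phase, not a consequence of zero momentum; moreover the absorption is closed by choosing $\epsilon$ (equivalently $\rho_\epsilon$) small relative to the fixed $\delta_0$ (or $\delta_1$), rather than by shrinking $\delta_0$.
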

\begin{proof}
Let $\delta_{1}\in (0, \delta_{0})$ be sufficiently small (cf. Proposition~\ref{ModilationFree}).
We use the localized virial identity in Lemma~\ref{VirialModulate11}
with the function $\chi(t)$ satisfying
\[
\chi(t)=
\begin{cases}
1& \quad \delta(t)<\delta_{1}, \\
0& \quad \delta(t)\geq \delta_{1}.
\end{cases}
\]
Let $R>1$ to be specified later. Notice that we can write
\begin{equation}\label{Iden11}
\frac{d}{dt}\P_{R}[u(t)]=8\delta (t)+\sigma(t)
\end{equation}
where
\begin{equation}\label{Error11}
\sigma(t)=
\begin{cases}
F_{R}[u(t)]-F_{\infty}[u(t)]& \quad  \text{if $\delta(t)\geq \delta_{1}$}, \\
F[u(t)]-F_{\infty}[u(t)]-\Gamma[u(t)]& \quad \text{if $\delta(t)< \delta_{1}$},
\end{cases}
\end{equation}
with
\begin{equation}\label{Error22}
\Gamma(t)=F_{R}[e^{i \omega t}e^{i\theta(t)}P_{\omega}(\cdot-y(t))]-F_{\infty}[e^{i \omega t}e^{i\theta(t)}P_{\omega}(\cdot-y(t))].
\end{equation}
We will show the following:

\begin{enumerate}[label=\rm{(\roman*)}]
\item \textit{{Claim I.}} Fix $R>1$.  We have
\begin{align}\label{Ident22}
|\P_{R}[u(t_{j})]|\lesssim \tfrac{R}{\delta_{1}}\delta(t_{j}) \quad& \text{if $\delta(t_{j})\geq \delta_{1}$ for $j=1$, $2$},\\
\label{EstimateV22}
|\P_{R}[u(t_{j})]|\lesssim R \delta(t_{j}) \quad &\text{if $\delta(t_{j})< \delta_{1}$ for $j=1$, $2$}.
\end{align}
\item \textit{{Claim II.}} For  $\epsilon>0$, there exists $\rho_{\epsilon}>0$ so that if
$R=\rho_{\epsilon}+\sup_{t\in [t_{1}, t_{2}]}|x(t)|$, then 
\begin{align}\label{EstimateE11}
|\sigma(t)|\lesssim \epsilon \quad &\text{uniformly for $t\in [t_{1}, t_{2}]$ and $\delta(t)\geq \delta_{1}$},\\
\label{EstimateE22}
|\sigma(t)|\leq \epsilon \delta(t)\quad &\text{uniformly for $t\in [t_{1}, t_{2}]$ and $\delta(t)< \delta_{1}$}.
\end{align}
\end{enumerate}

By using \eqref{Ident22}, \eqref{EstimateV22}, \eqref{EstimateE11}, \eqref{EstimateE22} and applying the fundamental theorem 
of calculus to  \eqref{Iden11} over $[t_{1}, t_{2}]$ we get
\[
\int^{t_{2}}_{t_{1}}\delta(t)dt\lesssim 
\tfrac{1}{\delta_{1}}\Big[\rho_{\epsilon}+\sup_{t\in [t_{1}, t_{2}]}|x(t)|\Big](\delta(t_{1})+\delta(t_{2}))
+(\tfrac{\epsilon}{\delta_{1}}+\epsilon)\int^{t_{2}}_{t_{1}}\delta(t)dt,
\]
Thus, choosing $\epsilon=\epsilon(\delta_{1})>0$ sufficiently small
we get \eqref{BoundT11}.

It remains to establish the above claims.
\begin{proof}[{Proof of Claim I}] Assume that $\delta(t_{j})\geq \delta_{1}$. Notice that
\[
|\P_{R}[u(t)]|\lesssim R\|u\|^{2}_{L^{\infty}_{t}H^{1}_{x}}\lesssim_{P_{\omega}} \tfrac{R}{\delta_{1}}\delta(t_{j})
\]
which implies \eqref{Ident22}. Now, if $\delta(t_{j})< \delta_{1}$, then as $P_{\omega}$ is real valued, we write
\[
P_\omega(t) = e^{i[\theta(t)+\omega t]} P_\omega(\cdot-y(t))
\]
and obtain
\begin{align*}
|\P_{R}[u(t)]|&=\bigg|2\IM\int_{\R^{3}}\nabla w_{R}\cdot [\overline{u}(t_j)\nabla u(t_j)-\bar P_\omega(t_j)\nabla P_{\omega}(t_j)]dx\bigg|\\
&\lesssim
R[\|u\|_{L^{\infty}_{t}H^{1}_{x}}+\|P_{\omega}\|_{H^{1}}]
\|u(t_{j})-P_\omega(t_j)\|_{H^{1}}\\
&\lesssim_{P_{\omega}}R \delta(t_{j}).
\end{align*}
where in the last line we have used estimate \eqref{EstimateFree} in  Proposition~\ref{ModilationFree} 
\end{proof}

\begin{proof}[{Proof of Claim II}]
Suppose that $\delta(t)\geq \delta_{1}$. Using compactness (see \eqref{CompacNew}) we obtain that for each $\epsilon>0$, there 
exists a positive constant $\rho_{\epsilon}=\rho (\epsilon)>0$  such that
\begin{equation}\label{Compact123}
\int_{|x-x(t)|>\rho_{\epsilon}}|\nabla u(t,x)|^{2}+|u(t,x)|^{6}+|u(t,x)|^{6}+ |u(t,x)|^{2}dx<\epsilon.
\end{equation}
We put
\[
R:=\rho_{\epsilon}+\sup_{t\in [t_{1},t_{2}]}|x(t)|.
\]
Note that $\left\{|x|\geq R\right\}\subset \left\{|x-x(t)|\geq \rho_{\epsilon}\right\}$ 
for all $t\in [t_{1}, t_{2}]$. Then the same argument developed in the proof of Lemma~\ref{ZeroVirial} shows that (see \eqref{EstimaP})
\[
|F_{R}(u(t))-F_{\infty}(u(t))|\lesssim \epsilon.
\]
This proves \eqref{EstimateE11}.

Finally, we show  estimate \eqref{EstimateE22}.  Suppose  $\delta(t)< \delta_{1}$. 
In order to simplify the notation, we set $P_{\omega}(t):=e^{i \omega t}e^{i\theta(t)}P_{\omega}(\cdot-y(t))$.
By definition of $\sigma(t)$ when $\delta(t)<\delta_{1}$ (cf. \eqref{Error11}) we have  for $t\in [t_{1}, t_{2}]$, 
\begin{align*}
\sigma(t)&=\int_{|x|> R}8[|\nabla u|^{2}-|\nabla P_{\omega}(t)|^{2}]+
8[| u|^{6}-|P_{\omega}(t)|^{6}]\,dx \\
& \quad +\int_{|x|>R}-6[|u|^{4}-|P_{\omega}(t)|^{4}] +[-\Delta^{2}w_{R}][|u|^{2}-|P_{\omega}(t)|^{2}]\,dx\\
&\quad \int-\Delta w_{R}[|u|^{4}-|P_{\omega}(t)|^{4}]+4\RE[\overline{u_{j}}u_{k}-\partial_{j}\overline{P_{\omega}}(t)\partial_{k}P_{\omega}(t)]\partial_{j k}w_{R}(x)\,dx
\end{align*}
An application of the elementary inequality \begin{equation}\label{ElemIne}
||z_{1}|^{p+1}-|z_{2}|^{\alpha+1}|\lesssim|z_{1}-z_{2}|(|z_{1}|^{p}+|z_{2}|^{p})
\quad(p>0)
\end{equation}
immediately yields
\begin{equation*}
|\sigma(t)|
\lesssim \sum_{p\in\left\{1,3,5\right\}}
[\|u(t)\|^{p}_{H_{x}^{1}(|x|\geq R)}+\|P_{\omega}(t)\|^{p}_{H_{x}^{1}(|x|\geq R)}]
\|u(t)-P_{\omega}(t)\|_{H_{x}^{1}}.
\end{equation*}

Combining Proposition~\ref{ModilationFree} (note that $x(t)=y(t)$ because $\delta(t)<\delta_{1}$) and 
\eqref{Compact123}, and choosing $\rho_\eps$ larger if necessary, we deduce that
\begin{equation}\label{PrimerIne}
|\sigma(t)|
\lesssim \sum_{p\in\left\{1,3,5\right\}}
[\|u(t)\|^{p}_{H_{x}^{1}(|x-x(t)|\geq \rho_{\epsilon})}+\|P_{\omega}\|^{p}_{H_{x}^{1}(|x|\geq \rho_{\epsilon})}] \delta(t)
\lesssim_{P_{\omega}}
\epsilon \delta(t).
\end{equation}
\end{proof}
With both claims established, we complete the proof of the lemma. \end{proof}

The last ingredient we need before proving Proposition~\ref{CompacDeca} is the following lemma, which shows how $\delta(t)$ controls the motion of $x(t)$. 

\begin{lemma}\label{SpatialcenterCombi}
There exists $C>0$ such that
\begin{equation}\label{InequDelta}
|x(t_{1})-x(t_{2})|\leq C\int^{t_{2}}_{t_{1}}\delta(t)\,dt,
\end{equation}
for all $t_{1}$, $t_{2}>0$ with $t_{1}+1\leq t_{2}$.
\end{lemma}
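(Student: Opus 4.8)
The plan is to estimate the motion of the center $x(t)$ by comparing $x(t)$ against the momentum-type quantity $\P_R[u(t)]$ restricted to a suitable spatial scale, using the fact (Lemma~\ref{Mome}) that the conserved momentum vanishes. First I would observe that by conservation of momentum and $\ZZ(u)=0$, on any fixed window $[t_1,t_2]$ with $t_1+1\le t_2$ the relevant ``velocity'' of the profile is controlled by $\delta(t)$: indeed, since the orbit $\{u(t,\cdot+x(t))\}$ is pre-compact in $H^1$ (see \eqref{CompacNew}) and the momentum is zero, one can show (as in the companion lemmas of \cite{MiaMurphyZheng2021, KillipOhPoVi2017}) that
\[
\Bigl|\int_{\R^3} 2\IM\bigl(\overline{u(t,x)}\,\nabla u(t,x)\bigr)\,\eta_R(x-x(t))\,dx\Bigr|\lesssim \delta(t)
\]
for an appropriate bump $\eta_R$ equal to $1$ on a ball of radius $\sim R$, at least once $R$ is taken larger than the compactness radius $\rho_\eps$ and the excursion $\sup_{[t_1,t_2]}|x(t)|$; here the point is that when $\delta(t)$ is small the modulation decomposition of Proposition~\ref{ModilationFree} writes $u(t)$ as $e^{i\sigma}P_\omega(\cdot-y(t))$ (a \emph{real} profile, hence zero local momentum) plus a remainder of size $O(\delta(t))$, while when $\delta(t)\ge\delta_1$ we simply bound the integrand by $\delta(t)/\delta_1\lesssim \delta(t)$ using the uniform $H^1$ bound.

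Next I would relate this localized momentum to $\dot x(t)$. Differentiating a suitably truncated first-moment functional $\int x\,\chi_R(x-x(t))\,|u(t,x)|^2\,dx$ in $t$, the main term produces (a multiple of) the localized momentum above, while the error terms come from (a) the time derivative of $x(t)$ hitting the cutoff — which is harmless because $\chi_R$ is locally constant near the support of $u(t,\cdot+x(t))$ by compactness — and (b) the tails $|x|\gtrsim R$, which are $o(1)$ by pre-compactness and can be absorbed. This yields a differential-type inequality $|\dot x(t)|\lesssim \delta(t) + (\text{tail error})$, and integrating over $[t_1,t_2]$ and choosing $R$ as in Lemma~\ref{Moduvirial} (namely $R=\rho_\eps+\sup_{[t_1,t_2]}|x(t)|$, with $\eps$ small) gives
\[
|x(t_1)-x(t_2)|\le C\int_{t_1}^{t_2}\delta(t)\,dt + (\text{small multiple of the same integral}),
\]
whence \eqref{InequDelta} after absorbing the small term. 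The hypothesis $t_1+1\le t_2$ is used to convert a pointwise-in-time $\dot x$ bound into the stated integral bound without boundary losses, and to allow an averaging over a unit time interval where needed (so that $x(t)$ itself, which is only defined up to $O(1)$ via Lemma~\ref{Parametrization} on $I_0$ versus $[0,\infty)\setminus I_0$, is meaningfully comparable at the two endpoints).

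The main obstacle I expect is handling the two regimes for $\delta(t)$ in a unified way, and in particular the patching of the spatial center $x(t)$ across the boundary between $I_0$ (where $x(t)=y(t)$ comes from modulation) and its complement (where $x(t)=x_0(t)$ comes from concentration compactness): one must check that the truncated first-moment argument is insensitive to this redefinition, using Lemma~\ref{Parametrization} to control $|x_0(t)-y(t)|\le C$. A secondary technical point is justifying that the time derivative of the truncated moment is legitimate and that all the tail terms really are controlled by $\delta(t)$ rather than merely by a constant — this again leans on the pre-compactness \eqref{CompacNew} together with the modulation estimate \eqref{EstimateFree}, exactly as in the proof of Lemma~\ref{Moduvirial} above. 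With these in place the estimate follows by the same scheme as in \cite[Lemma 4.3]{MiaMurphyZheng2021} (or the analogous rigidity lemmas in \cite{DuyckaertsRou2010, KillipOhPoVi2017}), and I would simply invoke that argument with the obvious modifications, omitting the routine computations.
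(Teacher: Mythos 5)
Your scheme hinges on a pointwise-in-time bound $|\dot x(t)|\lesssim\delta(t)$ on all of $[t_1,t_2]$, extracted from a truncated first-moment/localized-momentum computation, with the regime $\delta(t)\ge\delta_1$ dispatched by the trivial bound $O(1)\lesssim\delta(t)/\delta_1$. This is where the argument breaks. Off the modulation set $I_0$ the center $x(t)=x_0(t)$ is only a concentration-compactness modulus: it is not continuous, let alone differentiable, and nothing in the momentum computation relates it to a velocity of $x_0$; the truncated moment measures the motion of mass, not of the (arbitrarily chosen, $O(1)$-ambiguous) center. Moreover, the tail and cutoff errors in that computation are of size $\epsilon$, $e^{-c\rho_\epsilon}$, or $\rho_\epsilon$ --- controlled by compactness, not by $\delta(t)$ --- so after integration over a long window they contribute $\epsilon(t_2-t_1)+O(\rho_\epsilon)$, which is \emph{not} dominated by $\int_{t_1}^{t_2}\delta(t)\,dt$ when $\delta$ is small; the ``absorb the small multiple'' step cannot repair this. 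You also misidentify the role of the hypothesis $t_1+1\le t_2$: if a genuine pointwise bound $|\dot x|\lesssim\delta$ were available, the fundamental theorem of calculus would need no such hypothesis.

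The paper's proof (following Lemma~6.8 of \cite{DuyckaertsRou2010}) uses a different mechanism in the large-$\delta$ regime, and this is the missing idea. Step 1: a crude bound $|x(t)-x(s)|\le C$ whenever $|t-s|\le 2$ (mass cannot move far in bounded time). Step 2: a dichotomy --- for every window $[T,T+2]$, either $\inf\delta\ge\delta_*$ or $\sup\delta<\delta_0$ --- proved by contradiction via precompactness, Lemma~\ref{PriModula} and continuity of the flow: if $\delta$ were both close to $0$ and $\ge\delta_1$ on one window, the solution would be close to the soliton orbit, whose virial vanishes. Step 3: on windows of length between $1$ and $2$, in the first alternative one has $\int\delta\ge\delta_*$, so the Step-1 bound alone yields \eqref{InequDelta} (this is the true role of $t_2\ge t_1+1$: the inequality holds because the right side is bounded below and the left side above, not because $x$ moves at rate $\delta$); in the second alternative the window lies in $I_0$, where $x=y$ and \eqref{EstimateFree} gives $|y'|\lesssim\delta$, so one integrates. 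General intervals are then chopped into such windows. Your proposal essentially reproduces the second alternative in a more complicated way, but offers no substitute for Steps 1 and 2, which are exactly what control the regime where $\delta$ is not small; without them the stated estimate does not follow.
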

\begin{proof}
We follow \cite[Lemma 6.8]{DuyckaertsRou2010} and divide the proof into three steps. 

\textsl{Step 1.} There exists a constant $C>0$ such that
\begin{equation}\label{step11}
|x(t)-x(s)|\leq C \quad \text{for all $t$, $s\geq 0$ such that $|t-s|\leq 2$}.
\end{equation}
The proof of \eqref{step11} is the same as the one given in  \cite[Lemma 3.10, Step 1]{DuyMerle2009}.

\textsl{Step 2.} Let $\delta_{0}>0$ be as Proposition~\ref{ModilationFree}.
We will show that there exists $\delta_{\ast}>0$ such that either
\begin{equation}\label{MinMax11}
\qtq{for all $T\geq 0$, either}\inf_{t\in [T, T+2]}\delta(t)\geq \delta_{\ast} \qtq{or}
\sup_{t\in [T, T+2]}\delta(t)<\delta_{0}.
\end{equation}
Assume instead that there exist $t_{n}^{\ast}\geq 0$ and two sequences
$t_{n}$, $t^{\prime}_{n}\in  [t_{n}^{\ast}, t_{n}^{\ast}+2]$ such that (up to a subsequence)
\begin{align}\label{ContraStep2}
&\delta(t_{n})\to 0 \quad \text{and}\quad \delta(t^{\prime}_{n})\geq \delta_{1} \quad \text{as $n\to \infty$},\\
\label{ContraLimit}
& t^{{\prime}}_{n}-t_{n}\to t^{\ast}\in[-2,2].
\end{align}
By \eqref{CompacNew}, we dedeuce that there exits $\varphi\in H^{1}(\R)$ such that 
\begin{equation}\label{Step2Conver}
\text{$u(t_{n}, \cdot+x(t_{n}))\to \varphi$ strongly in $H^{1}(\R^{3})$ as $n\to \infty$}.
\end{equation}
In particular, as $\delta(t_{n})\to 0$ and $(M(u), E(u))=(M(P_\omega),E(P_\omega))$, Lemma~\eqref{PriModula} implies that $\varphi(x)=e^{i \theta_{0}}P_{\omega}(x-x_{0})$ for some $\theta_{0}\in \R$ and  $x_{0}\in \R^{3}$.
Notice also that the solution to \eqref{NLS} with initial data $\varphi$ is $f(t,x)=e^{i(t+\theta_{0})}P_{\omega}(x-x_{0})$.
Thus, by the continuity of the flow and \eqref{ContraLimit} we infer that \[
\delta(t^{\prime}_{n})\to V(e^{i(t^{\ast}+\theta_{0})}P_{\omega}(x-x_{0}))=0,\]
which contradicts \eqref{ContraStep2}. This completes the proof of \eqref{step11}.

\textsl{Step 3.} Conclusion. We prove \eqref{InequDelta} with an additional condition that $t_{2}\leq t_{1}+2$. By \eqref{MinMax11} we have two cases:
\begin{enumerate}[label=\rm{(\roman*)}]

\item  If $\inf_{t\in [t_{1}, t_{2}]}\delta(t)\geq \delta_{\ast}$ holds, then \eqref{InequDelta} follows immediately 
 by applying \eqref{step11}.

\item On the other hand,  if $\sup_{t\in [t_{1}, t_{2}]}\delta(t)<\delta_{1}$ holds, then as $x(t)=y(t)$ for all $t\in I_{0}$,
from  \eqref{EstimateFree} we have $|x^{\prime}(t)|\leq C \delta(t)$ for all $t\in I_{0}$. Applying  the fundamental theorem of calculus
we get \eqref{InequDelta}.
\end{enumerate}

Finally we may remove the assumption $t_{2}\leq t_{1}+2$ by dividing the interval $[t_{1}, t_{2}]$ into intervals of length at least $1$ and
at most $2$ and combining together the inequalities in (i) and (ii).
\end{proof}

\begin{proof}[{Proof of Proposition~\ref{CompacDeca}}]
Combining Lemmas~\ref{ZeroVirial}, \ref{Moduvirial} and \ref{SpatialcenterCombi}, and using the same argument developed in  \cite[Proposition 6.1]{DuyckaertsRou2010}, we can show that $|x(t)|$ is bounded on $[0,\infty)$.  Briefly, we use the standard localized virial to find a sequence $t_n\to\infty$ with $\delta(t_n)\to 0$.  Using the modulated virial and the fact that the integral of $\delta$ controls the variation of $x(\cdot)$, we can obtain that $|x(t)|\lesssim |x(t_N)|$ for all $t\geq t_N$ for some sufficiently large $N$.

Applying Lemma~\ref{Moduvirial}, we then obtain that there exists $C>0$ so that
\[
\int^{s}_{T}\delta(t)\,dt\leq C\left\{\delta(T)+\delta(s)\right\}
\]
with $[T, s]\subset [0, \infty]$.  Applying this with a sequence $t_n\to\infty$ such that $\delta(t_n)\to 0$, we find that $\int^{\infty}_{T}\delta(t)\,dt\leq C\delta(T)$ for all $T\geq 0$.  Gronwall's lemma then implies that there exists $\alpha$, $\beta>0$ so such
\[
\int^{\infty}_{T}\delta(t)\,dt\leq \alpha e^{-\beta T}.
\]
The desired convergence then follows from Lemma~\ref{Decaiment}.
\end{proof}

\begin{proof}[{Proof of Corollary~\ref{ClassC}}]
Assume that $u$ satisfies \eqref{PropCon11} and \eqref{Infinity10}. Arguing as above, we can construct $x(t)$ such that $\left\{u(t+x(t)): t\in \R\right\}$ is  pre-compact in $H^{1}$.  Furthermore, we can prove that $x(t)$ is bounded and 
\[
\lim_{t\to -\infty}\delta(t)=\lim_{t\to \infty}\delta(t)=0.
\]
Modifying the proof of Lemma~\ref{Moduvirial}, one obtains
\[
\int^{n}_{-n}\delta(t)\,dt\leq C(\delta(n)+\delta(-n))\quad \text{for all $n\in \N$}.
\]
Sending $n\to \infty$, we obtain $V(u(t))=\delta(t)\equiv 0$, contradicting \eqref{PropCon11}.\end{proof}

%%%%%%%%%%%%%%%%%%%%%%%%%%%%%%
%%%%%%%%%%%%%%%%%%%%%%%%%%%%%%
\section{Construction of local stable solutions}\label{S: stableS}

In this section, we establish the existence and uniqueness of the solution converging exponentially to the soliton $P_{\omega}$.

We begin with the construction of some approximate solutions to the linearized equation \eqref{Decomh}.  The proof is similar to that of \cite[Proposition 6.3]{DuyMerle2009}, so it will suffice to sketch the argument. We recall the notation for the eigenfunctions and eigenvalues of $\mathcal{L}$ introduced in Section~\ref{S:Spectral} (see e.g. Lemma~\ref{SpecLL}).

\begin{proposition}\label{ApproxSo}
Let $a\in \R$. There exist $\{g^{a}_{j}\}_{j\geq 1}$ in $\Sch(\R^{3})$ such that the following holds: writing $g^{a}_{1}=ae_{+}$ and
\[
W^{a}_{k}(t,x):=\sum^{k}_{j=1}e^{-j\lambda_{1} t}g^{a}_{j}(x)\qtq{for}k\geq 1,
\]
we have 
\begin{equation}\label{Aproxlimi}
\partial_{t}W^{a}_{k}+\L W^{a}_{k}=iR(W^{a}_{k})+\mathcal{O}(e^{-(k+1)\lambda_{1}t})\quad \text{in $\Sch(\R^{3})$}\qtq{as}t\to\infty,
\end{equation}
where the nonlinear terms are defined in \eqref{Resi}.
\end{proposition}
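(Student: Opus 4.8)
Proof proposal.

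The plan is to construct the profiles $g^a_j$ recursively in $j$, so that the $j$-th correction cancels the $O(e^{-j\lambda_1 t})$ error left by $W^a_{j-1}$. Substituting the ansatz $W^a_k = \sum_{j=1}^k e^{-j\lambda_1 t} g^a_j$ into the left side of \eqref{Decomh} and matching powers of $e^{-\lambda_1 t}$, one is led to the hierarchy of equations
\[
(\L - j\lambda_1)g^a_j = i\,\Phi_j(g^a_1,\dots,g^a_{j-1}),
\]
where $\Phi_j$ collects the contributions to $R(W^a_{j-1})$ that carry the factor $e^{-j\lambda_1 t}$ together with lower-order cross terms; since $R$ is a finite sum of monomials of degree between $2$ and $6$ in $h$ (with smooth, exponentially decaying coefficients built from $P_\omega$), each $\Phi_j$ is a finite sum of products of the previously constructed $g^a_1,\dots,g^a_{j-1}$, hence lies in $\Sch(\R^3)$ by the inductive hypothesis. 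The base case $j=1$ is $(\L-\lambda_1)g^a_1=0$, which holds by definition since $g^a_1=ae_+$ is the eigenfunction of $\L$ with eigenvalue $\lambda_1$ (Lemma~\ref{SpecLL}); note the homogeneous equation carries no right-hand side, so nothing further is needed at this step.

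The key analytic point is the invertibility of $\L - j\lambda_1$ on the relevant function space for every integer $j\geq 2$. By Lemma~\ref{SpecLL} we have $\sigma(\L)\cap\R=\{-\lambda_1,0,\lambda_1\}$ and the essential spectrum is the imaginary segment $\{i\xi:|\xi|\geq\omega\}$, so $j\lambda_1$ for $j\geq 2$ is a point in the resolvent set of $\L$; thus $(\L-j\lambda_1)^{-1}$ exists as a bounded operator on $L^2(\R^3)\times L^2(\R^3)$. To keep the solutions in $\Sch$, I would invert on weighted Sobolev spaces: since the source $\Phi_j$ is Schwartz and the coefficients of $\L$ differ from the constant-coefficient operator $\begin{pmatrix}0 & \Delta-\omega\\ -(\Delta-\omega) & 0\end{pmatrix}$ only by terms with exponentially decaying, smooth coefficients, elliptic regularity plus a bootstrap (commuting with $\langle x\rangle^N$ and with derivatives, using that $j\lambda_1 < \omega$ so the resolvent is controlled below the essential spectrum) shows $g^a_j=i(\L-j\lambda_1)^{-1}\Phi_j\in\Sch(\R^3)$. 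This is where I expect to follow \cite[Proposition 6.3]{DuyMerle2009} most closely, and it is also the main obstacle: one must verify that the resolvent of the non-self-adjoint matrix operator $\L$ genuinely preserves Schwartz decay, which requires either a Combes--Thomas type exponential-decay estimate or a careful bootstrap argument rather than a one-line citation of spectral theory.

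Finally I would verify the error estimate \eqref{Aproxlimi}. By construction, when $W^a_k$ is substituted into $\partial_t W^a_k + \L W^a_k - iR(W^a_k)$, every term proportional to $e^{-j\lambda_1 t}$ with $1\leq j\leq k$ cancels; what remains is a finite sum of monomials in the $g^a_j$ (times their $P_\omega$-coefficients), each multiplied by $e^{-\ell\lambda_1 t}$ with $\ell\geq k+1$. Since there are finitely many such terms and each $g^a_j\in\Sch$, this remainder is $O(e^{-(k+1)\lambda_1 t})$ in every Schwartz seminorm as $t\to\infty$, which is exactly the claim. I would present the recursion explicitly for $j=1,2$ to fix notation and then say the general inductive step is identical, leaving the routine (but lengthy) bookkeeping of which monomials land at which power of $e^{-\lambda_1 t}$ to the reader, as the cited reference does.
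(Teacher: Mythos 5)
Your proposal is correct and follows essentially the same route as the paper: an induction in which the residual at order $e^{-(k+1)\lambda_1 t}$ is removed by applying $(\L-(k+1)\lambda_1)^{-1}$, which exists because Lemma~\ref{SpecLL} gives $\sigma(\L)\cap\R=\{-\lambda_1,0,\lambda_1\}$, with the Schwartz regularity of the resulting profiles handled exactly where you indicate (the paper simply cites \cite[Remark 7.2]{DuyMerle2009} for this point). One minor correction: your parenthetical ``$j\lambda_1<\omega$'' is neither guaranteed for large $j$ nor needed, since the essential spectrum of $\L$ is purely imaginary and hence every real $j\lambda_1$ with $j\ge 2$ lies in the resolvent set regardless of its size relative to $\omega$.
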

\begin{proof} We prove this proposition by induction. To simplify notation, we omit most superscripts.

We define $g_{1}=ae_{+}$ and $W_{1}(t,x):=e^{-\lambda_{1}t}g_{1}(x)$. It is clear that
\[
\partial_{t}W_{1}+\L W_{1}-iR(W_{1})=-iR(W_{1})=-iR(e^{-\lambda_{1}t}g_{1}),
\]
Since $iR(e^{-\lambda_{1}t}g_{1})=O(e^{-2\lambda_{1}t})$ we obtain \eqref{Aproxlimi} for $k=1$.

Let $k\geq 1$. We assume that $g_{1}, g_{2}, \ldots, g_{k}$ and the corresponding $W_{k}$ satisfying \eqref{Aproxlimi} have been constructed.  Using the explicit expression for $R$, we can write
\[
R(W_{k})=\sum^{5k}_{j=2}e^{-j\lambda_{1}t}\psi_{jk} \quad \text{with $\psi_{jk}\in \Sch(\R^{3})$}.
\] 
Combining this with  \eqref{Aproxlimi} we deduce that as $t \to \infty$, we have
\begin{equation}\label{Aproxlimi11}
\partial_{t}W_{k}+\L W_{k}=iR(W_{k})+e^{-(k+1)\lambda_{1}t}V_{k+1}+\mathcal{O}(e^{-(k+2)\lambda_{1}t})\quad \text{in $\Sch(\R^{3})$}
\end{equation}
for some $V_{k+1}\in \Sch(\R^{3})$.  Noting that $(k + 1)\lambda_{1}$  is not in the spectrum of $\L$ (recall that $k\geq1$), we define 
\[
g_{k+1}:=-(\L-(k + 1)\lambda_{1})^{-1}V_{k+1}.
\]
As $V_{k+1}\in \Sch(\R^{3})$, it follows that $g_{k+1}\in \Sch(\R^{3})$ (see Remark 7.2 in \cite{DuyMerle2009}).
Now, we set 
\[
W_{k+1}(t,x):=W_{k}(t,x)+e^{-(k+1)\lambda_{1}t}g_{k+1}(x).
\]
By definition of $W_{k+1}$ and \eqref{Aproxlimi11}, we get
\begin{equation}\label{AproxlimiK1}
\begin{aligned}
\partial_{t}W_{k+1}&+\L W_{k+1}-iR(W_{k+1})\\
&=iR(W_{k})-iR(W_{k+1})+\mathcal{O}(e^{-(k+2)\lambda_{1}t})\quad \text{in $\Sch(\R^{3})$}.
\end{aligned}
\end{equation}
Since $W_{j}=O(e^{-\lambda_{1}t})$ for $j=1$, $2$, $\ldots$ $k$ and $W_{k}-W_{k+1}=O(e^{-(k+1)\lambda_{1}t})$ in $\Sch(\R^{3})$
as $t\to \infty$, if follows that $iR(W_{k})-iR(W_{k+1})=O(e^{-(k+2)\lambda_{1}t})$ in $\Sch(\R^{3})$
as $t\to \infty$ (cf. \eqref{ElemIne}). Therefore, using the expansion  \eqref{AproxlimiK1}  we have that \eqref{Aproxlimi} holds at $k+1$, which completes the induction. \end{proof}

We will use the approximate solutions $W_k^a(t)$ to construct a true solution to \eqref{Decomh}; however, we first need to collect a few technical lemmas.

The linearized equation \eqref{Decomh} may be written as a Schr\"odinger equation 
\begin{equation}\label{EquiVale}
i\partial_{t}h+\Delta h-\omega h +[\Lambda_{1}(h)+iR_{1}(h)]+[\Lambda_{2}(h)+iR_{2}(h)]=0,
\end{equation}
where 
\begin{align*}
\Lambda_{1}(h)&=2P^{2}_{\omega}h+P^{2}_{\omega}\overline{h},\\
\Lambda_{2}(h)&=-3P^{4}_{\omega}h-2P^{4}_{\omega}\overline{h},\\
R_{1}(h)&=|h|^{2}h+P_{\omega}[2|h|^{2}+h^{2}],\\
R_{2}(h)&=-h|h|^{4}+P_{\omega}[-2|h|^{2}h^{2}-3|h|^{4}]
-2P^{2}_{\omega}[\tfrac{h^{3}}{2}+3|h|^{2}h +\tfrac{3}{2}|h|^{2}\overline{h}]\\
&\quad -2P^{3}_{\omega}[2|h|^{2}+\tfrac{5}{2}h^{2}+\tfrac{1}{2}(\overline{h})^{2}].
\end{align*}

We first have the following nonlinear estimates:
\begin{lemma}\label{StriZ}
Let  $I$ be a finite interval of length $|I|$.  Then, there exists $\alpha>0$ and a constant $C$ independent of $I$ such that
\begin{equation}\label{R11Diference}
\begin{aligned}
\|R_{1}&(f)-R_{1}(g)\|_{L_{t}^{\frac{5}{3}}H^{1,\frac{30}{23}}_{x}} \\
&\leq C\|f-g\|_{L^{10}_{t,x}}
[
(\|f\|_{L_{t}^{\infty}L^{2}_{x}}+1)\| f \|_{L_{t}^{2}H^{1,6}_{x}}
+
(\|g\|_{L_{t}^{\infty}L^{2}_{x}}+1)\| g \|_{L_{t}^{2}H^{1,6}_{x}}]\\
&\quad +C\| f-g \|_{L_{t}^{2}H^{1,6}_{x}}
[\|f\|_{L^{10}_{t,x}}(\|f\|_{L_{t}^{\infty}L^{2}_{x}}+1)+
\|g\|_{L^{10}_{t,x}}(\|g\|_{L_{t}^{\infty}L^{2}_{x}}+1)]
\end{aligned}
\end{equation}
and
\begin{equation}\label{R22Diference}
\begin{aligned}
\|R_{2}&(f)-R_{2}(g)\|_{L_{t}^{\frac{10}{9}}H^{1, \frac{30}{17}}_{x}}\\
&\leq C(1+|I|^{\alpha})\|f-g\|_{L^{10}_{t,x}}[\|g\|_{L_{t}^{2}H^{1,6}_{x}}+\|f\|_{L_{t}^{2}H^{1,6}_{x}}]
\big(1+\sum^{3}_{j=1}\|f \|^{j}_{L^{10}_{t,x}}+\| g \|^{j}_{L^{10}_{t,x}}\big)\\
&\quad +
C(1+|I|^{\alpha})\|f-g\|_{L_{t}^{2}H^{1,6}_{x}}
\sum^{3}_{j=1}\bigl(\|f \|^{j}_{L^{10}_{t,x}}+\| g \|^{j}_{L^{10}_{t,x}}\bigr),
\end{aligned}
\end{equation}
where all spacetime norms are over $I\times \R^{3}$.
In particular, we have
\begin{equation}\label{L26E}
\begin{split}
&\|R_{1}(f)\|_{L_{t}^{\frac{5}{3}}H^{1,\frac{30}{23}}_{x}}
\leq C
\|f\|_{L_{t}^{2}H^{1, 6}_{x}}\|f \|_{L^{10}_{t, x}}[1+\|f \|_{L_{t}^{\infty}L^{2}_{x}}]\\
&\|R_{2}(f)\|_{L_{t}^{\frac{10}{9}}H^{1, \frac{30}{17}}_{x}}
\leq 
C(1+|I|^{\alpha})\|f\|_{L_{t}^{2}H^{1, 6}_{x}}
\sum^{4}_{j=1}\|f \|^{j}_{L^{10}_{t,x}}.
\end{split}
\end{equation}
Moreover,
\begin{equation}\label{Lam}
\begin{split}
\|\Lambda_{1}(f)\|_{L_{t}^{\frac{5}{3}}H^{1,\frac{30}{23}}_{x}}
&\leq C
|I|^{\alpha}\|f\|_{L_{t}^{2}H^{1, 6}_{x}},\\
\|\Lambda_{2}(f)\|_{L_{t}^{\frac{10}{9}}H^{1, \frac{30}{17}}_{x}}
&\leq C
|I|^{\alpha}\|f\|_{L_{t}^{2}H^{1, 6}_{x}}.
\end{split}
\end{equation}
\end{lemma}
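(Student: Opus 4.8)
The plan is to prove the collection of estimates \eqref{R11Diference}--\eqref{Lam} by systematically exploiting the explicit polynomial structure of the nonlinearities $R_1, R_2, \Lambda_1, \Lambda_2$, combined with H\"older's inequality in spacetime, the product rule, and the (Schwartz, hence rapidly decaying) structure of $P_\omega$. Each term in $R_1, R_2$ is a monomial of the form $P_\omega^a \, h^{b} \overline{h}^{c}$ with $a \in \{0,1,2,3\}$ and $b+c$ ranging between $2$ and $5$; the coefficients $P_\omega^a$ are bounded with all derivatives bounded and decaying, so for the purposes of Sobolev estimates they act as harmless smooth multipliers. The key bookkeeping is to verify that the H\"older exponents balance: for $R_1$ the target is $L_t^{5/3} H^{1,30/23}_x$, for $R_2$ it is $L_t^{10/9} H^{1,30/17}_x$, and these are precisely the exponents appearing as the dual Strichartz indices $L_t^{\tilde q'} L_x^{\tilde r'}$ (at the level of $\nabla$) that will be needed when we run the contraction argument for \eqref{EquiVale}.

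Concretely, the first step is to handle \eqref{L26E} directly (the difference estimates \eqref{R11Diference}--\eqref{R22Diference} follow by the same computation applied to $R_i(f)-R_i(g)$ using the elementary inequality \eqref{ElemIne} and its derivative analogue to expand a difference of monomials into a telescoping sum, each term of which carries one factor of $f-g$). For $R_1(f) = |f|^2 f + P_\omega(2|f|^2 + f^2)$: the cubic term $|f|^2 f$ is estimated by putting one factor in $L_t^{10}L_x^{10}$, one factor in $L_t^2 H^{1,6}_x$ after the derivative falls (using the product rule, $\nabla(|f|^2 f)$ is a sum of terms with two undifferentiated factors and one differentiated factor), and the remaining factor in $L_t^\infty L_x^2$; one checks $\tfrac{1}{10}+\tfrac{1}{2}+\tfrac{1}{\infty} = \tfrac{3}{5}$ in time and $\tfrac{1}{10}+\tfrac{1}{6}+\tfrac{1}{2} = \tfrac{23}{30}$ in space, matching the target. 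The quadratic term $P_\omega|f|^2$ is even easier: the extra factor of $P_\omega$ absorbs one slot, and one places $f$ in $L_t^{10}L_x^{10}$ and the other factor (differentiated or not) in $L_t^2 H^{1,6}_x$; the $P_\omega$ factor, being Schwartz, provides whatever integrability is missing in the remaining slot (this is where the $+1$ in $(\|f\|_{L_t^\infty L_x^2}+1)$ comes from, since in the quadratic terms the $L_t^\infty L_x^2$ factor is replaced by $\|P_\omega\|$, absorbed into the constant). For $R_2$, the analysis is identical but with higher-degree monomials (up to quintic), which is where the factor $(1+|I|^\alpha)$ and the sum $\sum_{j=1}^4 \|f\|_{L^{10}_{t,x}}^j$ enter: in the top-degree terms like $|f|^4 f$ one has five factors, and after distributing across $L_t^{10}L_x^{10}$ and $L_t^2 H^{1,6}_x$ one uses H\"older \emph{in time on the finite interval $I$} to convert surplus powers of $L_t^{10}$ into the stated exponents, picking up a power $|I|^\alpha$; for the lower-degree terms in $R_2$ (those with $P_\omega$ factors), the Schwartz decay again frees up a slot. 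Finally \eqref{Lam} is immediate since $\Lambda_1(f), \Lambda_2(f)$ are \emph{linear} in $f$ with Schwartz coefficients: write $\|P_\omega^a f\|_{H^{1,r'}_x} \lesssim \|f\|_{H^{1,6}_x}$ (the $P_\omega^a$ factor handles the gap between $6$ and the target spatial exponent via H\"older), and then H\"older in time on $I$ converts $L_t^2$ into $L_t^{5/3}$ or $L_t^{10/9}$ at the cost of $|I|^\alpha$.

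I expect the main obstacle to be purely clerical rather than conceptual: one must track, for \emph{each} of the roughly fifteen monomials appearing in $R_1$ and $R_2$, exactly which factor goes into which norm, verify that the H\"older exponents close in both time and space (and that the derivative, when present, can always be assigned to a factor sitting in the $H^{1,6}_x$ slot), and confirm that the powers of $|I|$ collected are nonnegative so they can be bounded by $1+|I|^\alpha$ with a single uniform $\alpha$. The one genuinely delicate point is making sure that in the quintic terms of $R_2$ one never needs more than \emph{one} factor in $L_t^\infty L_x^2$ and never needs a factor in a norm stronger than $L_t^2 H^{1,6}_x$ or $L_t^{10}L_x^{10}$ — this forces the specific choice of output exponents $L_t^{10/9}H^{1,30/17}_x$ and is the reason $R_2$ is separated from $R_1$. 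Since the computation is entirely routine once the exponent bookkeeping is set up, and since an essentially identical lemma appears in \cite{DuyMerle2009} and \cite{DuyckaertsRou2010}, it suffices to carry out one representative case (say the quintic term $|f|^4 f$ for \eqref{R22Diference} and the cubic term $|f|^2 f$ for \eqref{R11Diference}) in detail and indicate that the remaining terms are handled identically, with the Schwartz factors $P_\omega^a$ only improving the estimates.
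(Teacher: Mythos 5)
Your approach is the same as the paper's: exploit the monomial structure of $R_1,R_2,\Lambda_1,\Lambda_2$, use the pointwise difference estimates for $|z|^pz$ (as in \eqref{PointE11}--\eqref{PointE22}) to reduce the difference bounds to products carrying one factor of $f-g$, and close the H\"older bookkeeping with exactly the splitting $L^{10}_{t,x}$ / $L_t^2H^{1,6}_x$ / $L_t^\infty L_x^2$ that the paper records in \eqref{Holder11}--\eqref{Holder22}, with the Schwartz factor $P_\omega$ absorbing spare spatial slots and H\"older in time on the finite interval producing the $|I|^\alpha$ factors. One bookkeeping slip should be corrected, though it does not sink the argument since the claimed bound \eqref{R22Diference} carries $(1+|I|^\alpha)$ in front of everything: you attribute the $|I|^\alpha$ to the quintic term and claim the $P_\omega$-weighted terms of $R_2$ need none, but it is exactly the other way around. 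For $|f|^4f$ the exponents close with no surplus, since $4\cdot\tfrac1{10}+\tfrac12=\tfrac9{10}$ in time and $4\cdot\tfrac1{10}+\tfrac16=\tfrac{17}{30}$ in space, i.e.\ precisely $L_t^{10/9}H^{1,\frac{30}{17}}_x$; no time-H\"older is needed or available to apply there. By contrast, for a term such as $P_\omega|f|^4$ the natural placement (three factors in $L^{10}_{t,x}$, one in $L_t^2H^{1,6}_x$, $P_\omega\in L_x^{10}$) yields only $L_t^{5/4}$ in time, and since $P_\omega$ is time-independent its decay repairs the spatial exponent but never the temporal one; passing from $L_t^{5/4}(I)$ down to $L_t^{10/9}(I)$ costs $|I|^{1/10}$, and similarly for the cubic and quadratic $P_\omega$-weighted monomials (and, as you correctly note, for $\Lambda_1,\Lambda_2$). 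So when you write out the "remaining terms handled identically," they are not quite identical: each monomial with a $P_\omega$ coefficient in $R_2$ requires the extra H\"older-in-time step, which is the actual source of the $(1+|I|^\alpha)$ in \eqref{R22Diference} and \eqref{L26E}, while the corresponding terms in $R_1$ happen to close without it because the $L_t^\infty L_x^2$ slot (occupied either by a copy of $f$ or by $P_\omega$, which explains the $+1$'s) carries no time integrability.
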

\begin{proof}
Let $F(z)=|z|^{p}z$ for $p\geq1$. We have the following pointwise estimate:
\begin{align}\label{PointE11}
|F(f)-F(g)|&	\lesssim
|f-g|(|f|^{p}+|g|^{p}),\\ \label{PointE22}
|\nabla F(f)-\nabla F(g)|&	\lesssim
|f-g|(|f|^{p-1}+|g|^{p-1})(|\nabla f|+|\nabla g|)\\
& \quad +|\nabla f-\nabla g|(|f|^{p}+|g|^{p}).
\end{align}
Moreover, using H\"older, we have
\begin{align}\label{Holder11}
\|fgh\|_{L_{t}^{\frac{5}{3}}H^{1,\frac{30}{23}}_{x}}&
\leq \|f \|_{L^{10}_{t,x}}\|g\|_{L_{t}^{2}L^{6}_{x}}\|h\|_{L_{t}^{\infty}L^{2}_{x}},\\\label{Holder22}
\|fghuv\|_{L_{t}^{\frac{10}{9}}H^{1, \frac{30}{17}}_{x}}&
\leq\|f \|_{L^{10}_{t,x}}\|g \|_{L^{10}_{t,x}}\|h \|_{L^{10}_{t,x}}\|u \|_{L^{10}_{t,x}}\|v\|_{L_{t}^{2}L^{6}_{x}}.
\end{align}
Combining \eqref{PointE11}--\eqref{Holder22}, we obtain \eqref{R11Diference}--\eqref{Lam}.\end{proof}

Next, we record a useful integral summation argument from \cite{DuyMerle2009}.
\begin{lemma}\label{SumsE}
Let $a_{0}>0$, $t_{0}>0$, $p\in[1, \infty)$, $E$ a normed vector space, and $f\in L_{\text{loc}}^{p}((t_{0}, \infty); E)$. Suppose that there exist $\tau_{0}>0$ and $C_{0}>0$ with 
\[
\|f\|_{L^{p}(t, t+\tau_{0})}\leq C_{0}e^{-a_{0}t} \quad \text{for all $t\geq t_{0}$}.
\]
Then for all $t\geq t_{0}$,
\[
\|f\|_{L^{p}(t, \infty)}\leq \frac{C_{0}e^{-a_{0}t}}{1-e^{-a_{0}\tau_{0}}}.
\]
\end{lemma}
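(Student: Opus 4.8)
The plan is a direct geometric-series summation, in the spirit of the corresponding argument in \cite{DuyMerle2009}. Fix $t\ge t_{0}$ and, for integers $k\ge 0$, set $I_{k}:=(t+k\tau_{0},\,t+(k+1)\tau_{0})$, so that $(t,\infty)$ is the disjoint union of the $I_{k}$. For each $N\ge 1$ the triangle inequality in $L^{p}$ gives $\|f\|_{L^{p}(t,\,t+N\tau_{0})}\le\sum_{k=0}^{N-1}\|f\|_{L^{p}(I_{k})}$, and letting $N\to\infty$ (so that $\|f\|_{L^{p}(t,\,t+N\tau_{0})}\uparrow\|f\|_{L^{p}(t,\infty)}$ by monotone convergence) yields
\[
\|f\|_{L^{p}(t,\infty)}\le\sum_{k=0}^{\infty}\|f\|_{L^{p}(I_{k})}.
\]
Since $t+k\tau_{0}\ge t\ge t_{0}$, the hypothesis applies on each interval $I_{k}$ and gives $\|f\|_{L^{p}(I_{k})}\le C_{0}e^{-a_{0}(t+k\tau_{0})}=C_{0}e^{-a_{0}t}\,(e^{-a_{0}\tau_{0}})^{k}$.

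Summing the resulting geometric series, whose ratio $e^{-a_{0}\tau_{0}}$ lies in $(0,1)$ because $a_{0},\tau_{0}>0$, we obtain
\[
\|f\|_{L^{p}(t,\infty)}\le C_{0}e^{-a_{0}t}\sum_{k=0}^{\infty}(e^{-a_{0}\tau_{0}})^{k}=\frac{C_{0}e^{-a_{0}t}}{1-e^{-a_{0}\tau_{0}}},
\]
which is exactly the asserted bound. There is no genuine obstacle here; the only point worth a word is the passage from finite to countable subadditivity of the $L^{p}$-norm over the disjoint decomposition, which is immediate from monotone convergence. (Alternatively, one could sum the $p$-th powers, $\|f\|_{L^{p}(t,\infty)}^{p}=\sum_{k}\|f\|_{L^{p}(I_{k})}^{p}$, and then invoke $p\ge1$ to compare $(1-e^{-pa_{0}\tau_{0}})^{-1/p}\le(1-e^{-a_{0}\tau_{0}})^{-1}$; the triangle-inequality route above avoids this minor comparison.)
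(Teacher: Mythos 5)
Your proof is correct; the paper itself records this lemma without proof, citing \cite{DuyMerle2009}, and your argument (decomposing $(t,\infty)$ into consecutive intervals of length $\tau_{0}$, applying the hypothesis on each, and summing the geometric series with ratio $e^{-a_{0}\tau_{0}}$) is exactly the standard argument behind that reference. The passage from finite to countable subadditivity via monotone convergence is handled appropriately, so nothing is missing.
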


We now construct true solutions to \eqref{NLS} that are close to the soliton as $t\to\infty$.

\begin{proposition}\label{ContracA}
Let $a\in \R$. There exist $k_{0}>0$ and  $t_{k}\geq 0$ such that for any $k\geq k_{0}$, there exists a solution $W^{a}$ to \eqref{NLS} such that for $t\geq t_k$, we have
\begin{equation}\label{Uniq}
\|W^{a}(t) - U_k^a(t)\|_{H^1}+
\|W^{a}-U^{a}_{k}\|_{{L_{t}^{2}H^{1, 6}_{x}}\cap {L_{t}^{10}H^{1, \frac{30}{13}}_{x}}((t, \infty)\times \R^{3})}
\leq e^{-(k+\frac{1}{2})\lambda_{1}t},
\end{equation}
% {\color{blue}\begin{equation}\label{Uniq}
% \|W^{a}(t)-U^{a}_{k}(t)\|_{H^{1}_{x} \cap {L_{t}^{2}H^{1, 6}_{x}}\cap {L_{t}^{10}H^{1, \frac{30}{13}}_{x}}((t_{0}, \infty)\times \R^{3})}
% \leq e^{-(k+\frac{1}{2})\lambda_{1}t}
% \quad
% \text{for all $t\geq t_{k}$},
% \end{equation}}
where 
\[
U^{a}_{k}(t):=e^{i\omega t}(P_{\omega}+W^{a}_{k}(t)).
\]
In addition, $W^{a}$ is the unique solution to equation \eqref{NLS} satisfying \eqref{Uniq} for large $t$.  Finally, $W^{a}$ is independent
of $k$ and satisfies for large $t$,
\begin{equation}\label{UniqVec}
\|W^{a}(t)-e^{i\omega t}P_{\omega}-a e^{i\omega t} e^{-\lambda_{1} t}e_{+}\|_{H^{1}}\leq e^{-\frac{3}{2}\lambda_{1}t}.
\end{equation}
\end{proposition}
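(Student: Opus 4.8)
The plan is to realize $W^{a}$ as a small perturbation of the approximate solution $U^{a}_{k}$ via a contraction mapping run on a half-line $[T,\infty)$ with $T$ large, and then to read off the uniqueness, the $k$-independence, and the expansion \eqref{UniqVec} from the construction.

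First I would translate Proposition~\ref{ApproxSo} into a statement about \eqref{NLS} through the correspondence between \eqref{Decomh} and \eqref{EquiVale}: the function $U^{a}_{k}(t)=e^{i\omega t}(P_{\omega}+W^{a}_{k}(t))$ is an approximate solution,
\[
i\partial_{t}U^{a}_{k}+\Delta U^{a}_{k}-F(U^{a}_{k})=e^{i\omega t}\mathcal{E}_{k},\qquad F(z)=|z|^{4}z-|z|^{2}z,
\]
with $\mathcal{E}_{k}=\mathcal{O}(e^{-(k+1)\lambda_{1}t})$ in $\Sch(\R^{3})$; in particular $\mathcal{E}_{k}$ is controlled in the dual Strichartz norms of Lemma~\ref{StriZ} on $(t,\infty)$, with size $\lesssim e^{-(k+1)\lambda_{1}t}$. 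Writing $W^{a}=U^{a}_{k}+e^{i\omega t}\epsilon$, the correction $\epsilon$ solves a Schr\"odinger equation of the form $i\partial_{t}\epsilon+\Delta\epsilon-\omega\epsilon+\Lambda(\epsilon)+\mathcal{N}(W^{a}_{k},\epsilon)+\mathcal{E}_{k}=0$, where $\Lambda=\Lambda_{1}+\Lambda_{2}$ is the linear potential and $\mathcal{N}(W^{a}_{k},\epsilon)$ gathers the genuinely nonlinear contributions built from $R_{1},R_{2}$ together with the cross terms with $W^{a}_{k}$; crucially $\mathcal{N}$ is either quadratic (or higher) in $\epsilon$, or linear in $\epsilon$ with a coefficient carrying a factor $W^{a}_{k}=\mathcal{O}(e^{-\lambda_{1}t})$.

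I would then solve
\[
\epsilon(t)=-i\int_{t}^{\infty}e^{i(t-s)(\Delta-\omega)}\bigl[\Lambda(\epsilon)+\mathcal{N}(W^{a}_{k},\epsilon)+\mathcal{E}_{k}\bigr](s)\,ds
\]
by a fixed point in the ball of radius one of the weighted space $Y_{T}$ with norm
\[
\|\epsilon\|_{Y_{T}}:=\sup_{t\ge T}e^{(k+\frac12)\lambda_{1}t}\Bigl(\|\epsilon(t)\|_{H^{1}}+\|\epsilon\|_{(L^{2}_{t}H^{1,6}_{x}\cap L^{10}_{t}H^{1,30/13}_{x})((t,\infty)\times\R^{3})}\Bigr).
\]
The Strichartz estimate \eqref{Strichartz} and the nonlinear bounds \eqref{R11Diference}--\eqref{Lam} control every term: $\mathcal{E}_{k}$ contributes $\lesssim e^{-(k+1)\lambda_{1}t}$, which beats the weight $e^{-(k+\frac12)\lambda_{1}t}$ once $T$ is large; $\mathcal{N}(W^{a}_{k},\epsilon)$ is negligible after taking $T$ large, thanks either to its quadratic character in $\epsilon$ or to the smallness of $W^{a}_{k}$; and the order-one linear terms $\Lambda_{1}(\epsilon),\Lambda_{2}(\epsilon)$, which carry no smallness, are tamed by splitting $(t,\infty)$ into unit intervals, applying the $|I|^{\alpha}$-estimates \eqref{Lam} on each, and resumming with Lemma~\ref{SumsE}: the resulting geometric factor is finite and, for $T$ large, small compared with the weight. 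Contraction follows from the Lipschitz bounds \eqref{R11Diference}--\eqref{R22Diference} under the same interval decomposition. This produces $W^{a}$ satisfying \eqref{Uniq} for $t\ge t_{k}:=T$ provided $k\ge k_{0}$, for a suitable $k_{0}$. I expect pinning down this functional framework --- in particular neutralizing the non-small potential terms via the short-interval/resummation device while keeping the exponential weight --- to be the main obstacle.

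It remains to extract the last three assertions. For uniqueness within the class \eqref{Uniq} at a fixed $k$: two such solutions have a difference that, for $T$ large, lies in $Y_{T}$ and solves the associated difference equation, so the contraction estimate applied to the difference map forces them to agree on $[T,\infty)$, and uniqueness for the Cauchy problem of \eqref{NLS} (Theorem~\ref{Th1}) extends this to all $t$. For $k$-independence: for $k'>k$ one has $U^{a}_{k'}-U^{a}_{k}=e^{i\omega t}\sum_{j=k+1}^{k'}e^{-j\lambda_{1}t}g^{a}_{j}=\mathcal{O}(e^{-(k+1)\lambda_{1}t})$, so the solution constructed at level $k'$ also obeys \eqref{Uniq} at level $k$ for $t$ large; uniqueness at level $k$ then shows it coincides with $W^{a}$, whence $W^{a}$ does not depend on $k$. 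Finally, $U^{a}_{1}=e^{i\omega t}P_{\omega}+a\,e^{i\omega t}e^{-\lambda_{1}t}e_{+}$, so \eqref{UniqVec} is exactly $\|W^{a}(t)-U^{a}_{1}(t)\|_{H^{1}}\le e^{-\frac32\lambda_{1}t}$; writing $W^{a}-U^{a}_{1}=(W^{a}-U^{a}_{k})+(U^{a}_{k}-U^{a}_{1})$ with $U^{a}_{k}-U^{a}_{1}=e^{i\omega t}\sum_{j=2}^{k}e^{-j\lambda_{1}t}g^{a}_{j}=\mathcal{O}(e^{-2\lambda_{1}t})$, and combining \eqref{Uniq} (for the $k$-independent $W^{a}$) with this bound, \eqref{UniqVec} follows for $t$ large.
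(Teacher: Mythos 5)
Your overall architecture coincides with the paper's: write $W^{a}=U^{a}_{k}+e^{i\omega t}v$, solve for $v$ by a Duhamel-from-infinity fixed point in the unit ball of the exponentially weighted Strichartz space with norm $\sup_{t\ge T}e^{(k+\frac12)\lambda_{1}t}\bigl[\|v(t)\|_{H^{1}}+\|v\|_{L^{2}_{t}H^{1,6}_{x}\cap L^{10}_{t}H^{1,30/13}_{x}((t,\infty))}\bigr]$, and then obtain uniqueness within the class \eqref{Uniq}, $k$-independence via that uniqueness, and \eqref{UniqVec} from $U^{a}_{k}=e^{i\omega t}P_{\omega}+a e^{i\omega t}e^{-\lambda_{1}t}e_{+}+\mathcal{O}(e^{-2\lambda_{1}t})$. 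Those last three steps are correct and are exactly how the paper concludes.

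The gap is in the step you yourself flagged as the main obstacle: the treatment of the time-independent potential terms $\Lambda_{1}(v),\Lambda_{2}(v)$. Your mechanism---split $(t,\infty)$ into \emph{unit} intervals, apply \eqref{Lam}, resum with Lemma~\ref{SumsE}, and claim the outcome is ``for $T$ large, small compared with the weight''---does not produce the smallness needed for a contraction. With $|I|=1$ the factor $|I|^{\alpha}$ is $1$, and since $P_{\omega}$ does not decay in time the resummation gives only $\|\Lambda(v)\|_{N(t,\infty)}\le C\bigl(1-e^{-(k+\frac12)\lambda_{1}}\bigr)^{-1}e^{-(k+\frac12)\lambda_{1}t}\|v\|_{Y_{T}}$: the exponential rate is exactly that of the weight, so after multiplying by $e^{(k+\frac12)\lambda_{1}t}$ you are left with a constant that is independent of $T$, and sending $T\to\infty$ gains nothing; there is no reason this constant times the Strichartz constant is less than $1$, so the linear part of the map is not a contraction as you have set it up. The correct device---and the actual reason the threshold $k_{0}$ appears in the statement---is to take the subinterval length $\tau_{0}$ \emph{small}, so that \eqref{Lam} yields the gain $\tau_{0}^{\alpha}$, and then to take $k\ge k_{0}$ large enough that the resummation factor $\bigl(1-e^{-(k+\frac12)\lambda_{1}\tau_{0}}\bigr)^{-1}$ from Lemma~\ref{SumsE} remains bounded (say by $2$); largeness of $t_{k}$ is needed only to absorb the error $\epsilon_{k}$ and the terms carrying extra factors of $W^{a}_{k}$ or of $v$. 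You do write ``provided $k\ge k_{0}$'' at the end, but you assign $k_{0}$ no role and instead attribute the smallness to $T$, which is where the argument as written fails.
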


\begin{remark}\label{Wa:Positive}
Let $a\neq 0$. Then $V(W^{a}(t))>0$ for all $t\in \R$. Indeed, by \eqref{UniqVec}, conservation of mass and energy, we first note that
\[
M(W^{a})=M(P_{\omega})\quad \text{and}\quad E(W^{a})=E(P_{\omega}).
\]
Now suppose that there exists $t_{0}$ such that $V(W^{a}(t_{0}))=0$.
As $M(W^{a}(t_{0}))=M(P_{\omega})$ and $E(W^{a}(t_{0}))=E(P_{\omega})$, the variational characterization from \cite{KillipOhPoVi2017} and uniqueness for \eqref{NLS} implies that $W^{a}(t)=e^{i \theta} e^{i \omega t}P_{\omega}(\cdot-y_{0})$ for some $\theta\in \R$ and $y_{0}\in \R^{3}$. In this case, \eqref{UniqVec} shows that $\theta=0$ and $y_{0}=0$, so that $W^{a}(t)=e^{i \omega t}P_{\omega}$.  Then \eqref{UniqVec} and the fact that $a\neq 0$ imply
\[
\|e_{+}\|_{H^{1}_{x}}\leq e^{-\frac{\lambda_{1}}{2}t},
\]
for large $t>0$, which is a contradiction. \end{remark}

\begin{proof} Define 
\[
\Lambda(h)=2P^{2}_{\omega}h+P^{2}_{\omega}\bar h-3P^{4}_{\omega}h-2P^{4}_{\omega}\bar h
\]
and recall the functions $W_k^a$ constructed in Proposition~\ref{ApproxSo}, which satisfy
\begin{equation}\label{ErroB}
\epsilon_{k}:=\partial_{t}W^{a}_{k}+\L W^{a}_{k}-i R(W^{a}_{k})=O(e^{-(k+1)\lambda_{1}t})\quad \text{in $\Sch(\R^{3})$}.
\end{equation}
We wish to construct a suitable solution to 
\begin{equation}\label{NewW}
i\partial_{t}v+\Delta v-\omega v=-\Lambda(W^{a}_{k}+v)+\Lambda(W^{a}_{k})+R(W^{a}_{k}+v)-R(W^{a}_{k})-i\epsilon_{k}.
\end{equation}
Indeed, this equation may equivalently be written as 
\[
\partial_{t}v+\L v=-iR(W^{a}_{k}+v)+iR(W^{a}_{k})-\epsilon_{k},
\]
from which we can deduce that $W_k^a+v$ solves \eqref{Decomh}. In particular the desired solution to \eqref{NLS} may be defined as
\[
W^a(t) = e^{i\omega t}P_\omega + e^{i\omega t}[W_k^a(t)+v(t)].
\]

We construct the solution to \eqref{NewW} via a fixed point argument.  We define the operator
\begin{equation}\label{PointFix}
[\Phi v](t):=-\int^{\infty}_{t}e^{i(t-s)\Delta}[-\Lambda(v(s))+R(W^{a}_{k}(s)+v(s))-R(W^{a}_{k}(s))+\epsilon_{k}(s)]\,ds
\end{equation}
and the spaces
\begin{align*}
X(t, \infty)&:={L_{t}^{2}H^{1, 6}_{x}}\cap {L_{t}^{10}H^{1, \frac{30}{13}}_{x}}((t, \infty)\times \R^{3}),\\
N(t, \infty)&:=L_{t}^{\frac{10}{9}}H^{1, \frac{30}{17}}_{x}((t,\infty)\times \R^{3})+L_{t}^{\frac{5}{3}}H^{1,\frac{30}{23}}_{x}((t,\infty)\times \R^{3}).
\end{align*}
We now fix $k$ and $t_k$.  We will show that the map $\Phi$ defined above is a contraction on the Banach space
\[
B^{k}:=\left\{v\in E^{k}, \|v\|_{E^{k}}\leq 1\right\},
\]
where
\begin{align*}
E^{k}&:=\big\{v\in C_{t}H_{x}^{1}(t_{k}, \infty) \cap X(t_{k}, \infty),\ 
\|v\|_{E^{k}}<\infty\big\}, \\
\|v\|_{E^{k}}&=\sup_{t\geq t_{k}}e^{(k+\frac{1}{2})\lambda_{1}t}[\|v(t)\|_{H_{x}^{1}}+\|v\|_{ X(t, \infty)}].
\end{align*}

By Strichartz, for $v,u\in B^{k}$, we have
\begin{equation}\label{Diffe11}
\begin{aligned}
\|\Phi & v(t)\|_{H_{x}^{1}}+\|\Phi v\|_{X(t, \infty)} \\
& \leq C^{\ast}[\|\Lambda(v)\|_{N(t, \infty)}+\|R(W^{a}_{k}+v)-R(W^{a}_{k})\|_{N(t, \infty)}+\|\epsilon_{k}\|_{N(t, \infty)}],
\end{aligned}
\end{equation}
\begin{equation}\label{Diffe22}
\begin{aligned}
\|&\Phi  v(t)-\Phi u(t)\|_{H_{x}^{1}}+\|\Phi v-\Phi u\|_{X(t, \infty)}\\
&\leq C^{\ast}[\|\Lambda(v-u)\|_{N(t, \infty)} + \|R(W^{a}_{k}+v)-R(W^{a}_{k}+u)\|_{N(t, \infty)}+\|\epsilon_{k}\|_{N(t, \infty)}].
\end{aligned}
\end{equation}
Here $C^{\ast}$ encodes the various constants appearing in the Strichartz estimates.

We now need the following: 
\begin{claim}\label{Est34}
Let $v$, $u \in E^{k}$. There exists $k_{0}>0$ such that for $k\geq k_{0}$, we have
\begin{align}\label{Cla11}
\|\Lambda (u-v)\|_{N(t, \infty)}&\leq \tfrac{1}{4 C^{\ast}}e^{-(k+\frac{1}{2})\lambda_{1}t}\|u-v\|_{E^{k}},\\ 
\label{Cla22}
\|R(W^{a}_{k}+v)-R(W^{a}_{k}+u)\|_{N(t, \infty)}&\leq C_{k}e^{-(k+\frac{3}{2})\lambda_{1}t}\|u-v\|_{E^{k}},\\
\label{Cla33}
\|\epsilon_{k}\|_{N(t, \infty)}&\leq C_{k}e^{-(k+1)\lambda_{1}t},
\end{align}
for all $t\geq t_{k}$, where the constant $C_{k}$ depends only on $k$.
\end{claim}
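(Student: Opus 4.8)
The plan is to deduce all three estimates by combining the nonlinear bounds of Lemma~\ref{StriZ} with the summation device of Lemma~\ref{SumsE}, after first recording some elementary decay estimates. Since each $g^{a}_{j}\in\Sch(\R^{3})$ and $W^{a}_{k}(t)=\sum_{j=1}^{k}e^{-j\lambda_{1}t}g^{a}_{j}$, one checks directly that each of the norms $\|W^{a}_{k}\|_{L_{t}^{\infty}L_{x}^{2}}$, $\|W^{a}_{k}\|_{L_{t}^{2}H^{1,6}_{x}}$, $\|W^{a}_{k}\|_{L^{10}_{t,x}}$ and $\|W^{a}_{k}\|_{L_{t}^{10}H^{1,\frac{30}{13}}_{x}}$ over $(t,\infty)\times\R^{3}$ is $\leq C_{k}e^{-\lambda_{1}t}$. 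On the other hand, for $v,u\in B^{k}$ and $t\geq t_{k}$, the definition of $\|\cdot\|_{E^{k}}$ (which controls both $\sup_{s\geq t_{k}}e^{(k+\frac{1}{2})\lambda_{1}s}\|v(s)\|_{H^{1}_{x}}$ and the $X(\cdot,\infty)$-norm), together with the Sobolev embedding $H^{1,\frac{30}{13}}_{x}\hookrightarrow L^{10}_{x}$, shows that each of the above four norms of $v$ (and of $u$) over $(t,\infty)$ is $\leq e^{-(k+\frac{1}{2})\lambda_{1}t}\|v\|_{E^{k}}\leq e^{-\lambda_{1}t}$, using $k\geq1$. Hence $f:=W^{a}_{k}+v$ and $g:=W^{a}_{k}+u$ satisfy these four norms $\leq C_{k}e^{-\lambda_{1}t}$ over $(t,\infty)$, while $f-g=v-u$ satisfies $\|v-u\|_{L^{10}_{t,x}(t,\infty)}+\|v-u\|_{L_{t}^{2}H^{1,6}_{x}(t,\infty)}\leq e^{-(k+\frac{1}{2})\lambda_{1}t}\|v-u\|_{E^{k}}$.

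Estimate \eqref{Cla33} is then immediate: by Proposition~\ref{ApproxSo}, $\epsilon_{k}=O(e^{-(k+1)\lambda_{1}t})$ in $\Sch(\R^{3})$, so, bounding $N$ by its $L_{t}^{\frac{10}{9}}H^{1,\frac{30}{17}}_{x}$-component, $\|\epsilon_{k}\|_{N(t,t+1)}\leq C_{k}e^{-(k+1)\lambda_{1}t}$ for $t\geq t_{k}$, and Lemma~\ref{SumsE} upgrades this to \eqref{Cla33}. For \eqref{Cla22}, apply \eqref{R11Diference}--\eqref{R22Diference} on each unit interval $(t+j,t+j+1)$ with $f,g$ as above. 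The structural observation is that every summand on the right-hand sides of \eqref{R11Diference}--\eqref{R22Diference} is a product of $\|f-g\|$ in an $X$-type norm with at least one further ``soliton-scale'' factor (the ``$+1$''s and the factor $1+\sum_{j}(\cdots)$ always multiply such a small quantity, and $\|f\|^{j}_{L^{10}_{t,x}}\leq\|f\|_{L^{10}_{t,x}}$ since $\|f\|_{L^{10}_{t,x}}\leq1$ for $t\geq t_{k}$). Hence, using the decay estimates above, each summand restricted to $(t+j,\infty)$ is $\leq C_{k}e^{-(k+\frac{1}{2})\lambda_{1}(t+j)}\cdot e^{-\lambda_{1}(t+j)}\|v-u\|_{E^{k}}=C_{k}e^{-(k+\frac{3}{2})\lambda_{1}(t+j)}\|v-u\|_{E^{k}}$, the harmless factor $1+|I|^{\alpha}$ with $|I|=1$ being absorbed into $C_{k}$; summing over $j$ by Lemma~\ref{SumsE} gives \eqref{Cla22}.

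Finally, for \eqref{Cla11} we use \eqref{Lam}: since $\Lambda=\Lambda_{1}+\Lambda_{2}$ and $N=L_{t}^{\frac{10}{9}}H^{1,\frac{30}{17}}_{x}+L_{t}^{\frac{5}{3}}H^{1,\frac{30}{23}}_{x}$, on an interval of length $\tau_{0}$ we have $\|\Lambda(u-v)\|_{N(\tau,\tau+\tau_{0})}\leq C\tau_{0}^{\alpha}\|u-v\|_{L_{t}^{2}H^{1,6}_{x}(\tau,\infty)}\leq C\tau_{0}^{\alpha}e^{-(k+\frac{1}{2})\lambda_{1}\tau}\|u-v\|_{E^{k}}$, and summing over the intervals $(t+j\tau_{0},t+(j+1)\tau_{0})$ via Lemma~\ref{SumsE} yields
\[
\|\Lambda(u-v)\|_{N(t,\infty)}\leq\frac{C\tau_{0}^{\alpha}}{1-e^{-(k+\frac{1}{2})\lambda_{1}\tau_{0}}}\,e^{-(k+\frac{1}{2})\lambda_{1}t}\,\|u-v\|_{E^{k}}.
\]
The linear term $\Lambda$ carries no exponential gain in $t$, so one cannot absorb its constant by taking $t_{k}$ large here; instead take $\tau_{0}=\tau_{0}(k):=\bigl((k+\tfrac{1}{2})\lambda_{1}\bigr)^{-1}$, so that, using $1-e^{-x}\geq x/2$ for $0<x\leq1$, the prefactor is at most $2C\bigl((k+\tfrac{1}{2})\lambda_{1}\bigr)^{-\alpha}\to0$ as $k\to\infty$; hence for $k\geq k_{0}$ with $k_{0}$ large it is $\leq\tfrac{1}{4C^{\ast}}$, which is \eqref{Cla11}. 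The only genuine subtlety is precisely this interplay between the short-interval gain $|I|^{\alpha}$ and large $k$ used to pin down the constant $\tfrac{1}{4C^{\ast}}$, together with the bookkeeping that guarantees the extra factor $e^{-\lambda_{1}t}$ in \eqref{Cla22} (which is what later makes $C_{k}$ harmless in the fixed-point argument, upon choosing $t_{k}$ large); everything else is a routine combination of Lemmas~\ref{StriZ} and~\ref{SumsE}.
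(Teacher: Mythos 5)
Your proof is correct and follows essentially the same route as the paper's: apply the nonlinear estimates of Lemma~\ref{StriZ} on short time intervals, use the decay of $W^{a}_{k}$ and of $u,v$ in $E^{k}$ to extract the extra factor $e^{-\lambda_{1}t}$ in \eqref{Cla22}, and sum via Lemma~\ref{SumsE}, with \eqref{Cla33} following directly from \eqref{ErroB}. The only difference is that you make explicit the choice of $\tau_{0}$ (namely $\tau_{0}=((k+\tfrac12)\lambda_{1})^{-1}$) and of $k_{0}$ needed to force the constant $\tfrac{1}{4C^{\ast}}$ in \eqref{Cla11}, which the paper leaves as ``choosing $\tau_{0}$ and $k_{0}$ appropriately.''
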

\begin{proof}[{Proof of Claim~\ref{Est34}}]
We first estimate \eqref{Cla11}. Let $\tau_{0}>0$. Using \eqref{Lam}, we have
\[
\|\Lambda (u-v)\|_{N(t, t+\tau_{0})}\leq C_{1}\tau_{0}^{\alpha}e^{-(k+\frac{1}{2})\lambda_{1}t}\|u-v\|_{E^{k}}.
\]
We then obtain \eqref{Cla11} from Lemma~\ref{SumsE} for $k\geq k_{0}$ by choosing $\tau_{0}$ and $k_{0}$ appropriately.

Now we show \eqref{Cla22}. Recall that by construction $\|W^{a}_{k}(t)\|_{H_{x}^{1} \cap X(t, \infty)}\leq C_{k}e^{-\lambda_{1}t}$ 
(cf. Proposition~\ref{ApproxSo}).
By \eqref{R11Diference} and \eqref{R22Diference} (and recalling that $I=[t,t+1])$, we deduce that for $v$, $u \in E^{k}$,
\begin{align*}
\|R(W^{a}_{k}+v)-R(W^{a}_{k}+u)\|_{N(t,t+1)}&\leq C_{k,1}e^{-\lambda_{1}t}\|u-v\|_{H_{x}^{1} \cap X(t, t+1)} \\
&
\leq  C_{k,2}e^{-( k+\frac{3}{2}) \lambda_{1}t}\|u-v\|_{E^{k}},
\end{align*}
where the constant $ C_{k,2}$ depends only on $k$. Thus, Lemma~\ref{SumsE} implies \eqref{Cla22}. 

Finally, the estimate \eqref{Cla33} is a direct consequence of \eqref{ErroB}.\end{proof}

Now let $k\geq k_{0}$, where $k_{0}$ is defined in Claim~\ref{Est34}. Combining \eqref{Diffe11},  
\eqref{Diffe22}, \eqref{Cla11}, \eqref{Cla22} and  \eqref{Cla33} we get for all $t\geq t_{k}$,
\begin{align}\label{BoundPhi}
&\|\Phi v\|_{E^{k}}\leq (\tfrac{1}{4}+C^{\ast} C_{k}e^{-\lambda_{1}t_{k}}+C^{\ast} C_{k}e^{-\frac{1}{2}\lambda_{1}t_{k}}),\\
\label{BoundPhi11}
&\|\Phi v-\Phi u\|_{E^{k}}\leq \|u-v\|_{E^{k}}
(\tfrac{1}{4}+C^{\ast} C_{k}e^{-\lambda_{1}t_{k}}),
\end{align}
so that $\Phi$ is a contraction provided $t_{k}$ is chosen sufficiently large. Therefore, 
for $k\geq k_{0}$, equation \eqref{NLS} has a unique solution $W^{a}$ satisfying the estimate \eqref{Uniq} for $t \geq t_{k}$.
Note also that all of the above still remains valid
for larger $t_{k}$; in particular,  the uniqueness still holds in the class of solution of \eqref{NLS} satisfying \eqref{Uniq} for
$t>t^{\prime}_{k}$ with $t^{\prime}_{k}$ is a real number larger than $t_{k}$. Moreover, by the uniqueness in the fixed point argument, one can also show that  $W^{a}$ does not depend on $k$ (cf. \cite[Proposition 6.3]{DuyMerle2009}). 

Finally, we prove \eqref{UniqVec}. By construction we have 
$\|\Phi v\|_{H_{x}^{1}}\leq Ce^{-( k+\frac{1}{2} )\lambda_{1}t}$. As $e^{i \omega t}v=W^{a}-U^{a}_{k}$ and $v=\Phi v$, we obtain
\[
\|W^{a}(t)-U^{a}_{k}(t)\|_{H_{x}^{1}}\leq Ce^{-( k+\frac{1}{2} )\lambda_{1}t}.
\]
This, together with the fact that $U^{a}_{k}=e^{i \omega t}P_{\omega}+a e^{i \omega t}e^{-\lambda_{1}t}e_{+}+O(e^{-2\lambda_{1}t})$
in $\Sch(\R^{3})$, yields \eqref{UniqVec}. \end{proof}

%%%%%%%%%
\section{A Uniqueness Result}\label{S:Rigid}

Our first main goal in this section is to establish the following:

\begin{proposition}\label{UniqueU} Let $(M(P_\omega),E(P_\omega))\in\partial\mathcal{K}_s$.  If $u$ is a solution to \eqref{NLS} satisfying
\begin{equation}\label{UniqCon}
E(u)=E(P_\omega),\ M(u)=M(P_\omega),\qtq{and}
\|u-e^{i \omega t}P_{\omega}\|_{H_{x}^{1}}\leq Ce^{-ct} 
\end{equation} 
for some $C$, $c>0$, then there exists unique $a \in \R$ such that $u=W^{a}$, where $W^{a}$ is the solution of \eqref{NLS} constructed
in Proposition~\ref{ContracA}.
\end{proposition}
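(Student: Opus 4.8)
The plan is to prove uniqueness by a contraction/rigidity argument in the spirit of \cite[Proposition 7.1]{DuyMerle2009} and \cite{DuyckaertsRou2010}. Write $u = e^{i\omega t}(P_\omega + h(t))$, so that $h$ solves the linearized equation \eqref{Decomh} with $\|h(t)\|_{H^1_x}\leq Ce^{-ct}$. The first step is to identify the correct value of the parameter $a$. Since $h$ decays exponentially and solves $\partial_t h + \L h = iR(h)$ with $R(h) = O(\|h\|_{H^1}^2) = O(e^{-2ct})$, I expect $h$ to be driven at leading order by the spectral projection onto the unstable eigenvalue $\lambda_1$ of $\L$. Using the spectral decomposition from Lemma~\ref{SpecLL}, project the equation onto the $e_+$ direction: if $h(t) = b_+(t)e_+ + (\text{stable/neutral part})$, then $b_+' - \lambda_1 b_+ = O(e^{-2ct})$, which forces $b_+(t) = a e^{-\lambda_1 t} + O(e^{-2ct})$ for a unique constant $a\in\R$ (the neutral and stable parts must decay at least as fast as $e^{-ct}$ but cannot contribute exponential growth; here one uses that $h$ is genuinely decaying, ruling out the other exponential mode $e^{+\lambda_1 t}$). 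This pins down $a$, and correspondingly the candidate solution $W^a$ from Proposition~\ref{ContracA}.

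The second step is to compare $u$ with $W^a$. Set $w(t) := u(t) - W^a(t)$; by \eqref{UniqVec} and \eqref{UniqCon}, both $u$ and $W^a$ converge to $e^{i\omega t}P_\omega$ at rate $e^{-\lambda_1 t}$ (or faster), and by the choice of $a$ their $e_+$-components agree to order $o(e^{-\lambda_1 t})$; I would like to show $w\equiv 0$. The function $w$ satisfies a difference equation of the form $i\partial_t w + \Delta w - \omega w = -\Lambda(u-\text{shift}) + \cdots$, i.e. a linear Schr\"odinger equation with potential terms $\Lambda_1,\Lambda_2$ (from the linearization at $P_\omega$) plus a nonlinear remainder that is $O((\|h\|+\|W^a - e^{i\omega t}P_\omega\|)\cdot |w|)$, hence small since both $u$ and $W^a$ are close to the soliton for large $t$. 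The key estimate is a bootstrap: using the Strichartz estimates \eqref{Strichartz}, the nonlinear bounds of Lemma~\ref{StriZ}, the summation Lemma~\ref{SumsE}, and a Duhamel representation that integrates \emph{from} $+\infty$ (exploiting the decay of $w$), I would show that for $t$ large,
\[
\|w(t)\|_{H^1_x} + \|w\|_{X(t,\infty)} \leq \tfrac12\big(\|w(t)\|_{H^1_x} + \|w\|_{X(t,\infty)}\big) + (\text{exponentially small remainder from matching $a$}),
\]
so that $w$ decays strictly faster than $e^{-\lambda_1 t}$, say like $e^{-(\lambda_1+\eta)t}$. One then needs a second projection argument: the improved decay rate $\lambda_1 + \eta$ is not an eigenvalue of $\L$ (for $\eta$ small, unless it crosses $2\lambda_1$, in which case one iterates through the $k\lambda_1$ gaps as in Proposition~\ref{ApproxSo}), so projecting the equation for $w$ onto the spectral subspaces and using that $\L$ is invertible off its finitely many real eigenvalues and off the essential spectrum forces the leading coefficient of $w$ to vanish, improving the decay further. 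Iterating (a telescoping / continuous-induction argument identical in structure to the construction in Proposition~\ref{ContracA}) shows $w$ decays faster than any exponential. Finally, a unique-continuation-type argument — or more simply, applying the \emph{uniqueness} clause already built into Proposition~\ref{ContracA}, which asserts $W^a$ is the unique solution of \eqref{NLS} satisfying \eqref{Uniq} for large $t$ — lets us conclude: once $u - W^a$ decays faster than $e^{-(k+\frac12)\lambda_1 t}$ for $k$ large, $u$ itself satisfies the defining estimate \eqref{Uniq} of $W^a$, hence $u = W^a$. Uniqueness of $a$ is then immediate from \eqref{UniqVec}, since distinct values of $a$ give solutions with distinct $e^{-\lambda_1 t}e_+$ asymptotics.

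I expect the \textbf{main obstacle} to be the bootstrap/gain step: extracting the \emph{strictly improved} exponential decay rate for $w = u - W^a$ from the fixed-point-type inequality, while carefully tracking that the inhomogeneous remainder is genuinely smaller than $e^{-\lambda_1 t}$ (this is exactly what the choice of $a$ buys us, and it must be quantified via the projection onto $e_+$), and then correctly iterating the gain past each spectral gap $k\lambda_1$ without the estimates degrading. This is the step where one must be careful that the operator $(\L - \mu)^{-1}$ is bounded on the relevant Sobolev/Strichartz spaces for $\mu$ in the resolvent set, and that the nonlinear error terms, which a priori involve products of $W^a_k$ with $w$, are controlled by Lemma~\ref{StriZ} uniformly in the (fixed, large) index $k$. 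The remaining ingredients — the Duhamel-from-infinity setup, the Strichartz bookkeeping, and invoking the uniqueness clause of Proposition~\ref{ContracA} — are routine given the machinery already assembled in Sections~\ref{S:Spectral} and \ref{S: stableS}.
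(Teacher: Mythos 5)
Your overall architecture is the same as the paper's: extract the coefficient $a$ from the $e_{+}$-asymptotics of $h=e^{-i\omega t}u-P_\omega$, compare $u$ with $W^{a}$ and push the decay of the difference through the exponential scales until it beats $e^{-(k_0+\frac12)\lambda_1 t}$, then invoke the uniqueness clause of Proposition~\ref{ContracA}. However, there is a genuine gap at the very first step, the identification of $a$. The hypothesis \eqref{UniqCon} only gives $\|h(t)\|_{H^1}\leq Ce^{-ct}$ with $c>0$ possibly much smaller than $\lambda_1$, so the source in the projected ODE is merely $O(e^{-2ct})$, which may decay \emph{more slowly} than $e^{-\lambda_1 t}$. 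In that regime the relation $b_{+}'+\lambda_1 b_{+}=O(e^{-2ct})$ (note the sign: the decaying mode satisfies this equation, not $b_+'-\lambda_1 b_+=\dots$) only yields $|b_{+}(t)|\lesssim e^{-2ct}$; the limit $\lim_{t\to\infty}e^{\lambda_1 t}b_{+}(t)$ defining $a$ need not exist, and your statement ``$b_{+}(t)=ae^{-\lambda_1 t}+O(e^{-2ct})$ for a unique $a$'' does not pin down $a$ at all. Consequently the claim that the $e_{+}$-components of $u$ and $W^{a}$ agree to order $o(e^{-\lambda_1 t})$ --- which is what you need to start the bootstrap for $w=u-W^{a}$ at a rate strictly better than $\lambda_1$ --- is unjustified. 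The missing step is a preliminary self-improvement of the decay of $h$ itself: one first iterates the linear estimate (in the paper, Lemma~\ref{AxuST} with $c_1=c$, $c_2=2c$, then $c\mapsto\tfrac32 c$, etc., together with Lemma~\ref{BoundGra11} and Lemma~\ref{StriZ}) to reach $\|h(t)\|_{H^1}\lesssim e^{-\lambda_1 t}$, so that $R(h)=O(e^{-2\lambda_1 t})$, and only then does the projection give $\|h(t)-ae^{-\lambda_1 t}e_{+}\|_{H^1}\leq Ce^{-2^{-}\lambda_1 t}$ with a well-defined $a$.

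A second, related weakness is that you assert the neutral and stable parts of $h$ (respectively of $w$) ``must decay at least as fast'' as the source, and you propose to justify the spectral bookkeeping by boundedness of $(\L-\mu)^{-1}$ off the spectrum. Resolvent bounds alone give no time-decay for this non-self-adjoint evolution: $\L$ has a four-dimensional kernel ($iP_\omega$, $\partial_j P_\omega$), and the components along these neutral directions are only controlled by integrating their ODEs once the remaining part is bounded. The mechanism that makes this work is the near-conservation of the linearized energy, $\frac{d}{dt}\F(v)=2\F(g_1+g_2,v)$, combined with the coercivity of $\F$ on the subspace $Y^{\perp}$ (Lemma~\ref{NewCoer}); this is exactly what Lemma~\ref{AxuST} packages, and you should invoke it (or reprove its content) instead of a resolvent argument. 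With that lemma in hand, the rest of your plan --- Duhamel from $+\infty$, the Strichartz estimates and Lemma~\ref{SumsE}, the induction that gains roughly $\lambda_1$ per step without any resonance obstruction since $m\lambda_1$, $m\geq 2$, is not in $\sigma(\L)$, and the final appeal to the uniqueness statement \eqref{Uniq} in Proposition~\ref{ContracA} --- is indeed the paper's Step 2 and goes through as you describe.
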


We begin with a lemma.

\begin{lemma}\label{BoundGra11}
Let $v$ be a solution of \eqref{EquiVale} with 
\begin{equation}\label{ExpH}
\|v(t)\|_{H^{1}_{x}}\leq Ce^{-c_{0}t}
\end{equation}
for some constants $C>0$ and $c_{0}>0$. Then for any admissible pair $(q, r)$ we have for $t$ large
\begin{equation}\label{BoundCo}
\|v\|_{L^{10}_{t, x}([t,\infty)\times \R^{3})}
+\|v\|_{L_{t}^{q}H^{1, r}_{x}([t,\infty)\times \R^{3})}
\leq Ce^{-c_{0}t}.
\end{equation}
\end{lemma}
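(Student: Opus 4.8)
The plan is to run a standard Strichartz bootstrap on \eqref{EquiVale}, using the exponential decay \eqref{ExpH} in $H^1_x$ together with the smallness of the potential-type terms coming from the soliton profile $P_\omega$. First I would observe that \eqref{EquiVale} is a linear-looking Schr\"odinger equation $i\partial_t v + \Delta v - \omega v = -[\Lambda_1(v)+\Lambda_2(v)] - i[R_1(v)+R_2(v)]$ with the right-hand side controlled by the nonlinear estimates of Lemma~\ref{StriZ}. Since $\omega>0$, the free propagator for $i\partial_t + \Delta - \omega$ is (up to a harmless phase $e^{-i\omega t}$) just $e^{it\Delta}$, so the usual Strichartz estimates \eqref{Strichartz} apply verbatim, in their $H^1$-version via $\<\nabla\>$.

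The main step is a local-in-time estimate on unit intervals. Fix a large time $T$ and consider $I=[T,T+1]$. Applying Strichartz on $I$ to $v$, using $v(T)$ as initial data and the Duhamel formula, one gets
\[
\|v\|_{L^{10}_{t,x}(I\times\R^3)}+\|v\|_{L^q_t H^{1,r}_x(I\times\R^3)}
\lesssim \|v(T)\|_{H^1_x} + \|\Lambda_1(v)+\Lambda_2(v)\|_{N(I)} + \|R_1(v)+R_2(v)\|_{N(I)},
\]
where $N(I)$ denotes the dual Strichartz space used in Lemma~\ref{StriZ}. By \eqref{Lam}, the linear terms $\Lambda_1(v),\Lambda_2(v)$ are bounded by $C|I|^\alpha\|v\|_{L^2_tH^{1,6}_x(I)}$, and since $|I|=1$ this constant is uniform; the nonlinear terms $R_1(v),R_2(v)$ are, by \eqref{L26E}, bounded by $C(1+\|v\|_{L^\infty_tL^2_x(I)})$ times positive powers of $\|v\|_{X(I)}$, all of which are $O(e^{-c_0 T})$ and hence small for $T$ large. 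Thus for $T$ large one can absorb the $\Lambda$-terms into the left side (the coefficient $C|I|^\alpha$ must be beaten — this is exactly where one needs the Strichartz constant times the $\Lambda$-coefficient to be small, so one should take $|I|$ a small fixed length $\tau_0$ rather than $1$, or equivalently note $\alpha>0$ gives smallness as $\tau_0\to0$), and the $R$-terms are harmless, yielding
\[
\|v\|_{L^{10}_{t,x}(I)}+\|v\|_{L^q_tH^{1,r}_x(I)}\lesssim \|v(T)\|_{H^1_x}\lesssim e^{-c_0 T}.
\]

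Finally I would globalize. Summing the unit-interval bounds over $T,T+\tau_0,T+2\tau_0,\dots$ via the geometric-series argument of Lemma~\ref{SumsE} (with $p$ the relevant Strichartz exponent, $a_0=c_0$, $\tau_0$ the chosen interval length) upgrades the local bounds to
\[
\|v\|_{L^{10}_{t,x}([T,\infty)\times\R^3)}+\|v\|_{L^q_tH^{1,r}_x([T,\infty)\times\R^3)}\lesssim e^{-c_0 T},
\]
which is \eqref{BoundCo}. The main obstacle is the $\Lambda$-terms: unlike genuinely nonlinear terms they do not come with a smallness factor from $\|v\|$ alone, so the estimate only closes because $P_\omega$ multiplies them and, more importantly, because one works on short time intervals where the factor $|I|^\alpha$ (or, alternatively, the decay of $e^{-i\omega s}$ in a weighted norm) provides the needed gain; getting this bookkeeping right — choosing $\tau_0$ small enough that $C^\ast C|I|^\alpha<\tfrac12$ while keeping $T$ large enough that the $R$-contributions are negligible — is the only delicate point, and it is entirely routine given Lemmas~\ref{StriZ} and~\ref{SumsE}.
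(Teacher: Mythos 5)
Your proposal is correct and follows essentially the same route as the paper: a Strichartz estimate on short intervals of length $\tau$, with the $\tau^{\alpha}$ gain from \eqref{Lam} absorbing the linear $\Lambda$-terms, the $R$-terms handled as superlinear powers of the local Strichartz norm, and the exponential $H^1$ decay \eqref{ExpH} propagated to all of $[t,\infty)$ via Lemma~\ref{SumsE}. The only place where your wording is loose is in declaring the powers of $\|v\|_{X(I)}$ to be $O(e^{-c_0T})$ outright; as in the paper, this is legitimized by the continuity (bootstrap) argument applied to the closed inequality $H(t)\leq K\{\|v(t)\|_{H^1}+H(t)^2+H(t)^5+\tau^{\alpha}H(t)\}$, which you implicitly invoke by calling the scheme a bootstrap.
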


\begin{proof} Given $t>0$ and $\tau\in(0,1)$, let us write
\[
H(t)=\|v\|_{L_{t}^{2}H^{1, 6}_{x}\cap L_{t}^{10}H^{1, \frac{30}{13}}_{x}((t,t+\tau)\times \R^{3})}.
\]
By Strichartz and Lemma~\ref{StriZ}, we have
\begin{equation}\label{primerB}
\begin{split}
H(t)
\leq K\left\{ \|v(t)\|_{H^{1}_{x}}+[H(t)]^{2}+[H(t)]^{5}+\tau^{\alpha}H(t)
\right\}
\end{split}
\end{equation}
for some $\alpha>0$. Using \eqref{ExpH}, a continuity argument implies that there exists $\tau>0$ such that
\begin{equation}\label{NewBh}
\|v\|_{L_{t}^{2}H^{1, 6}_{x}\cap L_{t}^{10}H^{1, \frac{30}{13}}_{x}((t, t+\tau)\times \R^{3})}
\leq Ce^{-c_{0}t}
\end{equation}
for large $t$. Lemma~\ref{SumsE} and Sobolev embedding then yield
\[
\|v\|_{L^{10}_{t, x}([t,\infty)\times \R^{3})}+
\|v\|_{L_{t}^{2}H^{1, 6}_{x}\cap L_{t}^{10}H^{1, \frac{30}{13}}_{x}([t, \infty)\times \R^{3})}
\leq Ce^{-c_{0}t}.
\]
\end{proof}

Let $t_{0}\geq 0$ and suppose that we have functions
\[
v\in C^{0}([t_{0}, \infty), H^{1}(\R^{3})),\quad g_{1}, g_{2}\in C^{0}([t_{0}, \infty), L^{\frac{6}{5}}(\R^{3}))
\]
satisfying the following: 
\[
g_{1}\in L_{t}^{\frac{5}{3}}H^{1,\frac{30}{23}}_{x}([t_{0}, \infty)\times \R^{3}),\quad g_{2}\in L_{t}^{\frac{10}{9}}H^{1,\frac{30}{17}}_{x}([t_{0}, \infty)\times \R^{3}),
\]
and 
\begin{align}\label{CondiExp}
& \partial_{t}v+\L v=g_{1}+g_{2}, \quad (t,x)\in [t_{0}, \infty)\times \R^{3},\\ \label{CondiExp22}
& \|v(t)\|_{H^{1}}\leq Ce^{-c_{1}t},\\ \label{CondiExp33}
&\|g_{1}+g_{2}\|_{L_{x}^{\frac{6}{5}}(\R^{3})}+\|g_{1}\|_{L_{t}^{\frac{5}{3}}H^{1,\frac{30}{23}}_{x}([t, \infty)\times \R^{3})} +\|g_{2}\|_{L_{t}^{\frac{10}{9}}H^{1,\frac{30}{17}}_{x}([t, \infty)\times \R^{3})}\leq Ce^{-c_{2}t}
\end{align}
for all $t\geq t_{0}$, where $0 < c_{1} < c_{2}$.

Using  Strichartz estimates, \eqref{Lam}, and Lemma~\ref{SumsE}, we can obtain the following result.

\begin{lemma}\label{AxuST11}
Under the above assumptions \eqref{CondiExp}, \eqref{CondiExp22} and  \eqref{CondiExp33} with $0 < c_{1} < c_{2}$, we have
\begin{equation}\label{NewSt11}
\|v\|_{L_{t}^{q}H^{1, r}_{x}([t,\infty)\times \R^{3})}
\leq Ce^{-c_{1}t}
\end{equation}
for any admissible pair $(q, r)$. 
\end{lemma}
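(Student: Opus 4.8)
The plan is to run a linear version of the proof of Lemma~\ref{BoundGra11}, with the cubic and quintic terms replaced by the linear potential terms $\Lambda_{1},\Lambda_{2}$ and the prescribed forcing $g_{1}+g_{2}$, and then to upgrade the resulting short-interval bound to a half-line bound via the summation Lemma~\ref{SumsE}. First I would recast $\partial_{t}v+\L v=g_{1}+g_{2}$ in the Schr\"odinger form of \eqref{EquiVale}; since passing between the two formulations amounts to applying a fixed bounded pointwise $\R$-linear map on $\C$, the equation becomes
\[
i\partial_{t}v+\Delta v-\omega v+\Lambda_{1}(v)+\Lambda_{2}(v)=\tilde g_{1}+\tilde g_{2},
\]
where $\tilde g_{1},\tilde g_{2}$ obey $\|\tilde g_{1}\|_{L_{t}^{5/3}H^{1,\frac{30}{23}}_{x}(J\times\R^{3})}\lesssim\|g_{1}\|_{L_{t}^{5/3}H^{1,\frac{30}{23}}_{x}(J\times\R^{3})}$ and $\|\tilde g_{2}\|_{L_{t}^{10/9}H^{1,\frac{30}{17}}_{x}(J\times\R^{3})}\lesssim\|g_{2}\|_{L_{t}^{10/9}H^{1,\frac{30}{17}}_{x}(J\times\R^{3})}$ on every interval $J\subset[t_{0},\infty)$. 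Because $\|v(s)\|_{H^{1}}\to0$ as $s\to\infty$ by \eqref{CondiExp22}, on any finite interval $[t,t+\tau]\subset[t_{0},\infty)$ the solution obeys the Duhamel identity
\[
v(s)=e^{i(s-t)\Delta}e^{-i\omega(s-t)}v(t)+i\int_{t}^{s}e^{i(s-\sigma)\Delta}e^{-i\omega(s-\sigma)}\bigl[\Lambda_{1}(v)+\Lambda_{2}(v)-\tilde g_{1}-\tilde g_{2}\bigr](\sigma)\,d\sigma .
\]

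Next I would apply the short-interval Strichartz estimate. Fixing an admissible pair $(q,r)$ and using \eqref{Strichartz} together with its $\dot H^{1}$ counterpart on $[t,t+\tau]$, I get
\[
\|v\|_{L_{t}^{q}H^{1,r}_{x}([t,t+\tau]\times\R^{3})}\le C\Bigl[\|v(t)\|_{H^{1}}+\|\Lambda_{1}(v)\|_{L_{t}^{5/3}H^{1,\frac{30}{23}}_{x}}+\|\Lambda_{2}(v)\|_{L_{t}^{10/9}H^{1,\frac{30}{17}}_{x}}+\|g_{1}\|_{L_{t}^{5/3}H^{1,\frac{30}{23}}_{x}}+\|g_{2}\|_{L_{t}^{10/9}H^{1,\frac{30}{17}}_{x}}\Bigr],
\]
all spacetime norms on the right taken over $[t,t+\tau]\times\R^{3}$. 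By \eqref{Lam}, each $\Lambda_{j}(v)$ norm is $\le C\tau^{\alpha}\|v\|_{L_{t}^{2}H^{1,6}_{x}([t,t+\tau]\times\R^{3})}$. I would first take $(q,r)=(2,6)$, fix $\tau\in(0,1)$ so small that $C\tau^{\alpha}\le\tfrac12$, and absorb the $L_{t}^{2}H^{1,6}_{x}$-term into the left-hand side (it is finite by the local theory, and since the $\Lambda_{j}$ are \emph{linear} this is pure absorption --- no continuity argument is needed); together with \eqref{CondiExp22}, \eqref{CondiExp33}, and $c_{1}<c_{2}$ this gives
\[
\|v\|_{L_{t}^{2}H^{1,6}_{x}([t,t+\tau]\times\R^{3})}\le C\bigl(e^{-c_{1}t}+e^{-c_{2}t}\bigr)\le Ce^{-c_{1}t}\qquad(t\ge t_{0}).
\]
Feeding this back into the displayed Strichartz inequality for a general admissible $(q,r)$ yields $\|v\|_{L_{t}^{q}H^{1,r}_{x}([t,t+\tau]\times\R^{3})}\le Ce^{-c_{1}t}$ for all $t\ge t_{0}$.

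Finally I would pass from the fixed-length intervals to the half-line with Lemma~\ref{SumsE}: applied with $E=H^{1,r}(\R^{3})$, exponent $p=q$, rate $a_{0}=c_{1}$, and $\tau_{0}=\tau$, to the $E$-valued map $s\mapsto v(s)$, it gives
\[
\|v\|_{L_{t}^{q}H^{1,r}_{x}([t,\infty)\times\R^{3})}\le\frac{Ce^{-c_{1}t}}{1-e^{-c_{1}\tau}}\le Ce^{-c_{1}t}\qquad(t\ge t_{0}),
\]
which is \eqref{NewSt11}. I expect the only genuine subtlety to be organizational: the potentials $\Lambda_{1},\Lambda_{2}$ cannot be estimated over $[t,\infty)$ in a single step, since the factor $|I|^{\alpha}$ in \eqref{Lam} is infinite there, so the estimate must be localized to unit-length time intervals and then summed --- which is exactly the function of Lemma~\ref{SumsE}. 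Modulo this, the argument is a routine application of Strichartz estimates together with the linear bounds of Lemma~\ref{StriZ}.
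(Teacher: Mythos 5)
Your argument is correct and is exactly the route the paper intends: the paper gives no detailed proof of Lemma~\ref{AxuST11}, merely citing Strichartz estimates, the potential bounds \eqref{Lam}, and the summation Lemma~\ref{SumsE}, and your write-up (short-interval Strichartz with the forcing split into the two dual norms, absorption of the $\Lambda_j$ terms for small $\tau$, then summation) fills in precisely those steps. The only cosmetic remark is that for the endpoint pair $(q,r)=(\infty,2)$ Lemma~\ref{SumsE} is stated for $p<\infty$, but there the bound follows directly from \eqref{CondiExp22} by taking the supremum, so nothing is lost.
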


In what follows, we adopt the following notation: given $c>0$, we denote by $c^{-}$ a positive number arbitrary close to $c$ and
such that $0<c^{-}<c$.  Recall that $\lambda_1>0$ denotes the eigenvalue of the linearized operator, as introduced in Lemma~\ref{SpecLL}.

\begin{lemma}\label{AxuST}
Consider $v$, $g_{1}$ and $g_{2}$ satisfying  assumptions \eqref{CondiExp}, \eqref{CondiExp22}, \eqref{CondiExp33} with parameters $0<c_1<c_2$. Then for any admissible pair $(q, r)$, we have:
\begin{enumerate}[label=\rm{(\roman*)}]
\item If $\lambda_{1}\notin [c_{1}, c_{2})$, then 
\begin{equation}\label{BoundHsec}
 \|v(t)\|_{H^{1}}+ \|v\|_{L_{t}^{q}H^{1, r}_{x}([t,\infty)\times \R^{3})}  \leq Ce^{-c^{-}_{2}t}
\end{equation}
\item If $\lambda_{1}\in [c_{1}, c_{2})$, then there exists $a\in \R$ such that
\begin{equation}\label{BoundHsec22}
 \|v(t)-ae^{-\lambda_{1}t}e_{+}\|_{H^{1}}
+ \|v-ae^{-\lambda_{1}t}e_{+}\|_{L_{t}^{q}H^{1, r}_{x}([t,\infty)\times \R^{3})} 
\leq Ce^{-c^{-}_{2}t}.
\end{equation}
\end{enumerate}
\end{lemma}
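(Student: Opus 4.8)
The plan is to split $v$ along the spectral decomposition of $\L$ from Lemma~\ref{SpecLL}: the unstable eigenmode $e_-$ (eigenvalue $-\lambda_1$), the stable eigenmode $e_+$ (eigenvalue $\lambda_1$), and a remaining \emph{center--stable} part on which $\L$ has spectrum $\{0\}\cup\{i\xi:|\xi|\ge\omega\}$. Each piece is treated separately, combining a scalar ODE analysis for the two real eigenmodes with the Strichartz machinery already developed (Lemma~\ref{StriZ}, Lemma~\ref{SumsE}, Lemma~\ref{AxuST11}). The decomposition is realized through the bilinear form $\F$ and the identities in Remark~\ref{PLf} and \eqref{OrthoHB}: because $\F(\L h,g)=-\F(h,\L g)$, $\F(e_+,e_-)\neq0$, $\F(e_\pm,e_\pm)=0$, and $\F(\partial_jP_\omega,\cdot)=\F(iP_\omega,\cdot)=0$, the maps $v\mapsto \mathcal P_\pm v:=\frac{\F(v,e_\mp)}{\F(e_\pm,e_\mp)}\,e_\pm$ are exactly the spectral projections onto $\operatorname{span}\{e_\pm\}$, and they commute with $\L$.

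First I would apply Lemma~\ref{AxuST11} to promote \eqref{CondiExp22} to $\|v\|_{L^q_tH^{1,r}_x([t,\infty)\times\R^3)}\lesssim e^{-c_1t}$ for every admissible $(q,r)$. Writing $\alpha_\pm(t):=\F(v(t),e_\mp)/\F(e_\pm,e_\mp)$, so that $|\alpha_\pm(t)|\lesssim\|v(t)\|_{H^1}\lesssim e^{-c_1t}$, and differentiating along \eqref{CondiExp} using $\L e_\pm=\pm\lambda_1e_\pm$, one obtains the scalar ODEs
\[
\alpha_+'(t)+\lambda_1\alpha_+(t)=h_+(t),\qquad \alpha_-'(t)-\lambda_1\alpha_-(t)=h_-(t),
\]
where $h_\pm$ is a fixed pairing of $g_1+g_2$ against a Schwartz function, hence $|h_\pm(t)|\lesssim e^{-c_2t}$ by \eqref{CondiExp33}. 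Integrating the $\alpha_-$ equation backward from $+\infty$ (legitimate since $\alpha_-(t)\to0$) gives $\alpha_-(t)=-\int_t^\infty e^{\lambda_1(t-s)}h_-(s)\,ds$, so $|\alpha_-(t)|\lesssim e^{-c_2t}$. For $\alpha_+$ one uses $\alpha_+(t)=e^{-\lambda_1t}\bigl(\alpha_+(0)+\int_0^te^{\lambda_1s}h_+(s)\,ds\bigr)$ and distinguishes cases: if $\lambda_1>c_2$ the particular part dominates, $|\alpha_+(t)|\lesssim e^{-c_2t}$ (the value $\lambda_1=c_2$ contributing an extra factor $t$, whence the loss to $e^{-c_2^-t}$); if $\lambda_1<c_1$, the slowly decaying homogeneous mode $e^{-\lambda_1t}$ would violate $|\alpha_+|\lesssim e^{-c_1t}$, which forces $\alpha_+(0)+\int_0^\infty e^{\lambda_1s}h_+(s)\,ds=0$ and again $|\alpha_+(t)|\lesssim e^{-c_2t}$; and if $c_1\le\lambda_1<c_2$ the convergent integral defines $a:=\alpha_+(0)+\int_0^\infty e^{\lambda_1s}h_+(s)\,ds$, with $\alpha_+(t)=ae^{-\lambda_1t}+O(e^{-c_2t})$. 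This is precisely the dichotomy between (i) and (ii).

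It remains to estimate $w:=(I-\mathcal P_+-\mathcal P_-)v$, which solves $\partial_tw+\L w=(I-\mathcal P_+-\mathcal P_-)(g_1+g_2)$ — a source still bounded by $e^{-c_2t}$ in the norms of \eqref{CondiExp33} — and on which $\L$ has spectrum with $\operatorname{Re}\sigma=0$; hence $e^{-t\L}$ restricted to this center--stable subspace is uniformly bounded in $t\in\R$ (up to a harmless polynomial factor from a possible generalized kernel) and enjoys the usual Strichartz estimates. Since $\|w(T)\|_{H^1}\lesssim e^{-c_1T}\to0$, the backward Duhamel representation $w(t)=-\int_t^\infty e^{-(t-s)\L}(I-\mathcal P_+-\mathcal P_-)(g_1+g_2)(s)\,ds$ converges and yields $\|w(t)\|_{H^1}+\|w\|_{L^q_tH^{1,r}_x([t,\infty)\times\R^3)}\lesssim e^{-c_2t}$. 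Since the Strichartz norms of $\alpha_\pm(t)e_\pm$ on $[t,\infty)$ are controlled by $\sup_{s\ge t}|\alpha_\pm(s)|$ times a fixed $H^{1,r}$ norm of $e_\pm$, collecting the three pieces gives $\|v(t)\|_{H^1}+\|v\|_{L^q_tH^{1,r}_x([t,\infty)\times\R^3)}\lesssim e^{-c_2^-t}$ in case (i) and, with $a$ as above, the same bound for $v-ae^{-\lambda_1t}e_+$ in case (ii). (Equivalently, once the pointwise $H^1$ decay is known, the Strichartz bounds follow from one further application of Lemma~\ref{AxuST11}, noting that $v-ae^{-\lambda_1t}e_+$ again solves \eqref{CondiExp} with the same source since $\L e_+=\lambda_1e_+$.)

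The main obstacle is the spectral input for the center--stable piece: one must know that $e^{-t\L}(I-\mathcal P_+-\mathcal P_-)$ is uniformly bounded on $H^1$ for all $t\in\R$ and satisfies Strichartz estimates, so that the backward Duhamel integral converges and transmits the $e^{-c_2t}$ decay of the source to $w$. This relies on the structural content of Lemma~\ref{SpecLL} — purely imaginary essential spectrum, no embedded eigenvalues, and no threshold resonance at $\pm i\omega$ — and is handled exactly as in \cite{DuyMerle2009,DuyckaertsRou2010}. The only genuinely lossy step, the passage from $e^{-c_2t}$ to $e^{-c_2^-t}$, is confined to the borderline case $\lambda_1=c_2$, where a linear-in-$t$ factor enters.
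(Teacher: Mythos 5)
Your treatment of the two real eigenmodes is fine and matches the paper: projecting with $\mathcal P_\pm v=\F(v,e_\mp)e_\pm/\F(e_\pm,e_\mp)$, deriving the scalar ODEs $\alpha_+'+\lambda_1\alpha_+=h_+$, $\alpha_-'-\lambda_1\alpha_-=h_-$ with $|h_\pm|\lesssim e^{-c_2t}$, integrating $\alpha_-$ backward from infinity, and extracting $a=\lim e^{\lambda_1t}\alpha_+(t)$ in the case $c_1\le\lambda_1<c_2$ is exactly the paper's Step~1. The gap is in how you handle the remainder $w=(I-\mathcal P_+-\mathcal P_-)v$. You invoke a backward Duhamel formula together with the claim that $e^{-t\L}$ restricted to the center--stable subspace is bounded (modulo a polynomial factor from the generalized kernel) and satisfies Strichartz estimates. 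Nothing of the sort is available: Lemma~\ref{SpecLL} only locates the real spectrum, the essential spectrum and the kernel of $\L$; it says nothing about embedded eigenvalues inside $\{i\xi:|\xi|\ge\omega\}$, nothing about resonances at $\pm i\omega$, and, since $\L$ is not self-adjoint, boundedness of the group on a subspace whose spectrum lies on $i\R$ does not follow from spectral location alone --- indeed the generalized kernel (e.g.\ $L_-(x_jP_\omega)=-2\partial_jP_\omega$) already produces secular growth. Dispersive/Strichartz bounds for matrix operators of this type are a substantial separate enterprise and are not ``handled exactly as in \cite{DuyMerle2009,DuyckaertsRou2010}'': those papers do not use linear estimates for $e^{-t\L}$ at all. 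So the step that transmits the $e^{-c_2t}$ decay of the source to $w$ is unsupported, and with it the entire conclusion for the kernel and continuous-spectrum components of $v$.

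The paper closes exactly this gap by an energy, not a semigroup, argument: since $\F(\L h,h)=0$, one has $\frac{d}{dt}\F(v)=2\F(g_1+g_2,v)$, which together with \eqref{CondiExp33}, Lemma~\ref{AxuST11} and Lemma~\ref{SumsE} gives $|\F(v(t))|\le Ce^{-(c_1+c_2)t}$; writing $\F(v)=\F(v^\bot)+2\alpha_+\alpha_-$ and using the coercivity of $\F$ on $Y^\bot$ (Lemma~\ref{NewCoer}) yields $\|v^\bot(t)\|_{H^1}\lesssim e^{-(c_1+c_2)t/2}$, the kernel coefficients $\beta_j$ are then recovered by integrating $|\beta_j'|\lesssim\|v^\bot\|_{L^2}+e^{-c_2t}$ from infinity, and an iteration (replacing $c_1$ by $(c_1+c_2)/2$ and repeating) upgrades the decay to $e^{-c_2^-t}$, after which Lemma~\ref{AxuST11} supplies the Strichartz norms. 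If you want to salvage your outline, replace the propagator bound for $w$ by this quasi-conservation-plus-coercivity argument (noting that the kernel directions must be estimated separately, since they are not damped by any propagator decay); as written, the central analytic input of your proof is assumed rather than proved.
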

\begin{proof}
We closely follow the argument in \cite[Proposition 5.9]{DuyMerle2009} and  \cite[Lemma 7.2]{DuyckaertsRou2010}.
Let $Y^{\bot}$ be the set of $h\in H^{1}(\R^{3})$ satisfying the orthogonality relations \eqref{NewCon} and \eqref{Newcon22}.
We decompose $v$ as
\begin{equation}\label{DecompV}
v(t)=\alpha_{+}(t)e_{+}+\alpha_{-}(t)e_{-}+\sum^{3}_{j=0}\beta_{j}(t)P_{\omega, j}+v^{\bot}(t),
\end{equation}
where $v^{\bot}(t)\in Y^{\bot}$ and
\[
P_{\omega, 0}=\frac{i P_{\omega}}{\|P_{\omega}\|_{L^{2}}}, \quad
P_{\omega, j}=\frac{\partial_{j}P_{\omega}}{\|\partial_{j} P_{\omega}\|_{L^{2}}}
\quad \text{for $j=1,2,3$}. 
\]
By Remark~\ref{PLf11}, we can normalize the eigenfunctions $e_{\pm}$ so that that $\F(e_{+}, e_{-})=1$ (recall the notation $\F$ from \eqref{Quadratic}).  Moreover, from Remark~\ref{PLf} and definition of $Y^{\bot}$ we have that
\begin{align*}
&	\alpha_{+}(t)=\F(v(t), e_{-}), \quad	\alpha_{-}(t)=\F(v(t), e_{+}),\\
& \beta_{j}(t)=(v(t)-\alpha_{+}(t)e_{+}-\alpha_{-}(t)e_{-}, P_{\omega, j})_{L^{2}}\quad \text{for $j=0,1,2,3$}.
\end{align*}
\textbf{Step 1. Decay estimates.} By condition \eqref{CondiExp33} and following the same argument developed in \cite[Lemma 7.2, Step 1]{DuyckaertsRou2010}, one can show that
\begin{align}\label{Exp11alfa00}
&|\alpha^{\prime}_{-}(t)-\lambda_{1}\alpha_{-}(t)|\leq C e^{-c_{2}t},	\quad |\alpha^{\prime}_{+}(t)+\lambda_{1}\alpha_{+}(t)|\leq C e^{-c_{2}t},\\ \label{Exp11alfa}
&	|\beta^{\prime}_{j}(t)|\leq C( \|v^{\bot}(t)\|_{L_{x}^{2}}+ e^{-c_{2}t}),
\end{align}
and 
\begin{align}\label{OnExp11}
& |\alpha_{+}(t)|\leq e^{-c^{-}_{2}t} \quad \text{if $c_{2}\leq \lambda_{1}$}\\  \label{OnExp22}
& |\alpha_{+}(t)-ae^{-\lambda_{1}t}|\leq e^{-c_{2}t} \quad \text{if $c_{2}> \lambda_{1}$},
\end{align}
where
\begin{align}\label{L:alfa}
a:=\lim_{t\to \infty}e^{\lambda_{1}t}\alpha_{+}(t).
\end{align}

Next, we show that
\begin{equation}\label{Fexp1}
\F(v(t))\leq Ce^{-(c_{1}+c_{2})t}.
\end{equation}
Indeed, notice that, since $\F(\L v, v)=0$ (cf. Remark~\ref{PLf}) we have
\begin{equation}\label{DecomF}
\frac{d}{dt}\F(v(t))=2\F(\partial_{t} v(t), v(t))=-2\F(\L v,v)+2\F(g_{1}+g_{2}, v)=2\F(g_{1}+g_{2}, v).
\end{equation}
Moreover, by the definition of the quadratic form $\F$, we have for any time-interval $|I|$ with $|I|<\infty$,
\begin{equation}\label{L1BoundF}
\begin{aligned}
\int_{I}|\F(g_{1}(t)+g_{2}(t), v(t))|dt &\leq
\| g_{1} \|_{L_{t}^{\frac{5}{3}}H^{1,\frac{30}{23}}_{x}}
\| v  \|_{L_{t}^{\frac{5}{2}}H^{1, \frac{30}{7}}_{x}}
+
\| g_{2} \|_{L_{t}^{\frac{10}{9}}H^{1, \frac{30}{17}}_{x}}
\| v  \|_{L_{t}^{10}H^{1, \frac{30}{13}}_{x}}\\
&\quad +
|I|\|g_{1}+g_{2}\|_{L_{t}^{\infty}L^{\frac{6}{5}}_{x}}\|v\|_{L_{t}^{\infty}L^{6}_{x}},
\end{aligned}
\end{equation}
where all spacetime norms are over $I \times \R^{3}$. Thus, by conditions \eqref{CondiExp22}-\eqref{CondiExp33} and Lemma~\ref{AxuST11}, we get
\[
\int^{t+1}_{t}|\F(g_{1}(t)+g_{2}(t), v(t))|dt\leq Ce^{-(c_{1}+c_{2})t}.
\]
In this case, Lemma~\ref{SumsE} implies that
\[
\int^{\infty}_{t}|\F(g_{1}(t)+g_{2}(t), v(t))|dt\leq Ce^{-(c_{1}+c_{2})t}.
\]
As $|\F(v(t))|\lesssim \|v(t)\|^{2}_{H^{1}}\to 0$ as $t\to \infty$ (cf. \eqref{CondiExp22}), from \eqref{DecomF} and inequality above we have
\[
|\F(v(t))|\leq \int^{\infty}_{t}|\F(g_{1}(t)+g_{2}(t), v(t))|dt\leq Ce^{-(c_{1}+c_{2})t}.
\]
This proves \eqref{Fexp1}.

\textbf{Step 2. Proof in the case when either  $\lambda_1\geq c_2$, or $\lambda_1<c_2$ and $a=0$.}
Combining \eqref{Exp11alfa00}, \eqref{OnExp11}  and \eqref{OnExp22} one can obtain (see \cite[Lemma 7.2, Step 3]{DuyckaertsRou2010})
\begin{align}\label{Decaalfa11}
|\alpha_{+}(t)|+|\alpha^{\prime}_{+}(t)|&\leq Ce^{-c^{-}_{2}t}\\
\label{Decaalfa22}
|\alpha_{-}(t)|+|\alpha^{\prime}_{-}(t)|&\leq Ce^{-c_{2}t}.
\end{align}
On the other hand, since $\F(e_{+}, v^{\bot})=\F(e_{-}, v^{\bot})=0$ (recall $v^{\bot}\in Y^{\bot}$),
$\F(e_{+}, e_{-})=1$, and  $\F(e_{+})=\F(e_{-})=0$ we obtain
\[
\F(v)=\F(v^{\bot})+2\alpha_{+}\alpha_{-}.
\]
Thus, by \eqref{Decaalfa11}, \eqref{Decaalfa22} and \eqref{Fexp1}, Proposition~\ref{NewCoer} implies that
\begin{equation}\label{EstimaV}
\|v^{\bot}(t)\|_{H^{1}}\lesssim \sqrt{|\F(v^{\bot})|}\leq Ce^{-\frac{(c_{1}+c_{2})}{2}t}.
\end{equation}
Using  \eqref{Exp11alfa} we can therefore obtain the following estimate of $\beta_{j}(t)$,
\begin{equation}\label{BetaEst}
|\beta(t)|\leq Ce^{-\frac{(c_{1}+c_{2})}{2}t}, \quad \text{for $j=0,1,2,3$}.
\end{equation}

From the decomposition \eqref{DecompV} and summing up estimates \eqref{Decaalfa11}, \eqref{Decaalfa22}, \eqref{EstimaV} and \eqref{BetaEst}, we arrive at 
\[
\|v(t)\|_{H^{1}}\leq Ce^{-\frac{(c_{1}+c_{2})}{2}t}.
\]
Therefore, $v$, $g_{1}$ and $g_{2}$ satisfies \eqref{CondiExp}, \eqref{CondiExp22} and \eqref{CondiExp33} 
with $c_{1}$ replaced by $c^{\ast}_{1}=\frac{(c_{1}+c_{2})}{2}$. An iteration argument now yields
\begin{equation}\label{ArguI}
\|v(t)\|_{H^{1}}\leq Ce^{-c^{-}_{2}t}.
\end{equation}
In this case, using Lemma~\ref{AxuST11}, we obtain the estimate \eqref{BoundHsec}-\eqref{BoundHsec22}.

\textbf{Step 3. Proof in the remaining cases.} If $\lambda_1<c_1$, then we have $\lambda_1<c_2$ and $a=0$, and hence we obtain the estimate in (ii) with $a=0$ using Step 2.  Thus, it suffices to consider the case $c_1\leq \lambda_1<c_2$ and $a\neq 0$. 

We set
\[
\nu(t)=v(t)-ae^{-\lambda_{1}t}e_{+}.
\]
Note that 
\[
\partial_{t}\nu(t)+\L \nu(t)=g_{1}(t)+g_{2}(t), \quad \|\nu(t)\|_{H^{1}}\leq Ce^{-c_{1}t}.
\]
Let $\tilde{\alpha}_{+}(t)=\F(\nu(t), e_{-})$. From \eqref{OnExp22} we infer that
\[
|e^{\lambda_{1}t}\tilde{\alpha}_{+}(t)|\leq Ce^{-(c_{2}-\lambda_{1})t}\to 0 \quad \text{as $t\to \infty$}.
\]
Therefore,  $\tilde{\alpha}_{+}(t)$, $g_{1}$ and $g_{2}$ satisfy all the assumptions of Step 2 (cf. \eqref{L:alfa}), hence, we have
\[
\|v(t)-ae^{-\lambda_{1}t}e_{+}\|_{H^{1}}=\|\nu(t) \|_{H^{1}} \leq Ce^{-c^{-}_{2}t},
\]
and Lemma~\ref{AxuST11} implies the estimate \eqref{BoundHsec22}. \end{proof}

We will also need the following lemma, whose proof is standard. 

\begin{lemma}\label{EstR}
 Let $f$, $g$ functions in $H^{1}(\R^{3})$. Define $R(\cdot)$ as in \eqref{Resi}.  Then
\begin{equation}\label{EstimatesR}
\begin{split}
\|R(f)-R(g)\|_{L^{\frac{6}{5}}}\leq C
%\|f-g\|_{L^{2}}\left[\|f\|_{H^{1}}  + \|g\|_{H^{1}} + \|f\|^{2}_{H^{1}}  + \|g\|^{2}_{H^{1}}\right]\\
\|f-g\|_{L^{6}}(\|f\|_{H^{1}}  + \|g\|_{H^{1}} + \|f\|^{4}_{H^{1}}  + \|g\|^{4}_{H^{1}}).
\end{split}
\end{equation}
\end{lemma}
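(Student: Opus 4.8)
The plan is to prove Lemma~\ref{EstR} by treating $R(f)-R(g)$ term by term, using the explicit formula \eqref{Resi}. Inspecting that formula, $R(h)$ is a finite linear combination of expressions of the form $P_\omega^{k}\,N_j(h)$, where $N_j(h)$ is a monomial of degree $j$ in $(h,\bar h)$ and the pairs $(k,j)$ range over the finite set $\{(0,3),(0,5),(1,2),(1,4),(2,3),(3,2)\}$; in particular $j\le 5$ throughout, and $j\ge 3$ whenever $k=0$. Two facts will be used repeatedly: since $P_\omega\in\Sch(\R^3)$ (exponential decay), $P_\omega\in L^p(\R^3)$ for every $p\in[1,\infty]$; and by Sobolev embedding in three dimensions, $\|f\|_{L^q}\lesssim\|f\|_{H^1}$ for every $q\in[2,6]$.

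For each term I would apply the elementary pointwise bound $|N_j(f)-N_j(g)|\lesssim |f-g|\,\bigl(|f|^{j-1}+|g|^{j-1}\bigr)$ (cf.\ \eqref{ElemIne}), followed by H\"older's inequality in $L^{6/5}$: place $|f-g|$ in $L^6$, each of the $j-1$ remaining factors of $f$ (resp.\ $g$) in $L^6$ (or in $L^3$ in the exceptional case below), and, when $k\ge1$, the coefficient $P_\omega^{k}$ in $L^{6/(5-j)}$, which is a finite exponent. The balance of exponents is $\tfrac56=\tfrac{5-j}{6}+\tfrac16+\tfrac{j-1}{6}$ when $k\ge1$; it is $\tfrac56=\tfrac16+4\cdot\tfrac16$ for $(k,j)=(0,5)$ (all four factors in $L^6$); and it is $\tfrac56=\tfrac16+2\cdot\tfrac13$ for $(k,j)=(0,3)$, where the two factors of $f$ (resp.\ $g$) go into $L^3$. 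In every case this produces a bound of the shape $\|f-g\|_{L^6}\bigl(\|f\|_{H^1}^{\,m}+\|g\|_{H^1}^{\,m}\bigr)$ with $m\in\{1,2,3,4\}$, absorbing all powers of $P_\omega$ into fixed finite constants.

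Summing the finitely many terms and then using the trivial inequality $t^{m}\le t+t^{4}$ for $t\ge0$ and $m\in\{1,2,3,4\}$ to bound each $\|f\|_{H^1}^{\,m}$ by $\|f\|_{H^1}+\|f\|_{H^1}^{4}$ (and similarly for $g$) yields precisely \eqref{EstimatesR}. I do not expect any genuine obstacle; the only point requiring a little care is the purely cubic term $|h|^2h$, which carries no factor of $P_\omega$ to compensate for the ``missing'' integrability, so one cannot simply put both remaining factors in $L^6$ — one uses the embedding $H^1\hookrightarrow L^3$ instead, which is available exactly because $3\in[2,6]$ in three space dimensions.
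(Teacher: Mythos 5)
Your term-by-term argument is correct: the exponent bookkeeping in H\"older works out in every case ($\tfrac56=\tfrac{5-j}{6}+\tfrac16+\tfrac{j-1}{6}$ for $k\ge1$, with the $(0,3)$ term handled via $H^1\hookrightarrow L^3$ and the $(0,5)$ term via four factors in $L^6$), and the final reduction $t^m\le t+t^4$ gives exactly \eqref{EstimatesR}. The paper omits the proof, calling it standard, and what you wrote is precisely that standard argument, so there is nothing to compare beyond noting agreement.
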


We turn now to the proof of Proposition~\ref{UniqueU}.
\begin{proof}[Proof of Proposition~\ref{UniqueU}]
Let $u=e^{i \omega t}(P_{\omega}+h)$ be a solution of  \eqref{NLS} satisfying \eqref{UniqCon}.
Note that $h$ satisfies the linearized equation
\[
\partial_{t}h+\L h=iR_{1}(h)+iR_{2}(h).
\]
In particular, $h$ satisfies the equation \eqref{EquiVale}.

\textbf{Step 1.} We will show that there exists $a\in \R$ such that
\begin{equation}\label{Step1Est}
 \|h(t)-ae^{-\lambda_{1}t}e_{+}\|_{H^{1}}
+ \|h(t)-ae^{-\lambda_{1}t}e_{+}\|_{L_{t}^{q}H^{1, r}_{x}([t,\infty)\times \R^{3})} 
\leq Ce^{-2^{-}\lambda_{1}t}.
\end{equation}
First, we show that
\begin{align}\label{Dsc11}
	&\|h(t)\|_{H^{1}}\leq Ce^{-\lambda_{1}t}
\end{align}
and 
\begin{equation}\label{Dsc33}
\begin{split}
\|R_{1}(h)+R_{2}(h)\|_{L_{x}^{\frac{6}{5}}}+\|R_{1}(h)\|_{L_{t}^{\frac{5}{3}}H^{1,\frac{30}{23}}_{x}([t, \infty)\times \R^{3})}
\\	+\|R_{2}(h)\|_{L_{t}^{\frac{10}{9}}H^{1,\frac{30}{17}}_{x}([t, \infty)\times \R^{3})}\leq
	Ce^{-2\lambda_{1}t}.
\end{split}
\end{equation}
Indeed, by \eqref{UniqCon} and Lemma~\ref{BoundGra11} we obtain for any admissible pair $(q,r)$,
\begin{equation}\label{BoundCoNew11}
\|h\|_{L^{10}_{t, x}([t,\infty)\times \R^{3})}
+\|h\|_{L_{t}^{q}H^{1, r}_{x}([t,\infty)\times \R^{3})}
\leq Ce^{-ct}.
\end{equation}
Thus, by Lemmas~\ref{EstR} and \ref{StriZ}  we see that
\begin{align}\label{g1g2}
&\|R_{1}(h)+R_{2}(h)\|_{L_{x}^{\frac{6}{5}}(\R^{3})}\leq Ce^{-2ct}\\ \label{g1g2H}
&\|R_{1}(h)\|_{L_{t}^{\frac{5}{3}}H^{1,\frac{30}{23}}_{x}((t, t+1)\times \R^{3})}
	+\|R_{2}(h)\|_{L_{t}^{\frac{9}{10}}H^{1,\frac{30}{17}}_{x}((t, t+1)\times \R^{3})}\leq
	Ce^{-2ct}.
\end{align}
Moreover, by using Lemma~\ref{SumsE} and \eqref{g1g2H} we obtain
\begin{equation}\label{g1g2HNew}
\|R_{1}(h)\|_{L_{t}^{\frac{5}{3}}H^{1,\frac{30}{23}}_{x}((t, \infty)\times \R^{3})}
	+\|R_{2}(h)\|_{L_{t}^{\frac{9}{10}}H^{1,\frac{30}{17}}_{x}((t, \infty)\times \R^{3})}\leq
	Ce^{-2ct}.
\end{equation}
Thus $h$ satisfies the conditions in Lemma~\ref{AxuST} with $g_{1}=iR_{1}(h)$, $g_{2}=iR_{2}(h)$, $c_{1}=c$, and $c_{2}=2c$.
Therefore, we obtain
\[
\|h(t)\|_{H^{1}}\leq C(e^{-\lambda_{1}t}+e^{-\frac{3}{2}c t})
\]
If $\frac{3}{2}c>\lambda_{1}$, it is clear that \eqref{Dsc11} holds. If not, by using \eqref{UniqCon} (with $\frac{3}{2}c$ instead $c$), 
Lemma~\ref{AxuST} and an iteration argument gives the estimate \eqref{Dsc11}. Moreover, combining Lemma~\ref{BoundGra11}, estimates \eqref{L26E} and \eqref{EstimatesR}, and Lemma~\ref{SumsE} we get \eqref{Dsc33}.

Thus, we can apply Lemma~\ref{AxuST} to obtain \eqref{Step1Est}.

\textbf{Step 2.} We will use the induction argument to show that there exists $t_{0}\geq 0$ such that for all $m>0$,
%\begin{equation}\label{BoundH1}
%u(t)-W^{a}(t)\|_{H_{x}^{1}\cap {L_{t}^{2}H^{1, %6}_{x}}\cap {L_{t}^{10}H^{1, \frac{30}{13}}_{x}}((t, %\infty)\times \R^{3})}
%\leq e^{-m t}\quad \text{for all $t\geq t_{0}$},
%\end{equation}
\begin{equation}\label{BoundH1}
\|u(t)-W^{a}(t)\|_{H_{x}^{1}}
+\|u-W^{a}\|_{{L_{t}^{2}H^{1, 6}_{x}}\cap {L_{t}^{10}H^{1, \frac{30}{13}}_{x}}((t, \infty)\times \R^{3})}
\leq e^{-m t}\quad \text{for all $t\geq t_{0}$},
\end{equation}
where $W^{a}=e^{i \omega t}(P_{\omega}+h^{a})$ ( recall that $W^{a}$ is the solution to \eqref{NLS} defined in 
Proposition~\ref{ContracA}).  Indeed, combining \eqref{Step1Est} and estimate \eqref{Uniq}, we have that \eqref{BoundH1}  holds with $m=\frac{3}{2}\lambda_{1}$.
Now we show that if \eqref{BoundH1} holds for some $m=m_{1}>\lambda_{1}$, then it also holds for $m=m_{1}+\frac{\lambda_{1}}{2}$.
Indeed, recalling that $u=e^{i \omega t}(P_{\omega}+h)$ and $W^{a}=e^{i \omega t}(P_{\omega}+h^{a})$), we note that $h-h^{a}$ satisfies the equation 
\[
\partial_{t}(h-h^{a})+\L (h-h^{a})=iR(h)-iR(h^{a}).
\]
Since by hypothesis we have
%\[
%h(t)-h^{a}(t)\|_{H_{x}^{1}\cap {L_{t}^{2}H^{1, %6}_{x}}\cap {L_{t}^{10}H^{1, \frac{30}{13}}_{x}}((t, %\infty)\times \R^{3})}
%\leq e^{-m_{1} t},
%\]
\[
\|h(t)-h^{a}(t)\|_{H_{x}^{1}}
+\|h-h^{a}\|_{{L_{t}^{2}H^{1, 6}_{x}}\cap {L_{t}^{10}H^{1, \frac{30}{13}}_{x}}((t, \infty)\times \R^{3})}
\leq e^{-m_{1} t},
\]
it follows from \eqref{Dsc11} and \eqref{UniqVec}, and  Lemma~\ref{EstR}, 
\[
\|R(h)-R(h^{a})\|_{L_{x}^{\frac{6}{5}}}
\leq 	Ce^{-(\lambda_{1}+m_{1})t}
\]
Similarly, combining Lemmas~\ref{StriZ}, \ref{SumsE} and \ref{BoundGra11} we get
\[\begin{split}
&\|R_{1}(h)-R_{1}(h^{a})\|_{L_{t}^{\frac{5}{3}}H^{1,\frac{30}{23}}_{x}([t, \infty)\times \R^{3})}
+\|R_{1}(h)-R_{1}(h^{a})\|_{L_{t}^{\frac{9}{10}}H^{1,\frac{30}{17}}_{x}([t, \infty)\times \R^{3})}
\\
&\leq Ce^{-(\lambda_{1}+m_{1})t}.
\end{split}\]
Thus, from Lemma~\ref{AxuST} we obtain
\[
\|h(t)-h^{a}(t)\|_{H_{x}^{1}}
+\|h-h^{a}\|_{{L_{t}^{2}H^{1, 6}_{x}}\cap {L_{t}^{10}H^{1, \frac{30}{13}}_{x}}((t, \infty)\times \R^{3})}
\leq e^{-(m_{1}+\frac{3}{4}\lambda_{1} )t},
\]
which implies \eqref{BoundH1} with $m=m_{1}+\frac{\lambda_{1}}{2}$. Thus, estimate \eqref{BoundH1} follows by iteration for all $m>0$.

Finally, we are ready to finish the proof of the proposition. Combining \eqref{BoundH1} with $m=(k_{0}+1)\lambda_{1}$, where $k_{0}$ is defined in Proposition~\ref{ContracA}, and \eqref{Uniq} we find that
%\[
%\|u(t)-U_{k_{0}}^{a}(t)\|_{H_{x}^{1}\cap {L_{t}^{2}H^{1, %6}_{x}}\cap {L_{t}^{10}H^{1, \frac{30}{13}}_{x}}((t, %\infty)\times \R^{3})}
%\leq e^{-(k_{0}+\frac{1}{2})\lambda_{1} t}\quad %\text{for  large $t$}.
%\]
\[
\|u(t)-U_{k_{0}}^{a}(t)\|_{H_{x}^{1}}
+\|u-U_{k_{0}}^{a}\|_{{L_{t}^{2}H^{1, 6}_{x}}\cap {L_{t}^{10}H^{1, \frac{30}{13}}_{x}}((t, \infty)\times \R^{3})}
\leq e^{-(k_{0}+\frac{1}{2})\lambda_{1} t}.
\]
for large $t$.  By the uniqueness in Proposition~\ref{ContracA}, we finally obtain $u=W^{a}$. \end{proof}

Our next goal is to show that if $u$ is a solution as in Proposition~\ref{UniqueU} with positive virial, then $u=W^a$ for some $a\geq 0$.  We begin with the following lemma: 

\begin{lemma}\label{GMP} Let $(M(P_\omega),E(P_\omega))\in\partial\mathcal{K}_s$. Suppose $u_{0}\in H^{1}(\R^{3})$, with
\begin{equation}\label{CondiEMV}
E(u_{0})=E(P_{\omega}), \quad
M(u_{0})=M(P_{\omega}), \quad \text{and} \quad
V(u_{0})>0.
\end{equation}
Then we have 
\[
\|\nabla u(t)\|^{2}_{L^{2}}<\|\nabla P_{\omega}\|^{2}_{L^{2}}
\quad
\text{for all $t\in \R$},
\]
where $u(t)$ is the corresponding solution to \eqref{NLS} with initial data $u_{0}$.
\end{lemma}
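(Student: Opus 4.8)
The plan is to avoid a direct variational argument and instead reduce everything to the dichotomy already recorded in Remark~\ref{ReEMG}, using the scattering information furnished by Corollary~\ref{ClassC}.

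First, note that the hypotheses of the lemma are exactly \eqref{PropCon11}, and that by Lemma~\ref{PositiveV} we have $V(u(t))>0$ for all $t$, so $u(t)$ is never a symmetry image of $P_\omega$; consequently $\|\nabla u(t)\|_{L^2}^2\neq\|\nabla P_\omega\|_{L^2}^2$ for every $t$, and by continuity of $t\mapsto\|\nabla u(t)\|_{L^2}^2$ together with Remark~\ref{ReEMG} the function $t\mapsto\|\nabla u(t)\|_{L^2}^2$ is either strictly below $\|\nabla P_\omega\|_{L^2}^2$ for all $t\in\R$ or strictly above it for all $t\in\R$. Hence it suffices to establish the inequality $\|\nabla u(t)\|_{L^2}^2<\|\nabla P_\omega\|_{L^2}^2$ for a single value of $t$ (in fact, for $|t|$ large).

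Next, I would apply Corollary~\ref{ClassC}: since $u$ satisfies \eqref{PropCon11}, its scattering size cannot be infinite on both half-lines, so (after replacing $u$ by $(t,x)\mapsto\overline{u(-t,x)}$, which again solves \eqref{NLS} with the same mass, energy and positive virial, if necessary) we may assume $\|u\|_{L^{10}_{t,x}((T,\infty)\times\R^3)}<\infty$ for some $T$. By the standard scattering theory for \eqref{NLS} — using the uniform $H^1$-bound from Lemma~\ref{PositiveV} and the Strichartz estimate \eqref{Strichartz} — this implies $u$ scatters as $t\to+\infty$, i.e. there is $u_+\in H^1(\R^3)$ with $\|u(t)-e^{it\Delta}u_+\|_{H^1_x}\to0$; in particular $\|\nabla u(t)\|_{L^2}^2\to\|\nabla u_+\|_{L^2}^2$. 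Moreover, the dispersive decay $\|e^{it\Delta}u_+\|_{L^4_x}+\|e^{it\Delta}u_+\|_{L^6_x}\to0$ (valid for $u_+\in H^1$ by density of Schwartz functions together with the usual $L^{p'}\to L^p$ decay estimate and $\|e^{it\Delta}\cdot\|_{L^6_x}\lesssim\|\cdot\|_{\dot H^1_x}$) gives $\|u(t)\|_{L^4_x}+\|u(t)\|_{L^6_x}\to0$. Combining this with conservation of energy,
\[
\tfrac12\|\nabla u_+\|_{L^2}^2=\lim_{t\to\infty}\Bigl[\tfrac12\|\nabla u(t)\|_{L^2}^2-\tfrac14\|u(t)\|_{L^4}^4+\tfrac16\|u(t)\|_{L^6}^6\Bigr]=E(u_0)=E(P_\omega).
\]

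Finally, by the ground state identities \eqref{PhIden} we have $E(P_\omega)=\tfrac{1-\beta(\omega)}{6}\|\nabla P_\omega\|_{L^2}^2$, so that $\|\nabla u_+\|_{L^2}^2=2E(P_\omega)=\tfrac{1-\beta(\omega)}{3}\|\nabla P_\omega\|_{L^2}^2<\|\nabla P_\omega\|_{L^2}^2$ (since $\beta(\omega)>0$). Therefore $\|\nabla u(t)\|_{L^2}^2<\|\nabla P_\omega\|_{L^2}^2$ for all sufficiently large $t$, and by the first paragraph this forces $\|\nabla u(t)\|_{L^2}^2<\|\nabla P_\omega\|_{L^2}^2$ for every $t\in\R$. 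The only point that requires genuine (but routine) care is the implication from finiteness of $\|u\|_{L^{10}_{t,x}}$ on a half-line to $H^1$-scattering with $\|u(t)\|_{L^4_x},\|u(t)\|_{L^6_x}\to0$; everything else is immediate from results already in hand. (An alternative, purely variational proof is possible — working with the mass-preserving rescaling $u(t)_\lambda(x)=\lambda^{3/2}u(t,\lambda x)$, using that $\lambda=1$ is the bottom of the outer well of $\lambda\mapsto E(u(t)_\lambda)$ because $E(P_\omega)=\EE(M(P_\omega))$, and the minimality defining $\EE$ — but it is longer and I would not pursue it.)
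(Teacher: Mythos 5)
Your proof is correct and follows essentially the same route as the paper: invoke Corollary~\ref{ClassC} to get scattering in one time direction, use dispersive decay of the $L^4$ (and $L^6$) norms plus conservation of energy and the identities \eqref{PhIden} to see that $\|\nabla u(t)\|_{L^2}^2$ drops strictly below $\|\nabla P_\omega\|_{L^2}^2$ for large $|t|$, and then conclude for all $t$ via the dichotomy of Remark~\ref{ReEMG}. The only differences are cosmetic (the paper scatters backward in time and combines $E+\tfrac12\omega M=\tfrac13\|\nabla P_\omega\|_{L^2}^2$, keeping only the $L^4$ decay, whereas you time-reverse and use $2E(P_\omega)=\tfrac{1-\beta(\omega)}{3}\|\nabla P_\omega\|_{L^2}^2$), so there is nothing further to add.
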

\begin{proof}
Using Corollary~\ref{ClassC}, we can assume that
\[ 
\| u\|_{L^{10}_{t, x}((-\infty, 0)\times \R^{3})}<\infty,
\]
i.e., the solution $u(t)$ scatters for negative time. Therefore, there exists $\psi_{-}\in H^{1}(\R^{3})$ such that
\begin{equation}\label{ScN}
\lim_{t\to -\infty}\|u(t)-e^{it \Delta}\psi_{-}\|_{H^{1}}=0.
\end{equation}
Note also that
\[
E(u(t))+\tfrac{1}{2}\omega M(u(t))=E(P_{\omega})+\tfrac{1}{2}\omega M(P_{\omega})=\tfrac{1}{3}\|\nabla P_{\omega}\|^{2}_{L^{2}}
\quad
\text{for all $t\in \R$}.
\]
Since $u(t)-e^{it \Delta}\psi_{-}\to 0$ in $H^{1}$ and $e^{it \Delta}\psi_{-}\to 0$ in $L^{4}$ as
$t \to -\infty$, we see that
\begin{align*}
&\lim_{t\to -\infty}\|\nabla u(t)\|^{2}_{L^{2}}=\|\nabla \psi_{-}\|^{2}_{L^{2}},\\
&\tfrac{1}{2}\|\nabla \psi_{-}\|^{2}_{L^{2}}+\tfrac{1}{2}\omega M(P_{\omega})\leq \tfrac{1}{3}\|\nabla P_{\omega}\|^{2}_{L^{2}},
\end{align*}
which implies that there exists $t_{0}\in (-\infty, 0)$ such that
\[
\|\nabla u(t_{0})\|^{2}_{L^{2}}\leq \tfrac{2}{3}\|\nabla P_{\omega}\|^{2}_{L^{2}}<\|\nabla P_{\omega}\|^{2}_{L^{2}}
\]
(recall $\omega M(P_{\omega})>0$). Therefore, by Remark~\ref{ReEMG} we get $\|\nabla u(t)\|^{2}_{L^{2}}<\|\nabla P_{\omega}\|^{2}_{L^{2}}$ for all $t\in \R$.
\end{proof}

\begin{proposition}\label{coroA}
If $u$ satisfies the assumption \eqref{UniqCon} and $V(u_{0})>0$, then $u=W^{a}$ for some unique $a\geq 0$.
\end{proposition}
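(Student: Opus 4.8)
The plan is to combine the uniqueness statement of Proposition~\ref{UniqueU} with the sharp asymptotic expansion \eqref{UniqVec} and the kinetic-energy monotonicity of Lemma~\ref{GMP}. Since $u$ satisfies \eqref{UniqCon}, Proposition~\ref{UniqueU} already gives $u=W^{a}$ for a unique $a\in\R$, so the only remaining task is to show $a\geq 0$. First I would dispose of the case $a=0$: tracing the construction in Propositions~\ref{ApproxSo} and \ref{ContracA}, the choice $a=0$ gives $g^{0}_{1}=0$, and since every term of $R$ in \eqref{Resi} is at least quadratic we have $R(0)=0$, so inductively $W^{0}_{k}\equiv 0$ and $U^{0}_{k}=e^{i\omega t}P_{\omega}$; the uniqueness in Proposition~\ref{ContracA} then forces $W^{0}=e^{i\omega t}P_{\omega}$. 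But this solution has $V\equiv V(P_{\omega})=0$, contradicting $V(u_{0})>0$. Hence $a\neq 0$.

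With $a\neq 0$, the key computation is an expansion of the kinetic energy of $W^{a}(t)$ for large $t$. By \eqref{UniqVec} we may write $W^{a}(t)=e^{i\omega t}P_{\omega}+a\,e^{i\omega t}e^{-\lambda_{1}t}e_{+}+\rho(t)$ with $\|\rho(t)\|_{H^{1}}\leq e^{-\frac{3}{2}\lambda_{1}t}$. Expanding $\|\nabla W^{a}(t)\|_{L^{2}}^{2}$, using that the phase $e^{i\omega t}$ drops out of the inner products and that $P_{\omega}$ and $e_{1}=\RE e_{+}$ are real-valued (so that $\RE\<\nabla P_{\omega},\nabla e_{+}\>=\<\nabla P_{\omega},\nabla e_{1}\>=-\<\Delta P_{\omega},e_{1}\>$), while all the remaining terms are $O(e^{-\frac{3}{2}\lambda_{1}t})$, one obtains
\[
\|\nabla W^{a}(t)\|_{L^{2}}^{2}=\|\nabla P_{\omega}\|_{L^{2}}^{2}-2a\,e^{-\lambda_{1}t}\<\Delta P_{\omega},e_{1}\>+O(e^{-\frac{3}{2}\lambda_{1}t}).
\]
By Remark~\ref{PE1} we have $\<\Delta P_{\omega},e_{1}\>\neq 0$, so this is a genuine first-order correction.

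Finally I would invoke Lemma~\ref{GMP}: since $E(u_{0})=E(P_{\omega})$, $M(u_{0})=M(P_{\omega})$ and $V(u_{0})>0$, we have $\|\nabla W^{a}(t)\|_{L^{2}}^{2}<\|\nabla P_{\omega}\|_{L^{2}}^{2}$ for every $t$. Comparing with the displayed expansion (divide by $e^{-\lambda_{1}t}$ and let $t\to\infty$) forces $a\,\<\Delta P_{\omega},e_{1}\>>0$, i.e.\ $a$ has the same sign as $\<\Delta P_{\omega},e_{1}\>$. Since the eigenfunction $e_{+}$ has so far been fixed only up to sign, we normalize it so that $\<\Delta P_{\omega},e_{1}\>>0$ (possible by Remark~\ref{PE1}), and then $a>0$, in particular $a\geq 0$; uniqueness of $a$ is inherited from Proposition~\ref{UniqueU}. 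The only delicate point is the interplay between this sign normalization and the nonvanishing of $\<\Delta P_{\omega},e_{1}\>$, but that is exactly what Remark~\ref{PE1} supplies, so there is no real obstacle beyond careful bookkeeping of the error terms in the kinetic-energy expansion.
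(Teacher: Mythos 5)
Your proposal is correct and follows essentially the same route as the paper: apply Proposition~\ref{UniqueU} to get $u=W^{a}$, expand $\|\nabla W^{a}(t)\|_{L^{2}}^{2}$ via \eqref{UniqVec}, use Remark~\ref{PE1} together with a sign normalization of $e_{+}$, and conclude via the kinetic-energy bound of Lemma~\ref{GMP}. Your additional exclusion of $a=0$ (via $V(u_{0})>0$) is a harmless strengthening not needed for the statement as phrased.
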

\begin{proof}
By Proposition~\ref{UniqueU}, it is enough to show that there is no solution $W^{a}$ to  \eqref{NLS} satisfying \eqref{UniqVec} with $a<0$. Indeed, suppose $W^{a}$ were such a solution. 
Using \eqref{UniqVec}, a direct calculation leads to
\begin{align*}
\|\nabla W^{a}(t)\|^{2}_{L^{2}}&=\|\nabla P_{\omega}\|^{2}_{L^{2}}+ae^{-\lambda_{1}t}\int_{\R^{3}}\nabla P_{\omega}e_{1}dx
	+O(e^{-\frac{3}{2}\lambda_{1}t}).
\end{align*}
From Remark~\ref{PE1}, we get $\int_{\R^{3}}\nabla P_{\omega}e_{1}dx\neq 0$. Replacing $e_{+}$ by $-e_{+}$ if necessary, we can assume that 
\[
\int_{\R^{3}}\nabla P_{\omega}e_{1}dx<0.
\]
Thus, if $a<0$, then $\|\nabla W^{a}(t)\|^{2}_{L^{2}}>\|\nabla P_{\omega}\|^{2}_{L^{2}}$ for large $t$, which is a contradiction 
to Lemma~\ref{GMP}. Therefore $a\geq 0$.
\end{proof}

%
%\textcolor{magenta}{It is possible to give an alternative proof to the proposition above. Indeed,
%using \eqref{UniqVec}, we infer that
%\begin{align*}
%	V(W^{a}(t))&=V(P_{\omega})+ae^{-\lambda_{1}t}\int_{\R^{3}}\(-2\Delta P_{\omega}+6P^{5}_{\omega}-3P^{3}_{\omega}\)e_{1}dx
%	+O(e^{-\frac{3}{2}\lambda_{1}t})\\
%	&=ae^{-\lambda_{1}t}\int_{\R^{3}}\(-2\Delta P_{\omega}+6P^{5}_{\omega}-3P^{3}_{\omega}\)e_{1}dx+O(e^{-\frac{3}{2}\lambda_{1}t}).
%\end{align*}
%Notice that 
%\begin{align*}
%\int_{\R^{3}}\(-2\Delta P_{\omega}+6P^{5}_{\omega}-3P^{3}_{\omega}\)e_{1}dx
%&=\langle L_{+}(x\cdot \nabla P_{\omega}+\tfrac{3}{2}P_{\omega}), e_{1}\rangle\\
%&=\lambda_{1}\langle x\cdot \nabla P_{\omega}+\tfrac{3}{2}P_{\omega}, e_{2} \rangle.
%\end{align*}
%If $\langle x\cdot \nabla P_{\omega}+\tfrac{3}{2}P_{\omega}, e_{2} \rangle\neq 0$
%\textcolor{black}{(for me it is not clear how to show this)}, then replacing $e_{+}$ by $-e_{+}$ if necessary, we can assume that 
%\[
%\int_{\R^{3}}\(-2\Delta P_{\omega}+6P^{5}_{\omega}-3P^{3}_{\omega}\)e_{1}dx>0.
%\]
%Thus, if $a<0$, then $V(W^{a}(t))<0$ for large $t$, which is a contradiction to Remark~\ref{Wa:Positive}. Therefore $a\geq 0$.}

As a corollary of Propositions~\ref{UniqueU} and \ref{coroA}, we see that (modulo time translation and rotation) all of the functions $W^a$ with $a>0$ are the same.

\begin{corollary}\label{coroCla}
Let $a>0$. Then there exists $\theta_{a}$,  $T_{a}\in \R$ such that 
\[W^{a}=e^{ i \theta_{a}}W^{+1}(t-T_{a}).\]
\end{corollary}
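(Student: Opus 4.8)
The plan is to deduce the corollary from the uniqueness statement of Proposition~\ref{UniqueU} together with the time-translation and gauge invariance of \eqref{NLS}. Fix $a>0$. The natural choice of parameters is
\[
T_{a}:=\tfrac{1}{\lambda_{1}}\log a,\qquad \theta_{a}:=\omega T_{a},
\]
and I would consider the function $w(t):=e^{i\theta_{a}}W^{+1}(t-T_{a})$. Since \eqref{NLS} is autonomous and invariant under multiplication by a constant phase, $w$ is again a solution of \eqref{NLS}, with the same mass and energy as $W^{+1}$, namely $M(w)=M(P_{\omega})$ and $E(w)=E(P_{\omega})$. The point of the particular choices above is that the gauge rotation is tuned to the time shift ($\theta_a-\omega T_a=0$) so that $e^{i\omega t}P_\omega$ is left fixed, and $T_a$ is tuned so that the leading exponentially small mode of $w$ carries the coefficient $a$.

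Next I would check that $w$ satisfies the hypotheses of Proposition~\ref{UniqueU}. From Proposition~\ref{ContracA} (in particular \eqref{UniqVec}) one has $\|W^{+1}(s)-e^{i\omega s}P_{\omega}\|_{H^{1}}\lesssim e^{-\lambda_{1}s}$ for $s$ large; substituting $s=t-T_{a}$, multiplying by $e^{i\theta_{a}}$, and using the cancellation $e^{i\theta_{a}}e^{i\omega(t-T_{a})}=e^{i(\theta_{a}-\omega T_{a})}e^{i\omega t}=e^{i\omega t}$ (together with $P_\omega$ being real-valued), one obtains $\|w(t)-e^{i\omega t}P_{\omega}\|_{H^{1}}\le Ce^{-ct}$ for suitable $C,c>0$. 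Hence Proposition~\ref{UniqueU} applies and yields a unique $b\in\R$ with $w=W^{b}$.

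It then remains to identify $b=a$. Applying \eqref{UniqVec} to $W^{+1}$ at time $t-T_{a}$ and multiplying by $e^{i\theta_{a}}$, the same phase cancellation together with $e^{-\lambda_{1}(t-T_{a})}e^{i\theta_a}e^{i\omega(t-T_a)}=a\,e^{-\lambda_1 t}e^{i\omega t}$ (here is where $e^{\lambda_1 T_a}=a$ enters) gives
\[
\bigl\|w(t)-e^{i\omega t}P_{\omega}-a\,e^{i\omega t}e^{-\lambda_{1}t}e_{+}\bigr\|_{H^{1}}\le a^{3/2}e^{-\frac{3}{2}\lambda_{1}t}\qquad\text{for $t$ large.}
\]
Comparing this with \eqref{UniqVec} for $w=W^{b}$ (which bounds the same expression with $b$ in place of $a$), the triangle inequality yields $|a-b|\,\|e_{+}\|_{H^{1}}\le(1+a^{3/2})e^{-\frac{1}{2}\lambda_{1}t}\to0$ as $t\to\infty$, so $b=a$ and $W^{a}=w=e^{i\theta_{a}}W^{+1}(\cdot-T_{a})$, as claimed.

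I do not expect a genuine obstacle here; the only care needed is bookkeeping: choosing $\theta_{a}=\omega T_{a}$ so the shift and the rotation conspire to fix $e^{i\omega t}P_{\omega}$ rather than a rotated soliton, choosing $T_{a}=\lambda_{1}^{-1}\log a$ so the $e^{-\lambda_1 t}e_+$-mode has the right coefficient (which is exactly why $a>0$ is needed), and observing that the constant $a^{3/2}$ picked up by the remainder after the shift is harmless since the last step only matches the $e^{-\lambda_{1}t}$-coefficients asymptotically. An alternative would be to verify directly that $w$ satisfies the defining characterization \eqref{Uniq} of $W^{a}$ in Proposition~\ref{ContracA}, but routing through Proposition~\ref{UniqueU} is cleaner.
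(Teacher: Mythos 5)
Your proof is correct and follows essentially the same route as the paper: choose $T_a$ with $e^{\lambda_1 T_a}=a$, use the time-translation and phase symmetries of \eqref{NLS} together with the uniqueness statement of Proposition~\ref{UniqueU}, and identify the parameter by comparing the expansion \eqref{UniqVec}. The only (cosmetic) difference is the direction: you shift and rotate $W^{+1}$ and match its leading coefficient to $a$, whereas the paper normalizes $e^{-i\omega T_a}W^{a}(t+T_a)$ so that its coefficient becomes $1$ and then identifies it with $W^{+1}$.
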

\begin{proof}
Let $a>0$ and choose $T_{a}$ such that $ae^{-\lambda_{1}T_{a}}=1$. Then by estimate \eqref{UniqVec} we get
\begin{equation}\label{W1}
\|e^{-i \omega T_{a}}W^{a}(t+T_{a})-e^{i\omega t}P_{\omega}-e^{(i\omega-\lambda_{1}) t}e_{+}\|_{H^{1}}
\leq e^{-\frac{3}{2}\lambda_{1}t}.
\end{equation}
Note that $e^{-i \omega T_{a}}W^{a}(t+T_{a})$ satisfies the assumption of Proposition~\ref{UniqueU}. Thus, there exists $\tilde{a}\geq 0$
such that 
\[e^{-i \omega T_{a}}W^{a}(t+T_{a})=W^{\tilde{a}}.\]
From \eqref{W1} and Proposition~\ref{ContracA} we infer that $\tilde{a}=1$. Therefore, \[
W^{a}=e^{ i \omega T_{a}}W^{+1}(t-T_{a}).
\]
\end{proof}

\section{Proofs of the main results}\label{S:proof} 

Finally, we are in a position to establish the main results, namely, Theorem~\ref{TH1} and Theorem~\ref{ScatSoliton}.

\begin{proof}[{Proof of Theorem~\ref{TH1}}] We let $(m,e)=(M(P_\omega),E(P_\omega))\in\partial\mathcal{K}_s$. We set 
\[
\G_{\omega}(t):=W^{+1}(t),
\]
where $W^{+1}$ is the global solution to \eqref{NLS} defined in Proposition~\ref{ContracA}.  
By Remark~\ref{Wa:Positive} we see that $V(\G_{\omega}(t))>0$ for all $t\in \R$.
Moreover, by Corollary~\ref{ClassC} we obtain that the solution $\G_{\omega}(t)$ scatters for negative time. 
If not, applying the above arguments to the solution $\overline{\G}_{\omega}(x, -t)$ we get the desired result.\end{proof}

\begin{proof}[{Proof of Theorem~\ref{ScatSoliton}}]
Let $u$ be a solution to \eqref{NLS} such that \[
(M(u), E(u))=(m,e)\in \partial\K_{s}
\]
and $V(u(0))>0$.  

Suppose that $u$ does not scatter, i.e.  $\|u\|_{L^{10}_{t, x}(\R\times \R^{3})}=\infty$. Then, replacing if necessary $u(t)$ by $\overline{u(-t)}$, Proposition~\ref{CompacDeca} implies that there exist $\theta\in \R$, $y_{0}\in \R^{3}$, $c$ and $C>0$ such that
\[
\|e^{-i\theta}u(t, \cdot+y_{0})-e^{i\omega t}P_{\omega}\|_{H^{1}}\leq Ce^{-c t}.
\]
Now, Propositions~\ref{UniqueU} and \ref{coroA} imply that $e^{-i\theta}u(\cdot, \cdot+y_{0})=W^{a}$ for some $a\geq 0$. As $V(W^{a}(t))>0$ for all $t\in \R$ (recall that $V(u(0))>0$),
it follows that $a>0$. But then, by Corollary~\ref{coroCla} we know that there exist 
$\theta_{a}$,  $T_{a}\in \R$ such that 
\[e^{-i\theta}u(t, \cdot+y_{0})=W^{a}(t)=e^{ i \theta_{a}}W^{+1}(t-T_{a}),
\]
which implies item (i).

As for item (ii), this follows from the variational characterization of $P_{\omega}$ established in \cite{KillipOhPoVi2017}. \end{proof}

\appendix

\section{Proof of Lemma~\ref{SpecLL}}\label{S:A}

In this section, we provide the proof of Lemma~\ref{SpecLL}.  Noting that $\overline{\L v}= -\L(\overline{v})$,  we find that if $\lambda_{1}>0$  is an eigenvalue of the operator $\L$ with the eigenfunction $e_{+}$, then $-\lambda_{1}$ is also an eigenvalue of $\L$ with eigenfunction $e_{-}=\overline{e_{+}}$.  We put $e_{1}=\RE e_{+}$ and $e_{2}=\IM e_{+}$. To show the existence of $e_{+}$, we must study the system 
\begin{equation}\label{SiE}
\left\{
\begin{aligned} 
L_{+} e_{1}&=\lambda_{1}e_{2},\\
-L_{-} e_{2}&=\lambda_{1}e_{1}.
\end{aligned}
\right.
\end{equation}
Recall that $L_{-}$ is self-adjoint on $L^{2}$ and nonnegative (see  \eqref{Lmenos}). Thus, we see that
$L_{-}$ has a unique square root $(L_{-})^{\frac{1}{2}}$ with domain $H^{1}$. Consider the operator $\TT$ on $L^{2}$ 
with domain $H^{4}$,
\[
\TT=(L_{-})^{\frac{1}{2}}(L_{+})(L_{-})^{\frac{1}{2}}.
\]
As $P_{\omega}$ has exponential decay, it follows that $\TT$ is a relatively compact, self-adjoint, perturbation of $(-\Delta+\omega)^{2}$.
Thus, by the Weyl Theorem, we know that $\sigma_{\text{ess}}(\TT)=[\omega, \infty)$.
Now, suppose that there exists $g\in H^{4}$ such that
\begin{equation}\label{egenT}
\TT g=-\lambda^{2}_{1}g.
\end{equation}
Then, taking
\[
e_{1}:=(L_{-})^{\frac{1}{2}}g, \quad \text{and} \quad e_{2}:=\frac{1}{\lambda_{1}}(L_{+})(L_{-})^{\frac{1}{2}}g
\]
we obtain a solution to \eqref{SiE}, which implies the existence of the eigenfunction $e_{+}$. 

Thus, to show the existence of $e_{+}$ we need to show that $\TT$ has at least one negative eigenvalue $-\lambda^{2}_{1}$.
Indeed, we have the following:

\begin{lemma}\label{NegativeL}
\[
\Lambda(\TT):=\inf\left\{(\TT g, g)_{L^{2}}, g\in H^{4}, \|g\|_{L^{2}}=1 \right\}<0.
\]
\end{lemma}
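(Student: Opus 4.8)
The plan is to exhibit an explicit trial function $g \in H^4$ with $(\TT g, g)_{L^2} < 0$, where $\TT = (L_-)^{1/2} L_+ (L_-)^{1/2}$. The natural candidate comes from unwinding the substitution used to define $\TT$: if $e_1$ is a function with $\langle L_+ e_1, e_1\rangle < 0$ and $e_1$ happens to lie in the range of $(L_-)^{1/2}$ with $g := (L_-)^{-1/2} e_1$, then $(\TT g, g)_{L^2} = \langle L_+ e_1, e_1\rangle < 0$. The obvious choice of $e_1$ is $g_\omega := \tfrac12 x\cdot\nabla P_\omega$, since the paper has already recorded (from \cite[Table~2.1]{KillipOhPoVi2017}) that $L_+ g_\omega = -\Delta P_\omega$ and $\langle L_+ g_\omega, g_\omega\rangle = -\tfrac12\|g_\omega\|_{\dot H^1}^2 < 0$. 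Alternatively one may use $P_\omega$ itself, for which $\langle L_+ P_\omega, P_\omega\rangle = \tfrac43(\beta(\omega)-2)\|P_\omega\|_{\dot H^1}^2 < 0$ (here $\beta(\omega) \in (0,\tfrac12)$ on the relevant branch, so the sign is negative). So concretely I would set $g := (L_-)^{-1/2} g_\omega$ and compute $(\TT g, g)_{L^2} = ((L_-)^{1/2} L_+ (L_-)^{1/2} (L_-)^{-1/2} g_\omega,\, (L_-)^{-1/2} g_\omega)_{L^2} = (L_+ g_\omega, g_\omega)_{L^2} < 0$, then normalize.

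The first technical point to settle is that $g := (L_-)^{-1/2} g_\omega$ actually makes sense and lies in $H^4$. Here one uses that $L_-$ is self-adjoint and nonnegative with $\ker L_- = \operatorname{span}\{P_\omega\}$, and crucially that $g_\omega = \tfrac12 x\cdot\nabla P_\omega \perp P_\omega$ in $L^2$ (this orthogonality is noted in the proof of Lemma~\ref{CoerQuadra}); hence $g_\omega$ lies in the range of $L_-$ restricted to the orthogonal complement of its kernel, where $L_-$ has a spectral gap by \eqref{Lmenos}, so $(L_-)^{-1/2}$ applied to $g_\omega$ is well-defined and bounded. For the regularity $g \in H^4$: since $P_\omega \in \Sch(\R^3)$ (it solves an elliptic equation with smooth, exponentially decaying data), $g_\omega \in \Sch$ as well, and elliptic regularity for $L_- = -\Delta + \omega - P_\omega^2 + P_\omega^4$ propagates Schwartz regularity through $(L_-)^{-1/2}$, giving $g \in H^4$ (indeed $g \in \Sch$). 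One could also sidestep the range issue entirely by working with the quadratic form: $\Lambda(\TT) = \inf\{(\TT g,g) : \|g\|_{L^2}=1\} \le (\TT g_N, g_N)$ for a minimizing-type sequence $g_N$ approximating $(L_-)^{-1/2}g_\omega$ and passing to the limit, using density of $H^4$.

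The main obstacle I anticipate is precisely the bookkeeping around $(L_-)^{-1/2}$: verifying that the inverse square root is well-defined on $g_\omega$ (requires the orthogonality $g_\omega \perp \ker L_- = \operatorname{span}\{P_\omega\}$ together with the coercivity \eqref{Lmenos} on $\{P_\omega\}^\perp$), that the formal manipulation $(\TT (L_-)^{-1/2}g_\omega, (L_-)^{-1/2}g_\omega) = (L_+ g_\omega, g_\omega)$ is rigorous (here one writes $g_\omega = (L_-)^{1/2}\big((L_-)^{-1/2}g_\omega\big)$ and uses self-adjointness of $(L_-)^{1/2}$), and that the resulting $g$ lies in the stated domain $H^4$. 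None of these steps is deep, but they require care about domains of unbounded operators; everything else — the sign computations for $\langle L_+ g_\omega, g_\omega\rangle$ and the normalization — is immediate from identities already recorded in the paper. Once $\Lambda(\TT) < 0$ is established, the variational principle produces a minimizer $g$ solving $\TT g = -\lambda_1^2 g$ with $\lambda_1 > 0$, and setting $e_1 = (L_-)^{1/2}g$, $e_2 = \lambda_1^{-1}L_+(L_-)^{1/2}g$ recovers the eigenfunction $e_+ = e_1 + i e_2$ of $\L$, completing the proof of Lemma~\ref{SpecLL}.
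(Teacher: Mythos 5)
There is a genuine gap at the very first step of your construction: the orthogonality $g_{\omega}=\tfrac12 x\cdot\nabla P_{\omega}\perp P_{\omega}$ is false. What the proof of Lemma~\ref{CoerQuadra} records is $(g_{\omega},\partial_{j}P_{\omega})_{L^{2}}=0$, i.e.\ orthogonality to $\ker L_{+}$, not to $P_{\omega}=\ker L_{-}$. In fact
\[
(x\cdot\nabla P_{\omega},P_{\omega})_{L^{2}}=\tfrac12\int_{\R^{3}}x\cdot\nabla(P_{\omega}^{2})\,dx=-\tfrac32\|P_{\omega}\|_{L^{2}}^{2}\neq 0,
\]
and this very identity is what the paper uses to define the correction coefficient $\mu$ in its own proof of Lemma~\ref{NegativeL}. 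Consequently $g_{\omega}$ has a nonzero component along $\ker L_{-}$, so $(L_{-})^{-1/2}g_{\omega}$ is not defined and the formal identity $(\TT g,g)_{L^{2}}=\<L_{+}g_{\omega},g_{\omega}\>$ cannot be arranged with your trial function. Your fallback choice $P_{\omega}$ is even worse, since $P_{\omega}$ spans $\ker L_{-}$ exactly. Nor is the obvious repair automatic: if you project, writing $\tilde g=g_{\omega}-c P_{\omega}$ with $c=(g_{\omega},P_{\omega})/\|P_{\omega}\|_{L^{2}}^{2}=-\tfrac34$, the cross term $-2c\<L_{+}g_{\omega},P_{\omega}\>=+\tfrac32\|\nabla P_{\omega}\|_{L^{2}}^{2}$ is positive (recall $L_{+}g_{\omega}=-\Delta P_{\omega}$), and the sign of $\<L_{+}\tilde g,\tilde g\>$ is no longer determined by the identities recorded in the paper, so negativity is not guaranteed.

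This is precisely the difficulty the paper's proof is designed to overcome: it starts from the negative ground state $Z$ of $L_{+}$ (where $\<L_{+}Z,Z\>=-\lambda_{1}\|Z\|_{L^{2}}^{2}$ gives as much negativity as possible), corrects it to $\Phi=Z+\mu\,x\cdot\nabla P_{\omega}$ so that $(\Phi,P_{\omega})_{L^{2}}=0$, and then must still prove $\<L_{+}\Phi,\Phi\><0$; this requires the nontrivial fact $\<Z,x\cdot\nabla P_{\omega}\>\geq 0$, which is obtained by contradiction from the coercivity alternative \eqref{QortoX334} in Lemma~\ref{CoerQuadra}. Only after that does the paper address the $(L_{-})^{\pm 1/2}$ domain issues (via the approximation $G_{\epsilon}=(L_{-}+1)^{-1}L_{-}g_{\epsilon}$ and the gap \eqref{Lmenos} on $\{P_{\omega}\}^{\perp}$), which is the part you correctly identified as requiring care. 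So your outline of the functional-analytic bookkeeping is reasonable, but the core input --- a trial function orthogonal to $P_{\omega}$ with strictly negative $L_{+}$-energy --- is missing, and supplying it is the real content of the lemma.
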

\begin{proof}
We recall that  $L_{+}$ has only one negative eigenvalue $-\lambda_{1}$ with a corresponding  
eigenfunction $Z\in H^{2}(\R^{3})$ (it is radial and positive function).
 We define
\[
\Phi=Z+\mu\, x\cdot \nabla P_{\omega},
\quad\text{with}\quad \mu=-\frac{(Z, P_{\omega})_{L^{2}}}{(x\cdot \nabla P_{\omega}, P_{\omega})_{L^{2}}}
=\frac{(Z, P_{\omega})_{L^{2}}}{\tfrac{3}{2}\|P_{\omega}\|^{2}_{L^{2}}}.
\]
Notice that $\mu> 0$ and $(\Phi, P_{\omega})_{L^{2}}=0$.

Now we show that
\begin{align*}
	N_{0}:=\int_{\R^{3}}L_{+}(\Phi)\Phi dx<0.
\end{align*}
Indeed, we easily see that
\begin{align}\nonumber
	\<L_{+}(\Phi),\Phi\>&=
	\<L_{+}(Z),Z\>+\mu^{2}\<L_{+}(x\cdot \nabla P_{\omega}),x\cdot \nabla P_{\omega}\>
	-2\mu\lambda_{1}\<Z,x\cdot \nabla P_{\omega}\>\\ \label{NegaL}
	&=\<L_{+}(Z),Z\>-\mu^{2}\|\nabla P_{\omega}\|^{2}_{L^{2}}-2\mu\lambda_{1}\<Z,x\cdot \nabla P_{\omega}\>.
\end{align}
We claim that $\<Z,x\cdot \nabla P_{\omega}\>\geq 0$. Suppose by contradiction that $\<Z,x\cdot \nabla P_{\omega}\><0$. Then we have
\[
\<Z,L_{+}(x\cdot \nabla P_{\omega})\>=
\<L_{+}(Z), x\cdot \nabla P_{\omega}\>=-\lambda_{1}\<Z, x\cdot \nabla P_{\omega}\>>0.
\]
This implies $Z$ satisfies the condition \eqref{QortoX334} in Lemma~\ref{CoerQuadra}. Thus, as $\<Z, \partial_{j}P_{\omega}\>=0$ for $j=1,2,3$, by Lemma~\ref{CoerQuadra}
we deduce that $\<L_{+}(Z),Z\>>0$, which is a contradiction. Having shown that $\<Z,x\cdot \nabla P_{\omega}\>\geq 0$, by \eqref{NegaL} we obtain that $N_{0}<0$.

Next, we have (recall that $(\Phi, P_{\omega})_{L^{2}}=0$)
\[
((L_{-}+1)\Phi, P_{\omega})_{L^{2}}=(\Phi, (L_{-}+1)P_{\omega})_{L^{2}}=0.
\]
Since
\[
\text{Ran}(L_{-})^{\bot}=\text{Ker}(L_{-})=\text{span}\left\{P_{\omega}\right\},
\]
it follows that $(L_{-}+1)\Phi\in \overline{\text{Ran}(L_{-})}$. Then for $\epsilon>0$ (which will be chosen later) there exists $g_{\epsilon}\in H^{2}$ such that
\begin{equation}\label{Gps}
\|L_{-}g_{\epsilon}-(L_{-}+1)\Phi\|_{L^{2}}<\epsilon.
\end{equation}
We set $G_{\epsilon}:=(L_{-}+1)^{-1}L_{-}g_{\epsilon}$. Note that
$G_{\epsilon}\in H^{2}$. By \eqref{Gps}, we infer that
\[
\|G_{\epsilon}-\Phi\|_{H^{2}}\leq \epsilon \|(L_{-}+1)^{-1}\|_{L^{2}\to L^{2}},
\]
which implies that there exists a constant $C_{0}>0$ such that
\[
\left| \int_{\R^{3}}L_{+}(G_{\epsilon})G_{\epsilon}dx-\int_{\R^{3}}L_{+}(\Phi)\Phi dx \right|\leq C_{0}\epsilon.
\]
Choosing $\epsilon=\frac{-N_{0}}{2C_{0}}$, we see that $(L_{+}G_{\epsilon}, G_{\epsilon})<0$. 

Now, if $G=(L_{-})^{-\frac{1}{2}}G_{\epsilon}$, then we have
\[
(\TT G, G)_{L^{2}}=(L_{+}G_{\epsilon}, G_{\epsilon})_{L^{2}}<0.
\]
Since  $\sigma_{\text{ess}}(\TT)=[\omega, +\infty)$, we conclude that the operador $\TT$ has at least one 
negative eigenvalue.
\end{proof}

%\begin{lemma}\label{NegativeL}
%\[
%\Lambda(\TT):=\inf\left\{(\TT g, g)_{L^{2}}, g\in H^{4}, %\|g\|_{L^{2}}=1 \right\}<0.
%\]
%\end{lemma}
%\begin{proof}
% $Z=x\cdot \nabla P_{\omega}$, we see that
%
%N_{0}:=\int_{\R^{3}}L_{+}(Z)Z dx=-\|\nabla %P_{\omega}\|^{2}_{L^{2}}<0,
 %\quad \text{with $(Z, %\partial_{j}P_{\omega})_{L^{2}}=0$} \quad j=1,2,3. 
%\]
% particular, we have
%\[
%((L_{+}+c)Z, \partial_{j}P_{\omega})_{L^{2}}=(Z, %\partial_{j}P_{\omega})_{L^{2}}=0.
%
%, $c>0$ is such that $(L_{+}+c)\gtrsim 1$. Since
%\[
%text{Ran}(L_{+})^{\bot}=\text{Ker}(L_{+})=\text{span}\lef%t\{\partial_{j}P_{\omega}, j=1,2,3\right\},
%
%it follows that $(L_{+}+c)Z\in \overline{\text{Ran}(L_{+})}$. Then for $\epsilon>0$ (which will be chosen later) there exists $g_{\epsilon}\in H^{2}$ such that
%\begin{equation}\label{Gps}
%L_{+}g_{\epsilon}-(L_{+}+c)Z\|_{L^{2}}<\epsilon.
%end{equation}
% set $G_{\epsilon}:=(L_{+}+c)^{-1}L_{+}g_{\epsilon}$. Note that
%$G_{\epsilon}\in H^{2}$. By \eqref{Gps}, we infer that
%
%G_{\epsilon}-Z\|_{H^{2}}\leq %\epsilon%L_{+}+c)^{-1}\|_{L^{2}\to L^{2}},
%\]
% implies that there exists a constant $C_{0}>0$ such that
%\[
%left| \int_{\R^{3}}L_{+}(G_{\epsilon})G_{\epsilon}dx-\int%_{\R^{3}}L_{+}(Z)Zdx \right|\leq C_{0}\epsilon.
%\]
%Choosing $\epsilon=\frac{-N_{0}}{2C_{0}}$, we see that $(L_{+}G_{\epsilon}, G_{\epsilon})<0$. 

%Now, if $G=(L_{-})^{-\frac{1}{2}}G_{\epsilon}$, then we have
%
%(\TT G, G)_{L^{2}}=(L_{+}G_{\epsilon}, %G_{\epsilon})_{L^{2}}<0.
%\]
%Since  $\sigma_{\text{ess}}(\TT)=[\omega, \infty)$, we conclude that the operator $\TT$ has at least one 
%negative eigenvalue.
%\end{proof}

Now, using the same argument developed in \cite[Subsection 7.2.2]{DuyMerle2009} we deduce that $e_{\pm}\in \Sch(\R^{3})$.
We observe that discussions given in Section~\ref{S:Spectral} show that 
\begin{equation}\label{KerL}
\text{ker}\left\{\L\right\}
=\text{span}\left\{\partial_{1}P_{\omega}, \partial_{2}P_{\omega}, \partial_{3}P_{\omega}, iP_{\omega}  \right\}.
\end{equation}
Moreover, by Lemma~\ref{NegativeL}  we see that $\left\{\pm \lambda_{1}\right\}\subset \sigma(\L)$.

Finally, we will characterize the real spectrum of $\L$. First, notice that 
\[
\L=JL, \quad 
J:=\begin{pmatrix}
0 & 1 \\
-1 & 0 
\end{pmatrix}
\quad \text{and}\quad 
L:=\begin{pmatrix}
L_{+} & 0 \\
0 & L_{-} 
\end{pmatrix}
\]
As the operator $L$ is a compact perturbation
of
\[
\begin{pmatrix}
-\Delta+\omega &  0\\
0 & -\Delta+\omega 
\end{pmatrix}
\]
we find that $\sigma_{\text{ess}}(\L)=\left\{i\xi: \xi\in \R, |\xi|\geq \omega\right\}$.
Consequently, $\sigma(\L)\cap (\R \setminus\left\{0\right\})$ contains only eigenvalues. It remains to show $\sigma(\L)\cap (\R \setminus\left\{0\right\})=\left\{-\lambda_{1}, \lambda_{1}\right\}$.

Assume towards a contradiction that there instead exists $f\in H^{2}$ such that
\[
\L f=-\lambda_{0}f,
\]
with $\lambda_{0}\in \R\setminus\left\{0, -\lambda_{1}, \lambda_{1}\right\}$. Since $\F(\L g, h)=-\F(g, \L h)$ we see that
\[
(\lambda_{1}+\lambda_{0})\F(f, e_{+})=(\lambda_{1}-\lambda_{0})\F(f, e_{-})=0
\quad \text{and}\quad 
\lambda_{0}\F(f,f)=-\lambda_{0}\F(f,f).
\]
Therefore,
\[
\F(f, e_{+})=\F(f, e_{-})=\F(f,f)=0.
\]
Equivalently, we have
\[
(f, e_{+})_{L^{2}}=(f, e_{-})_{L^{2}}=0.
\]
Next, we write
\[
f=i\beta_{0}P_{\omega}+\sum^{3}_{j=1}\beta_{j}(\partial_{j}P_{\omega})+g,
\quad
\beta_{0}=\tfrac{(f,iP_{\omega})_{L_{2}}}{\|P_{\omega}\|^{2}_{L^{2}}},
\quad
\beta_{j}=\tfrac{(f,\partial_{j}P_{\omega})_{L_{2}}}{\|\partial_{j}P_{\omega}\|^{2}_{L^{2}}},
\]
where $j=1,\ldots,3$ and $g\in Y^{\bot}$. By Remark~\ref{PLf} we get $\F(f,f)=\F(g,g)$. Then,
 Lemma~\ref{NewCoer} implies 
\[
\|g\|^{2}_{H^{1}}\lesssim \F(g,g)=\F(f,f)=0.
\]
In particular, $g=0$ and (see \eqref{KerL})
\[
\lambda_{0}f=\L f =\L\big(i\beta_{0}P_{\omega}+\sum^{3}_{j=1}\beta_{j}(\partial_{j}P_{\omega})\big)=0,
\]
which is a contradiction.

\section{Scattering at the threshold}\label{S:B}

In this section, we demonstrate the existence of scattering threshold solutions.

\begin{proposition}\label{P:exist-scatter}
Assume $E(P_{\omega})>0$. There exists a solution $v$ of \eqref{NLS} such that
$E(v)=E(P_{\omega})$, $M(v)=M(P_{\omega})$, $V(v_{0})>0$ and
$\|v \|_{L^{10}_{t, x}(\R\times\R^{3})}<\infty$. In particular, 
$v$ scatters in both time directions.
\end{proposition}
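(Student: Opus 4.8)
The plan is to construct the scattering solution $v$ as the one emanating (backward in time) from the radiation profile associated with $\G_\omega$, but with the "unstable direction" perturbed in the opposite sign. Concretely, I would revisit the fixed-point construction of Proposition~\ref{ContracA} with $a=-1$: this produces a global solution $W^{-1}$ to \eqref{NLS} with $M(W^{-1})=M(P_\omega)$, $E(W^{-1})=E(P_\omega)$, and
\[
\|W^{-1}(t)-e^{i\omega t}P_\omega + e^{i\omega t}e^{-\lambda_1 t}e_+\|_{H^1}\leq e^{-\frac32\lambda_1 t}\qtq{for large}t.
\]
As in Corollary~\ref{ClassC} applied to $\overline{W^{-1}(-t)}$, the solution $W^{-1}$ cannot fail to scatter in \emph{both} time directions (that would force $V\equiv 0$, contradicting Remark~\ref{Wa:Positive}), so in particular $W^{-1}$ scatters as $t\to-\infty$. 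The point is then to show that $W^{-1}$ actually scatters as $t\to+\infty$ as well, i.e. that the choice $a=-1$ selects the branch which leaves a neighborhood of the soliton \emph{into} the scattering region.

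For this, the key is the gradient dichotomy. By the expansion
\[
\|\nabla W^{-1}(t)\|_{L^2}^2=\|\nabla P_\omega\|_{L^2}^2 - e^{-\lambda_1 t}\!\int_{\R^3}\!\nabla P_\omega\, e_1\,dx + O(e^{-\frac32\lambda_1 t}),
\]
together with the normalization $\int \nabla P_\omega\,e_1\,dx<0$ fixed after Proposition~\ref{coroA}, we get $\|\nabla W^{-1}(t)\|_{L^2}^2<\|\nabla P_\omega\|_{L^2}^2$ for all large $t$; by Remark~\ref{ReEMG} this then holds for \emph{all} $t\in\R$, and in particular $V(W^{-1}_0)>0$ (using that, near the soliton, the sign of $V$ is governed by $\alpha(t)$, which in turn is governed by the same sign as $-e^{-\lambda_1 t}\int\nabla P_\omega e_1$; cf. Lemma~\ref{BoundI} and the computation of $\alpha$). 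Now suppose for contradiction that $W^{-1}$ does \emph{not} scatter forward in time, i.e. $\|W^{-1}\|_{L^{10}_{t,x}((0,\infty)\times\R^3)}=\infty$. Then $W^{-1}$ satisfies the hypotheses of Proposition~\ref{CompacDeca}, so modulo symmetries $W^{-1}(t)\to e^{i\omega t}P_\omega$ exponentially as $t\to\infty$; by Propositions~\ref{UniqueU} and~\ref{coroA} this forces $W^{-1}=W^{a}$ for some $a\geq 0$ (up to symmetry). But the asymptotic expansion \eqref{UniqVec} of $W^{-1}$ has leading correction $-e^{i\omega t}e^{-\lambda_1 t}e_+$ with a strictly negative coefficient, which is incompatible (even after a phase rotation $e^{i\theta}$ and time translation $T$, which can only multiply the coefficient by $e^{i\theta}e^{\lambda_1 T}>0$ in modulus while fixing it to be a positive multiple of $e_+$, recalling $e_-=\overline{e_+}$) with $a\geq 0$. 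This contradiction shows $\|W^{-1}\|_{L^{10}_{t,x}((0,\infty)\times\R^3)}<\infty$; combined with the backward scattering already noted, $v:=W^{-1}$ scatters in both time directions, and the standard characterization of scattering in $H^1$ (Lemma~\ref{S: result}) upgrades the finite $L^{10}_{t,x}$ norm to the existence of $v_\pm$ as in \eqref{scattering}.

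\textbf{Main obstacle.} The delicate point is the sign bookkeeping: one must carefully track how the symmetries of \eqref{NLS} (phase rotation and time translation) act on the asymptotic expansion \eqref{UniqVec}, to confirm that the solution $W^{-1}$ with a \emph{negative} coefficient in front of $e_+$ genuinely cannot coincide (modulo symmetries) with any $W^a$, $a\geq 0$ — equivalently, that the sign of $a$ is a symmetry-invariant of the branch. Once the identification $W^a\leftrightarrow$ (sign of $a$) is pinned down, as it essentially is in Proposition~\ref{coroA} and Corollary~\ref{coroCla}, the rest is a routine application of the concentration-compactness/rigidity machinery already assembled in Sections~\ref{S: Concentrated}--\ref{S:Rigid}. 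A secondary point is verifying $V(W^{-1}_0)>0$ rather than $<0$; this follows from Lemma~\ref{GMP} read in reverse together with the gradient expansion above, since $V>0$ is equivalent (at this mass-energy, near the soliton) to $\|\nabla u\|_{L^2}<\|\nabla P_\omega\|_{L^2}$.
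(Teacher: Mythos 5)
There is a genuine gap, and in fact the chosen candidate cannot work. By its very construction in Proposition~\ref{ContracA}, the solution $W^{-1}$ satisfies \eqref{UniqVec}, so $\|W^{-1}(t)-e^{i\omega t}P_{\omega}\|_{H^{1}}\to 0$ as $t\to+\infty$; in particular $\|W^{-1}(t)\|_{L^{4}_{x}}\to\|P_{\omega}\|_{L^{4}}\neq 0$, which is incompatible with forward scattering, and $\|W^{-1}\|_{L^{10}_{t,x}((0,\infty)\times\R^{3})}=\infty$. No contradiction argument can rescue this: forward non-scattering of $W^{-1}$ is a fact built into the construction, not a hypothesis to be refuted. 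Your attempted contradiction also misfires for a concrete reason: Proposition~\ref{CompacDeca}, Corollary~\ref{ClassC}, and Proposition~\ref{coroA} all require $V(u_{0})>0$, and for $W^{-1}$ this hypothesis fails. Indeed, your sign bookkeeping in the gradient expansion is reversed: with $a=-1$ and the normalization $\int\nabla P_{\omega}\,e_{1}\,dx<0$, the expansion gives
\[
\|\nabla W^{-1}(t)\|_{L^{2}}^{2}=\|\nabla P_{\omega}\|_{L^{2}}^{2}-e^{-\lambda_{1}t}\int_{\R^{3}}\nabla P_{\omega}\,e_{1}\,dx+O(e^{-\frac{3}{2}\lambda_{1}t})>\|\nabla P_{\omega}\|_{L^{2}}^{2}
\]
for large $t$ (minus a negative quantity is positive), not $<$. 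By Lemma~\ref{GMP} this forces $V(W^{-1}(0))\leq 0$ (and then $V(W^{-1}(t))<0$ for all $t$, since $V$ never vanishes along $W^{a}$, $a\neq 0$). So $W^{-1}$ fails both required properties of the proposition: it does not scatter forward, and it does not have positive virial; the identification ``$a<0$ $\Rightarrow$ impossible'' in Proposition~\ref{coroA} is a statement about solutions with $V>0$, not a scattering criterion for $W^{-1}$. Morally, $W^{-1}$ is the solution on the \emph{other} side of the threshold (gradient above the soliton's), which here simply stays global without blowup; it is not the scattering branch.

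The paper's proof is entirely different and much softer: one builds initial data $\varphi_{n}=\phi^{1}(\cdot-x_{n}^{1})+\phi^{2}(\cdot-x_{n}^{2})$ as a superposition of two compactly supported bumps, each carrying exactly half the mass and half the energy of $P_{\omega}$ (obtained by elementary scaling), separated by $|x_{n}^{1}-x_{n}^{2}|\to\infty$ so that $M(\varphi_{n})=M(P_{\omega})$ and $E(\varphi_{n})=E(P_{\omega})$ for large $n$. Each bump lies strictly inside the scattering region $\K$, so by Theorem~\ref{MainTheorem} each evolves into a global solution with finite $L^{10}_{t,x}$ norm; the sum of the two evolutions is an approximate solution with asymptotically vanishing error thanks to the spatial separation, and the stability lemma then shows the true solution with data $\varphi_{n}$ scatters in both directions. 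Finally $V(\varphi_{n})>0$, since $V(\varphi_{n})=0$ at this mass-energy would force $\varphi_{n}$ to be a soliton modulo symmetries, which a two-bump profile is not. If you want to salvage a dynamical-systems flavored argument, you would need a candidate that leaves the soliton neighborhood forward in time (which none of the $W^{a}$ do), so the superposition construction is really the natural route here.
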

\begin{proof}
First we claim that there exists $\phi\in C_{c}^{\infty}(\R^{3})$ so that
\begin{equation}\label{EMEq}
E(\phi)=\tfrac{1}{2}E(P_{\omega})
\quad\text{and}
\quad
M(\phi)=\tfrac{1}{2}M(P_{\omega}).
\end{equation}
Indeed, consider $\psi\in C_{c}^{\infty}(\R^{3})$ with $M(\psi)>0$. We define $\psi_{\lambda}(x)=\lambda\psi(\lambda x)$, where $\lambda=\frac{2 M(\psi)}{M(P_{\omega})}$. Notice that $M(\psi_{\lambda})=\frac{1}{2}M(P_{\omega})$.
Now, consider $f_{a}(x)=a^{\frac{3}{2}}f(ax)$, where $f(x)=\psi_{\lambda}(x)$ and $a>0$. Then $M(f_{a})=M(f)=\frac{1}{2}M(P_{\omega})$,
and
\[
E(f_{a})=\tfrac{a^{2}}{2}\|\nabla f\|^{2}_{L^{2}}+\tfrac{a^{6}}{6}\| f\|^{6}_{L^{6}}-
\tfrac{a^{3}}{4}\| f\|^{4}_{L^{4}}.
\]
Since $\lim_{a\to 0^{+}}E(f_{a})=0$ and $\lim_{a\to \infty}E(f_{a})=\infty$, there exists $a_{0}>0$ such that $E(f_{a_{0}})=\frac{1}{2}E(P_{\omega})>0$. Thus, writing $\phi(x)=f_{a_{0}}(x)$
we get \eqref{EMEq}.

By the claim above we see that there exist $\phi^{1}$, $\phi^{2}\in C_{c}^{\infty}(\R^{3})$ satisfying \eqref{EMEq}, with  
\begin{align*}
&E(P_{\omega})=E(\phi^{1})+E(\phi^{2})
 \quad \text{and} \quad
M(P_{\omega})=M(\phi^{1})+M(\phi^{2}).
\end{align*}

Now consider sequences $\{x_n^1\}$ and $\{x_n^2\}$ in $\R^3$ with 
$|x^{1}_{n}-x^{2}_{n}|\to \infty$ as $n\to \infty$.
We set
\[
\phi_{n}^{j}(x)=\phi^{j}(x-x^{j}_{n}), \quad \text{for $j=1$, $2$,}
\]
Writing
\begin{equation}\label{Dpe}
\varphi_{n}(x)=\phi_{n}^{1}(x)+\phi_{n}^{2}(x),
\end{equation}
as $|x^{1}_{n}-x^{2}_{n}|\to \infty$ as $n\to \infty$,  we have 
\begin{align}\label{EMCC}
E(\varphi_{n})=E(P_{\omega}) \quad \text{and} \quad M(\varphi_{n})=M(P_{\omega})
\end{align}
for all $n$ large. Now, for $j=1$, $2$, we define $v^{j}$ to be the global solution of \eqref{NLS}
	with the initial data $v^{j}(0)=\phi^{j}$. Moreover, we define
\[v^{j}_{n}(t,x):=v^{j}(t, x-x^{j}_{n}), \quad j=1,2.\]

By Theorem~\ref{MainTheorem}, we infer that, for $j=1$, $2$,
\[
\|v^{j}_{n} \|_{L^{10}_{t, x}(\R\times\R^{3})}\lesssim_{P_{\omega}} 1 \quad \text{for all $n$.}
\]
Moreover, by persistence of regularity we get (cf. \cite[Lemma 6.2]{KillipOhPoVi2017})
\begin{equation}\label{Estrii}
\| v^{j}_{n}  \|_{L^{10}_{t,x}(\R\times\R^{3})}+\|\nabla v^{j}_{n}\|_{L^{\frac{10}{3}}_{t,x}(\R\times\R^{3})}
\lesssim_{P_{\omega}}1
\end{equation}
and
\begin{equation}\label{masses}
\| v^{j}_{n}\|_{L^{\frac{10}{3}}_{t,x}(\R\times\R^{3})}\lesssim_{P_{\omega}}1
\end{equation}
for all $n$.

Following the same argument developed in \cite[Lemma 9.2]{KillipOhPoVi2017} and using orthogonality condition $|x^{1}_{n}-x^{2}_{n}|\to \infty$ as $n\to \infty$, we have that 
\begin{equation}\label{Apliq}
\lim_{n\rightarrow\infty}[\|  v^{1}_{n}  v^{2}_{n}  \|_{L^{5}_{t,x}}+
\|  v^{1}_{n}  \nabla v^{2}_{n}  \|_{L^{\frac{5}{2}}_{t,x}}+\| \nabla v^{1}_{n}  \nabla v^{2}_{n}  \|_{L^{\frac{5}{3}}_{t,x}}
+\|  v^{1}_{n}   v^{2}_{n}  \|_{L^{\frac{5}{3}}_{t,x}}]=0.
\end{equation}

We now define the approximate solution to \eqref{NLS} by
\[W_{n}(t):=v^{1}_{n}(t)+v^{2}_{n}(t).\]
It is clear that  (cf. \eqref{Dpe})
\begin{equation}\label{Aps}
W_{n}(0)=\varphi_{n}.
\end{equation}
Moreover, we have the following global space bound
\begin{equation}\label{Gstb}
\limsup_{n\rightarrow\infty}\left[\| W_{n}  \|_{L_{t}^{\infty}H^{1}_{x}(\R\times\R^{3})}+ \| W_{n}  \|_{L^{10}_{t,x}(\R\times \R^{3})} 
+\| W_{n}  \|_{L_{t}^{\frac{10}{3}}H^{1,\frac{10}{3}}_{x}(\R\times \R^{3})} \right]\lesssim_{P_{\omega}} 1.
\end{equation}
and
\begin{equation}\label{Eimpo}
\limsup_{n\rightarrow\infty}\|\nabla [i\partial_{t}W_{n}+\Delta W_{n} -F(W_{n})]  \|_{L^{\frac{10}{7}}_{x}(\R\times \R^{3})}=0,
\end{equation}
where $F(z)=|z|^{4}z-|z|^{2}z$. Indeed, with \eqref{Estrii}, \eqref{masses} and \eqref{Apliq} in hands, 
the proof of \eqref{Gstb} and \eqref{Eimpo}  is essentially the same as in 
\cite[Lemma 9.4]{KillipOhPoVi2017} and \cite[Lemma 9.5]{KillipOhPoVi2017}, respectively.

Let $u_{n}$ be the corresponding solution to \eqref{NLS} with initial data $\varphi_{n}$, then the stability result 
\cite[Proposition 6.3]{KillipOhPoVi2017} implies that
$\|u_{n} \|_{L^{10}_{t, x}(\R\times\R^{3})}<\infty$ for $n$ large. Moreover, by \eqref{EMCC} we have 
$E(u_{n})=E(P_{\omega})$ and  $M(u_{n})=M(P_{\omega})$. Note also that $V(u_{n}(0))>0$. Indeed, if $V(u_{n}(0))=0$, then $u_{n}=P_{\omega}$ modulo symmetries, which is a contradiction.
\end{proof}

We close with a remark about case $E(P_\omega)=0$, which shows that the threshold behaviors will not be the same as in the case $E(P_\omega)>0$.

\begin{remark}\label{RB2}
Assume $E(P_{\omega})=0$. If $E(u_{0})=E(P_{\omega})$ and $M(u_{0})=M(P_{\omega})$, then the solution $u$ to \eqref{NLS}
does not scatter in both time directions. Indeed, suppose by contradiction that there exists $\psi_{+}\in H^{1}$ so that
\[
\lim_{t\to \infty}\|u(t)-e^{it}\psi_{+}\|_{H^{1}}=0.
\]
As $e^{it}\psi_{+} \to 0$ in $L^{4}$ as $t \to \infty$, we infer that
\[
\tfrac{1}{2}\|\nabla \psi_{+}\|^{2}_{L^{2}}\leq 
\lim_{t\to \infty}E(u(t))=E(P_{\omega})=0
\quad M(\psi_{+})=\lim_{t\to \infty}M(u(t))=M(P_{\omega})>0,
\]
which is a contradiction. Therefore, $u$ does not scatter in positive time.
A similar argument shows that $u$ does not scatter in negative time.

%
%\color{red}{Note that in the statement of Theorems~\ref{ScatSoliton} and \ref{TH1} it is necessary to assume that $E(P_{\omega})>0$.
%
%It is probably also possible to characterize the solutions when $E(u_{0})=E(P_{\omega})$ and $M(u_{0})=M(P_{\omega})$. However I think it is necessary to use a different argument. 
%
%
%}

\end{remark}

%\bibliographystyle{siam}
%\bibliography{bibliografia}

\begin{thebibliography}{1}

\bibitem{CFR} {\sc L. Campos, L. G. Farah, and S. Roudenko,} {\em Threshold solutions for the nonlinear Schr\"odinger equation.} Preprint {\tt arXiv:2010.14434}.

\bibitem{CM} {\sc L. Campos and J. Murphy}, {\em Threshold solutions for the intercritical inhomogeneous NLS.} Preprint {\tt arXiv:2205.09714.}

\bibitem{DuyHolmerRoude2008}
{\sc T.~Duyckaerts, J.~Holmer, and S.~Roudenko}, {\em Scattering for the
  non-radial 3{D} cubic nonlinear {S}chr\"odinger equation}, Math. Res. Lett.,
  15 (2008), pp.~1233--1250.

\bibitem{DuyMerle2009}
{\sc T.~Duyckaerts and F.~Merle}, {\em Dynamic of threshold solutions for
  energy-critical {NLS}}, Geom. funct. anal., 18 (2009), pp.~1787--1840.

\bibitem{DuyckaertsRou2010}
{\sc T.~Duyckaerts and S.~Roudenko}, {\em Threshold solutions for the focusing
  {3D} cubic {S}chr\"odinger equation}, Rev. Mat. Iberoam., 26 (2010),
  pp.~1--56.

\bibitem{GV}  {\sc J. Ginibre and G. Velo}, {\em Smoothing properties and retarded estimates for some dispersive evolution equations.} Comm. Math. Phys. 144 (1992), 163--188. %MR1151250

\bibitem{KT} {\sc M. Keel and T. Tao,} {\em Endpoint Strichartz estimates.}| Amer. J. Math. 120 (1998), 955--980.

\bibitem{KillipMurphyVisan2020}
{\sc R.~Killip, J.~Murphy, and M.~Visan}, {\em Cubic-quintic {NLS}: scattering
  beyond the virial threshold}, SIAM J. Math. Anal. 53 (2021), no. 5, pp. 5803--5812.


\bibitem{KillipOhPoVi2017}
{\sc R.Killip, T.~Oh, O.~Pocovnicu, and M.~Visan}, {\em Solitons and scattering
  for the cubic--quintic nonlinear {S}chr\"odinger equation on
  $\mathbb{R}^{3}$}, Arch. Rational Mech. Anal., 225 (2017), pp.~469--548.

\bibitem{LiZhang1} {\sc D. Li and X. Zhang}, {\em Dynamics for the energy critical nonlinear wave equation in high dimensions.} Trans. Amer. Math. Soc. 363 (2011), no. 3, pp. ~1137--1160.

\bibitem{MiaMurphyZheng2021}
{\sc J.~Murphy. C.~Miao and J.~Zheng.}, {\em Threshold scattering for the focusing
  {NLS} with a repulsive potential}, Preprint {\tt arXiv:2102.07163}.  To appear in Indiana Univ. J. Math.

\bibitem{Strichartz} {\sc R. S. Strichartz}, {\em Restriction of Fourier transform to quadratic surfaces and decay of solutions of wave equations.} Duke Math. J. 44 (1977), 705--774. 

\bibitem{YZZ} {\sc K. Yang, C. Zeng, and X. Zhang}, {\em Dynamics of threshold solutions for energy critical NLS with inverse square potential.} SIAM J. Math. Anal. 54 (2022), no. 1, pp.~173--219.

\bibitem{Zhang2006}
{\sc X.~Zhang}, {\em On the {C}auchy problem of 3-{D} energy-critical
  {S}chr\"odinger equations with subcritical perturbations}, J. Differential
  Equations, 230 (2006), pp.~422--445.

\end{thebibliography}

\end{document}